\numberwithin{equation}{section}
\newtheorem{thm}{Theorem}[section]
\newtheorem{prop}[thm]{Proposition}
\newtheorem{lem}[thm]{Lemma}
\newtheorem{cor}[thm]{Corollary}
\theoremstyle{remark}
\newtheorem{rem}[thm]{Remark}
\theoremstyle{definition}
\newtheorem{definition}[thm]{Definition}
\title{Nonexistence and Uniqueness for Pure States of Ferroelectric Six-Vertex Models}
\author{Amol Aggarwal} 
\begin{document}

	\begin{abstract}
		
		In this paper we consider the existence and uniqueness of pure states with some fixed slope $(s, t) \in [0, 1]^2$ for a general ferroelectric six-vertex model. First, we show there is an open subset $\mathfrak{H} \subset [0, 1]^2$, which is parameterized by the region between two explicit hyperbolas, such that there is no pure state for the ferroelectric six-vertex model of any slope $(s, t) \in \mathfrak{H}$. Second, we show that there is a unique pure state for this model of any slope $(s, t)$ on the boundary $\partial \mathfrak{H}$ of $\mathfrak{H}$. These results confirm predictions of Bukman--Shore from 1995. 
	\end{abstract}

	\maketitle
	
	\tableofcontents 
	
	\section{Introduction}   
	
	\label{Model}

	\subsection{Preface}
	
	\label{States}  

 	A fundamental question in mathematical physics concerns the classification and analysis of pure states (translation-invariant, ergodic Gibbs measures) for statistical mechanical systems. Over the past two decades, a deep understanding in this direction has been achieved for two-dimensional dimer models. Indeed, the classification aspect of this question was addressed in 2005 by Sheffield \cite{RS}, who showed that dimer pure states are parameterized by pairs of real numbers, also called slopes, that govern the average gradient for the height function associated with the model. These pure states were then analyzed in 2006 by Kenyon--Okounkov--Sheffield \cite{DA}, who showed that they come in three types. The first is \emph{frozen}, where the associated height function is essentially deterministic; the second is \emph{gaseous}, where the height variance is nonzero but bounded; and the third is \emph{liquid}, where the height fluctuations diverge logarithmically in the lattice size.
 	
 	In this paper we consider the existence and uniqueness of pure states for the six-vertex model. This model, whose definition we will recall more precisely in \Cref{Measures} below, involves six positive numbers $(a_1, a_2, b_1, b_2, c_1, c_2)$ that provide weights for local configurations. Associated with these weights is an \emph{anisotropy parameter} $\Delta = \Delta (a_1, a_2, b_1, b_2, c_1, c_2)$, defined by setting 
 	\begin{flalign}
 	\label{parameter1} 
 	\Delta = \displaystyle\frac{a_1 a_2 + b_1 b_2 - c_1 c_2}{2 \sqrt{a_1 a_2 b_1 b_2}},
 	\end{flalign}
 	
 	\noindent that is believed to dictate both qualitative and quantitative features of the model. 
 	
 	Here, we will be interested in the regime when $\Delta > 1$, which is known as the \emph{ferroelectric phase} of the six-vertex model. The description of pure states for this six-vertex model differs considerably from its counterpart in the dimer setting. Indeed, even the question of existence for pure states has different answers in these two contexts; in particular, we will see that there are certain slopes for the ferroelectric six-vertex model that are in principle allowable but do not admit pure states. 
 	
 	However, before explaining our results in more detail, let us first outline the predictions in this direction from the physics literature, due to Bukman--Shore \cite{TCPFSVM} in 1995. Associated with any pure state $\mu$ of a six-vertex model is a \emph{slope} $(s, t)$, where $s$ and $t$ denote the probabilities that a given vertical and horizontal edge is occupied under $\mu$, respectively; as such, we must have $(s, t) \in [0, 1]^2$. It is widely believed that each slope $(s, t) \in (0, 1)^2$ admits at most one pure state for any ferroelectric six-vertex model. In the context of gradient models governed by a simply attractive Gibbs potential, the stronger statement was shown in \cite{RS} that every slope $(s, t) \in (0, 1)^2$ admits a unique pure state. Yet, this result does not apply for the ferroelectric six-vertex model, which can be viewed as a discrete gradient model, but not under a potential that is simply attractive. 
 	
 	In fact, it was predicted in Section 3.5 of \cite{TCPFSVM} that there is a ``lens-shaped'' region $\mathfrak{H} \subset [0, 1]^2$ admitting no pure states for the ferroelectric six-vertex model; we refer to the shaded part of \Cref{st} for a depiction. A precise parameterization for this open set $\mathfrak{H}$ was also proposed there, by writing its boundary $\partial \mathfrak{H}$ as the union $\mathfrak{h}_1 \cup \mathfrak{h}_2$ of two explicit hyperbolas (given by \eqref{h} and \eqref{h1h2} below). In addition, \cite{TCPFSVM} proposed characterizations for how the pure state $\mu_{s, t}$ of slope $(s, t)$ should qualitatively behave in different regions of $[0, 1]^2$. See also Section 1.2.1 of the work \cite{SLSVM} by Corwin--Ghosal--Shen--Tsai for a restatement of these predictions, and also Section 4 of the work \cite{LSAFM} by de Gier--Kenyon--Watson where the first (nonexistence) part of the below characterization is predicted for the five-vertex model. In what follows, $\overline{\mathfrak{H}} = \mathfrak{H} \cup \partial \mathfrak{H}$ denotes the closure of $\mathfrak{H}$.

 	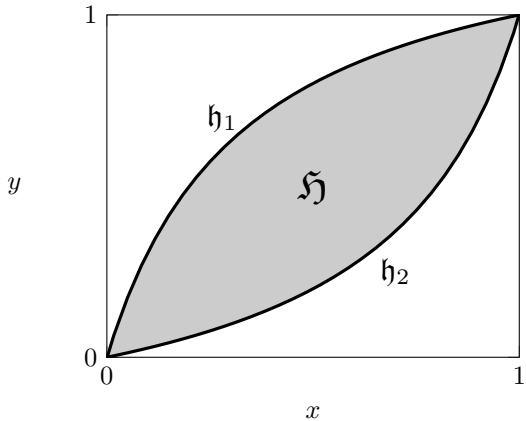
\begin{figure}
 		
 		\begin{center}
 			
 			\begin{tikzpicture}
 			\begin{axis}[
 			scale = .8,
 			samples= 300, 
 			xlabel = {$x$},
 			ylabel = {$y$},
 			ylabel style={rotate=270},
 			xmin=0, 
 			xmax=1, 
 			ymin=0, 
 			ymax=1, 
 			xtick distance=1,
 			ytick distance=1
 			]
 			\addplot[no marks, very thick, color=black,  name path=h1] {4*x/(3*x+1)};
 			\addplot[no marks, very thick, color=black,  name path=h2] {x/(4 - 3*x)};
 			\addplot[no marks, thick, color=black] coordinates {(.5, .5)} node[scale = 1.6]{$\mathfrak{H}$};
 			\addplot[no marks, thick, color=black] coordinates {(.28, .7)} node[scale = 1.2]{$\mathfrak{h}_1$};
 			\addplot[no marks, thick, color=black] coordinates {(.7, .25)} node[scale = 1.2]{$\mathfrak{h}_2$};
 			
 			\addplot[gray!40!white] fill between[of=h1 and h2];
 			\end{axis}

 			\end{tikzpicture}

 			\caption{\label{st} Depicted above is the region $\mathfrak{H}$ and its boundary $\partial \mathfrak{H} = \mathfrak{h}_1 \cup \mathfrak{h}_2$ in the phase diagram for the behavior of pure states in the ferroelectric six-vertex model.}
 			
 		\end{center}
 		
 	\end{figure}

 	\begin{enumerate}
 		\item \emph{Nonexistence}: If $(s, t) \in \mathfrak{H}$, then there should be no pure state $\mu_{s, t}$ of slope $(s, t)$. 
 		\item \emph{KPZ States}: If $(s, t) \in \partial \mathfrak{H}$, then $\mu_{s, t}$ should exhibit Kardar--Parisi--Zhang (KPZ) behavior.
 		\item \emph{Liquid States}: If $(s, t) \in (0, 1)^2 \setminus \overline{\mathfrak{H}}$, then $\mu_{s, t}$ should be liquid. 
 		\item \emph{Frozen States}: If $(s, t)$ is on the boundary of $[0, 1]^2$, then $\mu_{s, t}$ should be frozen.
 	\end{enumerate}
 
 	This description of pure states contrasts sharply with its counterpart in the dimer model. Indeed, while frozen and liquid states both appear in dimers, regions of nonexistence (that disconnect the space of liquid states) and KPZ states do not. Let us mention that, although \cite{TCPFSVM} did not precisely predict the sense in which the measures $\mu_{s, t}$ for $(s, t) \in \partial \mathfrak{H}$ should exhibit KPZ behavior, systems in the Kardar--Parisi--Zhang universality class are typically characterized by exhibiting fluctuations of order $N^{1 / 3}$ on a domain of size $N$ \cite{DSGI}. We refer to the surveys \cite{EUC} of Corwin and \cite{IUE} of Quastel for a more detailed introduction to this class. 
 	
 	Although it will not be a focus of this paper, let us briefly mention that there are also predicted phase diagrams for the behavior of six-vertex pure states outside of the ferroelectric regime (that is, when $\Delta < 1$). In this context, every slope $(s, t) \in (0, 1)^2$ is believed to admit a pure state with qualitative characteristics resembling those that appear in dimer models. More specifically, in the \emph{critical phase} $\Delta \in (-1, 1)$, all pure states should be either liquid or frozen and, in the \emph{antiferroelectric phase} $\Delta < -1$, all pure states should be liquid, gaseous, or frozen. For more information on these predictions, we refer to Section 9 of the survey \cite{ISVM} by Reshetikhin and Section 4 of that \cite{SVMFBC} by Palamarchuk--Reshetikhin. In certain cases, these predictions for $\Delta < 1$ have been mathematically established \cite{DGH,DPTM,LVHFI,CPTM,RD,HFD,DUFTHP,TDPM,CI,F,DSVM}. 
 
	However, except for its fourth part concerning frozen phases (whose analysis follows directly from definitions), no aspect of the above ferroelectric six-vertex model phase diagram has been mathematically proven until recently. To our knowledge, the only result in this direction concerns its second (KPZ) regime and appeared in Appendix A.2 of \cite{CFSAEPSSVMCL}, where a pure state $\mu (s) = \mu_{s, t}$ of any slope $(s, t) \in \partial \mathfrak{H}$ was introduced for the ferroelectric six-vertex model. Additionally, \cite{CFSAEPSSVMCL} established both qualitative and quantitative properties for $\mu (s)$, which are considerably different from those for pure states of tiling models. For instance, it was shown by Kenyon the latter are conformally invariant \cite{CI} with Gaussian free field fluctuations \cite{F}. In contrast, the six-vertex pure states $\mu (s)$ are quite anisotropic, exhibiting Baik--Rains fluctuations of exponent $\frac{1}{3}$ along a single direction and Gaussian fluctuations of exponent $\frac{1}{2}$ elsewhere \cite{CFSAEPSSVMCL}; this is an indication of the KPZ behavior manifested by these states. Essentially nothing (including existence and uniqueness) remains known about the conjecturally liquid states in the third regime of the phase diagram, where $(s, t) \notin \overline{\mathfrak{H}}$.
	
	Our results in this paper are twofold. First, we show that there are no pure states of any slope $(s, t) \in \mathfrak{H}$, thereby establishing the first regime of the above phase diagram. Second, we show that there is at most one pure state of any slope $(s, t) \in \partial \mathfrak{H}$, thereby showing that the KPZ states introduced in \cite{CFSAEPSSVMCL} are the unique ones of their slopes. 
	
	To establish these results, we will make use of the \emph{stochastic six-vertex model}, which is one whose six weights are of the form $(1, 1, B_1, B_2, 1 - B_1, 1 - B_2)$. This specialization was first considered by Gwa--Spohn \cite{SVMRSASH} in 1992 as an instance of the six-vertex model whose weights are stochastic and therefore give rise to a Markov process with local update rules. As observed in \cite{SVMRSASH}, and also by Borodin--Corwin--Gorin in \cite{SSVM}, this feature enables an interacting particle system interpretation of the stochastic six-vertex model; both algebraic and probabilistic ideas developed in the former context can then be adapted to analyze it. One reason as to why the stochastic six-vertex model is prevalent in our setting is that the pure states $\mu (s)$ described above (with slopes on $\partial \mathfrak{H}$) in fact serve as its translation-invariant stationary measures. For this reason, these pure states $\mu (s)$ are sometimes referred to as \emph{stochastic Gibbs states} \cite{SLSVM}. 
	
	So, we first apply a gauge transformation to view a pure state for any ferroelectric six-vertex model as one for some stochastic six-vertex model; the existence of such a transformation is guaranteed by the ferroelectricity of the original model. Next, we introduce the notion of a \emph{partition function stochastic lower bound} for a pure state $\mu$ of the stochastic six-vertex model, which essentially states the following (see \Cref{estimateprobabilitylower} below for a more precise formulation). With high probability, the partition function on a large $N \times N$ domain for this model with boundary data sampled under $\mu$ is at least $e^{-o(N^2)}$. The benefit to a measure $\mu$ satisfying this property is that it can be ``compared'' with a stochastic six-vertex model with free exit data. Indeed, since the partition function induced by the latter is equal to $1$, it can be quickly shown (see \Cref{modelmodelstochastic} below) that any event very unlikely under the free stochastic model is also unlikely under $\mu$. 
	
	Our task then reduces to establishing two results. The first (see \Cref{musmu} below) states that any pure state $\mu$ satisfying a partition function stochastic lower bound must coincide with a stochastic Gibbs state $\mu (s)$ introduced in \cite{CFSAEPSSVMCL}. The second (see \Cref{mulowerprobability} below) states that any pure state of slope $(s, t) \in \overline{\mathfrak{H}}$ must satisfy a partition function stochastic lower bound. Since the measures $\mu (s)$ all have slopes in $\partial \mathfrak{H}$, this shows that no pure state of slope $(s, t) \in \mathfrak{H}$ can exist and that it is uniquely given by $\mu (s)$ if $(s, t) \in \partial \mathfrak{H}$. 
	
	To prove the first of the two results mentioned above, we use one from \cite{LSLSSSVM}, which essentially states that the local statistics for any stochastic six-vertex model with free exit data are given by a stochastic Gibbs state $\mu (s)$ in the thermodynamical limit. Combining this with the above mentioned comparison between $\mu$ and such a stochastic six-vertex model, this will show $\mu = \mu (s)$. 
	
	To prove the second, we use the property that the partition function of any stochastic Gibbs state on an $N \times N$ domain is likely at least $e^{-O(N)}$ (differing from the more typical $e^{cN^2}$ asymptotics expected for liquid states). This can quickly be deduced from the facts that stochastic Gibbs states are stationary measures for the stochastic six-vertex model, that the total partition function for any such model with free exit data is equal to $1$, and that there are at most $e^{O(N)}$ choices of exit data. We will then compare the partition function for the pure state $\mu$ of slope $(s, t) \in \overline{\mathfrak{H}}$ with that for a stochastic Gibbs state, as follows. First, we introduce a ``sparsification procedure'' that reduces the slope of any pure state, while only reducing its partition function by at most a factor of $e^{o(N^2)}$. Next, since $(s, t) \in \overline{\mathfrak{H}}$, there exists a slope $(s_0, t_0) \in \partial \mathfrak{H}$ with $s_0 \ge s$ and $t_0 \ge t$ (see \Cref{s0t0} below). We can then interpret the pure state $\mu$, of slope $(s, t)$, as a sparsification of the stochastic Gibbs state $\mu (s_0)$, of slope $(s_0, t_0)$. Since the partition function of the latter is at least $e^{-O(N)}$, and since sparsification does not decrease partition functions by more than $e^{o (N^2)}$, this will yield the required lower bound of $e^{-o (N^2)}$ for the partition function induced by $\mu$. 
	
	Throughout this paper, we let $\mathbb{P}_{\mu}$ and $\mathbb{E}_{\mu}$ denote the probability and expectation with respect to any measure $\mu$, respectively. Furthermore, we let $\mathcal{A}^c$ denote the complement of any event $\mathcal{A}$, and $\textbf{1}_{\mathcal{A}} = \textbf{1} (\mathcal{A})$ denote the indicator function for $\mathcal{A}$.

	\subsection{Gibbs Measures for the Six-Vertex Model} 
	
	\label{Measures} 
	
	A \emph{domain} $\Lambda \subseteq \mathbb{Z}^2$ is a connected induced subgraph of $\mathbb{Z}^2$. The \emph{boundary} of $\Lambda$, denoted by $\partial \Lambda \subseteq \Lambda$, is the set of vertices in $\mathbb{Z}^2 \setminus \Lambda$ that are adjacent to some vertex in $\Lambda$, and the \emph{closure} of $\Lambda$ is defined to be the union $\overline{\Lambda} = \Lambda \cup \partial \Lambda$ of it with its boundary. 
	
	We now define six-vertex ensembles on domains $\Lambda \subseteq \mathbb{Z}^2$, to which end we begin by introducing arrow configurations. An \emph{arrow configuration} is a quadruple $(i_1, j_1; i_2, j_2)$ such that $i_1, j_1, i_2, j_2 \in \{ 0, 1 \}$ and $i_1 + j_1 = i_2 + j_2$. We view such a quadruple as an assignment of arrows to a vertex $v \in \Lambda$. Specifically, $i_1$ and $j_1$ denote the numbers of vertical and horizontal arrows entering $v$, respectively; similarly, $i_2$ and $j_2$ denote the numbers of vertical and horizontal arrows exiting $v$, respectively. The fact that $i_1 + j_1 = i_2 + j_2$ means that the numbers of incoming and outgoing arrows at $v$ are equal; this is sometimes referred to as \emph{arrow conservation}. There are six possible arrow configurations, which are depicted on the left side of \Cref{vertexdomain}. 
	
	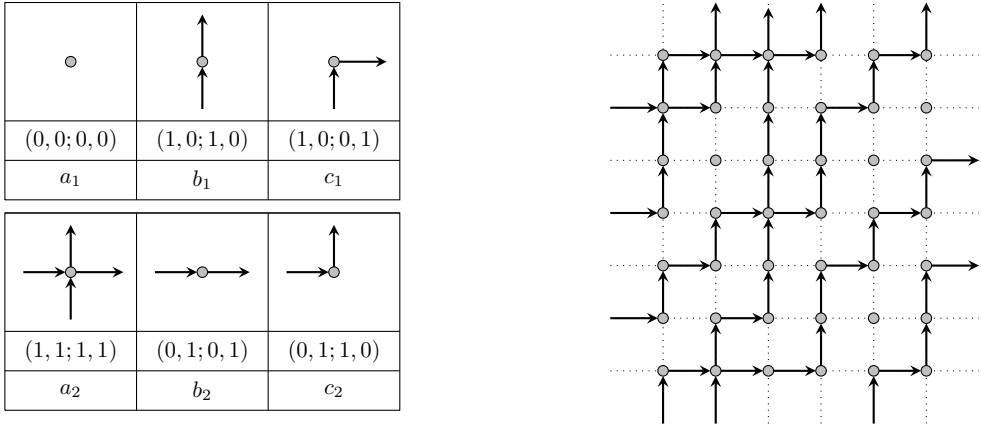
\begin{figure}[t]
		
		\begin{center}
			
			\begin{tikzpicture}[
			>=stealth,
			scale = .7	
			]
			
			\draw[] (-11.5, 3.25) -- (-4, 3.25) -- (-4, 7) -- (-11.5, 7) -- (-11.5, 3.25);
			
			\draw[] (-11.5, -.75) -- (-4, -.75) -- (-4, 3) -- (-11.5, 3) -- (-11.5, -.75);

			\draw[] (-11.5, 4) -- (-4, 4);
			\draw[] (-9, 3.25) -- (-9, 7);
			\draw[] (-6.5, 3.25) -- (-6.5, 7);
			\draw[] (-11.5, 4.75) -- (-4, 4.75);
			
			\draw[] (-11.5, 3) -- (-4, 3);
			\draw[] (-9, -.75) -- (-9, 3);
			\draw[] (-6.5, -.75) -- (-6.5, 3);
			\draw[] (-11.5, .75) -- (-4, .75);
			\draw[] (-11.5, 0) -- (-4, 0);
			
			\draw[] (-10.25, 4.375) circle [radius = 0] node[scale = .85]{$(0, 0; 0, 0)$};
			\draw[] (-7.75, 4.375) circle [radius = 0] node[scale = .85]{$(1, 0; 1, 0)$};
			\draw[] (-5.25, 4.375) circle [radius = 0] node[scale = .85]{$(1, 0; 0, 1)$};
			
			\draw[] (-10.25, .375) circle [radius = 0] node[scale = .85]{$(1, 1; 1, 1)$};
			\draw[] (-7.75, .375) circle [radius = 0] node[scale = .85]{$(0, 1; 0, 1)$};
			\draw[] (-5.25, .375) circle [radius = 0] node[scale = .85]{$(0, 1; 1, 0)$};

			\draw[] (-10.25, 3.625) circle [radius = 0] node[scale = .85]{$a_1$};
			\draw[] (-7.75, 3.625) circle [radius = 0] node[scale = .85]{$b_1$};
			\draw[] (-5.25, 3.625) circle [radius = 0] node[scale = .85]{$c_1$};
			
			\draw[] (-10.25, -.375) circle [radius = 0] node[scale = .85]{$a_2$};
			\draw[] (-7.75, -.375) circle [radius = 0] node[scale = .85]{$b_2$};
			\draw[] (-5.25, -.375) circle [radius = 0] node[scale = .85]{$c_2$};

			\draw[->, black,  thick] (-8.65, 1.875) -- (-7.85, 1.875);
			\draw[->, black,  thick] (-7.65, 1.875) -- (-6.85, 1.875);
			
			\draw[->, black,  thick] (-7.75, 4.975) -- (-7.75, 5.775);
			\draw[->, black,  thick] (-7.75, 5.975) -- (-7.75, 6.775);

			\draw[->, black,  thick] (-5.15, 5.875) -- (-4.25, 5.875);
			\draw[->, black,  thick] (-5.25, 4.975) -- (-5.25, 5.775);
			
			\draw[->, black,  thick] (-6.15, 1.875) -- (-5.35, 1.875);
			\draw[->, black,  thick] (-5.25, 1.975) -- (-5.25, 2.775);
			
			\draw[->, black,  thick] (-11.15, 1.875) -- (-10.35, 1.875);
			\draw[->, black,  thick] (-10.15, 1.875) -- (-9.25, 1.875);
			\draw[->, black,  thick] (-10.25, .975) -- (-10.25, 1.775);
			\draw[->, black,  thick] (-10.25, 1.975) -- (-10.25, 2.775);
			
			\filldraw[fill=gray!50!white, draw=black] (-10.25, 1.875) circle [radius=.1];
			\filldraw[fill=gray!50!white, draw=black] (-10.25, 5.875) circle [radius=.1];
			\filldraw[fill=gray!50!white, draw=black] (-7.75, 1.875) circle [radius=.1];
			\filldraw[fill=gray!50!white, draw=black] (-7.75, 5.875) circle [radius=.1];
			\filldraw[fill=gray!50!white, draw=black] (-5.25, 1.875) circle [radius=.1];
			\filldraw[fill=gray!50!white, draw=black] (-5.25, 5.875) circle [radius=.1];

			\draw[-, dotted] (0, 0) -- (7, 0);
			\draw[-, dotted] (0, 1) -- (7, 1);
			\draw[-, dotted] (0, 2) -- (7, 2);
			\draw[-, dotted] (0, 3) -- (7, 3);
			\draw[-, dotted] (0, 4) -- (7, 4);
			\draw[-, dotted] (0, 5) -- (7, 5);
			\draw[-, dotted] (0, 6) -- (7, 6);
			
			\draw[-, dotted] (1, -1) -- (1, 7); 
			\draw[-, dotted] (2, -1) -- (2, 7);  
			\draw[-, dotted] (3, -1) -- (3, 7); 
			\draw[-, dotted] (4, -1) -- (4, 7); 
			\draw[-, dotted] (5, -1) -- (5, 7);  
			\draw[-, dotted] (6, -1) -- (6, 7); 
			
			\draw[->, thick, black] (1, -1) -- (1, -.1); 
			\draw[->, thick, black] (2, -1) -- (2, -.1);
			\draw[->, thick, black] (5, -1) -- (5, -.1);
			
			\draw[->, thick, black] (1.1, 0) -- (1.9, 0);
			\draw[->, thick, black] (2.1, 0) -- (2.9, 0);
			\draw[->, thick, black] (3.1, 0) -- (3.9, 0);

			\draw[->, thick, black] (5.1, 0) -- (5.9, 0);

			\draw[->, thick, black] (2, .1) -- (2, .9);
			\draw[->, thick, black] (4, .1) -- (4, .9);
			\draw[->, thick, black] (4, 1.1) -- (4, 1.9);
			
			\draw[->, thick, black] (0, 1) -- (.9, 1);
			\draw[->, thick, black] (2.1, 1) -- (2.9, 1);
			\draw[->, thick, black] (4.1, 2) -- (4.9, 2);
			
			\draw[->, thick, black] (1, 1.1) -- (1, 1.9); 
			\draw[->, thick, black] (3, 1.1) -- (3, 1.9);

			\draw[->, thick, black] (1.1, 2) -- (1.9, 2);
			
			\draw[->, thick, black] (2, 2.1) -- (2, 2.9); 
			\draw[->, thick, black] (3, 2.1) -- (3, 2.9);
			\draw[->, thick, black] (5, 2.1) -- (5, 2.9);

			\draw[->, thick, black] (0, 3) -- (.9, 3);
			\draw[->, thick, black] (2.1, 3) -- (2.9, 3);
			\draw[->, thick, black] (3.1, 3) -- (3.9, 3);
			\draw[->, thick, black] (5.1, 3) -- (5.9, 3);
			
			\draw[->, thick, black] (1, 3.1) -- (1, 3.9); 
			\draw[->, thick, black] (3, 3.1) -- (3, 3.9);
			\draw[->, thick, black] (4, 3.1) -- (4, 3.9);
			
			\draw[->, thick, black] (3, 4.1) -- (3, 4.9); 
			\draw[->, thick, black] (3, 5.1) -- (3, 5.9); 
			\draw[->, thick, black] (3, 6.1) -- (3, 6.9);

			\draw[->, thick, black] (1, 4.1) -- (1, 4.9); 
			\draw[->, thick, black] (4, 4.1) -- (4, 4.9);

			\draw[->, thick, black] (0, 5) -- (.9, 5);
			\draw[->, thick, black] (1.1, 5) -- (1.9, 5);
			\draw[->, thick, black] (4.1, 5) -- (4.9, 5);

			\draw[->, thick, black] (1, 5.1) -- (1, 5.9); 
			\draw[->, thick, black] (2, 5.1) -- (2, 5.9);
			\draw[->, thick, black] (5, 5.1) -- (5, 5.9);

			\draw[->, thick, black] (1.1, 6) -- (1.9, 6);
			\draw[->, thick, black] (2.1, 6) -- (2.9, 6);
			\draw[->, thick, black] (3.1, 6) -- (3.9, 6);
			
			\draw[->, thick, black] (2, 6.1) -- (2, 7); 
			\draw[->, thick, black] (4, 6.1) -- (4, 7);
			
			\draw[->, thick, black] (5.1, 6) -- (5.9, 6);
			\draw[->, thick, black] (6, 6.1) -- (6, 7);

			\draw[->, thick, black] (6, 3.1) -- (6, 3.9);	
			\draw[->, thick, black] (6.1, 4) -- (7, 4);

			\draw[->, thick, black] (6, .1) -- (6, .9);
			\draw[->, thick, black] (6, 1.1) -- (6, 1.9);
			\draw[->, thick, black] (6.1, 2) -- (7, 2);

			\filldraw[fill=gray!50!white, draw=black] (1, 0) circle [radius=.1];
			\filldraw[fill=gray!50!white, draw=black] (1, 1) circle [radius=.1];
			\filldraw[fill=gray!50!white, draw=black] (1, 2) circle [radius=.1];
			\filldraw[fill=gray!50!white, draw=black] (1, 3) circle [radius=.1];
			\filldraw[fill=gray!50!white, draw=black] (1, 4) circle [radius=.1];
			\filldraw[fill=gray!50!white, draw=black] (1, 5) circle [radius=.1];
			\filldraw[fill=gray!50!white, draw=black] (1, 6) circle [radius=.1];
			
			\filldraw[fill=gray!50!white, draw=black] (2, 0) circle [radius=.1];
			\filldraw[fill=gray!50!white, draw=black] (2, 1) circle [radius=.1];
			\filldraw[fill=gray!50!white, draw=black] (2, 2) circle [radius=.1];
			\filldraw[fill=gray!50!white, draw=black] (2, 3) circle [radius=.1];
			\filldraw[fill=gray!50!white, draw=black] (2, 4) circle [radius=.1];
			\filldraw[fill=gray!50!white, draw=black] (2, 5) circle [radius=.1];
			\filldraw[fill=gray!50!white, draw=black] (2, 6) circle [radius=.1];
			
			\filldraw[fill=gray!50!white, draw=black] (3, 0) circle [radius=.1];
			\filldraw[fill=gray!50!white, draw=black] (3, 1) circle [radius=.1];
			\filldraw[fill=gray!50!white, draw=black] (3, 2) circle [radius=.1];
			\filldraw[fill=gray!50!white, draw=black] (3, 3) circle [radius=.1];
			\filldraw[fill=gray!50!white, draw=black] (3, 4) circle [radius=.1];
			\filldraw[fill=gray!50!white, draw=black] (3, 5) circle [radius=.1];
			\filldraw[fill=gray!50!white, draw=black] (3, 6) circle [radius=.1];
			
			\filldraw[fill=gray!50!white, draw=black] (4, 0) circle [radius=.1];
			\filldraw[fill=gray!50!white, draw=black] (4, 1) circle [radius=.1];
			\filldraw[fill=gray!50!white, draw=black] (4, 2) circle [radius=.1];
			\filldraw[fill=gray!50!white, draw=black] (4, 3) circle [radius=.1];
			\filldraw[fill=gray!50!white, draw=black] (4, 4) circle [radius=.1];
			\filldraw[fill=gray!50!white, draw=black] (4, 5) circle [radius=.1];
			\filldraw[fill=gray!50!white, draw=black] (4, 6) circle [radius=.1];
			
			\filldraw[fill=gray!50!white, draw=black] (5, 0) circle [radius=.1];
			\filldraw[fill=gray!50!white, draw=black] (5, 1) circle [radius=.1];
			\filldraw[fill=gray!50!white, draw=black] (5, 2) circle [radius=.1];
			\filldraw[fill=gray!50!white, draw=black] (5, 3) circle [radius=.1];
			\filldraw[fill=gray!50!white, draw=black] (5, 4) circle [radius=.1];
			\filldraw[fill=gray!50!white, draw=black] (5, 5) circle [radius=.1];
			\filldraw[fill=gray!50!white, draw=black] (5, 6) circle [radius=.1];
			
			\filldraw[fill=gray!50!white, draw=black] (6, 0) circle [radius=.1];
			\filldraw[fill=gray!50!white, draw=black] (6, 1) circle [radius=.1];
			\filldraw[fill=gray!50!white, draw=black] (6, 2) circle [radius=.1];
			\filldraw[fill=gray!50!white, draw=black] (6, 3) circle [radius=.1];
			\filldraw[fill=gray!50!white, draw=black] (6, 4) circle [radius=.1];
			\filldraw[fill=gray!50!white, draw=black] (6, 5) circle [radius=.1];
			\filldraw[fill=gray!50!white, draw=black] (6, 6) circle [radius=.1];

			\end{tikzpicture}
			
		\end{center}	
		
		\caption{\label{vertexdomain} The chart to the left shows all six possible arrow configurations, along with the associated vertex weights. An example of a six-vertex ensemble is shown to the right.}
	\end{figure}

	A \emph{six-vertex ensemble} on $\Lambda$ is an assignment of an arrow configuration to each vertex of $\Lambda$ in such a way that neighboring arrow configurations are \emph{consistent}; this means that, if $v_1, v_2 \in \Lambda$ are two adjacent vertices, then there is an arrow to $v_2$ in the configuration at $v_1$ if and only if there is an arrow from $v_1$ in the configuration at $v_2$. Observe in particular that the arrows in a six-vertex ensemble form non-crossing up-right directed paths connecting vertices of $\Lambda$, which enter and exit through its boundaries; see the right side of \Cref{vertexdomain} for a depiction.

	Let $\mathfrak{E} (\Lambda)$ denote the set of six-vertex ensembles on $\Lambda$. For any $\mathcal{E} \in \mathfrak{E} (\Lambda)$ and subdomain $\Lambda' \subseteq \Lambda$, we let $\mathcal{E}_{\Lambda'} \in \mathfrak{E} (\Lambda')$ denote the restriction of $\mathcal{E}$ to $\Lambda'$. We refer to \Cref{property} for a depiction. A \emph{cylinder set} in $\mathfrak{E} (\Lambda)$ is one of the form $\big\{ \mathcal{E} \in \mathfrak{E}(\Lambda): \mathcal{E}_{\Lambda'} = \mathcal{F} \big\}$, for some finite subdomain $\Lambda' \subseteq \Lambda$ and six-vertex ensemble $\mathcal{F} \in \mathfrak{E}(\Lambda')$. Assigning to $\mathfrak{E} (\Lambda)$ the $\sigma$-algebra generated by all cylinder sets, let $\mathscr{P} \big( \mathfrak{E} (\Lambda) \big)$ denote the space of probability measures on $\mathfrak{E} (\Lambda)$. 
	
	For any $\mathcal{E} \in \mathfrak{E} (\Lambda)$ and $(x, y) \in \Lambda$, let $\chi^{(v)} (x, y) = \chi_{\mathcal{E}}^{(v)} (x, y) \in \{ 0, 1 \}$ denote the indicator for the event that an arrow in $\mathcal{E}$ vertically exits from $(x, y)$. Stated alternatively, letting $\big( i_1 (x, y), j_1 (x, y); i_2 (x, y), j_2 (x, y) \big)$ denote the arrow configuration at $(x, y)$, we set $\chi^{(v)} (x, y) = i_1 (x, y + 1) = i_2 (x, y)$. Similarly, let $\chi^{(h)} (x, y)  = \chi_{\mathcal{E}}^{(h)} (x, y) = j_1 (x + 1, y) = j_2 (x, y) \in \{ 0, 1 \}$ denote the indicator for the event that an arrow in $\mathcal{E}$ horizontally exits through $(x, y)$. 
	
	Next fix six real numbers $a_1, a_2, b_1, b_2, c_1, c_2 > 0$. We assign a \emph{weight} $w(i_1, j_1; i_2, j_2)$ to each of the six possible arrow configurations by setting
	\begin{flalign}
	\label{aibici} 
	\begin{aligned} 
	& w (0, 0; 0, 0) = a_1; \qquad w (1, 0; 1, 0) = b_1; \qquad w (1, 0; 0, 1) = c_1; \\
	& w (1, 1; 1, 1) = a_2; \qquad w (0, 1; 0, 1) = b_2; \qquad w (0, 1; 1, 0) = c_2,
	\end{aligned}
	\end{flalign}
	
	\noindent and $w (i_1, j_1; i_2, j_2) = 0$ for any $(i_1, j_1; i_2, j_2)$ not of the above form. We refer to the left side of \Cref{vertexdomain} for a depiction.

	Given some domain $\Lambda \subseteq \mathbb{Z}^2$, a six-vertex ensemble $\mathcal{E} \in \mathfrak{E} (\Lambda)$, and a vertex $v \in \Lambda$ with some arrow configuration $(i_1, j_1; i_2, j_2) = \big( i_1 (v), j_1 (v); i_2 (v), j_2 (v) \big)$ under $\mathcal{E}$, we define the weight of $v$ with respect to $\mathcal{E}$ to be $w_{\mathcal{E}} (v) = w(i_1, j_1; i_2, j_2)$. If $\Lambda$ is finite, we may define the weight $w (\mathcal{E})$ of $\mathcal{E}$ to be the product of the weights of its vertices, namely,
	\begin{flalign}
	\label{eweight} 
	w (\mathcal{E}) = \prod_{v \in \Lambda} w_{\mathcal{E}} (v).
	\end{flalign} 
	
	\noindent The \emph{six-vertex model} (with free boundary conditions) on $\Lambda$ is then the probability measure $\mathbb{P} = \mathbb{P}_{\Lambda}^{(a_1, a_2, b_1, b_2, c_1, c_2)} \in \mathscr{P} \big( \mathfrak{E} (\Lambda) \big)$ such that $\mathbb{P} [\mathcal{E}] = Z^{-1} w (\mathcal{E})$ for any six-vertex ensemble $\mathcal{E} \in \mathfrak{E} (\Lambda)$, where $Z = \sum_{\mathcal{E} \in \mathfrak{E} (\Lambda)} w(\mathcal{E})$ is the \emph{partition function} chosen so these probabilities sum to $1$. 
	
	This definition no longer applies directly when $\Lambda$ is infinite, since then the product \eqref{eweight} might not converge. In this case, we consider the notion of Gibbs measures, given as follows. 
	
	\begin{definition}
		
		\label{ensemblemeasure}
		
		Let $\Lambda \subseteq \mathbb{Z}^2$ denote a domain, and let $\mu \in \mathscr{P} \big( \mathfrak{E} (\Lambda) \big)$ denote a measure. We say that $\mu$ satisfies the \emph{Gibbs property}, or is a \emph{Gibbs measure}, for the six-vertex model with weights $(a_1, a_2, b_1, b_2, c_1, c_2)$ if the following holds for any finite subdomain $\Lambda' \subseteq \Lambda$. Sample $\mathcal{E}\in \mathfrak{E} (\Lambda)$ under $\mu$, and condition on $\mathcal{E}_{\Lambda \setminus \Lambda'} = \mathcal{H}$, for some $\mathcal{H} \in \mathfrak{E} (\Lambda \setminus \Lambda')$. Then, for any $\mathcal{E}' \in \mathfrak{E} (\Lambda')$ consistent with $\mathcal{H}$, the conditional probability that $\mathcal{E}_{\Lambda'} = \mathcal{E}'$ is proportional to $w (\mathcal{E}')$ from \eqref{eweight}. Stated alternatively, $\mathbb{P}_{\mu} [\mathcal{E}_{\Lambda'} = \mathcal{E}' | \mathcal{E}_{\Lambda \setminus \Lambda'} = \mathcal{H}] = Z^{-1} w (\mathcal{E}')$ for any $\mathcal{E}' \in \mathfrak{E} (\Lambda')$ consistent with $\mathcal{H}$, where $Z = Z_{\Lambda'; \mu; \mathcal{H}}$ denotes the constant so that these probabilities sum to $1$. We refer to \Cref{property} for a depiction.

	\end{definition}

	\begin{figure}[t]
		
		\begin{center}
			
			\begin{tikzpicture}[
			>=stealth,
			scale = .5
			]

			\draw[->] (0, 10) -- (.9, 10);
			\draw[->] (1, 10) -- (1.9, 10);
			\draw[->] (2, 10) -- (2.9, 10);
			\draw[->] (3, 10) -- (3.9, 10);
			\draw[->] (4, 10) -- (4.9, 10);
			\draw[->] (5, 10) -- (5, 11);
			
			\draw[->] (0, 7) -- (.9, 7);
			\draw[->] (1, 7) -- (1.9, 7);
			\draw[->] (2, 7) -- (2.9, 7);
			\draw[->] (3, 7) -- (3, 7.9);
			\draw[->] (3, 8) -- (3.9, 8);
			\draw[->] (4, 8) -- (4.9, 8);
			\draw[->] (5, 8) -- (5, 8.9); 
			\draw[->] (5, 9) -- (5.9, 9);
			\draw[->] (6, 9) -- (6, 9.9); 
			\draw[->] (6, 10) -- (6.9, 10);
			\draw[->] (7, 10) -- (7, 11);
			
			\draw[->] (0, 4) -- (.9, 4);
			\draw[->] (1, 4) -- (1.9, 4);
			\draw[->] (2, 4) -- (2, 4.9);
			\draw[->] (2, 5) -- (2.9, 5);
			\draw[->] (3, 5) -- (3, 5.9);
			\draw[->] (3, 6) -- (3.9, 6);
			\draw[->, thick, dotted] (4, 6) -- (4.9, 6);
			\draw[->, thick, dotted] (5, 6) -- (5.9, 6);
			\draw[->, thick, dotted] (6, 6) -- (6, 6.9);
			\draw[->] (6, 7) -- (6, 7.9);
			\draw[->] (6, 8) -- (6.9, 8); 
			\draw[->] (7, 8) -- (7, 8.9);
			\draw[->] (7, 9) -- (7.9, 9); 
			\draw[->] (8, 9) -- (8, 9.9);
			\draw[->] (8, 10) -- (8.9, 10); 
			\draw[->] (9, 10) -- (9, 11);

			\draw[->] (0, 1) -- (.9, 1);
			\draw[->] (1, 1) -- (1.9, 1);
			\draw[->] (2, 1) -- (2.9, 1);
			\draw[->] (3, 1) -- (3, 1.9);
			\draw[->] (3, 2) -- (3.9, 2);
			\draw[->] (4, 2) -- (4, 2.9);
			\draw[->, thick, dotted] (4, 3) -- (4, 3.9);
			\draw[->, thick, dotted] (4, 4) -- (4, 4.9);
			\draw[->, thick, dotted] (4, 5) -- (4.9, 5);
			\draw[->, thick, dotted] (5, 5) -- (5.9, 5); 
			\draw[->, thick, dotted] (6, 5) -- (6, 5.9); 
			\draw[->, thick, dotted] (6, 6) -- (6.9, 6);
			\draw[->, thick, dotted] (7, 6) -- (7.9, 6); 
			\draw[->, thick, dotted] (8, 6) -- (8, 6.9);
			\draw[->] (8, 7) -- (8, 7.9);
			\draw[->] (8, 8) -- (8, 8.9);
			\draw[->] (8, 9) -- (8.9, 9);
			\draw[->] (9, 9) -- (9.9, 9);
			\draw[->] (10, 9) -- (10, 9.9);
			\draw[->] (10, 10) -- (11, 10);

			\draw[->] (4, 0) -- (4, .9); 
			\draw[->] (4, 1) -- (4, 1.9);
			\draw[->] (4, 2) -- (4.9, 2); 
			\draw[->] (5, 2) -- (5.9, 2);
			\draw[->] (6, 2) -- (6, 2.9); 
			\draw[->, thick, dotted] (6, 3) -- (6.9, 3); 
			\draw[->, thick, dotted] (7, 3) -- (7, 3.9);
			\draw[->, thick, dotted] (7, 4) -- (7, 4.9);
			\draw[->, thick, dotted] (7, 5) -- (7.9, 5); 
			\draw[->, thick, dotted] (8, 5) -- (8, 5.9);
			\draw[->, thick, dotted] (8, 6) -- (8.9, 6); 
			\draw[->, thick, dotted] (9, 6) -- (9, 6.9);
			\draw[->] (9, 7) -- (9, 7.9);
			\draw[->] (9, 8) -- (9.9, 8);
			\draw[->] (10, 8) -- (10, 8.9);
			\draw[->] (10, 9) -- (11, 9);

			\draw[->] (5, 0) -- (5, .9);
			\draw[->] (5, 1) -- (5.9, 1);
			\draw[->] (6, 1) -- (6.9, 1);
			\draw[->] (7, 1) -- (7, 1.9);
			\draw[->] (7, 2) -- (7, 2.9);
			\draw[->, thick, dotted] (7, 3) -- (7.9, 3);
			\draw[->, thick, dotted] (8, 3) -- (8, 3.9);
			\draw[->, thick, dotted] (8, 4) -- (8.9, 4);
			\draw[->, thick, dotted] (9, 4) -- (9, 4.9);
			\draw[->, thick, dotted] (9, 5) -- (9, 5.9);
			\draw[->] (9, 6) -- (9.9, 6);
			\draw[->] (10, 6) -- (10, 6.9);
			\draw[->] (10, 7) -- (10, 7.9);
			\draw[->] (10, 8) -- (11, 8);
			
			\draw[->] (7, 0) -- (7, .9); 
			\draw[->] (7, 1) -- (7.9, 1); 
			\draw[->] (8, 1) -- (8, 1.9);
			\draw[->] (8, 2) -- (8.9, 2);
			\draw[->] (9, 2) -- (9, 2.9); 
			\draw[->, thick, dotted] (9, 3) -- (9, 3.9); 
			\draw[->] (9, 4) -- (9.9, 4);
			\draw[->] (10, 4) -- (10, 4.9);
			\draw[->] (10, 5) -- (10, 5.9);
			\draw[->] (10, 6) -- (11, 6);
			
			\draw[->] (9, 0) -- (9, .9);
			\draw[->] (9, 1) -- (9.9, 1);
			\draw[->] (10, 1) -- (10, 1.9); 
			\draw[->] (10, 2) -- (10, 2.9); 
			\draw[->] (10, 3) -- (11, 3);
			
			\draw[dashed, very thick] (3.5, 2.5) -- (9.5, 2.5) -- (9.5, 7.5) -- (3.5, 7.5) -- (3.5, 2.5);
			
			\filldraw[fill=gray!50!white, draw=black] (1, 1) circle [radius=.1];
			\filldraw[fill=gray!50!white, draw=black] (1, 2) circle [radius=.1];
			\filldraw[fill=gray!50!white, draw=black] (1, 3) circle [radius=.1];
			\filldraw[fill=gray!50!white, draw=black] (1, 4) circle [radius=.1];
			\filldraw[fill=gray!50!white, draw=black] (1, 5) circle [radius=.1];
			\filldraw[fill=gray!50!white, draw=black] (1, 6) circle [radius=.1];
			\filldraw[fill=gray!50!white, draw=black] (1, 7) circle [radius=.1];
			\filldraw[fill=gray!50!white, draw=black] (1, 8) circle [radius=.1];
			\filldraw[fill=gray!50!white, draw=black] (1, 9) circle [radius=.1];
			\filldraw[fill=gray!50!white, draw=black] (1, 10) circle [radius=.1];
			
			\filldraw[fill=gray!50!white, draw=black] (2, 1) circle [radius=.1];
			\filldraw[fill=gray!50!white, draw=black] (2, 2) circle [radius=.1];
			\filldraw[fill=gray!50!white, draw=black] (2, 3) circle [radius=.1];
			\filldraw[fill=gray!50!white, draw=black] (2, 4) circle [radius=.1];
			\filldraw[fill=gray!50!white, draw=black] (2, 5) circle [radius=.1];
			\filldraw[fill=gray!50!white, draw=black] (2, 6) circle [radius=.1];
			\filldraw[fill=gray!50!white, draw=black] (2, 7) circle [radius=.1];
			\filldraw[fill=gray!50!white, draw=black] (2, 8) circle [radius=.1];
			\filldraw[fill=gray!50!white, draw=black] (2, 9) circle [radius=.1];
			\filldraw[fill=gray!50!white, draw=black] (2, 10) circle [radius=.1];
			
			\filldraw[fill=gray!50!white, draw=black] (3, 1) circle [radius=.1];
			\filldraw[fill=gray!50!white, draw=black] (3, 2) circle [radius=.1];
			\filldraw[fill=gray!50!white, draw=black] (3, 3) circle [radius=.1];
			\filldraw[fill=gray!50!white, draw=black] (3, 4) circle [radius=.1];
			\filldraw[fill=gray!50!white, draw=black] (3, 5) circle [radius=.1];
			\filldraw[fill=gray!50!white, draw=black] (3, 6) circle [radius=.1];
			\filldraw[fill=gray!50!white, draw=black] (3, 7) circle [radius=.1];
			\filldraw[fill=gray!50!white, draw=black] (3, 8) circle [radius=.1];
			\filldraw[fill=gray!50!white, draw=black] (3, 9) circle [radius=.1];
			\filldraw[fill=gray!50!white, draw=black] (3, 10) circle [radius=.1];
			
			\filldraw[fill=gray!50!white, draw=black] (4, 1) circle [radius=.1];
			\filldraw[fill=gray!50!white, draw=black] (4, 2) circle [radius=.1];
			\filldraw[fill=gray!50!white, draw=black] (4, 3) circle [radius=.1];
			\filldraw[fill=gray!50!white, draw=black] (4, 4) circle [radius=.1];
			\filldraw[fill=gray!50!white, draw=black] (4, 5) circle [radius=.1];
			\filldraw[fill=gray!50!white, draw=black] (4, 6) circle [radius=.1];
			\filldraw[fill=gray!50!white, draw=black] (4, 7) circle [radius=.1];
			\filldraw[fill=gray!50!white, draw=black] (4, 8) circle [radius=.1];
			\filldraw[fill=gray!50!white, draw=black] (4, 9) circle [radius=.1];
			\filldraw[fill=gray!50!white, draw=black] (4, 10) circle [radius=.1];
			
			\filldraw[fill=gray!50!white, draw=black] (5, 1) circle [radius=.1];
			\filldraw[fill=gray!50!white, draw=black] (5, 2) circle [radius=.1];
			\filldraw[fill=gray!50!white, draw=black] (5, 3) circle [radius=.1];
			\filldraw[fill=gray!50!white, draw=black] (5, 4) circle [radius=.1];
			\filldraw[fill=gray!50!white, draw=black] (5, 5) circle [radius=.1];
			\filldraw[fill=gray!50!white, draw=black] (5, 6) circle [radius=.1];
			\filldraw[fill=gray!50!white, draw=black] (5, 7) circle [radius=.1];
			\filldraw[fill=gray!50!white, draw=black] (5, 8) circle [radius=.1];
			\filldraw[fill=gray!50!white, draw=black] (5, 9) circle [radius=.1];
			\filldraw[fill=gray!50!white, draw=black] (5, 10) circle [radius=.1];
			
			\filldraw[fill=gray!50!white, draw=black] (6, 1) circle [radius=.1];
			\filldraw[fill=gray!50!white, draw=black] (6, 2) circle [radius=.1];
			\filldraw[fill=gray!50!white, draw=black] (6, 3) circle [radius=.1];
			\filldraw[fill=gray!50!white, draw=black] (6, 4) circle [radius=.1];
			\filldraw[fill=gray!50!white, draw=black] (6, 5) circle [radius=.1];
			\filldraw[fill=gray!50!white, draw=black] (6, 6) circle [radius=.1];
			\filldraw[fill=gray!50!white, draw=black] (6, 7) circle [radius=.1];
			\filldraw[fill=gray!50!white, draw=black] (6, 8) circle [radius=.1];
			\filldraw[fill=gray!50!white, draw=black] (6, 9) circle [radius=.1];
			\filldraw[fill=gray!50!white, draw=black] (6, 10) circle [radius=.1];
			
			\filldraw[fill=gray!50!white, draw=black] (7, 1) circle [radius=.1];
			\filldraw[fill=gray!50!white, draw=black] (7, 2) circle [radius=.1];
			\filldraw[fill=gray!50!white, draw=black] (7, 3) circle [radius=.1];
			\filldraw[fill=gray!50!white, draw=black] (7, 4) circle [radius=.1];
			\filldraw[fill=gray!50!white, draw=black] (7, 5) circle [radius=.1];
			\filldraw[fill=gray!50!white, draw=black] (7, 6) circle [radius=.1];
			\filldraw[fill=gray!50!white, draw=black] (7, 7) circle [radius=.1];
			\filldraw[fill=gray!50!white, draw=black] (7, 8) circle [radius=.1];
			\filldraw[fill=gray!50!white, draw=black] (7, 9) circle [radius=.1];
			\filldraw[fill=gray!50!white, draw=black] (7, 10) circle [radius=.1];
			
			\filldraw[fill=gray!50!white, draw=black] (8, 1) circle [radius=.1];
			\filldraw[fill=gray!50!white, draw=black] (8, 2) circle [radius=.1];
			\filldraw[fill=gray!50!white, draw=black] (8, 3) circle [radius=.1];
			\filldraw[fill=gray!50!white, draw=black] (8, 4) circle [radius=.1];
			\filldraw[fill=gray!50!white, draw=black] (8, 5) circle [radius=.1];
			\filldraw[fill=gray!50!white, draw=black] (8, 6) circle [radius=.1];
			\filldraw[fill=gray!50!white, draw=black] (8, 7) circle [radius=.1];
			\filldraw[fill=gray!50!white, draw=black] (8, 8) circle [radius=.1];
			\filldraw[fill=gray!50!white, draw=black] (8, 9) circle [radius=.1];
			\filldraw[fill=gray!50!white, draw=black] (8, 10) circle [radius=.1];
			
			\filldraw[fill=gray!50!white, draw=black] (9, 1) circle [radius=.1];
			\filldraw[fill=gray!50!white, draw=black] (9, 2) circle [radius=.1];
			\filldraw[fill=gray!50!white, draw=black] (9, 3) circle [radius=.1];
			\filldraw[fill=gray!50!white, draw=black] (9, 4) circle [radius=.1];
			\filldraw[fill=gray!50!white, draw=black] (9, 5) circle [radius=.1];
			\filldraw[fill=gray!50!white, draw=black] (9, 6) circle [radius=.1];
			\filldraw[fill=gray!50!white, draw=black] (9, 7) circle [radius=.1];
			\filldraw[fill=gray!50!white, draw=black] (9, 8) circle [radius=.1];
			\filldraw[fill=gray!50!white, draw=black] (9, 9) circle [radius=.1];
			\filldraw[fill=gray!50!white, draw=black] (9, 10) circle [radius=.1];
			
			\filldraw[fill=gray!50!white, draw=black] (10, 1) circle [radius=.1];
			\filldraw[fill=gray!50!white, draw=black] (10, 2) circle [radius=.1];
			\filldraw[fill=gray!50!white, draw=black] (10, 3) circle [radius=.1];
			\filldraw[fill=gray!50!white, draw=black] (10, 4) circle [radius=.1];
			\filldraw[fill=gray!50!white, draw=black] (10, 5) circle [radius=.1];
			\filldraw[fill=gray!50!white, draw=black] (10, 6) circle [radius=.1];
			\filldraw[fill=gray!50!white, draw=black] (10, 7) circle [radius=.1];
			\filldraw[fill=gray!50!white, draw=black] (10, 8) circle [radius=.1];
			\filldraw[fill=gray!50!white, draw=black] (10, 9) circle [radius=.1];
			\filldraw[fill=gray!50!white, draw=black] (10, 10) circle [radius=.1];
			
			\filldraw[fill=gray!50!black, draw=black] (5.5, 11.6) circle [radius=0] node[scale = 1.3]{$\mathcal{E}$};
			\filldraw[fill=gray!50!black, draw=black] (0, 2.5) circle [radius=0] node[scale = 1.3]{$\Lambda$};
			
			\filldraw[fill=gray!50!black, draw=black] (3.1, 3.5) circle [radius=0] node[scale = .9]{$\Lambda'$};
			\filldraw[fill=gray!50!black, draw=black] (4.55, 6.8) circle [radius=0] node[scale = .9]{$\mathcal{E}_{\Lambda'}$};

			\end{tikzpicture}
			
		\end{center}	
		
		\caption{\label{property} Depicted above is the Gibbs property from \Cref{ensemblemeasure}.}
	\end{figure}
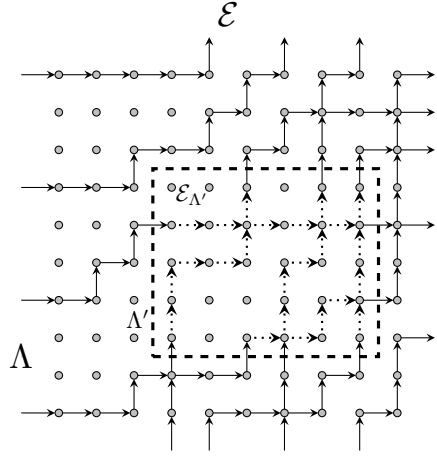

		Let us set some additional notation when $\Lambda = \mathbb{Z}^2$. For any $u \in \mathbb{Z}^2$, define the translation map $\mathfrak{T}_u: \mathbb{Z}^2 \rightarrow \mathbb{Z}^2$ by setting $\mathfrak{T}_u (v) = v - u$ for each $v \in \mathbb{Z}^2$. Then $\mathfrak{T}_u$ induces a map on both $\mathfrak{E} (\mathbb{Z}^2)$ and $\mathscr{P} \big( \mathfrak{E} (\mathbb{Z}^2) \big)$ that translates a six-vertex ensemble by $-u$; we also refer to them by $\mathfrak{T}_u$. A measure $\mu \in \mathscr{P} \big( \mathfrak{E} (\mathbb{Z}^2) \big)$ is called \emph{translation-invariant} if $\mathfrak{T}_u \mu = \mu$, for any $u \in \mathbb{Z}^2$. Recalling the horizontal and vertical indicator functions $\chi^{(v)} (x, y) = \chi_{\mathcal{E}}^{(v)} (x, y)$ and $\chi^{(h)} (x, y) = \chi_{\mathcal{E}}^{(h)} (x, y)$ associated with any six-vertex ensemble $\mathcal{E}\in \mathfrak{E} (\Lambda)$, we define the \emph{slope} of any translation-invariant Gibbs measure $\mu \in \mathscr{P} \big( \mathfrak{E} (\mathbb{Z}^2) \big)$ to be the pair $\big( \mathbb{E}_{\mu} \big[ \chi^{(v)} (0, 0) \big], \mathbb{E}_{\mu} \big[ \chi^{(h)} (0, 0) \big] \big) \in [0, 1] \times [0, 1]$. 
	
		Similarly to as above, an event $\mathcal{A}$ in the $\sigma$-algebra associated with $\mathfrak{E} (\mathbb{Z}^2)$ is called translation-invariant if $\mathfrak{T}_u \mathcal{A} = \mathcal{A}$, for any $u \in \mathbb{Z}^2$. We further call a translation-invariant measure $\mu \in \mathscr{P} \big( \mathfrak{E} (\mathbb{Z}^2) \big)$ \emph{ergodic} if, for any translation-invariant event $\mathcal{A}$, we have that $\mathbb{P}_{\mu} [A] \in \{ 0, 1 \}$. A translation-invariant, ergodic Gibbs measure for the six-vertex model on $\mathbb{Z}^2$ is called a \emph{pure state}.

	\subsection{Results} 
	
	\label{ModelDelta1}

	To state our results, we must first explicitly parameterize the ``lens-shaped'' region $\mathfrak{H}$ (shaded in \Cref{st}) where pure states of the ferroelectric six-vertex model should not exist. 
	
	So, fix parameters $a_1, a_2, b_1, b_2, c_1, c_2 > 0$ such that the $\Delta$ from \eqref{parameter1} satisfies $\Delta > 1$. We further assume throughout this paper that $a_1 a_2 > b_1 b_2$. The results we will state can then be obtained in the opposite regime by ``complementing'' vertical edges, that is, by replacing the arrow configuration $\big( i_1 (x, y), j_1 (x, y); i_2 (x, y), j_2 (x, y) \big)$ at $(x, y)$ with $\big(1 - i_2 (x, -y), j_1 (x, -y); 1 - i_1 (x, -y), j_2 (x, -y) \big)$ (equivalently, by first switching vertical arrows with absences of them and then reflecting the ensemble across the $x$-axis). This amounts to changing the six-vertex weights from $(a_1, a_2, b_1, b_2, c_1, c_2)$ to $(b_1, b_2, a_1, a_2, c_1, c_2)$ and slope of an associated pure state from $(s, t)$ to $(1 - s, t)$. Thus, by applying these symmetries, one can transform from the $a_1 a_2 > b_1 b_2$ setting to the $b_1 b_2 > a_1 a_2$ one. 	
	
	Now, define the function $\mathfrak{h} = \mathfrak{h}^{(a_1, a_2, b_1, b_2, c_1, c_2)}: \mathbb{R}^2 \rightarrow \mathbb{R}$ by setting 
	\begin{flalign} 
	\label{h} 
	\mathfrak{h} (x, y) = 2 xy \sqrt{\Delta^2 - 1} - \left( \sqrt{\displaystyle\frac{a_1 a_2}{b_1 b_2}} - \Delta + \sqrt{\Delta^2 - 1} \right) x +  \left( \sqrt{ \displaystyle\frac{a_1 a_2}{b_1 b_2}} - \Delta - \sqrt{\Delta^2 - 1} \right) y,
	\end{flalign}
	
	\noindent and the region $\mathfrak{H} = \mathfrak{H}^{(a_1, a_2, b_1, b_2, c_1, c_2)} \subset [0, 1]^2$ by setting
	\begin{flalign*}
	\mathfrak{H} = \Big\{ (s, t) \in (0, 1)^2: \displaystyle\max\big\{ \mathfrak{h} (s, t), \mathfrak{h} (t, s) \big\} < 0 \Big\}. 
	\end{flalign*}
	
	\noindent Further let $\partial \mathfrak{H}$ and $\overline{\mathfrak{H}} = \mathfrak{H} \cup \partial \mathfrak{H}$ denote the boundary and closure of $\mathfrak{H}$, respectively. In particular, defining the hyperbolas
	\begin{flalign}
	\label{h1h2} 
	\mathfrak{h}_1 = \big\{ (s, t) \in [0, 1]^2: \mathfrak{h} (s, t) = 0 \big\}; \qquad \mathfrak{h}_2 = \big\{ (s, t) \in [0, 1]^2: \mathfrak{h} (t, s) = 0 \big\},
	\end{flalign}
	
	\noindent we have that $\partial \mathfrak{H} = \mathfrak{h}_1 \cup \mathfrak{h}_2$. In the context of the ferroelectric six-vertex model, these curves $\mathfrak{h}_1$ and $\mathfrak{h}_2$ originally appeared as equation (3.38) of \cite{TCPFSVM}. We refer to \Cref{st} for a depiction.

	Now we can state the following theorem, which will be established in \Cref{Probability1} below. 
	
	\begin{thm} 
		
		\label{hstate}
		
		Fix $a_1, a_2, b_1, b_2, c_1, c_2 > 0$ with $a_1 a_2 > b_1 b_2$ and $\Delta > 1$, and let $(s, t) \in \overline{\mathfrak{H}}$.
		
		\begin{enumerate} 
			\item If $(s, t) \in \mathfrak{H}$, then there does not exist a pure state for the six-vertex model with weights $(a_1, a_2, b_1, b_2, c_1, c_2)$ and slope $(s, t)$.
			
			\item If $(s, t) \in \partial \mathfrak{H}$, then there exists a unique pure state for the six-vertex model with weights $(a_1, a_2, b_1, b_2, c_1, c_2)$ and slope $(s, t)$. 
			
		\end{enumerate} 
	\end{thm}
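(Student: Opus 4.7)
The plan is to reduce the question to the stochastic six-vertex model and then execute the two-step comparison argument outlined in the introduction. First, because $\Delta > 1$ and $a_1 a_2 > b_1 b_2$, I would apply a gauge transformation to identify any pure state $\mu$ of the six-vertex model with weights $(a_1, a_2, b_1, b_2, c_1, c_2)$ with a pure state of a stochastic six-vertex model with parameters of the form $(1, 1, B_1, B_2, 1 - B_1, 1 - B_2)$ for some $B_1, B_2 \in (0, 1)$. This gauge preserves slopes and transports $\mathfrak{H}$ and $\partial \mathfrak{H}$ to the analogous regions in the stochastic model, so it suffices to prove the theorem in that specialization, where the total free-exit partition function equals $1$.

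I would then establish the two intermediate statements labeled in the introduction. The first (\Cref{musmu}) asserts that any pure state $\mu$ of a stochastic six-vertex model satisfying a \emph{partition function stochastic lower bound} --- meaning that, with high probability, the partition function on an $N \times N$ box with $\mu$-sampled boundary data exceeds $e^{-o(N^2)}$ --- must equal some stochastic Gibbs state $\mu(s)$ constructed in \cite{CFSAEPSSVMCL}. The argument exploits that the free-exit stochastic partition function equals $1$: the lower bound forces the conditional law of $\mu$ on an $N \times N$ box to be comparable to the free-exit ensemble, with Radon--Nikodym derivative at most $e^{o(N^2)}$, so any event that is exponentially improbable under the free ensemble remains improbable under $\mu$. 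Combined with the local-limit result of \cite{LSLSSSVM} identifying $\mu(s)$ as the thermodynamic limit of local statistics of the free-exit ensemble, an ergodicity argument then forces $\mu = \mu(s)$.

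The second statement (\Cref{mulowerprobability}) asserts that every pure state of slope $(s, t) \in \overline{\mathfrak{H}}$ satisfies the partition function stochastic lower bound. Here I would first observe that each $\mu(s_0)$ itself obeys an $e^{-O(N)}$ partition function lower bound, using stationarity of the stochastic dynamics (so that the total $\mu(s_0)$-weighted partition function with free exit data equals $1$) together with the trivial $e^{O(N)}$ bound on the number of possible exit arrow configurations. To transfer this bound from slopes on $\partial \mathfrak{H}$ to all of $\overline{\mathfrak{H}}$, I would introduce a sparsification procedure that removes arrows in a translation-equivariant, locally randomized way to reduce the slope from $(s_0, t_0)$ to any target $(s, t)$ with $s \leq s_0$ and $t \leq t_0$, paying at most an $e^{o(N^2)}$ factor in partition function. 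A geometric lemma on $\overline{\mathfrak{H}}$ (\Cref{s0t0}) guarantees that every $(s, t) \in \overline{\mathfrak{H}}$ admits a choice of $(s_0, t_0) \in \partial \mathfrak{H}$ with $s_0 \geq s$ and $t_0 \geq t$, and the sparsification of $\mu(s_0)$ inherits the required $e^{-o(N^2)}$ lower bound.

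With these two ingredients, the theorem follows. If $(s, t) \in \mathfrak{H}$, any pure state of slope $(s, t)$ would satisfy the partition function lower bound and hence equal some $\mu(s')$; but every $\mu(s')$ has slope on $\partial \mathfrak{H}$, a contradiction, so no such pure state exists. If $(s, t) \in \partial \mathfrak{H}$, existence is supplied by $\mu(s)$ from \cite{CFSAEPSSVMCL}, and the same reasoning identifies any pure state of slope $(s, t)$ with $\mu(s)$, yielding uniqueness. I expect the main obstacle to be the sparsification step: one must design a Gibbs-compatible, translation-equivariant deletion mechanism and, more delicately, control the resulting multiplicative change in the partition function by $e^{o(N^2)}$, a bound that is tight on the relevant scale and appears to demand a careful entropy-type estimate.
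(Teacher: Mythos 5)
Your proposal follows essentially the same route as the paper: gauge-transform to a stochastic six-vertex model, show that any pure state satisfying a partition function stochastic lower bound must equal a stochastic Gibbs state $\mu(s)$ (by comparison with the free-exit model plus the local statistics result of \cite{LSLSSSVM}), show that any pure state of slope in $\overline{\mathfrak{H}}$ satisfies such a bound via sparsification from a slope on $\partial \mathfrak{H}$, and combine these to get both nonexistence and uniqueness. The one caveat is that the paper's sparsification (the $(L; K)$-restriction) only reduces the slope proportionally, so the comparison point $(s_0, t_0) \in \partial \mathfrak{H}$ must be taken on the ray through the origin and $(s, t)$ --- which is exactly how \Cref{s0t0} is arranged --- rather than as an arbitrary coordinatewise-dominating point of $\partial \mathfrak{H}$.
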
 
	
	The measures described in the second part of \Cref{hstate} are reasonably explicit; they were introduced in \cite{CFSAEPSSVMCL} and will be recalled in \Cref{Translation} (see also the proof of \Cref{murhoaibici}) below.

	The remainder of this paper is organized as follows. In \Cref{ModelStochasticModel} we set notation and recall several results that will be used later in this article. Next, in \Cref{Probability1}, we establish \Cref{hstate} assuming two results given by \Cref{musmu} and \Cref{mulowerprobability} below. The former is then established in \Cref{StateEstimate} and the latter in \Cref{ProbabilityLower}.

	\subsection*{Acknowledgments}

	The author heartily thanks Alexei Borodin for helpful conversations and valuable suggestions on an earlier draft of this paper. The author is also grateful to Ivan Corwin, Jan de Gier, Vadim Gorin, and Richard Kenyon for enlightening conversations. This work was partially supported by NSF grant NSF DMS-1664619, the NSF Graduate Research Fellowship under grant DGE-1144152, and a Harvard Merit/Graduate Society Term-time Research Fellowship.

	\section{Miscellaneous Preliminaries}
	
	\label{ModelStochasticModel}

	In this section we introduce some notation and collect several (essentially known) results that will be used later in this work. We begin in \Cref{Paths} by describing notation for six-vertex ensembles that will be used throughout this paper. We then in \Cref{MeasuresExtremal} establish a result for the regularity of boundary data induced by a pure state of the six-vertex model. In \Cref{ModelStochastic} we recall the definition of the stochastic six-vertex model from \cite{SVMRSASH}, and in \Cref{Translation} we recall a family of pure states associated with it from \cite{CFSAEPSSVMCL}. We then in \Cref{Local} describe a result from \cite{LSLSSSVM} for the convergence of local statistics in the stochastic six-vertex model with free exit data.  
	
	\subsection{Paths in Six-Vertex Ensembles}
	
	\label{Paths} 
	
	Here we set some notation for paths in a six-vertex ensemble on a \emph{rectangular domain}, namely, one of the form $[M_1, M_2] \times [N_1, N_2] \subset \mathbb{Z}^2$, for some integers $M_1 \le M_2$ and $N_1 \le N_2$. We set its \emph{south}, \emph{west}, \emph{north}, and \emph{east boundaries} to be $[M_1, M_2] \times \{ N_1 - 1\}$, $\{ M_1 - 1 \} \times [N_1, N_2]$, $[M_1, M_2] \times \{ N_2 + 1 \}$, and $\{ M_2 + 1 \} \times [N_1, N_2]$, respectively. Before proceeding further on six-vertex ensembles on rectangular domains, let us recall the precise notion of a path on an arbitrary domain $\Lambda \subseteq \mathbb{Z}^2$. 
	
	For any two points $z_1 = (x_1, y_1) \in \mathbb{R}^2$ and $z_2 = (x_2, y_2) \in \mathbb{R}^2$, we write $z_1 \ge z_2$, or equivalently $z_2 \le z_1$, if $x_1 \ge x_2$ and $y_1 \ge y_2$. A curve $\textbf{p} \subset \mathbb{R}^2$ is \emph{nondecreasing} if, for any $z_1, z_2 \in \textbf{p}$ we either have that $z_1 \ge z_2$ or $z_2 \ge z_1$. Given some domain $\Lambda \subseteq \mathbb{Z}^2$, a \emph{(directed) path} $\textbf{p}$ on $\Lambda$ is a continuous, nondecreasing curve in $\mathbb{R}^2$ connecting a sequence of adjacent vertices in $\overline{\Lambda}$ by edges of $\mathbb{Z}^2$, such that no edge of $\textbf{p}$ connects two vertices in $\partial \Lambda$. Any compact path $\textbf{p}$ has a \emph{starting point}, which is the unique $z \in \textbf{p}$ such that $z \le z'$ for any $z' \in \textbf{p}$. It also has an \emph{ending point}, which is the unique $z \in \textbf{p}$ such that $z \ge z'$ for any $z' \in \textbf{p}$. We say $\textbf{p}$ \emph{enters $\Lambda$ through $z$} if $z$ is the starting point for $\textbf{p}$ and $z \in \partial \Lambda$. Similarly, $\textbf{p}$ \emph{exits $\Lambda$ through $z$} if $z$ is the ending point for $\textbf{p}$ and $z \in \partial \Lambda$. 
	
	Given two paths $\textbf{p}_1$ and $\textbf{p}_2$ on $\Lambda$, we say that $\textbf{p}_1$ \emph{lies below} $\textbf{p}_2$, or equivalently that $\textbf{p}_2$ \emph{lies above} $\textbf{p}_1$, if the following two statements hold. First, for any $(x_1, y_1) \in \textbf{p}_1$, there exists some $(x_2, y_2) \in \textbf{p}_2$ such that $x_1 \ge x_2$ and $y_1 \le y_2$. Second, for any $(x_2, y_2) \in \textbf{p}_2$, there exists some $(x_1, y_1) \in \textbf{p}_1$ such that $x_1 \ge x_2$ and $y_1 \le y_2$. If these two properties are satisfied, then we write $\textbf{p}_1 \le \textbf{p}_2$. An ensemble of paths $\mathcal{P} = (\textbf{p}_1, \textbf{p}_2, \ldots , \textbf{p}_k)$ is called \emph{non-crossing} if $\textbf{p}_1 \le \textbf{p}_2 \le \cdots \le \textbf{p}_k$ and no two paths $\textbf{p}_i, \textbf{p}_j \in \mathcal{P}$ share an edge (although they may share vertices); see \Cref{pathsvertex}.

	\begin{figure}[t]
		
		\begin{center}
			
			\begin{tikzpicture}[
			>=stealth,
			scale = .7	
			]

			\draw[thick] (1, -1) node[below, scale = .8]{$u_0$} -- (1, 0) -- (2, 0) -- (2, 1) -- (3, 1) -- (3, 2) -- (3, 3) -- (4, 3) -- (4, 4) -- (4, 5) -- (5, 5) -- (5, 6) -- (6, 6) -- (6, 7) node[above, scale = .8]{$v_0$}; 
			\draw[thick] (2, -1) node[below, scale = .8]{$u_{-1}$} -- (2, 0) -- (3, 0) -- (4, 0) -- (4, 1) -- (4, 2) -- (5, 2) -- (5, 3) -- (6, 3) -- (6, 4) -- (7, 4) node[right, scale = .8]{$v_{-1}$};
			\draw[thick] (5, -1) node[below, scale = .8]{$u_{-2}$} -- (5, 0) -- (6, 0) -- (6, 1) -- (6, 2) -- (7, 2) node[right, scale = .8]{$v_{-2}$};
			
			\draw[thick] (0, 1) node[left, scale = .8]{$u_1$} -- (1, 1) -- (1, 2) -- (2, 2) -- (2, 3) -- (3, 3) -- (3, 4) -- (3, 5) -- (3, 6) -- (4, 6) -- (4, 7) node[above, scale = .8]{$v_1$};
			\draw[thick] (0, 3) node[left, scale = .8]{$u_2$} -- (1, 3) -- (1, 4) -- (1, 5) -- (2, 5) -- (2, 6) -- (3, 6) -- (3, 7) node[above, scale = .8]{$v_2$};
			\draw[thick] (0, 5) node[left, scale = .8]{$u_3$} -- (1, 5) -- (1, 6) -- (2, 6) -- (2, 7) node[above, scale = .8]{$v_3$};
			
			\draw[very thick] (0, -1) -- (7, -1) -- (7, 7) -- (0, 7) -- (0, -1);
			
			\draw[] (1, -.5) node[left, scale = .8]{$\textbf{p}_0$};
			\draw[] (2, -.5) node[right, scale = .8]{$\textbf{p}_{-1}$};
			\draw[] (5, -.5) node[right, scale = .8]{$\textbf{p}_{-2}$};
			\draw[] (.5, 1) node[above, scale = .8]{$\textbf{p}_1$};
			\draw[] (.5, 3) node[above, scale = .8]{$\textbf{p}_2$};
			\draw[] (.5, 5) node[above, scale = .8]{$\textbf{p}_3$};
			
			\draw[] (.3, 7) node[above, scale = 1.2]{$\Lambda$};

			\end{tikzpicture}
			
		\end{center}	
		
		\caption{\label{pathsvertex} A non-crossing ensemble is depicted above, with its paths and boundary data labeled.  }
	\end{figure}
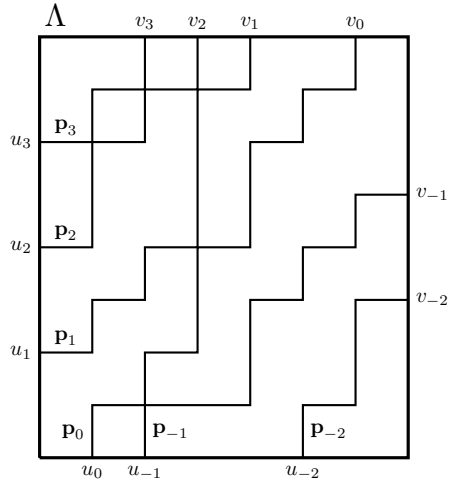

	Under this notation, six-vertex ensembles on a rectangular domain $\Lambda \subset \mathbb{Z}^2$ are in bijective correspondence with non-crossing path ensembles on $\Lambda$, each of whose paths enters $\Lambda$ through a vertex on its south or west boundary and exits $\Lambda$ through a vertex its north or east boundary. We index the paths in any such ensemble $\mathcal{P} = (\textbf{p}_{-B}, \textbf{p}_{1 - B}, \ldots , \textbf{p}_A)$, for some integers $A = A (\mathcal{P}) \ge 0$ and $B = B (\mathcal{P}) \ge -1$, so that $\textbf{p}_{-B}, \textbf{p}_{1 - B}, \ldots , \textbf{p}_0$ enter though the south boundary of $\Lambda$, and $\textbf{p}_1, \textbf{p}_2, \ldots , \textbf{p}_A$ enter through the west boundary. In this way, there are $A + B + 1$ paths in $\mathcal{P}$, and $\textbf{p}_1$ is the bottommost path entering through the west boundary of $\Lambda$. We refer to \Cref{pathsvertex} for a depiction of the non-crossing path ensemble associated with the six-vertex ensemble on the right side of \Cref{vertexdomain}. 	
	
	For each integer $i \in [-B, A]$, let $u_i \in \partial \Lambda$ denote the vertex through which the path $\textbf{p}_i$ enters $\Lambda$, and let $v_i \in \partial \Lambda$ denote the vertex through which $\textbf{p}_i$ exits $\Lambda$. We then refer to the $(A + B + 1)$-tuple $\textbf{u} = (u_{-B}, u_{1 - B}, \ldots , u_A) \subseteq \partial \Lambda$ as the \emph{entrance data} for $\mathcal{P}$ (or equivalently for the associated six-vertex ensemble) and to the $(A + B + 1)$-tuple $\textbf{v} = (v_{-B}, v_{1 - B}, \ldots , v_A) \subseteq \partial \Lambda$ as the \emph{exit data} for $\mathcal{P}$. \emph{Boundary data} for $\mathcal{P}$ consists of the union $\textbf{u} \cup \textbf{v}$ of its entrance and exit data. 
	
	Let $\mathfrak{E}_{\textbf{u}} (\Lambda) \subseteq \mathfrak{E} (\Lambda)$ denote the set of six-vertex ensembles on $\Lambda$ with entrance data given by $\textbf{u}$, and let $\mathfrak{E}_{\textbf{u}; \textbf{v}} (\Lambda) \subseteq \mathfrak{E}_{\textbf{u}} (\Lambda)$ denote the set of those with boundary data given by $\textbf{u} \cup \textbf{v}$. The \emph{six-vertex model with entrance data} $\textbf{u}$ (and free exit data) is the probability measure on $\mathfrak{E}_{\textbf{u}} (\Lambda)$ that assigns probability $\mathbb{P} [\mathcal{E}] = \mathbb{P}_{\textbf{u}; \Lambda} [\mathcal{E}] = Z_{\textbf{u}; \Lambda}^{-1} w (\mathcal{E})$ to any $\mathcal{E} \in \mathfrak{E}_{\textbf{u}} (\Lambda)$, where we recalled the weight $w (\mathcal{E})$ from \eqref{eweight} and defined the normalizing constant $Z_{\textbf{u}; \Lambda} = \sum_{\mathcal{E} \in \mathfrak{E}_{\textbf{u}} (\Lambda)} w(\mathcal{E})$ so that these probabilities sum to $1$. Similarly, the \emph{six-vertex model with boundary data} $\textbf{u} \cup \textbf{v}$ is the probability measure on $\mathfrak{E}_{\textbf{u}; \textbf{v}} (\Lambda)$ that assigns probability $\mathbb{P} [\mathcal{E}] = \mathbb{P}_{\textbf{u}; \textbf{v}; \Lambda} [\mathcal{E}] = Z_{\textbf{u}; \textbf{v}; \Lambda}^{-1} w (\mathcal{E})$ to any $\mathcal{E}\in \mathfrak{E}_{\textbf{u}; \textbf{v}} (\Lambda)$, where $Z_{\textbf{u} ;\textbf{v}; \Lambda}= \sum_{\mathcal{E} \in \mathfrak{E}_{\textbf{u}; \textbf{v}} (\Lambda)} w (\mathcal{E})$.

	\subsection{Regularity for Boundary Data} 

	\label{MeasuresExtremal}
	
	In this section we introduce the notion of regularity for boundary data of six-vertex ensembles and show that the boundary data induced by a pure state is likely regular. 
	
	To that end, we begin with the following two definitions. The first essentially states that a set is $(R; \eta)$-regular with slope $\rho$ on an interval $J$ if it $\eta$-approximates the Lebesgue measure of density $\rho$ on any subinterval $I \subseteq J$ of length $R$. The second essentially states that boundary data on a rectangular domain $\Lambda$ is regular if it is regular on each of the four boundaries of $\Lambda$.
	
	In what follows, for any interval $I \subseteq \mathbb{Z}^2$, we let $|I|$ denote the number of vertices in $I$. In particular, if $I = [A_1, A_2] \times \{ B \}$ or $I = \{ A \} \times [B_1, B_2]$ for some $A, A_1, A_2, B, B_1, B_2 \in \mathbb{Z}$ then $|I| = A_2 - A_1 + 1$ or $|I| = B_2 - B_1 + 1$, respectively.

	\begin{definition}	
		
		\label{boundary1}
		
		Fix real numbers $\eta, \rho \in (0, 1]$ and $R \ge 1$. For any subset $\textbf{u} \subseteq \mathbb{Z}^2$ and finite interval $J \subset \mathbb{Z}^2$ of the form $[A_1, A_2] \times \{ B \}$ or $\{A \} \times [B_1, B_2]$, we say $\textbf{u}$ is \emph{$(R; \eta)$-regular with slope $\rho$ on $J$} if the following holds. For any interval $I \subseteq J$ with $|I| \le R$, we have $\big| I \cap \textbf{u} - \rho |I| \big| \le \eta R$.

	\end{definition}

	\begin{definition}	
		
		\label{boundarystomega}
		
		Fix real numbers $\eta, s, t \in (0, 1]$ and $R \ge 1$. Let $\Lambda \subset \mathbb{Z}^2$ denote a rectangular domain, and let $\textbf{u} \cup \textbf{v}$ denote boundary data on $\Lambda$. We say $\textbf{u} \cup \textbf{v}$ is \emph{$(R; \eta)$-regular with slope $(s, t)$} if it is $(R; \eta)$-regular with slope $s$ along both the north and south boundaries of $\Lambda$, and it is $(R; \eta)$-regular with slope $t$ along both the west and east boundaries of $\Lambda$.

	\end{definition} 

	\begin{rem}
		
		\label{uetareta} 
		
		Suppose $\textbf{u} \subseteq \mathbb{Z}^2$ is $(R; \eta)$-regular with slope $s$ on some interval $J \subset \mathbb{Z}^2$. Then, for any interval $I \subseteq J$ such that $|I| \ge R$, we have $\big| |\textbf{u} \cap I| - s |I| \big| < 2 \eta |I|$. Indeed this follows from the fact that $I$ can be covered by a family of at most $2R^{-1} |I|$ intervals, each of length bounded above by $R$. 	 
		
	\end{rem} 

	The following lemma essentially states that boundary data induced by a pure state of slope $(s, t)$ is likely regular with this slope.

	\begin{lem}
		
		\label{rhosmu}
		
		Fix a pair $(s, t) \in (0, 1]^2$ and a translation-invariant, ergodic measure $\mu \in \mathscr{P} \big( \mathfrak{E} (\mathbb{Z}^2) \big)$ of slope $(s, t)$. For any real number $\eta \in (0, 1)$, there exists a constant $C_0 = C_0 (\eta, \mu) > 1$ such that the following holds. Let $R > C_0$ denote a real number and $M, N \in [R, \eta^{-1} R]$ denote integers; set $\Lambda = [1, M] \times [1, N] \subset \mathbb{Z}^2$. Sample a random six-vertex ensemble $\mathcal{E} \in \mathfrak{E}(\mathbb{Z}^2)$ under $\mu$, and let $\textbf{\emph{u}} \cup \textbf{\emph{v}}$ denote the boundary data induced on $\Lambda$ by the restriction $\mathcal{E}_{\mathbb{Z}^2 \setminus \Lambda}$. Then, with probability at least $1 - \eta$, the boundary data $\textbf{\emph{u}} \cup \textbf{\emph{v}}$ is $(R; \eta)$-regular with slope $(s, t)$.
		
	\end{lem}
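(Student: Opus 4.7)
The plan is to apply Birkhoff's pointwise ergodic theorem to control the partial-sum fluctuations of the arrow-indicator process along each of the four boundaries of $\Lambda = [1, M] \times [1, N]$, and then take a union bound over the four resulting events.

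By the horizontal-vertical symmetry of the setup, it suffices to show that the south boundary $[1, M] \times \{0\}$ is $(R; \eta)$-regular with slope $s$ with probability at least $1 - \eta/4$; the remaining three boundaries are handled analogously, using the $y$-shift and $\chi^{(h)}$ in place of the $x$-shift and $\chi^{(v)}$ for the west and east ones. Set $\xi_x := \chi_{\mathcal{E}}^{(v)}(x, 0)$ for $x \in [1, M]$, so that the entrance data on this boundary is $\{ (x, 0) : \xi_x = 1 \}$, and let $T_n := \sum_{x=1}^n (\xi_x - s)$. For any subinterval $I = [a+1, a+k] \subseteq [1, M]$ with $k \le R$,
\[
\big| |I \cap \textbf{u}| - s|I| \big| = |T_{a+k} - T_a|,
\]
so the desired regularity on the south boundary is implied by $\max_{0 \le n \le M} |T_n| \le \eta R/2$.

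Since $\mu$ is translation-invariant and ergodic with $\mathbb{E}_{\mu}[\xi_0] = s$, Birkhoff's pointwise ergodic theorem yields $T_n / n \to 0$ as $n \to \infty$, $\mu$-almost surely. By Egorov's theorem, there exist $N_0 = N_0(\eta, \mu) \ge 1$ and an event $\mathcal{G}$ with $\mathbb{P}_{\mu}[\mathcal{G}] \ge 1 - \eta/16$ on which $|T_n| \le (\eta^2/4)\, n$ for every $n \ge N_0$. Setting $C_0 := 4 N_0 / \eta$ and assuming $R > C_0$, on $\mathcal{G}$ I will bound, using $M \le \eta^{-1} R$: for $n \ge N_0$, $|T_n| \le (\eta^2/4)\, n \le (\eta^2/4)\, \eta^{-1} R = \eta R/4$; for $n < N_0$, $|T_n| \le n \le N_0 \le \eta R/4$. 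Hence $\max_{0 \le n \le M} |T_n| \le \eta R/4 \le \eta R/2$ on $\mathcal{G}$. A union bound over the four boundaries then yields $(R;\eta)$-regularity with probability at least $1 - 4 \cdot \eta/16 = 1 - \eta/4 \ge 1 - \eta$.

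The main subtlety will be the application of Birkhoff's theorem to one-dimensional averages under the $\mathbb{Z}^2$-ergodic measure $\mu$: a priori, Birkhoff applied to the $x$-shift alone gives convergence of $T_n/n$ only to the conditional expectation of $\xi_0$ given the $\sigma$-algebra $\mathcal{I}_x$ of $x$-shift-invariant sets, which is not automatically the constant $s$. Identifying this limit with $s$ almost surely is the principal technical point, and can be carried out by exploiting the commutation of the $x$- and $y$-shifts together with the full $\mathbb{Z}^2$-ergodicity of $\mu$ to force the relevant conditional expectation to be invariant under the full translation action, hence constant and equal to its mean $s$.
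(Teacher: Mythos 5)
Your overall packaging of the quantitative step is fine and is a legitimate alternative to the paper's argument: where the paper covers each boundary by $O(\eta^{-2})$ intervals of length $\asymp \eta R$ and union-bounds a fixed-$N$ law of large numbers over them, you control the maximal partial-sum deviation $\max_{n \le M} |T_n|$ via Egorov, which cleanly implies $(R;\eta)$-regularity for all subintervals at once. But there is a genuine gap at exactly the point you flag as "the principal technical point," and your proposed resolution of it does not work. It is \emph{false} in general that $\mathbb{Z}^2$-ergodicity of $\mu$ forces the $x$-shift Birkhoff limit $\lim_n T_n/n$ to be constant: take $X_{i,j} = Y_j$ with $(Y_j)_{j \in \mathbb{Z}}$ i.i.d.\ Bernoulli. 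This measure is $\mathbb{Z}^2$-ergodic (any translation-invariant event is $y$-shift-invariant, hence trivial by ergodicity of the i.i.d.\ sequence), yet $\frac{1}{n}\sum_{i=1}^n X_{i,0} = Y_0$ is non-degenerate. The commutation of the shifts tells you that the $x$-invariant $\sigma$-algebra $\mathcal{I}_x$ is preserved by the $y$-shift, but the limit you need to control is $\mathbb{E}[\xi_{(0,0)} \mid \mathcal{I}_x]$, and applying the $y$-shift turns this into $\mathbb{E}[\xi_{(0,1)} \mid \mathcal{I}_x]$ --- a different random variable --- so no invariance under the full $\mathbb{Z}^2$-action follows from soft ergodic theory alone.

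The correct fix is model-specific, and it is where the paper's Lemma 2.6 does its work: arrow conservation implies that for any window $[N_1, N_2]$ the row sums satisfy $\big| \sum_{x=N_1}^{N_2} \chi^{(v)}_{\mathcal{E}}(x, y+1) - \sum_{x=N_1}^{N_2} \chi^{(v)}_{\mathcal{E}}(x, y) \big| \le 1$, since the two can differ only by the horizontal arrows crossing the two lateral sides of the window. Hence the one-row Ces\`{a}ro limit $S(\mathcal{E})$ coincides with the two-dimensional Ces\`{a}ro average over growing boxes, which \emph{is} a translation-invariant function of $\mathcal{E}$; ergodicity of $\mu$ then makes it a.s.\ constant, and translation invariance identifies the constant with $s$. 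You should insert this arrow-conservation step (or an equivalent use of the height-function/Lipschitz structure of six-vertex ensembles) in place of the commutation argument; with that repair, the rest of your proof goes through.
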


	To establish this lemma, we first require the following one, which provides a law of large numbers for sums of the indicator functions $\chi^{(v)} (u) = \chi_{\mathcal{E}}^{(v)} (u)$ and $\chi^{(h)} (u) = \chi_{\mathcal{E}}^{(h)} (u)$ (from \Cref{Measures}) along horizontal and vertical lines.
	
	\begin{lem} 
		
		\label{sumchimu}
		
		Let $\mu \in \mathscr{P} \big( \mathfrak{E} (\mathbb{Z}^2) \big)$ denote a translation-invariant, ergodic measure of slope $(s, t) \in (0, 1]^2$, and let $\mathcal{E}\in \mathfrak{E}(\mathbb{Z}^2)$ denote a random six-vertex ensemble sampled under $\mu$. For any real number $\delta > 0$, there exists a constant $C_0 = C_0 (\delta; \mu) > 1$ such that
		\begin{flalign}
		\label{chisumestimate} 
		\mathbb{P}_{\mu} \Bigg[ \bigg| \displaystyle\frac{1}{N} \displaystyle\sum_{x = 0}^{N - 1} \chi_{\mathcal{E}}^{(v)} (x, 0) - s \bigg| < \delta \Bigg] \ge 1 - \delta; \qquad \mathbb{P}_{\mu} \Bigg[ \bigg| \displaystyle\frac{1}{N} \displaystyle\sum_{y = 0}^{N - 1} \chi_{\mathcal{E}}^{(v)} (0, y) - t \bigg| < \delta \Bigg] \ge 1 - \delta,
		\end{flalign} 
		
		\noindent for any integer $N > C_0$.  
	\end{lem}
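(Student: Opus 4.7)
The plan is to reduce to almost sure convergence via the one-dimensional Birkhoff ergodic theorem along the horizontal shift, and then to use arrow conservation in six-vertex ensembles to upgrade the resulting limit to a $\mathbb{Z}^2$-invariant random variable, on which the full ergodicity of $\mu$ can be applied. The main subtlety is that $\mathbb{Z}^2$-ergodicity of $\mu$ does not imply ergodicity under any one-dimensional subaction, so the one-dimensional Birkhoff limit is \emph{a priori} a nontrivial random variable; it is the six-vertex structure, via arrow conservation, that forces this limit to be constant.

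Since $\mu$ is invariant under the horizontal shift $\mathfrak{T}_{(-1,0)}$, Birkhoff's theorem applied to the bounded observable $\chi_{\mathcal{E}}^{(v)}(0,0)$ yields, for each $y \in \mathbb{Z}$, a random variable $X_y$ with
\begin{flalign*}
S_N^{(y)}(\mathcal{E}) := \frac{1}{N}\sum_{x=0}^{N-1}\chi_{\mathcal{E}}^{(v)}(x,y) \xrightarrow{N \to \infty} X_y(\mathcal{E}) \quad \mu\text{-a.s.},
\end{flalign*}
such that $X_y$ is $\mathfrak{T}_{(-1,0)}$-invariant and $\mathbb{E}_{\mu}[X_y] = s$ by translation-invariance. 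It then suffices to show that $X_0$ is also invariant under $\mathfrak{T}_{(0,-1)}$: the ergodicity of $\mu$ would then force the $\mathbb{Z}^2$-invariant $X_0$ to equal the constant $\mathbb{E}_{\mu}[X_0] = s$ almost surely, and convergence in probability for the first estimate in \eqref{chisumestimate} would follow.

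To see this vertical invariance, apply arrow conservation at each vertex $(x,1)$ in the form $\chi^{(v)}(x,0) + \chi^{(h)}(x-1,1) = \chi^{(v)}(x,1) + \chi^{(h)}(x,1)$, and sum over $x \in [0, N-1]$; the horizontal terms telescope, yielding
\begin{flalign*}
\left|\sum_{x=0}^{N-1}\chi^{(v)}(x,0) - \sum_{x=0}^{N-1}\chi^{(v)}(x,1)\right| = |\chi^{(h)}(-1,1) - \chi^{(h)}(N-1,1)| \le 1.
\end{flalign*}
Hence $|S_N^{(0)} - S_N^{(1)}| \le 1/N$, so the almost-sure limits coincide: $X_0 = X_1$. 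Because $X_1 = X_0 \circ \mathfrak{T}_{(0,-1)}$ via the identity $\chi_{\mathfrak{T}_{(0,-1)}\mathcal{E}}^{(v)}(x,0) = \chi_{\mathcal{E}}^{(v)}(x,1)$, this gives the required vertical invariance. The second estimate in \eqref{chisumestimate} then follows by the symmetric argument: apply Birkhoff for the vertical shift to column averages, and use the analogous conservation identity, summed at the vertices $(1,y)$ for $y \in [0, N-1]$, to compare columns $x=0$ and $x=1$; this promotes horizontal invariance of the limit, and ergodicity of $\mu$ identifies it with the constant $t$.
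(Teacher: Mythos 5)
Your proof is correct and follows essentially the same route as the paper: the one-dimensional Birkhoff theorem along the horizontal shift, arrow conservation to show consecutive row averages differ by $O(1/N)$, and $\mathbb{Z}^2$-ergodicity to force the limit to be the constant $s$. The only (cosmetic) difference is that you upgrade the Birkhoff limit to a vertically-invariant random variable directly via $X_0 = X_1$, whereas the paper passes through a double Ces\`{a}ro average over an $M \times N$ box and argues that the corresponding limit event is translation-invariant; both hinge on the identical conservation inequality.
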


	\begin{proof} 
		
		Let us only establish the first bound in \eqref{chisumestimate}, as the proof of the latter is entirely analogous. To that end, first define the map $f: \mathfrak{E} (\mathbb{Z}^2) \rightarrow \{ 0, 1 \}^{\mathbb{Z}^2}$ so that, for any six-vertex ensemble $\mathcal{E} \in \mathfrak{E}(\mathbb{Z}^2)$, the value of $f(\mathcal{E})$ at some vertex $u \in \mathbb{Z}^2$ is the indicator $\chi_{\mathcal{E}}^{(v)} (u) \in \{ 0, 1 \}$. Let $\lambda = f_* \mu$ denote the probability measure on $\{ 0, 1\}^{\mathbb{Z}^2}$ obtained as the pushforward of $\mu$ under $f$. 
		
		Next, let $\lambda_0$ denote the marginal of $\lambda$ on the $x$-axis $\mathbb{Z} \times \{ 0 \}$; in this way, $\lambda_0$ is a probability measure on $\{ 0, 1 \}^{\mathbb{Z}}$. Since $\mu$ is translation-invariant, as is $\lambda$, and therefore $\lambda_0$ is as well. Hence, since $\lambda_0$ prescribes the law of $\big( \chi_{\mathcal{E}}^{(v)} (x, 0) \big)_{x \in \mathbb{Z}} \in \{ 0, 1 \}^{\mathbb{Z}}$ under $\mu$, the strong ergodic theorem implies the almost sure (with respect to $\mu$) existence of the limit
		\begin{flalign}
		\label{sumchivse} 
		\displaystyle\lim_{N \rightarrow \infty} \displaystyle\frac{1}{N} \displaystyle\sum_{x = 0}^{N - 1} \chi_{\mathcal{E}}^{(v)} (x, 0) = S (\mathcal{E}).
		\end{flalign}
		
		Now, for any $N_1, N_2, y \in \mathbb{Z}$ with $N_1 \le N_2$ and any $\mathcal{E} \in \mathfrak{E} (\mathbb{Z}^2)$, observe that  
		\begin{flalign*}
		\Bigg| \displaystyle\sum_{x = N_1}^{N_2} \chi_{\mathcal{E}}^{(v)} (x, y + 1) - \displaystyle\sum_{x = N_1}^{N_2} \chi_{\mathcal{E}}^{(v)} (x, y) \Bigg| \le 1.
		\end{flalign*}
		
		\noindent Combining this with \eqref{sumchivse}, we deduce that 
		\begin{flalign}
		\label{sumchimnse}
		\displaystyle\lim_{M \rightarrow \infty} \left(\displaystyle\lim_{N \rightarrow \infty} \displaystyle\frac{1}{MN} \displaystyle\sum_{y = 0}^{M - 1} \displaystyle\sum_{x = 0}^{N - 1} \chi_{\mathcal{E}}^{(v)} (x, y) \right) = S (\mathcal{E}),
		\end{flalign}
		
		\noindent holds almost surely with respect to $\mu$. Then, since 
		\begin{flalign*} 
		\displaystyle\lim_{M \rightarrow \infty} \left( \displaystyle\lim_{N \rightarrow \infty} \displaystyle\frac{1}{MN} \Big| \partial \big( [0, N - 1] \times [0, M - 1] \big)\Big| \right) = 0,
		\end{flalign*}
		
		\noindent the event 
		\begin{flalign*}
		\Bigg\{ \displaystyle\lim_{M \rightarrow \infty} \bigg( \displaystyle\lim_{N \rightarrow \infty} \displaystyle\frac{1}{MN} \displaystyle\sum_{y = 0}^{M - 1} \displaystyle\sum_{x = 0}^{N - 1} \chi_{\mathcal{E}}^{(v)} (x, y) \bigg) \in I \Bigg\},
		\end{flalign*}
		
		\noindent is invariant with respect to translations by elements of $\mathbb{Z}^2$, for any interval $I \subseteq [0, 1]$. 
		
		Therefore, \eqref{sumchimnse} and the ergodicity of $\mu$ together imply $\mathbb{P}_{\mu} \big[ S (\mathcal{E}) \in I \big] \in \{ 0, 1 \}$, for any $I \subseteq [0, 1]$. It follows that there exists some $s_0 \in [0, 1]$ such that $S (\mathcal{E}) = s_0$ almost surely under $\mu$. Since $\mu$ has slope $(s, t)$, we have $s_0 = s$, and so the first estimate in \eqref{chisumestimate} follows from the almost sure limit \eqref{sumchivse}. As mentioned previously, the proof of the second is very similar and is therefore omitted. 
	\end{proof}

	Now we can establish \Cref{rhosmu}.
	
	\begin{proof}[Proof of \Cref{rhosmu}]
		
		Set $\delta = \frac{\eta^3}{12}$, and let $I \subset \partial \Lambda$ denote an interval satisfying $\frac{\eta R}{3} \le |I| \le \frac{\eta R}{2}$. If the constant $C_0 (\eta, \mu)$ here is chosen to be larger than the $3 \eta^{-1} C_0 (\delta, \mu)$ from \Cref{sumchimu}, then \eqref{chisumestimate} implies 
		\begin{flalign}
		\label{nuintervaln}
		\mathbb{P}_{\mu} \bigg[ \Big| \big| I \cap (\textbf{u} \cup \textbf{v}) \big| - s|I| \Big| < \delta R \bigg] \ge 1 - \delta, \quad \text{or} \quad 
		\mathbb{P}_{\mu} \bigg[ \Big| \big| I \cap (\textbf{u} \cup \textbf{v}) \big| - t|I| \Big| < \delta R \bigg] \ge 1 - \delta, 
		\end{flalign} 
		
		\noindent if $I$ lies on either the north or south boundary of $\Lambda$, or if $I$ lies on either the east or west boundary of $\Lambda$, respectively.  
		
		Next let $\mathcal{I}$ denote a set of mutually disjoint intervals of lengths between $\frac{\eta R}{3}$ and $\frac{\eta R}{2}$, whose union constitutes the north and south boundaries of $\Lambda$. Similarly, let $\mathcal{J}$ denote a set of mutually disjoint intervals of lengths between $\frac{\eta R}{3}$ and $\frac{\eta R}{2}$, whose union constitutes the east and west boundaries of $\Lambda$. Defining the event 
		\begin{flalign*}
		\mathcal{A} = \bigg\{ \displaystyle\max_{I \in \mathcal{I} } \Big| \big| I \cap (\textbf{u} \cup \textbf{v}) \big| - s |I| \Big| < \delta R \bigg\} \cap \bigg\{ \displaystyle\max_{I \in \mathcal{J} } \Big| \big| I \cap (\textbf{u} \cup \textbf{v}) \big| - t |I| \Big| < \delta R \bigg\},
		\end{flalign*} 
		
		\noindent observe that boundary data $\textbf{u} \cup \textbf{v}$ is $(R; \eta)$-regular with slope $(s, t)$ on $\mathcal{A}$. Indeed, suppose for instance $J$ is an interval on the west boundary of $\partial \Lambda$ of length at most $R$. If $|J| \le \eta R$, then $\big| |J \cap (\textbf{u} \cup \textbf{v})| - t |J| \big| \le |J| \le \eta R$. If instead $|J| > \eta R$, then $J$ admits a cover by at most $3 \eta^{-1}$ intervals in $\mathcal{J}$, and so on the event $\mathcal{A}$ we have $\big| |J \cap (\textbf{u} \cup \textbf{v})| - t |J| \big| \le 3 \eta^{-1} \delta R \le \eta R$. This establishes the bound $\big| |J \cap (\textbf{u} \cup \textbf{v})| - t |J| \big| \le \eta R$ in both cases and therefore verifies the $(R; \eta)$-regularity with slope of $\textbf{u} \cup \textbf{v}$ along the west boundary of $\partial \Lambda$. The proof that $\textbf{u} \cup \textbf{v}$ is $(R; \eta)$-regular (with the appropriate slope) along the other three boundaries is entirely analogous and is therefore omitted. 
		
		It therefore suffices to lower bound the probability $\mathbb{P}_{\mu} [\mathcal{A}]$, which we will do by estimating the sizes of $\mathcal{I}$ and $\mathcal{J}$. To that end, observe that since the total length of the north and south boundaries of $\Lambda$ is $2M \le 2 \eta^{-1} R$, we have $|\mathcal{I}| \le \frac{6M}{\eta R} \le 6 \eta^{-2}$. Similarly, $|\mathcal{J}| \le \frac{6N}{\eta R} \le 6 \eta^{-2}$. Thus, by a union bound over \eqref{nuintervaln}, we deduce that $\mathbb{P}_{\mu} [ \mathcal{A}] \ge 1 - 12 \eta^{-2} \delta = 1 - \eta$, which implies the lemma. 
	\end{proof}

	\subsection{Stochastic Six-Vertex Model} 

	\label{ModelStochastic} 	
	
	Fix real numbers $B_1, B_2 \in (0, 1)$. Introduced in \cite{SVMRSASH}, the \emph{$(B_1, B_2)$-stochastic six-vertex model} is the special case of the six-vertex model whose weights are given by
	\begin{flalign} 
	\label{bstochasticb}
	\begin{aligned}
	& w (0, 0; 0, 0) = 1; \qquad w (1, 0; 1, 0) = B_1; \qquad w (1, 0; 0, 1) = 1 - B_1; \\
	& w (1, 1; 1, 1) = 1; \qquad w (0, 1; 0, 1) = B_2; \qquad w (0, 1; 1, 0) = 1 - B_2.
	\end{aligned}
	\end{flalign}
	
	\noindent These weights are \emph{stochastic}, in that they satisfy the property that sum of the weights of all arrow configurations with a fixed set of incoming arrows is equal to $1$. Stated alternatively, we have $\sum_{i_2, j_2} w (i_1, j_1; i_2, j_2) = 1$ for any fixed $(i_1, j_1) \in \{ 0, 1 \} \times \{ 0, 1 \}$. 
	
	The choice \eqref{bstochasticb} enables a Markovian sampling procedure for the stochastic six-vertex model with fixed entrance data. Let us explain this sampling in more detail for the model on a rectangular domain $\Lambda = [M_1, M_2] \times [N_1, N_2] \subset \mathbb{Z}^2$; we will also permit $\infty \in \{ M_2, N_2 \}$, allowing $\Lambda$ to be a quadrant. We may assume by translation that $M_1 = 1 = N_1$, and so we set $M_2 = M$ and $N_2 = N$. For each integer $n \in [2, M + N]$, we will define a probability measure $\mathcal{P}_n = \mathcal{P}_{n; \Lambda} (B_1, B_2)$ on the set of six-vertex ensembles whose vertices are all contained in the subdomain $\mathcal{T}_n = \{ (x, y) \in \mathbb{Z}_{\ge 0}^2: x + y \le n \} \cap \Lambda \subseteq \Lambda$. The stochastic six-vertex model on $\Lambda$, denoted by $\mathcal{P} = \mathcal{P}_{\Lambda} (B_1, B_2)$, will then be set to $\mathcal{P}_{M + N}$. For each positive integer $n$, we define $\mathcal{P}_{n + 1}$ from $\mathcal{P}_n$ through the following Markovian update rules (in the case $n = 1$, $\mathcal{T}_n$ is empty, and so $\mathscr{P} \big( \mathfrak{E} (\mathcal{T}_1) \big)$ is empty).

	\begin{figure}[t]
		
		\begin{center}
			
			\begin{tikzpicture}[
			>=stealth,
			scale = .7	
			]

			\draw[thick, dotted] (.5, -.5) -- (5.75, -.5) -- (.5, 4.75) -- (.5, -.5);
			
			\draw[] (.25, -.75) circle[radius = 0] node[]{$\mathcal{T}_{n - 1}$};
			
			\draw[thick, dotted] (.25, 5.25) -- (6.25, -.75) -- (6.75, -.25) -- (.75, 5.75) -- (.25, 5.25);
			
			\draw[] (6.875, -.75) circle[radius = 0] node[]{$\mathcal{D}_n$};
			
			\draw[->, thick, black] (1, -1) -- (1, -.1); 
			\draw[->, thick, black] (2, -1) -- (2, -.1);
			\draw[->, thick, black] (5, -1) -- (5, -.1);
			
			\draw[->, thick, black] (1.1, 0) -- (1.9, 0);
			\draw[->, thick, black] (2.1, 0) -- (2.9, 0);
			\draw[->, thick, black] (3.1, 0) -- (3.9, 0);
			
			\draw[->, thick, black] (5.1, 0) -- (5.9, 0);
			
			\draw[->, thick, black] (2, .1) -- (2, .9);
			\draw[->, thick, black] (4, .1) -- (4, .9);
			\draw[->, thick, black] (4, 1.1) -- (4, 1.9);
			
			\draw[->, thick, black] (0, 1) -- (.9, 1);
			\draw[->, thick, black] (2.1, 1) -- (2.9, 1);
			
			\draw[->, thick, black] (1, 1.1) -- (1, 1.9); 
			\draw[->, thick, black] (3, 1.1) -- (3, 1.9);
			
			\draw[->, thick, black] (1.1, 2) -- (1.9, 2);
			
			\draw[->, thick, black] (2, 2.1) -- (2, 2.9); 
			\draw[->, thick, black] (3, 2.1) -- (3, 2.9);
			
			\draw[->, thick, black] (0, 3) -- (.9, 3);
			\draw[->, thick, black] (2.1, 3) -- (2.9, 3);
			
			\draw[->, thick, black] (1, 3.1) -- (1, 3.9);
			
			\draw[->, thick, black] (1, 4.1) -- (1, 4.9); 
			
			\draw[->, thick, black] (0, 5) -- (.9, 5);

			\draw[->, ultra thick, dashed] (4.1, 2) -- (4.9, 2);
			\draw[->, ultra thick, dashed] (3.1, 3) -- (3.9, 3);
			\draw[->, ultra thick, dashed] (3, 3.1) -- (3, 3.9);
			\draw[->, ultra thick, dashed] (1.1, 5) -- (1.9, 5);
			\draw[->, ultra thick, dashed] (1, 5.1) -- (1, 5.9); 
			\draw[->, ultra thick, dashed] (6, .1) -- (6, .9);

			\filldraw[fill=gray!50!white, draw=black] (1, 0) circle [radius=.1];
			\filldraw[fill=gray!50!white, draw=black] (1, 1) circle [radius=.1];
			\filldraw[fill=gray!50!white, draw=black] (1, 2) circle [radius=.1];
			\filldraw[fill=gray!50!white, draw=black] (1, 3) circle [radius=.1];
			\filldraw[fill=gray!50!white, draw=black] (1, 4) circle [radius=.1];
			\filldraw[fill=gray!50!white, draw=black] (1, 5) circle [radius=.1];
			\filldraw[fill=gray!50!white, draw=black] (1, 6) circle [radius=.1];
			
			\filldraw[fill=gray!50!white, draw=black] (2, 0) circle [radius=.1];
			\filldraw[fill=gray!50!white, draw=black] (2, 1) circle [radius=.1];
			\filldraw[fill=gray!50!white, draw=black] (2, 2) circle [radius=.1];
			\filldraw[fill=gray!50!white, draw=black] (2, 3) circle [radius=.1];
			\filldraw[fill=gray!50!white, draw=black] (2, 4) circle [radius=.1];
			\filldraw[fill=gray!50!white, draw=black] (2, 5) circle [radius=.1];
			\filldraw[fill=gray!50!white, draw=black] (2, 6) circle [radius=.1];
			
			\filldraw[fill=gray!50!white, draw=black] (3, 0) circle [radius=.1];
			\filldraw[fill=gray!50!white, draw=black] (3, 1) circle [radius=.1];
			\filldraw[fill=gray!50!white, draw=black] (3, 2) circle [radius=.1];
			\filldraw[fill=gray!50!white, draw=black] (3, 3) circle [radius=.1];
			\filldraw[fill=gray!50!white, draw=black] (3, 4) circle [radius=.1];
			\filldraw[fill=gray!50!white, draw=black] (3, 5) circle [radius=.1];
			\filldraw[fill=gray!50!white, draw=black] (3, 6) circle [radius=.1];
			
			\filldraw[fill=gray!50!white, draw=black] (4, 0) circle [radius=.1];
			\filldraw[fill=gray!50!white, draw=black] (4, 1) circle [radius=.1];
			\filldraw[fill=gray!50!white, draw=black] (4, 2) circle [radius=.1];
			\filldraw[fill=gray!50!white, draw=black] (4, 3) circle [radius=.1];
			\filldraw[fill=gray!50!white, draw=black] (4, 4) circle [radius=.1];
			\filldraw[fill=gray!50!white, draw=black] (4, 5) circle [radius=.1];
			\filldraw[fill=gray!50!white, draw=black] (4, 6) circle [radius=.1];
			
			\filldraw[fill=gray!50!white, draw=black] (5, 0) circle [radius=.1];
			\filldraw[fill=gray!50!white, draw=black] (5, 1) circle [radius=.1];
			\filldraw[fill=gray!50!white, draw=black] (5, 2) circle [radius=.1];
			\filldraw[fill=gray!50!white, draw=black] (5, 3) circle [radius=.1];
			\filldraw[fill=gray!50!white, draw=black] (5, 4) circle [radius=.1];
			\filldraw[fill=gray!50!white, draw=black] (5, 5) circle [radius=.1];
			\filldraw[fill=gray!50!white, draw=black] (5, 6) circle [radius=.1];
			
			\filldraw[fill=gray!50!white, draw=black] (6, 0) circle [radius=.1];
			\filldraw[fill=gray!50!white, draw=black] (6, 1) circle [radius=.1];
			\filldraw[fill=gray!50!white, draw=black] (6, 2) circle [radius=.1];
			\filldraw[fill=gray!50!white, draw=black] (6, 3) circle [radius=.1];
			\filldraw[fill=gray!50!white, draw=black] (6, 4) circle [radius=.1];
			\filldraw[fill=gray!50!white, draw=black] (6, 5) circle [radius=.1];
			\filldraw[fill=gray!50!white, draw=black] (6, 6) circle [radius=.1];

			\filldraw[dotted, fill = gray!40!white] (12, 0) -- (18, 0) -- (18, 1.5) -- (14, 1.5) -- (14, 6) -- (12, 6) -- (12, 0);

			\draw[very thick] (12, 0) -- (12, 6) -- (18, 6) -- (18, 0) -- (12, 0);
			\draw[thick] (14, 1.5) -- (16.5, 1.5) -- (16.5, 5) -- (14, 5) -- (14, 1.5);
			\draw[dashed] (14, 5) -- (14, 6);
			\draw[dashed] (16.5, 1.5) -- (18, 1.5);
			
			\draw[] (15, 6.5) circle[radius = 0] node[scale = 1.6]{$\Lambda$};
			\draw[] (15.25, 3.25) circle[radius = 0] node[scale = 1.3]{$\Lambda'$};
			\draw[] (13.25, .75) circle[radius = 0] node[scale = 1.3]{$\Lambda \setminus \Xi$};
			
			\end{tikzpicture}
			
		\end{center}	
		
		\caption{\label{vertexdomain2} The sampling procedure for the stochastic six-vertex model is depicted on the left. The Markov property described in \Cref{ensemblen} is depicted on the right.}
	\end{figure}
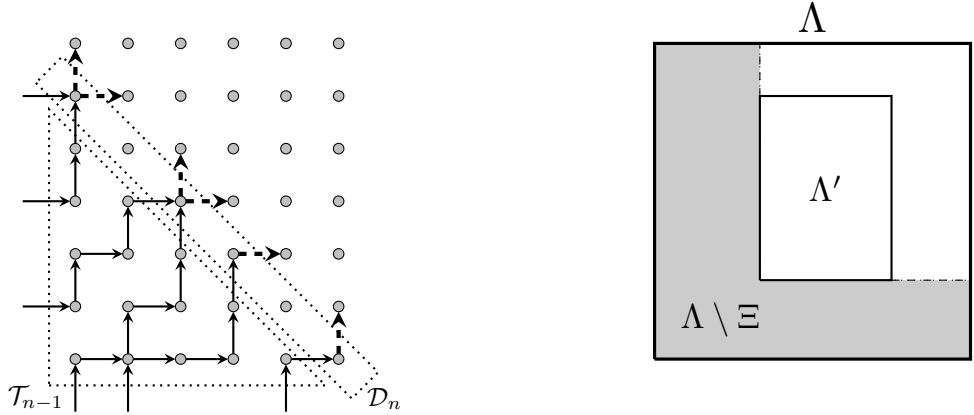

	Use $\mathcal{P}_n$ to sample a six-vertex ensemble $\mathcal{E}_n$ on $\mathcal{T}_n$, which assigns an arrow configuration to each vertex in $\Lambda$ strictly below the diagonal $\mathcal{D}_n = \{ (x, y) \in \mathbb{Z}_{> 0}^2: x + y = n \} \cap \Lambda$. Each vertex on $\mathcal{D}_n$ is also given ``half'' of an arrow configuration, in the sense that it is given the directions of all entering paths but no direction of any exiting path; we refer to the left side of \Cref{vertexdomain2} for a depiction. To extend $\mathcal{E}_n$ to an ensemble on $\mathcal{T}_{n + 1}$, we must ``complete'' the configurations (specify the exiting paths) at all vertices $v \in \mathcal{D}_n$. Any half-configuration can be completed in at most two ways; selecting between these completions is done randomly, according to the probabilities \eqref{bstochasticb}. All choices are mutually independent. 
	
	In this way, we obtain a random ensemble $\mathcal{E}_{n + 1}$ on $\mathcal{T}_{n + 1}$; the resulting probability measure on path ensembles with vertices in $\mathcal{T}_{n + 1}$ is denoted by $\mathcal{P}_{n + 1}$. Then define $\mathcal{P} = \mathcal{P}_{M + N}$ if $M + N$ is finite and $\mathcal{P} = \lim_{n \rightarrow \infty} \mathcal{P}_n$ if $M + N = \infty$. 
	
	\begin{rem}
	
	\label{ensemblen}
	
	Observe that a random six-vertex ensemble $\mathcal{E} \in \mathfrak{E} (\Lambda)$ sampled under $\mathcal{P}$ satisfies the following Markov property. Let $\Lambda' = [X_1, X_2] \times [Y_1, Y_2] \subseteq \Lambda$ be a rectangular subdomain; let $\textbf{u} = \textbf{u} (\mathcal{E})$ denote the entrance data for $\mathcal{E}_{\Lambda'}$ on $\Lambda'$; and set $\Xi = \big( [X_1, \infty) \times [Y_1, \infty) \big) \cap \Lambda$. Then, conditional on $\textbf{u}$, $\mathcal{E}_{\Lambda'}$ is independent of $\mathcal{E}_{\Lambda \setminus \Xi}$. We refer to the right side of \Cref{vertexdomain2} for a depiction.  
	
	\end{rem}

	\subsection{Pure States for the Stochastic Six-Vertex Model} 
	
	\label{Translation}
	
	In this section we describe a family of pure states $\mu (\rho)$ for the stochastic six-vertex model. We begin by explaining how to sample the restrictions of these measures to the positive quadrant $\mathbb{Z}_{>  0}^2$; the extension of these measures to all of $\mathbb{Z}^2$ is then done through a translation and limiting procedure. 
	
	To implement the former task, we require certain entrance data. For any $\rho_1, \rho_2 \in [0, 1]$, \emph{double-sided $(\rho_1, \rho_2)$-Bernoulli entrance data} is that in which sites on the $y$-axis are independently entrance sites for paths with probability $\rho_1$, and sites on the $x$-axis are independently entrance sites for paths with probability $\rho_2$. 
	
	Now, fix $0 < B_1 < B_2 < 1$ and define $\kappa > 1$ and $\varphi: [0, 1] \rightarrow [0, 1]$ by 
	\begin{flalign}
	\label{kappafunction}
	\kappa = \kappa_{B_1, B_2} = \displaystyle\frac{1 - B_1}{1 - B_2} > 1; \qquad \varphi (z) = \varphi_{B_1, B_2} (z) = \displaystyle\frac{\kappa z}{(\kappa - 1) z + 1}, \quad \text{for any $z \in [0, 1]$.}
	\end{flalign} 
	
	Further fix $\rho \in [0, 1]$, and consider the stochastic six-vertex model on the nonnegative quadrant with double-sided $\big( \varphi (\rho), \rho \big)$-Bernoulli entrance data; denote the associated measure on $\mathfrak{E} (\mathbb{Z}_{> 0}^2)$ by $\mu_0 = \mu_0 (\rho)$. It was shown in \cite{CFSAEPSSVMCL} that this measure is translation-invariant in the following sense. 
	
	\begin{lem}[{\cite[Lemma A.2]{CFSAEPSSVMCL}}]
		
		\label{murhochi}
		
	Fix $\rho \in [0, 1]$, and sample a six-vertex ensemble $\mathcal{E} \in \mathfrak{E} (\mathbb{Z}_{> 0}^2)$ randomly under $\mu_0 (\rho)$. Then, for any $(x, y) \in \mathbb{Z}_{\ge 0}^2$, the random variables $\big\{ \chi_{\mathcal{E}}^{(h)} (x, y + 1), \chi_{\mathcal{E}}^{(h)} (x, y + 2), \ldots \big\} \cup \big\{ \chi_{\mathcal{E}}^{(v)} (x + 1, y), \chi_{\mathcal{E}}^{(v)} (x + 2, y), \ldots \big\}$ are mutually independent. Furthermore, each $\chi_{\mathcal{E}}^{(h)} (x, y)$ and $\chi_{\mathcal{E}}^{(v)} (x, y)$ is a $0$-$1$ Bernoulli random variable with mean $\varphi (\rho)$ and $\rho$, respectively. 
	
	\end{lem}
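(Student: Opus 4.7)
The plan is to establish the lemma by iterating a \textbf{single-vertex invariance} computation. Specifically, consider any interior vertex $v = (x', y') \in \mathbb{Z}_{> 0}^2$, and suppose its two incoming arrow indicators $\big( \chi^{(v)}(x', y' - 1), \chi^{(h)}(x' - 1, y') \big)$ are independent with respective laws $\text{Ber}(\rho)$ and $\text{Ber}(\varphi(\rho))$. The key claim is that, after applying the Markovian stochastic update rule at $v$ (as described in \Cref{ModelStochastic}), the two outgoing indicators $\big( \chi^{(v)}(x', y'), \chi^{(h)}(x', y') \big)$ are also independent Bernoullis with the same parameters. This reduces to checking the four joint probabilities of $\big( \chi^{(v)}(x', y'), \chi^{(h)}(x', y') \big)$ against the weights \eqref{bstochasticb}, which after simplification amounts to verifying the algebraic identity $\varphi(\rho) \big( \rho(1 - B_1) + (1 - \rho)(1 - B_2) \big) = \rho(1 - B_1)$; this is equivalent to the definition \eqref{kappafunction} of $\varphi$, and is precisely why the parameter $\varphi(\rho)$ was chosen as it was.

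With this single-vertex invariance established, I would prove the lemma by double induction on $(x, y)$, starting from the base case $(x, y) = (0, 0)$ --- where the L-shape indicators coincide with the Bernoulli boundary data defining $\mu_0 (\rho)$. To induct from $(x, y)$ to $(x + 1, y)$, observe that the L-shape at $(x + 1, y)$ can be produced from that at $(x, y)$ by sampling the column of vertices $(x + 1, y + 1), (x + 1, y + 2), \ldots$ in sequence: the west input of $(x + 1, y + k)$ comes from the previous L-shape as $\chi^{(h)}(x, y + k)$, while its south input is $\chi^{(v)}(x + 1, y)$ when $k = 1$ and the previous step's output $\chi^{(v)}(x + 1, y + k - 1)$ when $k \geq 2$. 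Since only finitely many of the L-shape indicators enter any cylinder event, it suffices to process a finite prefix of this column; applying the single-vertex invariance inductively along the column yields that the new set $\{ \chi^{(h)}(x + 1, y + k) : k \geq 1 \}$ consists of independent Bernoullis with parameter $\varphi(\rho)$. The remaining new L-shape indicators $\{ \chi^{(v)}(x + 1 + k, y) : k \geq 1 \}$ are unchanged from the old L-shape and are independent of everything consumed by the column processing, hence independent of the newly produced indicators as well. The step from $(x, y)$ to $(x, y + 1)$ is entirely symmetric, processing a row instead of a column.

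The main subtlety is the bookkeeping of conditional independence across the sequence of single-vertex updates. At each step, one must verify not merely that the new pair of outputs is marginally an independent Bernoulli pair, but also that it is \emph{jointly} independent of all previously produced outputs and of all as-yet-unused indicators from the prior L-shape. This follows because, at step $k$, the newly produced output pair is a deterministic function of the south-input, the west-input, and a fresh independent uniform random variable (from the sampling procedure); the two inputs are themselves independent of all previously produced outputs (by the inductive independence structure generated by earlier steps) and of the untouched portion of the starting L-shape (by the inductive hypothesis). Modulo this careful propagation of independence through the Markov updates, the lemma follows directly from the single-vertex invariance calculation.
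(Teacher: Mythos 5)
Your proposal is correct. Note that the paper does not prove this lemma itself --- it imports it verbatim from Lemma A.2 of \cite{CFSAEPSSVMCL} --- but your argument is precisely the standard Burke-type proof given there: the single-vertex check that independent $\mathrm{Ber}(\rho)\otimes\mathrm{Ber}(\varphi(\rho))$ inputs produce independent outputs with the same marginals (your identity $\varphi(\rho)\big(\rho(1-B_1)+(1-\rho)(1-B_2)\big)=\rho(1-B_1)$ is exactly equivalent to \eqref{kappafunction}), followed by propagating the product structure along a column or row using the Markov property of the sampling procedure, with the joint-independence bookkeeping handled exactly as you describe.
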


	Observe in particular that, if $(x, y) = (0, 0)$, then this is the definition of double-sided $\big( \varphi (\rho), \rho \big)$-Bernoulli entrance data for $\mu_0$. The fact that it is also true for any $(x, y) \in \mathbb{Z}_{\ge 0}^2$ allows us to define a family of measures $\{ \mu_N \} = \big\{ \mu_N (\rho) \big\}_{N \ge 0}$ as follows. For each integer $N \ge 1$, let $\mu_N = \mu_N (\rho) = \mathfrak{T}_{(N, N)} \mu_0$ denote the measure on $\mathbb{Z}_{\ge -N}^2$ formed by translating $\mu_0$ by $(-N, -N)$ (that is, $N$ spaces down and to the left). Due to the translation-invariance of $\mu_0$ from \Cref{murhochi}, these measures are compatible in the sense that $\mu_M$ is the restriction of $\mu_N$ to $\mathbb{Z}_{> -M}^2$, for any integers $N \ge M \ge 0$. 
	
	Therefore, we can define the limit $\mu = \mu (\rho) = \lim_{N \rightarrow \infty} \mu_N (\rho)$ on all of $\mathbb{Z}^2$. By \Cref{murhochi}, this limit is invariant with respect to any vertical or horizontal shift. Thus, $\mu (\rho)$ is a translation-invariant Gibbs measure for the stochastic six-vertex model on $\mathbb{Z}^2$. The following proposition indicates that it is moreover a pure state.

	\begin{prop} 
		
		\label{murho1} 
		
		Fix real numbers $0 < B_1 < B_2 < 1$ and $\rho \in [0, 1]$. Then, the measure $\mu (\rho)$ is a pure state of slope $\big( \rho, \varphi (\rho) \big)$ for the $(B_1, B_2)$-stochastic six-vertex model.
	\end{prop}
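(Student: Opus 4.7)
By the discussion preceding this proposition, $\mu(\rho)$ is already a translation-invariant Gibbs measure for the $(B_1, B_2)$-stochastic six-vertex model on $\mathbb{Z}^2$, and \Cref{murhochi} gives $\mathbb{E}_{\mu}\big[\chi^{(v)}(0,0)\big] = \rho$ and $\mathbb{E}_{\mu}\big[\chi^{(h)}(0,0)\big] = \varphi(\rho)$, which is the asserted slope $(\rho, \varphi(\rho))$. The substance of the proof therefore lies in verifying ergodicity of $\mu(\rho)$ under the $\mathbb{Z}^2$ translation action.

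The plan is to establish the stronger mixing property: for any cylinder events $\mathcal{A}, \mathcal{B} \subseteq \mathfrak{E}(\mathbb{Z}^2)$,
\begin{flalign*}
\lim_{\|u\| \to \infty} \Big| \mathbb{P}_{\mu}[\mathcal{A} \cap \mathfrak{T}_u \mathcal{B}] - \mathbb{P}_{\mu}[\mathcal{A}] \cdot \mathbb{P}_{\mu}[\mathcal{B}] \Big| = 0.
\end{flalign*}
Mixing implies ergodicity in the usual way: any translation-invariant event $\mathcal{A}$ can be approximated in measure by cylinder events, and applying the mixing limit with $\mathcal{B} = \mathcal{A}$ forces $\mathbb{P}_{\mu}[\mathcal{A}] = \mathbb{P}_{\mu}[\mathcal{A}]^2 \in \{0, 1\}$.

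To prove mixing, suppose both $\mathcal{A}$ and $\mathcal{B}$ are determined by the configuration in a fixed box $[-K, K]^2$, and consider $u = (u_1, u_2) \in \mathbb{Z}^2$ with $u_1, u_2 > 0$ both large (the remaining cases being handled symmetrically). Choose an auxiliary point $p = (-L, -L)$ with $K \ll L \ll \|u\|$, so that both $[-K, K]^2$ and $u + [-K, K]^2$ lie inside the northeast quadrant $Q_p$ with corner $p$. \Cref{murhochi} applied at $p$ asserts that the entrance data to $Q_p$ is iid Bernoulli of parameters $\varphi(\rho)$ on the west and $\rho$ on the south, and \Cref{ensemblen} then asserts that the configuration in $Q_p$ is obtained from this iid boundary data by the sampling procedure of \Cref{ModelStochastic} using independent coin flips at each interior vertex.

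The main obstacle is then to show that under this Markovian sampling, the indicators $\mathbf{1}_{\mathcal{A}}$ and $\mathbf{1}_{\mathfrak{T}_u \mathcal{B}}$ become asymptotically independent. The plan is to argue that $\mathbf{1}_{\mathcal{A}}$ depends, up to error decaying as $L \to \infty$, only on the entrance bits and coin flips in a bounded neighborhood of $[-K, K]^2$, with an analogous localization for $\mathbf{1}_{\mathfrak{T}_u \mathcal{B}}$; since these neighborhoods are disjoint when $\|u\|$ is sufficiently large, the iid nature of the entrance data and coin flips in \Cref{murhochi} yields asymptotic independence. Quantifying this localization is the heart of the proof, and one plausible route is a monotone coupling that measures how sensitive the Markovian sampling is to perturbations of the iid entrance data at positions far from the region of interest, exploiting the stochastic character of the weights \eqref{bstochasticb}.
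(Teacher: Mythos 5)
Your overall strategy coincides with the paper's: read off the slope and translation-invariance from \Cref{murhochi}, then prove ergodicity by establishing a mixing property for cylinder events, exploiting the iid entrance data and the Markovian sampling of the stochastic six-vertex model. However, your argument stops exactly where the real work begins. The claim that $\textbf{1}_{\mathcal{A}}$ and $\textbf{1}_{\mathfrak{T}_u \mathcal{B}}$ become asymptotically independent because each ``depends, up to error decaying as $L \to \infty$, only on the entrance bits and coin flips in a bounded neighborhood'' of its box is precisely the statement requiring proof, and you offer only ``one plausible route'' (a monotone coupling) without carrying it out. Note also that the relevant dependence region is not bounded: the configuration in $u + [-K, K]^2$ is a deterministic function of \emph{all} entrance bits and coin flips in the cone below and to the left of it, and that cone contains the region near $[-K, K]^2$; what must actually be shown is that the \emph{influence} of the distant data is quantitatively small, with the dependence neighborhoods growing like $L$ and disjointness coming from $L \ll \|u\|$.

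The paper supplies exactly this missing estimate as \Cref{modelcouple} (imported from \cite{LSLSSSVM}): two stochastic six-vertex models whose entrance data agree on intervals $I_1, I_2$ of length $2N$ can be coupled to agree on boxes of size $M$ above those intervals off an event of probability $c^{-1} e^{-cM}$, provided $N \ge \frac{3M}{1 - B_2}$. It is applied twice --- once to replace the double-sided Bernoulli entrance data by entrance data supported only on two distant intervals, and once (comparing against the empty ensemble on an intermediate interval) to show that no path is likely to connect the two boxes, which is what renders the two truncated events conditionally independent. Without \Cref{modelcouple} or an equivalent finite-speed-of-propagation estimate, your localization claim is unsubstantiated, so the proposal has a genuine gap at its central step.
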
 

	\begin{rem}
		
		\label{rhoh} 
		
		Under the stochastic specialization $(a_1, a_2, b_1, b_2, c_1, c_2) = (1, 1, B_1, B_2, 1 - B_1, 1 - B_2)$ of weights, the condition $\mathfrak{h} (s, t) = 0$ (recall \eqref{h}) is equivalent to $t = \varphi (s)$. 
		
	\end{rem}

	To establish this proposition, we require the following lemma from \cite{LSLSSSVM} that essentially states the following. If the boundary data for two stochastic six-vertex models on the quadrant coincide in a union of two intervals on the $x$-axis, then these models can be coupled to coincide in rectangles above these intervals with high probability. Although the below result was stated in \cite{LSLSSSVM} in the case of one interval and for the stochastic six-vertex model on the discrete upper half-plane, it is quickly verified that the proof there also applies with multiple intervals for the model on the quadrant. 
	
	\begin{lem}[{\cite[Proposition 2.17]{LSLSSSVM}}]
		
		\label{modelcouple} 
		
		For any $0 < B_1 < B_2 < 1$, there exists a constant $c = c(B_2) > 0$ such that the following holds. Fix two entrance data $\textbf{\emph{u}}$ and $\textbf{\emph{u}}'$ on the quadrant $\mathbb{Z}_{> 0}^2$, and let $\mathfrak{S}$ and $\mathfrak{S}'$ denote the $(B_1, B_2)$-stochastic six-vertex models on $\mathbb{Z}^2$ with entrance data $\textbf{\emph{u}}$ and $\textbf{\emph{u}}'$, respectively. Further let $K_1, K_2, M, N > 0$ denote integers; assume that $N \le K_1, K_2$ and $N \ge \frac{3M}{1 - B_2}$. Denote the intervals $I_1 = [K_1 - N, K_1 + N] \times \{ 0 \}$ and $I_2 = [K_2 - N, K_2 + N] \times \{ 0 \}$, and additionally suppose that $\textbf{\emph{u}} \cap (I_1 \cup I_2) = \textbf{\emph{u}}' \cap (I_1 \cup I_2)$. 
		
		Randomly sample two six-vertex ensembles $\mathcal{E} \in \mathfrak{E}_{\textbf{\emph{u}}} (\mathbb{Z}_{> 0}^2)$ and $\mathcal{E}' \in \mathfrak{E}_{\textbf{\emph{u}}'} (\mathbb{Z}_{> 0}^2)$ under $\mathfrak{S}$ and $\mathfrak{S}'$, respectively, and set the domain $\Lambda = \big( [K_1 - M, K_1 + M] \cup [K_2 - M, K_2 + M] \big) \times [0, M]$. Then, it is possible to couple $\mathcal{E}$ and $\mathcal{E}'$ in such a way that $\mathcal{E}_{\Lambda} = \mathcal{E}_{\Lambda}'$ holds with probability at least $1 - c^{-1} e^{-cM}$.  
		
	\end{lem}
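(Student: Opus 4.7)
The approach is to construct a monotone grand coupling of the two stochastic six-vertex models and to control the propagation of discrepancies via a second-class particle analysis. First, define the basic coupling by drawing i.i.d.\ $\text{Uniform}[0,1]$ random variables $\{U_v\}_{v \in \mathbb{Z}_{>0}^2}$ and processing the antidiagonals $\mathcal{D}_n$ in sequence as in \Cref{ModelStochastic}; at each vertex $v$, once the incoming arrows are determined in both models, the outgoing arrows are set by comparing $U_v$ with the thresholds in \eqref{bstochasticb}. This yields the correct marginals $\mathcal{E} \sim \mathfrak{S}$ and $\mathcal{E}' \sim \mathfrak{S}'$, and crucially, whenever the two models have the same incoming configuration at an interior vertex $v$, their outgoing configurations at $v$ also coincide. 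Consequently, any \emph{discrepancy} (an edge present in exactly one of $\mathcal{E}, \mathcal{E}'$) can only be introduced at a site of $\mathbf{u} \triangle \mathbf{u}'$, which by hypothesis lies outside $I_1 \cup I_2$.

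Next, reduce to a single seeded discrepancy by decomposing $\mathbf{u} \triangle \mathbf{u}'$ into its $O(K_1 + K_2)$ individual sites and applying a union bound: it suffices to show that, for any fixed $w \in \partial \mathbb{Z}_{>0}^2 \setminus (I_1 \cup I_2)$, the discrepancy seeded at $w$ fails to reach $\Lambda$ except with probability at most $e^{-cM}$. Under the basic coupling, such a seed generates a single \emph{second-class particle} whose antidiagonal position $(X_t, Y_t)$ evolves by $(X_{t+1}, Y_{t+1}) - (X_t, Y_t) \in \{(1, 0), (0, 1)\}$, with the choice determined at each step by $U_v$ and the local environment. The key quantitative input is a uniform upper bound $\alpha = \alpha(B_2) < 1$ on the conditional probability of a $(1, 0)$-step given the past; this comes from \eqref{bstochasticb}, since at any vertex a horizontal arrow turns vertical with probability at least $1 - B_2 > 0$, giving a strictly positive vertical drift to the second-class particle.

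With this speed bound in hand, a Chernoff (or Azuma--Hoeffding) estimate applied to the supermartingale $X_t - \alpha t$ completes the argument. For a seed $w = (k, 0)$ with $k < K_i - N$, reaching $\Lambda$ requires $X_t - k \ge N - M$ and $Y_t \le M$ at the hitting time $t$, and since $t = (X_t - k) + Y_t$, the fraction $(X_t - k)/t$ of $(1, 0)$-steps is at least $(N - M)/N$. The hypothesis $N \ge 3M/(1 - B_2)$ gives $(N - M)/N \ge 1 - (1 - B_2)/3$, which exceeds $\alpha$ by a positive gap $c_0 = c_0(B_2) > 0$ (one may take $\alpha \le B_2$ since a horizontal arrow continues horizontally with probability at most $B_2$), so the standard large-deviation bound yields the claimed $e^{-cM}$ decay. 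Seeds on the $y$-axis and on the far side of $I_1 \cup I_2$ are handled by the symmetric vertical-speed estimate (with the roles of $X_t, Y_t$ interchanged), and the union bound absorbs the $O(K_1 + K_2)$ prefactor into the constant $c$.

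\textbf{Main obstacle.} The principal technical difficulty is establishing the uniform speed bound $\alpha(B_2) < 1$ on the per-step horizontal displacement of the second-class particle and verifying that a single boundary discrepancy does not branch into multiple particles under the monotone coupling; both require a careful case analysis of how the monotone update rule acts on the six possible configurations in \eqref{bstochasticb} when the two models disagree on a subset of the incoming arrows. A secondary care point is that seeds on the $y$-axis with initial $y$-coordinate exceeding $M$ trivially cannot reach $\Lambda$ since the particle's $y$-coordinate is nondecreasing, so only finitely many $y$-axis seeds need to be considered.
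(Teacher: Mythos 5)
Your overall strategy (a site-by-site coupling in which a single boundary discrepancy propagates as a second-class particle whose horizontal displacement over $M$ vertical steps is controlled) is the right one, and it is essentially how the cited reference proves this statement; note the paper itself does not reprove the lemma but imports it from \cite{LSLSSSVM}. However, your key quantitative step is wrong. The per-step horizontal probability of the discrepancy is \emph{not} bounded by $B_2$. Running the case analysis you defer to the ``main obstacle'': when the discrepancy enters a vertex as a vertical edge (incoming configurations $(0,j_1)$ versus $(1,j_1)$), the common uniform variable sends it out horizontally with probability $1-B_1$ if $j_1=0$ and $1-B_2$ if $j_1=1$. Only when it enters \emph{horizontally} is the continuation probability bounded by $B_2$. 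Hence the correct uniform per-step bound is $\alpha = \max\{1-B_1,\, B_2\}$, which tends to $1$ as $B_1 \to 0$, while the required threshold $(N-M)/N \ge 1 - (1-B_2)/3 = (2+B_2)/3$ and the constant $c = c(B_2)$ must be uniform in $B_1$. So $X_t - B_2 t$ is not a supermartingale, and for small $B_1$ the gap $(N-M)/N - \alpha$ is negative; the Chernoff/Azuma argument as you state it collapses exactly in the regime the lemma must cover.

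The fix is to exploit the renewal structure rather than a per-step bound: decompose the discrepancy's trajectory into at most $M+1$ maximal horizontal runs separated by vertical steps. A run, once started, continues at each vertex with probability at most $B_2$ (this is where your observation is valid), so each run length is stochastically dominated by a geometric variable of parameter $1-B_2$, with conditional mean at most $1/(1-B_2)$. Reaching $\Lambda$ forces the sum of these $\le M+1$ run lengths to exceed $N - M \ge \frac{2+B_2}{1-B_2}M$, roughly twice the maximal mean $\frac{M+1}{1-B_2}$, and a Chernoff bound for sums of conditionally geometric variables then gives the $e^{-cM}$ decay with $c = c(B_2)$; this is precisely why the hypothesis carries the factor $3/(1-B_2)$. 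Two smaller points: your claim that $\textbf{u} \triangle \textbf{u}'$ has $O(K_1+K_2)$ sites is unjustified (it may be infinite, including the whole $y$-axis and the far right of the $x$-axis), though seeds with $x$-coordinate beyond $\max\{K_1,K_2\}+N$ or $y$-coordinate beyond $M$ are monotonically excluded and the remaining infinite tail of left-lying seeds is summable once the exponential bound decays in the horizontal distance to $\Lambda$; and the single-particle (non-branching) property you flag does hold under the one-uniform-per-vertex coupling, but it needs the short verification that configurations differing in exactly one incoming arrow always produce outputs differing in exactly one outgoing arrow.
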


	Now we can establish \Cref{murho1}.
	
	\begin{proof}[Proof of \Cref{murho1}]
		
		As mentioned above, $\mu (\rho)$ is translation-invariant and has slope $\big( \rho, \varphi (\rho) \big)$ by \Cref{murhochi}, so it suffices to show that it is ergodic. To that end, recalling the translation map $\mathfrak{T}_u: \mathbb{Z}^2 \rightarrow \mathbb{Z}^2$ from \Cref{Measures} and abbreviating $\mathfrak{T}_X = \mathfrak{T}_{(-X, 0)}$, we will show that $\mu (\rho)$ satisfies the following mixing property. For any two events $\mathcal{A}_1$ and $\mathcal{A}_2$, we have that 
		\begin{flalign}
		\label{abprobabilityr} 
		\displaystyle\lim_{X \rightarrow \infty} \mathbb{P} \big[ \mathcal{A}_1 \cap \mathfrak{T}_X \mathcal{A}_2 \big] = \mathbb{P} [\mathcal{A}_1] \mathbb{P} [\mathcal{A}_2],
		\end{flalign} 
		
		\noindent where the probability measure is with respect to $\mu (\rho)$. 
		
		Assuming \eqref{abprobabilityr}, we can quickly establish the ergodicity of $\mu (\rho)$. Indeed, letting $\mathcal{A}$ denote a translation-invariant event, apply \eqref{abprobabilityr} with $\mathcal{A}_1 = \mathcal{A} = \mathcal{A}_2$. Then, $\mathfrak{T}_X \mathcal{A}_2 = \mathcal{A}$ by the translation-invariance of $\mathcal{A}$, and so \eqref{abprobabilityr} yields $\mathbb{P}[\mathcal{A}] = \mathbb{P} [\mathcal{A}]^2$. Hence, $\mathbb{P} [\mathcal{A}] \in \{ 0, 1 \}$, and so $\mu (\rho)$ is ergodic. 

		Thus, it remains to verify \eqref{abprobabilityr}. To that end, we may assume that $\mathcal{A}_1$ and $\mathcal{A}_2$ are both cylinder subsets. Then, there exist finite domains $\Lambda, \Lambda' \subset \mathbb{Z}^2$ and six-vertex ensembles $\mathcal{G}_1 \in \mathfrak{E}(\Lambda)$ and $\mathcal{G}_2 \in \mathfrak{E} (\Lambda')$ such that $\mathcal{A}_1 = \big\{ \mathcal{E} \in \mathfrak{E} (\mathbb{Z}^2): \mathcal{E}_{\Lambda} = \mathcal{G}_1 \big\}$ and $\mathcal{A}_2 = \big\{ \mathcal{E} \in \mathfrak{E} (\mathbb{Z}^2): \mathcal{E}_{\Lambda'} = \mathcal{G}_2 \big\}$. Letting $R > 0$ denote a sufficiently large integer such that $\Lambda, \Lambda' \subseteq [-R, R] \times [-R, R]$, we may then assume that $\Lambda = [-R, R] \times [-R, R] = \Lambda'$.
		
		Next, let $N \ge \frac{18R}{1 - B_2}$ and $X \ge 6N$ denote two integers. We consider two $(B_1, B_2)$-stochastic six-vertex models on the quadrant $\Gamma = (-2N, \infty) \times (-R, \infty) \subset \mathbb{Z}^2$ with different entrance data. The first, denoted $\mathfrak{S}$, has double-sided $\big( \varphi (\rho), \rho \big)$-Bernoulli entrance data $\textbf{u}$. Defining the intervals $I_1 = [-N, N] \times \{ -R \}$ and $I_2 = [X - N, X + N] \times \{ -R \}$, the second, denoted $\mathfrak{S}'$, has entrance data $\textbf{u} \cap (I_1 \cup I_2)$. Let $\mathcal{E}, \mathcal{E}' \in \mathfrak{E} (\Gamma)$ denote six-vertex ensembles sampled under $\mathfrak{S}$ and $\mathfrak{S}'$, respectively. 
		
		Observe from \Cref{murhochi} that the law of $\mathcal{E}$ is given by the marginal of $\mu (\rho)$ on $\mathfrak{E} (\Gamma)$. Thus, we may set the events $\mathcal{A}_1$ and $\mathcal{A}_2$ in \eqref{abprobabilityr} by
		\begin{flalign*}
		 & \mathcal{A}_1 = \big\{ \mathcal{E}_{\Lambda} = \mathcal{G}_1 \big\}; \qquad \mathcal{A}_2 = \big\{ \mathcal{E}_{\Lambda} = \mathcal{G}_2 \big\}.
		\end{flalign*}
		
		\noindent Let us further define the events $\mathcal{A}_1'$ and $\mathcal{A}_2'$ by 
		\begin{flalign*}
		& \mathcal{A}_1' = \big\{ \mathcal{E}_{\Lambda}' = \mathcal{G}_1 \big\}; \qquad \mathcal{A}_2' = \big\{ \mathcal{E}_{\Lambda}' = \mathcal{G}_2 \big\}.
		\end{flalign*}
		
		Now, the entrance data for $(\mathcal{E}, \mathcal{E}')$ coincide on $I_1 \cup I_2$. Therefore, the $(K_1, K_2, M, N) = \big( 2N, X + 2N, \big\lfloor \frac{(1 - B_2) N}{3} \big\rfloor, N \big)$ case of \Cref{modelcouple} (and the fact that $N \ge \frac{18R}{1 - B_2}$) yields the existence of a constant $c = c(B_2) > 0$ such that $\mathcal{E}$ and $\mathcal{E}'$ can be coupled to coincide on $\Lambda \cup \mathfrak{T}_X \Lambda$, off of an event of probability at most $c^{-1} e^{-cN}$. In particular, this implies the three bounds  
		\begin{flalign}
		\label{a1a2a1a2} 
		\begin{aligned}
		& \big| \mathbb{P}_{\mathfrak{S}} [\mathcal{A}_1] - \mathbb{P}_{\mathfrak{S}'} [\mathcal{A}_1'] \big| \le c^{-1} e^{-cN}; \qquad \big| \mathbb{P}_{\mathfrak{S}} [\mathfrak{T}_X \mathcal{A}_2] - \mathbb{P}_{\mathfrak{S}'} [\mathfrak{T}_X \mathcal{A}_2'] \big| \le c^{-1} e^{-cN}; \\
		& \big| \mathbb{P}_{\mathfrak{S}} [\mathcal{A}_1 \cap \mathfrak{T}_X \mathcal{A}_2] - \mathbb{P}_{\mathfrak{S}'} [\mathcal{A}_1' \cap \mathfrak{T}_X \mathcal{A}_2'] \big| \le c^{-1} e^{-cN}.
		\end{aligned} 
		\end{flalign} 
		
		Next let $\mathcal{B}$ denote the event that no path in $\mathcal{E}'$ intersects both $\Lambda$ and $\mathfrak{T}_X \Lambda$. Since the $\big\{ \chi^{(v)} (u) \big\}$ are mutually independent for $u \in I_1 \cup I_2$ (due to the choice of double-sided Bernoulli boundary data for $\textbf{u}$), it follows from the sampling procedure described in \Cref{ModelStochastic} that the events $\mathcal{A}_1' \cap \mathcal{B}$ and $\mathfrak{T}_X \mathcal{A}_2' \cap \mathcal{B}$ are independent after conditioning on $\mathcal{B}$. Hence,
		\begin{flalign*}
		\mathbb{P}_{\mathfrak{S}'} [\mathcal{A}_1' | \mathcal{B}] \mathbb{P}_{\mathfrak{S}'} [ \mathfrak{T}_X \mathcal{A}_2' | \mathcal{B}] = \mathbb{P}_{\mathfrak{S}'} [\mathcal{A}_1' \cap \mathfrak{T}_X \mathcal{A}_2' | \mathcal{B}],
		\end{flalign*}
		
		\noindent which implies that 
		\begin{flalign}
		\label{a1a21}
		\big| \mathbb{P}_{\mathfrak{S}'} [\mathcal{A}_1' \cap \mathfrak{T}_X \mathcal{A}_2'] - \mathbb{P}_{\mathfrak{S}'} [\mathcal{A}_1'] \mathbb{P}_{\mathfrak{S}'} [ \mathfrak{T}_X \mathcal{A}_2'] \big| \le 2 \mathbb{P}_{\mathfrak{S}'}[\mathcal{B}^c].
		\end{flalign}
		
		To bound $\mathbb{P}_{\mathfrak{S}'}[\mathcal{B}^c]$, we again apply \Cref{modelcouple}. To that end, let $\mathcal{E}_0 \in \mathfrak{E} (\Gamma)$ denote the six-vertex ensemble on $\Gamma$ with no paths. Then, since $X \ge 6N$, the entrance data for $(\mathcal{E}', \mathcal{E}_0)$ coincide on the interval $I_0 = [2N, 4N] \times \{ - R \}$. Thus, \Cref{modelcouple} implies that it is possible to couple $\mathcal{E}'$ with $\mathcal{E}_0$ on $[3N - R, 3N + R] \times [-R, R]$, off of an event of probability at most $c^{-1} e^{-cN}$. Since if $\mathcal{E}'$ contains no paths in $[3N - R, 3N + R] \times [-R, R]$ then $\mathcal{B}$ holds, it follows that $\mathbb{P}_{\mathfrak{S}'} [\mathcal{B}^c] \le c^{-1} e^{-cN}$. 
		
		Combining this with \eqref{a1a2a1a2}, \eqref{a1a21}, and the fact that $\mathbb{P}_{\mathfrak{S}} [\mathcal{A}_2] = \mathbb{P}_{\mathfrak{S}} [\mathfrak{T}_X \mathcal{A}_2]$ (by the translation-invariance of $\mu (\rho)$), we obtain 
		\begin{flalign*}
		\Big| \mathbb{P}_{\mathfrak{S}} \big[ \mathcal{A}_1 \cap \mathfrak{T}_X \mathcal{A}_2 \big] - \mathbb{P}_{\mathfrak{S}} [\mathcal{A}_1] \mathbb{P}_{\mathfrak{S}} [\mathcal{A}_2]  \Big| \le 6 c^{-1} e^{-cN},
		\end{flalign*}
		
		\noindent whenever $N \ge \frac{18R}{1 - B_2}$ and $X \ge 6N$. Letting $X$ tend to $\infty$ and then $N$ tend to $\infty$, we deduce \eqref{abprobabilityr} and therefore the proposition.
	\end{proof}

	\subsection{Local Statistics in the Stochastic Six-Vertex Model} 
	
	\label{Local}
	
	In this section we provide a convergence of local statistics result for the stochastic six-vertex model with free exit data that essentially states the following. Consider this model on the quadrant $\mathbb{Z}_{> 0}^2$ whose entrance data is regular with slope $\rho \in [0, 1]$ on an interval of the positive $x$-axis; then, the local statistics of this model slightly above this interval are approximately given by the measure $\mu (\rho)$ from \Cref{Translation}. This statement follows from results in \cite{LSLSSSVM}, but we will briefly outline how more precisely.

	\begin{lem} [{\cite{LSLSSSVM}}]
		
		\label{modellocal} 
		
		Fix real numbers $0 < B_1 < B_2 < 1$ and $\rho, \varepsilon \in (0, 1]$; an integer $k \ge 1$; and a six-vertex ensemble $\mathcal{G} \in \mathfrak{E} \big( [-k, k] \times [-k, k] \big)$. Then, there exist constants $\delta = \delta (B_1, B_2, \rho, \varepsilon, k) > 0$, $c_0 = c_0 (B_1, B_2, \rho, k) > 0$, and $C_0 = C_0 (B_1, B_2, \rho, \varepsilon, k) > 1$ such that the following holds for any integers $N > C_0$ and $M \in \big[ \frac{c_0 N}{2}, c_0 N \big]$. 
		
		Let $\textbf{\emph{u}}$ denote some entrance data on the quadrant $\mathbb{Z}_{> 0}^2$ that is $(\delta N; \delta)$-regular with slope $\rho$ on the interval $\big[ \frac{N}{2}, \frac{3N}{2} \big) \times \{ 0 \}$. Sample random six-vertex ensembles $\mathcal{E} \in \mathfrak{E}_{\textbf{\emph{u}}} (\mathbb{Z}_{> 0}^2)$ under the $(B_1, B_2)$-stochastic six-vertex model $\mathfrak{S}$ on $\mathbb{Z}_{> 0}^2$ with entrance data $\textbf{\emph{u}}$ and free exit data, and $\mathcal{F} \in \mathfrak{E} (\mathbb{Z}^2)$ under $\mu (\rho)$. Then, 
		\begin{flalign}
		\label{enkmk}
		\Big| \mathbb{P}_{\mathfrak{S}} \big[ \mathcal{E}_{[N - k, N + k] \times [M - k, M + k]} = \mathcal{G} \big] - \mathbb{P}_{\mu (\rho)} \big[ \mathcal{F}_{[-k, k] \times [-k, k]} = \mathcal{G} \big] \Big| < \varepsilon. 
		\end{flalign}

	\end{lem}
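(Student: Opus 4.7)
My plan is to deduce the claim from the local-statistics convergence machinery of \cite{LSLSSSVM}, combined with the construction of $\mu(\rho)$ given in \Cref{Translation}. The overall strategy is to compare the given model $\mathfrak{S}$ (with entrance data $\textbf{u}$) to an auxiliary stochastic six-vertex model with double-sided $\big(\varphi(\rho), \rho\big)$-Bernoulli entrance data, whose local statistics are (by definition) governed by $\mu(\rho)$. The constant $c_0 = c_0(B_1, B_2, \rho)$ would be chosen to be the characteristic speed of the $(B_1, B_2)$-stochastic six-vertex model at slope $\big(\rho, \varphi(\rho)\big)$, so that the ``domain of dependence'' of the target window $[N-k, N+k] \times [M-k, M+k]$ intersects the $x$-axis inside $[N/2, 3N/2) \times \{0\}$, where the regularity hypothesis on $\textbf{u}$ is available.

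First, I would introduce the auxiliary model. Place a $(B_1, B_2)$-stochastic six-vertex model $\mathfrak{S}_0$ on a shifted quadrant whose origin lies sufficiently far below and to the left of the target region, with double-sided $\big(\varphi(\rho), \rho\big)$-Bernoulli entrance data, and sample $\mathcal{E}_0$ from it. By \Cref{murhochi} and the construction $\mu(\rho) = \lim_{N \to \infty} \mu_N(\rho)$, the marginal of $\mathcal{E}_0$ on $[N-k, N+k] \times [M-k, M+k]$ matches the marginal of $\mathcal{F}$ under $\mu(\rho)$ on $[-k, k] \times [-k, k]$, to within total variation error at most $\varepsilon/2$ once the shift is taken large enough (which is permitted since both $N$ and $M$ exceed $C_0$).

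Second, I would couple $\mathcal{E}$ and $\mathcal{E}_0$ so that they agree on $[N-k, N+k] \times [M-k, M+k]$ with probability at least $1 - \varepsilon/2$. The entrance data of $\mathfrak{S}_0$ on $[N/2, 3N/2) \times \{0\}$ is i.i.d.\ Bernoulli$(\rho)$, hence by standard concentration it is itself $(\delta N; 2\delta)$-regular with slope $\rho$ on that interval with overwhelming probability (for $\delta$ fixed and $N$ sufficiently large). Since $\textbf{u}$ is also $(\delta N; \delta)$-regular with slope $\rho$ there, a coupling argument allows the two entrance data to be matched on all but a fraction $O(\delta)$ of the sub-intervals of scale $\delta N$. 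The path-blocking / finite-speed-of-propagation argument of \cite{LSLSSSVM}, which is a refinement of \Cref{modelcouple} applied to many short intervals in tandem, then implies that $\mathcal{E}$ and $\mathcal{E}_0$ coincide on the target window with probability at least $1 - \varepsilon/2$, provided $\delta$ was chosen small enough in terms of $B_1, B_2, \rho, \varepsilon, k$. Combining with the first step yields \eqref{enkmk}.

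The main obstacle is the second step: \Cref{modelcouple} as stated requires \emph{exact} coincidence of the two entrance data on fixed intervals, whereas we only have approximate density matching between the deterministic $\textbf{u}$ and an i.i.d.\ Bernoulli$(\rho)$ sequence. The needed refinement is essentially the local-statistics convergence theorem proven in \cite{LSLSSSVM} in the setting of the stochastic six-vertex model on the discrete upper half-plane; its proof (based on iterating a one-interval coupling along a net of sub-intervals and using the characteristic-speed bound to confine the propagating mismatches) extends mutatis mutandis to the quadrant setting considered here, since the additional entrance data on the $y$-axis cannot influence the target window for $c_0$ chosen strictly below the horizontal characteristic speed.
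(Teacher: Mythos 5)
There is a genuine gap in your second step, and it is the entire content of the lemma. You claim that because $\textbf{u}$ is $(\delta N;\delta)$-regular with slope $\rho$ and the double-sided Bernoulli entrance data is (with high probability) also $(\delta N; 2\delta)$-regular with slope $\rho$, ``a coupling argument allows the two entrance data to be matched on all but a fraction $O(\delta)$ of the sub-intervals of scale $\delta N$.'' Agreement of empirical densities at scale $\delta N$ does not allow the two point configurations themselves to be coupled so as to coincide on most sub-intervals: if, say, $\textbf{u}$ is the periodic configuration of density $\rho$ on $\big[\frac{N}{2},\frac{3N}{2}\big)\times\{0\}$, the probability that an i.i.d.\ Bernoulli$(\rho)$ string agrees with it on any fixed sub-interval of length $\delta N$ is exponentially small in $\delta N$, so with overwhelming probability the two entrance data agree on \emph{none} of the sub-intervals. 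With mismatches dense at scale $\delta N$ along the whole base interval, \Cref{modelcouple} (and any finite-speed-of-propagation refinement of it) only forces the two ensembles to agree up to height $O(\delta N)$, which is far below the target height $M\asymp c_0 N$. You acknowledge this obstacle, but the ``refinement'' you then invoke --- that the local statistics at macroscopic height depend on the entrance data only through its empirical slope --- is exactly the theorem being cited, not a consequence of iterating the one-interval coupling; as written, the proposal reduces the lemma to itself.

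For comparison, the paper's route never couples two models whose entrance data merely have matching densities. It applies Theorem 1.3 of \cite{LSLSSSVM} --- which directly asserts convergence of local statistics to $\mu(\rho)$ for the model on a discrete cylinder with $(\delta N;\delta)$-regular \emph{deterministic} entrance data --- to the restricted data $\textbf{u}\cap J$, and then transfers the conclusion to the quadrant by two couplings in which the entrance data of the compared models coincide \emph{exactly} (Proposition 5.7 of \cite{LSLSSSVM} for cylinder versus half-plane, and \Cref{modelcouple} for half-plane versus quadrant). All the ``insensitivity to microscopic structure'' is thus packaged inside the cited Theorem 1.3, and the remaining steps are elementary finite-speed arguments. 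If you want to salvage your Bernoulli-comparison strategy, you would need to import that same theorem (or reprove it), at which point the auxiliary Bernoulli model becomes unnecessary.
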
 
	
	\begin{proof}[Proof (Outline)]
		
		This result will follow from Theorem 1.3 of \cite{LSLSSSVM} (which establishes \eqref{enkmk} on a discrete cylinder); Proposition 5.7 of \cite{LSLSSSVM} (which compares the stochastic six-vertex model on a discrete cylinder with that on the discrete upper half-plane); and \Cref{modelcouple} above (which enables a comparison between the stochastic six-vertex model on the discrete upper half-plane with that on the positive quadrant). Let us briefly outline how this proceeds. 
		
		First define the discrete torus $\mathbb{T}_N = \mathbb{Z} / N \mathbb{Z}$ and discrete cylinder $\mathfrak{C}_{N; 2M} = \mathbb{T}_N \times \{ 1, 2, \ldots , 2M \}$. Denoting the interval $J = \big( [ \frac{N}{2}, \frac{3N}{2} ) \times \{ 0 \} \big) \cap \mathbb{Z}^2$, we may identify $J$ with the lower end of $\partial \mathfrak{C}_{N; 2M}$; in this way, $\textbf{u} \cap J$ induces entrance data $\mathfrak{C}_{N; 2M}$. Let $\mathcal{E}'$ denote random six-vertex ensemble on $\mathfrak{C}_{N; 2M}$ sampled from the $(B_1, B_2)$-stochastic six-vertex model $\mathfrak{S}'$ on $\mathfrak{C}_{N; 2M}$ with this entrance data $\textbf{u} \cap J$ and free exit data. Then, the $\psi = \rho$ case of Theorem 1.3 of \cite{LSLSSSVM} yields for each real number $\omega > 0$ the existence of constants $\delta = \delta (B_1, B_2, \rho, \varepsilon, \omega, k) > 0$ and $C_1 = C_1 (B_1, B_2, \rho, \varepsilon, \omega, k) > 1$ such that
		\begin{flalign}
		\label{enkmk2}
		\Big| \mathbb{P}_{\mathfrak{S}'} \big[ \mathcal{E}_{[N - k, N + k] \times [M - k, M + k]}' = \mathcal{G} \big] - \mathbb{P}_{\mu (\rho)} \big[ \mathcal{F}_{[-k, k] \times [-k, k]} = \mathcal{G} \big] \Big| < \displaystyle\frac{\varepsilon}{3},
		\end{flalign}
		
		\noindent for any integers $N > C_1$ and $M \in \big[ \omega N, \omega^{-1} N \big]$, if $\textbf{u}$ is $(\delta N; \delta)$-regular. 
		
		Next, let $\mathcal{E}'' \in \mathfrak{E}(\mathbb{Z} \times \mathbb{Z}_{> 0})$ denote a random six-vertex ensemble sampled under the $(B_1, B_2)$-stochastic six-vertex model $\mathfrak{S}''$ on the discrete upper half-plane $\mathbb{Z} \times \mathbb{Z}_{> 0}$ with entrance data $\textbf{u} \cap J$. Then, Proposition 5.7 of \cite{LSLSSSVM} implies the existence of a constant $c_1 = c_1 (B_2) > 0$ and a coupling between $\mathcal{E}'$ and $\mathcal{E}''$ such that they coincide on $[N - M, N + M] \times [0, 2M]$ off of an event of probability at most $c_1^{-1} e^{-c_1 M}$, if $M < c_1 N$. Combined with \eqref{enkmk2}, this for sufficiently large $N$ yields
		\begin{flalign}
		\label{enkmk3}
		\Big| \mathbb{P}_{\mathfrak{S}''} \big[ \mathcal{E}_{[N - k, N + k] \times [M - k, M + k]}'' = \mathcal{G} \big] - \mathbb{P}_{\mu (\rho)} \big[ \mathcal{F}_{[-k, k] \times [-k, k]} = \mathcal{G} \big] \Big| < \displaystyle\frac{2\varepsilon}{3}, 
		\end{flalign}
		
		\noindent if $\omega N < M < c_1 N$. 
		
		Now, since no paths exist in $\mathcal{E}''$ left of the line $x = N$, we may equivalently view $\mathcal{E}''$ as sampled from the $(B_1, B_2)$-stochastic six-vertex ensemble on the quadrant $\mathbb{Z}_{> 0}^2$ with entrance data $\textbf{u} \cap J$. Thus, \Cref{modelcouple} again yields a coupling between $\mathcal{E}$ and $\mathcal{E}''$ such that they coincide on $[N - M, N + M] \times [0, 2M]$ off of an event of probability at most $c_1 e^{-c_1 M}$, if $M < c_1 N$. This, together with \eqref{enkmk3}, implies \eqref{enkmk}. 
	\end{proof}

	\section{Comparison With the Stochastic Six-Vertex Model}
	
	\label{MeasureModel}
	
	In this section we establish \Cref{hstate}, conditional on a certain statement (given by \Cref{mulowerprobability} below). To that end, we begin in \Cref{TransformStochastic} by implementing a gauge transformation to show that a pure state of any ferroelectric six-vertex model is equivalent to a pure state of a particular stochastic six-vertex model. Then in \Cref{Probability1} we introduce the notion of a partition function stochastic lower bound and state two results, given by \Cref{musmu} and \Cref{mulowerprobability}, classifying pure states satisfying this bound; we further establish \Cref{hstate} conditional on these two results. In \Cref{StochasticProbability} and \Cref{StateEstimate} we establish \Cref{musmu}; \Cref{mulowerprobability} will be established in \Cref{ProbabilityLower} below.

	\subsection{Gauge Equivalence With a Stochastic Pure State} 
	
	\label{TransformStochastic}
	
	We begin with the following result stating that the Gibbs property for the six-vertex model is invariant under a certain gauge transformation of its weights; similar results, with analogous proofs, were showed at the end of Section 3 of \cite{ICSVMT} and Appendix A.1 of \cite{CFSAEPSSVMCL} (see also Section 2 of \cite{DTSVMFFP}). 
	
	\begin{lem} 
		
		\label{wabcxir}
		
		Fix real numbers $a_1, a_2, b_1, b_2, c_1, c_2 > 0$ and $r, x, y, z > 0$. If some measure $\mu \in \mathscr{P} \big( \mathfrak{E} (\mathbb{Z}^2) \big)$ satisfies the Gibbs property for the six-vertex model with weights $(a_1, a_2, b_1, b_2, c_1, c_2)$, then it is also does for the six-vertex model with weights $(r a_1, rz a_2, ryz b_1, ry^{-1} b_2, rx c_1, rx^{-1} z c_2)$.
	\end{lem}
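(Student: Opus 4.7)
The strategy is to show that for any finite subdomain $\Lambda' \subseteq \mathbb{Z}^2$ and any two ensembles $\mathcal{E}', \mathcal{F}' \in \mathfrak{E}(\Lambda')$ consistent with the same exterior configuration $\mathcal{H} \in \mathfrak{E}(\mathbb{Z}^2 \setminus \Lambda')$, the ratio $w^{\mathrm{new}}(\mathcal{E}')/w^{\mathrm{new}}(\mathcal{F}')$ equals $w^{\mathrm{old}}(\mathcal{E}')/w^{\mathrm{old}}(\mathcal{F}')$, where $w^{\mathrm{old}}$ and $w^{\mathrm{new}}$ denote the weight functions associated to $(a_1, a_2, b_1, b_2, c_1, c_2)$ and $(r a_1, r z a_2, r y z b_1, r y^{-1} b_2, r x c_1, r x^{-1} z c_2)$ respectively. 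Granting this, normalization makes the two Gibbs conditional probabilities agree, so $\mu$ satisfies the Gibbs property for the new weights.

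By direct inspection of the six lines of \eqref{aibici}, I would first verify that the per-vertex ratio factors as
\[
\frac{w^{\mathrm{new}}(i_1, j_1; i_2, j_2)}{w^{\mathrm{old}}(i_1, j_1; i_2, j_2)} = r \cdot x^{j_2 - j_1} \, y^{i_1 - j_2} \, z^{i_2},
\]
so that taking the product over $v \in \Lambda'$ gives
\[
\frac{w^{\mathrm{new}}(\mathcal{E}')}{w^{\mathrm{old}}(\mathcal{E}')} = r^{|\Lambda'|} \, x^{\alpha(\mathcal{E}')} \, y^{\beta(\mathcal{E}')} \, z^{\gamma(\mathcal{E}')},
\]
with $\alpha = \sum_v (j_2(v) - j_1(v))$, $\beta = \sum_v (i_1(v) - j_2(v))$, and $\gamma = \sum_v i_2(v)$. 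It therefore suffices to show that each of $\alpha, \beta, \gamma$ depends on $\mathcal{E}'$ only through the entrance and exit data $\textbf{u} \cup \textbf{v}$ that $\mathcal{E}'$ induces on $\partial \Lambda'$, which is determined by $\mathcal{H}$.

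The quantity $\alpha$ telescopes: for each interior horizontal edge $(v, v+(1,0))$ consistency gives $j_2(v) = j_1(v+(1,0))$, so these terms cancel in pairs, leaving $\alpha$ equal to (east boundary horizontal arrows) minus (west boundary horizontal arrows). For $\beta$ and $\gamma$, analogous bookkeeping yields $\sum_v i_1(v) = V_{\mathrm{int}} + V_{\mathrm{south}}$, $\sum_v i_2(v) = V_{\mathrm{int}} + V_{\mathrm{north}}$, and $\sum_v j_2(v) = H_{\mathrm{int}} + H_{\mathrm{east}}$, where $V_{\mathrm{int}}$ and $H_{\mathrm{int}}$ denote the numbers of vertical and horizontal arrows on edges with both endpoints inside $\Lambda'$. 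So everything reduces to showing $V_{\mathrm{int}}$ and $H_{\mathrm{int}}$ are boundary-determined.

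This is the main step, and the place where the non-crossing path structure enters. Decomposing $\mathcal{E}'$ into its associated non-crossing family of directed paths, the fixed entrance/exit data on $\partial \Lambda'$ forces a unique order-preserving pairing between entrance and exit vertices. For a path $p$ running from $u_p = (x_0, y_0)$ to $v_p = (x_1, y_1)$ on $\partial \Lambda'$, the total number of vertical arrows along $p$ is $y_1 - y_0$ and the total number of horizontal arrows is $x_1 - x_0$; of these, the initial and final arrows lie on $\partial \Lambda'$ and are vertical or horizontal according to whether $u_p$ and $v_p$ lie on the south/north or west/east boundary. Subtracting these boundary contributions from the totals and summing over all paths expresses $V_{\mathrm{int}}$ and $H_{\mathrm{int}}$ explicitly as functions of the entrance and exit data, and hence of $\mathcal{H}$. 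The main obstacle is this path-counting step, which is elementary once one invokes uniqueness of the non-crossing pairing. Assembling the pieces yields the equality of weight ratios, completing the proof.
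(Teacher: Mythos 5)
Your proposal is correct and follows essentially the same route as the paper: both factor the ratio of new to old weights into a per-vertex gauge factor and then show the resulting exponents are conserved quantities determined solely by the boundary data induced by the exterior configuration, using arrow conservation along rows/columns and the displacement of the non-crossing paths. The only difference is cosmetic bookkeeping — your exponents $\alpha, \beta, \gamma$ versus the paper's $M_1, M_2, M_3, M_4$ (note $\alpha = N_5 - N_6$ is exactly the paper's $M_2$), and your telescoping argument for $\alpha$ in place of the paper's path-index count.
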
 

	\begin{proof}
		
		Let $\mathcal{E}\in \mathfrak{E} (\mathbb{Z}^2)$ denote a random six-vertex ensemble on $\mathbb{Z}^2$ sampled under $\mu$. Fix a finite rectangular domain $\Lambda \subset \mathbb{Z}^2$, and condition on the restriction $\mathcal{E}_{\mathbb{Z}^2 \setminus \Lambda} = \mathcal{H}$ of $\mathcal{E}$ to the complement of $\Lambda$. Then $\mathcal{H}$ induces boundary conditions, denoted by $\textbf{u} \cup \textbf{v} = \textbf{u} (\mathcal{H}) \cup \textbf{v} (\mathcal{H})$, for the restriction $\mathcal{E}_{\Lambda}$ of $\mathcal{E}$ to $\Lambda$. Set $\textbf{u} = (u_{-B}, u_{1 - B}, \ldots , u_A)$ and $\textbf{v} = (v_{-B}, v_{1 - B}, \ldots , v_A)$, for some integers $A = A (\mathcal{H}) \ge 0$ and $B = B (\mathcal{H}) \ge -1$.
		
		Now, for any six-vertex ensemble $\mathcal{G} \in \mathfrak{E} (\Lambda)$ on $\Lambda$, let $N_1 (\mathcal{G})$, $N_2 (\mathcal{G})$, $N_3 (\mathcal{G})$, $N_4 (\mathcal{G})$, $N_5 (\mathcal{G})$, and $N_6 (\mathcal{G})$ denote the numbers of vertices in $\Lambda$ whose arrow configurations under $\mathcal{G}$ are given by $(0, 0; 0, 0)$, $(1, 1; 1, 1)$, $(1, 0; 1, 0)$, $(0, 1; 0, 1)$, $(1, 0; 0, 1)$, and $(0, 1; 1, 0)$, respectively. Then, since $\mu$ satisfies the Gibbs property for the six-vertex model with weights $(a_1, a_2, b_1, b_2, c_1, c_2)$, there exists a constant $Z = Z_{\mathcal{H}} > 0$ such that 
		\begin{flalign}
		\label{mulambdag123123}
		\begin{aligned}
		\mathbb{P} \big[ \mathcal{E}_{\Lambda} = \mathcal{G} \big| \mathcal{E}_{\mathbb{Z}^2 \setminus \Lambda} = \mathcal{H} \big] & = Z^{-1} a_1^{N_1} a_2^{N_2} b_1^{N_3} b_2^{N_4} c_1^{N_5} c_2^{N_6} \\
		& = Z^{-1} r^{-M_1}  x^{-M_2} y^{(M_4 - M_3) / 2} z^{(M_2 - M_3) / 2} \\
		& \qquad \times (ra_1)^{N_1} (rz a_2)^{N_2} (ryz b_1)^{N_3} (r y^{-1} b_2)^{N_4} (rx c_1)^{N_5} (rx^{-1} z c_2)^{N_6},
		\end{aligned} 
		\end{flalign}
		
		\noindent for any $\mathcal{G} \in \mathfrak{E}_{\textbf{u}; \textbf{v}} (\Lambda)$ with boundary data consistent with that of $\mathcal{H}$ (namely, $\textbf{u} \cup \textbf{v}$). Here, we have abbreviated $N_i = N_i (\mathcal{G})$ for each index $i \in [1, 6]$ and defined the $M_i = M_i (\mathcal{G})$ by 
		\begin{flalign*} 
		& M_1 = N_1 + N_2 + N_3 + N_4 + N_5 + N_6; \qquad M_2 = N_5 - N_6; \\
		& M_3 = 2 N_2 + 2 N_3 + N_5 + N_6; \qquad \qquad \qquad M_4 = 2N_2 + 2N_4 + N_5 + N_6.
		\end{flalign*}
		
		We claim that $M_1$, $M_2$, $M_3$, and $M_4$ are all independent of the six-vertex ensemble $\mathcal{G} \in \mathfrak{E}_{\textbf{u}; \textbf{v}} (\Lambda)$. Indeed, this holds for $M_1$, as it denotes the number of vertices in $\Lambda$. Furthermore, $2 N_2 + 2 N_3 + N_5 + N_6$ denotes the number of vertical edges in $\Lambda$ occupied by an arrow under $\mathcal{G}$. So, setting $\mathscr{Y} (x, y) = y$ for any $(x, y) \in \mathbb{Z}^2$, we have that $M_3 = \sum_{j = -B}^A \big( \mathscr{Y} (v_j) - \mathscr{Y} (u_j) \big)$, which only depends on $\textbf{u} \cup \textbf{v}$. Similarly, $2 N_2 + 2 N_4 + N_5 + N_6$ denotes the number of horizontal edges in $\Lambda$ occupied by an arrow under $\mathcal{G}$. So, setting $\mathscr{X} (x, y) = x$ for any $(x, y) \in \mathbb{Z}^2$, we have that $M_4 = \sum_{j = -B}^A \big( \mathscr{X} (v_j) - \mathscr{X} (u_j) \big)$, which too only depends on $\textbf{u} \cup \textbf{v}$. Moreover, letting $k = k (\mathcal{H}) \in [-B - 1, A]$ denote the maximal index of a path whose ending point $v_k$ is on the east boundary of $\Lambda$ (where we set $k = - B - 1$ if no path satisfies this property), we find that $M_2 = N_5 - N_6 = B - A + k + 1$ also is independent of $\mathcal{G}\in \mathfrak{E}_{\textbf{u}; \textbf{v}} (\Lambda)$. 
		
		Thus, the prefactor $Z^{-1} r^{-M_1}  x^{-M_2} y^{(M_4 - M_3) / 2} z^{(M_2 - M_3) / 2}$ on the right side of \eqref{mulambdag123123} only depends on $\mathcal{H}$, and so it follows that $\mu$ satisfies the Gibbs property for the six-vertex model with weights $(r a_1, rz a_2, ryz b_1, ry^{-1} b_2, rx c_1, rx^{-1} z c_2)$ on any rectangular subdomain of $\mathbb{Z}^2$. Since any finite domain $\Lambda \subset \mathbb{Z}^2$ is a subdomain of a rectangular one, we deduce that $\mu$ satisfies the Gibbs property for the six-vertex model with weights $(r a_1, rz a_2, ryz b_1, ry^{-1} b_2, rx c_1, rx^{-1} z c_2)$.
	\end{proof}

	The following corollary states that we can gauge transform the weights of any ferroelectric six-vertex model to those of a stochastic six-vertex model, while presevering the Gibbs property. Observe that the appearance of $\Delta - \sqrt{\Delta^2 - 1}$ in the below weights necessitates $\Delta \ge 1$, namely, that the six-vertex model is in its ferroelectric phase.

	\begin{cor}
		
		\label{aibicib1b2}
		
		Fix real numbers $a_1, a_2, b_1, b_2, c_1, c_2 > 0$, and set $\Delta$ as in \eqref{parameter1}. Assume that $\Delta \ge 1$ and that $b_1 b_2 < a_1 a_2$, and define 
		 \begin{flalign}
		 \label{b1b2}
		 B_1 = \big( \Delta - \sqrt{\Delta^2 - 1} \big) \sqrt{\displaystyle\frac{b_1 b_2}{a_1 a_2}}; \qquad B_2 = \big( \Delta + \sqrt{\Delta^2 - 1} \big) \sqrt{\displaystyle\frac{b_1 b_2}{a_1 a_2}}.
		 \end{flalign}	
		 
		 \noindent Then $\mu \in \mathscr{P} \big( \mathfrak{E} (\mathbb{Z}^2) \big)$ is a Gibbs measure for the six-vertex model with weights $(a_1, a_2, b_1, b_2, c_1, c_2)$ if and only if it is one for the $(B_1, B_2)$-stochastic six-vertex model. 
		
	\end{cor}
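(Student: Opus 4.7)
The plan is to apply \Cref{wabcxir} with a carefully chosen quadruple $(r, x, y, z)$ so that the transformed weights $(ra_1, rza_2, ryzb_1, ry^{-1}b_2, rxc_1, rx^{-1}zc_2)$ become precisely the stochastic weights $(1, 1, B_1, B_2, 1 - B_1, 1 - B_2)$. Since \Cref{wabcxir} is plainly reversible (given the transformed tuple, one recovers the original by inverting the elementary gauge), the biconditional will follow immediately once a single valid choice of $(r,x,y,z)$ is produced.

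First, I would read off $r$ and $z$ from the two $a$-equations: $ra_1 = 1$ forces $r = a_1^{-1}$, and $rza_2 = 1$ then forces $z = a_1 / a_2$. Next I would use the $b_1$-equation $ryzb_1 = B_1$ to solve $y = B_1 a_2 / b_1$, and the $c_1$-equation $rxc_1 = 1 - B_1$ to solve $x = (1 - B_1) a_1 / c_1$. These four choices are manifestly positive (using $0 < B_1 < 1$, which I will verify below), so they satisfy the hypothesis of \Cref{wabcxir}; the content of the corollary is then the claim that these same four numbers automatically satisfy the remaining two equations
\begin{equation*}
ry^{-1} b_2 = B_2, \qquad rx^{-1} z c_2 = 1 - B_2.
\end{equation*}
Substituting yields the equivalent identities $B_1 B_2 = b_1 b_2/(a_1 a_2)$ and $(1 - B_1)(1 - B_2) = c_1 c_2 /(a_1 a_2)$, which is where the specific form of \eqref{b1b2} enters.

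The core of the proof is then the algebraic verification of these two identities. The first is immediate from $(\Delta - \sqrt{\Delta^2 - 1})(\Delta + \sqrt{\Delta^2 - 1}) = 1$. For the second, I would compute
\begin{equation*}
B_1 + B_2 = 2 \Delta \sqrt{\tfrac{b_1 b_2}{a_1 a_2}} = \frac{a_1 a_2 + b_1 b_2 - c_1 c_2}{a_1 a_2},
\end{equation*}
using the definition \eqref{parameter1} of $\Delta$, and then expand
\begin{equation*}
(1 - B_1)(1 - B_2) = 1 - (B_1 + B_2) + B_1 B_2 = 1 - \frac{a_1 a_2 + b_1 b_2 - c_1 c_2}{a_1 a_2} + \frac{b_1 b_2}{a_1 a_2} = \frac{c_1 c_2}{a_1 a_2},
\end{equation*}
as required. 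It also remains to check that $B_1, B_2 \in (0, 1)$ so that the $(B_1, B_2)$-stochastic model of \Cref{ModelStochastic} is well-defined; positivity is clear, and writing $\beta = \sqrt{b_1 b_2/a_1 a_2} \in (0, 1)$ (using $a_1 a_2 > b_1 b_2$), the bound $B_2 < 1$ follows from $\Delta + \sqrt{\Delta^2 - 1} < \beta^{-1} + \beta$, which is in turn equivalent to the positivity of $c_1 c_2$ via the formula for $\Delta$.

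There is no real obstacle beyond the algebra; the only thing to be slightly careful about is the direction of implication in the biconditional. The ``only if'' direction is the direct application of \Cref{wabcxir} with the tuple $(r, x, y, z)$ constructed above. For the ``if'' direction, I would invoke \Cref{wabcxir} again with the inverse parameters $(\tilde r, \tilde x, \tilde y, \tilde z) = (r^{-1}, x^{-1}, y^{-1}, z^{-1})$ (or equivalently observe that the transformation in \Cref{wabcxir} is a group action under multiplication of the parameters, so its inverse acts as well), which carries $(1, 1, B_1, B_2, 1 - B_1, 1 - B_2)$ back to $(a_1, a_2, b_1, b_2, c_1, c_2)$ by the identities just established.
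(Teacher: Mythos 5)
Your proposal is correct and follows essentially the same route as the paper: the quadruple $(r, x, y, z)$ you solve for is exactly the one the paper plugs into \Cref{wabcxir}, and you merely make explicit the two residual identities $B_1 B_2 = b_1 b_2 / (a_1 a_2)$ and $(1 - B_1)(1 - B_2) = c_1 c_2 / (a_1 a_2)$ that the paper leaves to the reader. The only small wobble is in your check that $B_2 < 1$, which requires $\Delta + \sqrt{\Delta^2 - 1} < \beta^{-1}$ rather than the weaker $\Delta + \sqrt{\Delta^2 - 1} < \beta^{-1} + \beta$ you wrote (one can either use the monotonicity of $u \mapsto u + u^{-1}$ on $[1, \infty)$ together with $2\Delta < \beta + \beta^{-1}$, or simply deduce $1 - B_2 > 0$ from $(1 - B_1)(1 - B_2) = c_1 c_2/(a_1 a_2) > 0$ and $B_1 < 1$).
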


	\begin{proof}
	
	 Define the real numbers $r$, $x$, $y$, and $z$ by  
	\begin{flalign*}
	r = \displaystyle\frac{1}{a_1}; \quad  x = \displaystyle\frac{a_1}{c_1} \left( 1 - \big( \Delta - \sqrt{\Delta^2 - 1} \big) \sqrt{\displaystyle\frac{b_1 b_2}{a_1 a_2}} \right); \quad y = \big( \Delta - \sqrt{\Delta^2 - 1} \big) \sqrt{\displaystyle\frac{a_2 b_2}{a_1 b_1}}; \quad z = \displaystyle\frac{a_1}{a_2}, 
	\end{flalign*}

	\noindent which are all positive due to the bounds 
	\begin{flalign*} 
	a_1, a_2, b_1, b_2, c_1, c_2 > 0; \qquad 0 < \Delta - \sqrt{\Delta^2 - 1} \le 1 < \sqrt{\frac{a_1 a_2}{b_1 b_2}}.
	\end{flalign*} 
	
	\noindent Now the corollary follows from \Cref{wabcxir} and the fact that $(1, 1, B_1, B_2, 1 - B_1, 1 - B_2) = (ra_1, rz a_2, ryzb_1, ry^{-1} b_2, rx c_1, rx^{-1} z c_2)$.
	\end{proof}

	\begin{rem}
	
	\label{hb} 
	
	Under the choices of $0 < B_1 < B_2 < 1$ in \eqref{b1b2}, recalling $\mathfrak{h}$ from \eqref{h} and $\varphi$ from \eqref{kappafunction}, the conditions $t < \varphi (s)$ and $t = \varphi (s)$ are equivalent to $\mathfrak{h} (s, t) < 0$ and $\mathfrak{h} (s, t) = 0$, respectively.

	\end{rem} 
	
	In view of \Cref{aibicib1b2} and \Cref{hb}, we can now use the measures $\mu (\rho)$ from \Cref{Translation} to show existence of a pure state for the ferroelectric six-vertex model with any slope $(s, t) \in \partial \mathfrak{H}$. 
	
	\begin{cor}
		
		\label{murhoaibici} 
		
		Fix real numbers $a_1, a_2, b_1, b_2, c_1, c_2 > 0$; set $\Delta$ as in \eqref{parameter1}; assume that $\Delta > 1$ and $a_1 a_2 > b_1 b_2$; and let $(s, t) \in \partial \mathfrak{H}$. Then, there exists a pure state of slope $(s, t)$ for the six-vertex model with weights $(a_1, a_2, b_1, b_2, c_1, c_2)$.
		
	\end{cor}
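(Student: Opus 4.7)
The plan is to construct the required pure state by combining the gauge equivalence of \Cref{aibicib1b2} with the family of stochastic pure states $\mu(\rho)$ provided by \Cref{murho1}, handling the two components of $\partial \mathfrak{H} = \mathfrak{h}_1 \cup \mathfrak{h}_2$ separately.

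Suppose first that $(s, t) \in \mathfrak{h}_1$, so that $\mathfrak{h}(s, t) = 0$. Define $B_1, B_2$ by \eqref{b1b2}; the hypotheses $\Delta > 1$, $a_1 a_2 > b_1 b_2$, and $c_1 c_2 > 0$ together imply $0 < B_1 < B_2 < 1$. By \Cref{hb}, the equation $\mathfrak{h}(s, t) = 0$ is equivalent to $t = \varphi(s)$, with $\varphi$ as in \eqref{kappafunction}. Then \Cref{murho1} asserts that $\mu(s)$ is a pure state of slope $(s, \varphi(s)) = (s, t)$ for the $(B_1, B_2)$-stochastic six-vertex model; in particular, it is translation-invariant, ergodic, and satisfies the Gibbs property for the weights $(1, 1, B_1, B_2, 1 - B_1, 1 - B_2)$. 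By \Cref{aibicib1b2}, satisfying the Gibbs property for these stochastic weights is equivalent to satisfying it for $(a_1, a_2, b_1, b_2, c_1, c_2)$. Since translation-invariance, ergodicity, and slope depend only on the measure itself, $\mu(s)$ is then a pure state of slope $(s, t)$ for the ferroelectric six-vertex model, as required.

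Next suppose $(s, t) \in \mathfrak{h}_2$, so that $\mathfrak{h}(t, s) = 0$. I would reduce to the previous case by reflecting across the diagonal $y = x$. Let $\mathfrak{R}: \mathbb{Z}^2 \to \mathbb{Z}^2$ be the involution $(x, y) \mapsto (y, x)$; it induces a map on $\mathfrak{E}(\mathbb{Z}^2)$, also denoted $\mathfrak{R}$, sending a six-vertex ensemble with arrow configuration $(i_1, j_1; i_2, j_2)$ at $(y, x)$ to the ensemble with arrow configuration $(j_1, i_1; j_2, i_2)$ at $(x, y)$. A direct inspection of the table \eqref{aibici} shows that the pushforward $\mathfrak{R}_* \mu$ of a Gibbs measure $\mu$ for the weights $(a_1, a_2, b_1, b_2, c_1, c_2)$ is a Gibbs measure for the reflected weights $(a_1, a_2, b_2, b_1, c_2, c_1)$; moreover $\mathfrak{R}$ exchanges the two slope coordinates and commutes with translations, so it carries pure states to pure states. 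The reflected weights still satisfy $\Delta > 1$ and $a_1 a_2 > b_2 b_1 = b_1 b_2$, and the function $\mathfrak{h}$ from \eqref{h} depends on the weights only through $\Delta$ and the ratio $a_1 a_2/(b_1 b_2)$, so the curves $\mathfrak{h}_1$ and $\mathfrak{h}_2$ are identical for the original and reflected weights. Since $\mathfrak{h}(t, s) = 0$, the slope $(t, s)$ lies on $\mathfrak{h}_1$ for the reflected weights; the first case then produces a pure state $\tilde{\mu}$ of slope $(t, s)$ for those weights, and $\mathfrak{R}_* \tilde{\mu}$ is a pure state of slope $(s, t)$ for the original weights.

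The substantive content lies entirely in the already-established \Cref{aibicib1b2} and \Cref{murho1}; the only new verification needed here is the diagonal symmetry used in the second case, which reduces to a routine bookkeeping check on the list of arrow configurations in \eqref{aibici}. I do not anticipate any serious obstacle beyond tracking orientations carefully under the reflection.
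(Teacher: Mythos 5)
Your proposal is correct and follows essentially the same route as the paper: the $\mathfrak{h}_1$ case via \Cref{hb}, \Cref{murho1}, and \Cref{aibicib1b2}, and the $\mathfrak{h}_2$ case by reflecting across the diagonal $y = x$ a pure state of slope $(t,s)$ for the weights $(a_1, a_2, b_2, b_1, c_2, c_1)$, using that $\mathfrak{h}$ and \eqref{b1b2} depend on the weights only through the products $a_1 a_2$, $b_1 b_2$, $c_1 c_2$. No gaps.
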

	
	\begin{proof}
		
		Since $(s, t) \in \partial \mathfrak{H}$, we have $(s, t) \in \mathfrak{h}_1 \cup \mathfrak{h}_2$; let us first assume $(s, t) \in \mathfrak{h}_1$, so that $\mathfrak{h} (s, t) = 0$. Setting $0 < B_1 < B_2 < 1$ as in \eqref{b1b2} and defining $\varphi$ as in \eqref{kappafunction}, \Cref{hb} implies that the condition $\mathfrak{h} (s, t) = 0$ is equivalent to $t = \varphi (s)$. Then, by \Cref{murho1}, the measure $\mu (s)$ from \Cref{Translation} is a pure state of slope $\big( s, \varphi (s) \big) = (s, t)$ for the $(B_1, B_2)$-stochastic six-vertex model. Thus, \Cref{aibicib1b2} implies it is also a pure state of slope $(s, t)$ for the six-vertex model with weights $(a_1, a_2, b_1, b_2, c_1, c_2)$, establishing the corollary if $(s, t) \in \mathfrak{h}_1$. 
		
		If instead $(s, t) \in \mathfrak{h}_2$, then $(t, s) \in \mathfrak{h}_1$. Therefore, the above implies that $\mu (t)$ is a pure state of slope $(t, s)$ for the six-vertex model with weights $(a_1, a_2, b_2, b_1, c_2, c_1)$ (where we were permitted to interchange weights in the pairs $(b_2, b_1)$ and $(c_2, c_1)$, since the definitions \eqref{h} of $\mathfrak{h}$ and \eqref{b1b2} of $B_1, B_2$ only depend on $(a_1, a_2, b_1, b_2, c_1, c_2)$ through the three products $(a_1 a_2, b_1 b_2, c_1 c_2)$). Let $\widetilde{\mu} (t) \in \mathscr{P} \big( \mathfrak{E} (\mathbb{Z}^2) \big)$ denote the ``reflection'' of this measure into the line $y = x$, namely, the law of $\widetilde{\mathcal{E}} \in \mathfrak{E} (\mathbb{Z}^2)$ that is the ensemble obtained by first sampling $\mathcal{E} \in \mathfrak{E} (\mathbb{Z}^2)$ under $\mu (t)$ and then reflecting $\mathcal{E}$ into the line $y = x$. Since this reflection changes the six-vertex weights $(a_1, a_2, b_2, b_1, c_2, c_1)$ to $(a_1, a_2, b_1, b_2, c_1, c_2)$, and changes any slope $(s', t')$ of a pure state to $(t', s')$, $\widetilde{\mu} (t)$ is a pure state of slope $(s, t)$ for the six-vertex model with weights $(a_1, a_2, b_1, b_2, c_1, c_2)$. This shows the existence of a pure state for this model of any slope $(s, t) \in \mathfrak{h}_1 \cup \mathfrak{h}_2 = \partial \mathfrak{H}$, thereby establishing the corollary. 
	\end{proof}

	\subsection{Partition Function Stochastic Lower Bounds} 
	
	\label{Probability1}

	We begin this section with the following definition for when a pure state of the stochastic six-vertex model has the property that its partition function on an $N \times N$ domain is likely at least $e^{- o (N^2)}$. 
	
	\begin{definition}
		
		\label{estimateprobabilitylower}
		
		Fix real numbers $0 < B_1 < B_2 < 1$ and a translation-invariant Gibbs measure $\mu \in \mathscr{P} \big( \mathfrak{E} (\mathbb{Z}^2) \big)$ for the $(B_1, B_2)$-stochastic six-vertex model. We say that $\mu$ satisfies a \emph{partition function stochastic lower bound} if, for any real number $\delta > 0$, there exists a constant $C_0 = C_0 (\delta, \mu) > 1$ such that the following holds for any integer $N > C_0$. 
		
		Set $\Lambda_N = [1, N] \times [1, N]$, and let $\nu_N \in \mathscr{P} \big( \mathfrak{E} (\mathbb{Z}^2 \setminus \Lambda_N) \big)$ denote the marginal distribution of $\mu$ on $\mathfrak{E} (\mathbb{Z}^2 \setminus \Lambda_N)$ (induced by restricting six-vertex ensembles on $\mathbb{Z}^2$ to ones on $\mathbb{Z}^2 \setminus \Lambda_N$). Any six-vertex ensemble $\mathcal{H}\in \mathfrak{E}(\mathbb{Z}^2 \setminus \Lambda_N)$ that can be obtained as the restriction of one on $\mathbb{Z}^2$ induces boundary conditions $\textbf{u} (\mathcal{H}) \cup \textbf{v} (\mathcal{H})$ on $\Lambda$. Recalling the weights $w (\mathcal{E})$ from \eqref{eweight} for the $(B_1, B_2)$-stochastic six-vertex model and defining the partition function $Z (\mathcal{H}) = \sum_{\mathcal{E} \in \mathfrak{E}_{\textbf{u} (\mathcal{H}); \textbf{v} (\mathcal{H})}} w (\mathcal{E})$, we have 
		\begin{flalign*}
		\mathbb{P}_{\nu_N} \big[  Z (\mathcal{H}) \ge e^{-\delta N^2} \big] \ge 1 - \delta. 
		\end{flalign*}

	\end{definition} 

	Before describing the use of this notion, let us state two results. The first, which will be established in \Cref{StateEstimate} below, classifies any pure state satisfying a partition function stochastic lower bound as one of the measures $\mu (\rho)$ from \Cref{Translation}. The second, which will be established in \Cref{Lower1} below, states that any pure state with a certain slope satisfies a partition function stochastic lower bound.

	\begin{prop}
		
		\label{musmu}
			
		Fix real numbers $0 < B_1 < B_2 < 1$ and a pair $(s, t) \in (0, 1]^2$. If $\mu \in \mathscr{P} (\mathbb{Z}^2)$ is a pure state of slope $(s, t)$ for the $(B_1, B_2)$-stochastic six-vertex model that satisfies a partition function stochastic lower bound, then $\mu = \mu (s)$. 
		
	\end{prop}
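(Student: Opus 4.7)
My plan is to show that $\mu$ and $\mu(s)$ assign the same probability to every cylinder event on $\mathfrak{E}(\mathbb{Z}^2)$, which forces $\mu = \mu(s)$. Fix a local configuration $\mathcal{F} \in \mathfrak{E}([-k,k]^2)$ and set $\mathcal{G} = \{\mathcal{E}_{[-k,k]^2} = \mathcal{F}\}$; write $\alpha = \mathbb{P}_\mu[\mathcal{G}]$ and $\beta = \mathbb{P}_{\mu(s)}[\mathcal{G}]$, with the goal $\alpha = \beta$.

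The first step is a pointwise comparison between $\mu$ and the free-exit stochastic six-vertex model. Fix a small $\delta > 0$, set $\Lambda_N = [1,N]^2$, sample $\mathcal{E} \sim \mu$, and let $\mathcal{H} = \mathcal{E}|_{\mathbb{Z}^2 \setminus \Lambda_N}$ induce boundary data $\textbf{u} \cup \textbf{v}$ on $\Lambda_N$. By \Cref{rhosmu} and the assumed partition function stochastic lower bound, there is a ``good event'' $\mathcal{D}_N$, of $\mu$-probability at least $1 - O(\delta)$, on which $\textbf{u} \cup \textbf{v}$ is $(\delta N; \delta)$-regular of slope $(s,t)$ and $Z(\mathcal{H}) \ge e^{-\delta N^2}$. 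Since the weights \eqref{bstochasticb} are stochastic (so that the free-exit partition function equals $\sum_{\textbf{v}} Z_{\textbf{u};\textbf{v};\Lambda_N} = 1$), the Gibbs property and \Cref{ensemblemeasure} give, on $\mathcal{D}_N$,
\begin{flalign*}
\mathbb{P}_{\mu | \mathcal{H}}[\mathcal{A}] = Z(\mathcal{H})^{-1} \mathbb{P}_{\mathfrak{S}_\textbf{u}}\bigl[\mathcal{A} \cap \{\text{exit data} = \textbf{v}\}\bigr] \le e^{\delta N^2} \mathbb{P}_{\mathfrak{S}_\textbf{u}}[\mathcal{A}]
\end{flalign*}
for any event $\mathcal{A} \subseteq \mathfrak{E}_{\textbf{u}}(\Lambda_N)$, where $\mathfrak{S}_{\textbf{u}}$ denotes the $(B_1,B_2)$-stochastic six-vertex model on $\Lambda_N$ with entrance data $\textbf{u}$ and free exit.

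The bulk of the argument combines this comparison with an empirical density. Choose a sub-rectangle $R \subset \Lambda_N$ deep enough in the interior so that \Cref{modellocal} applies at each of its vertices with respect to the south boundary of $\Lambda_N$, and set
\begin{flalign*}
F_N = |R|^{-1} \sum_{(p,q) \in R} \mathbf{1}\bigl[\mathcal{E}_{[p-k,p+k] \times [q-k,q+k]} = \mathcal{F}\bigr].
\end{flalign*}
By translation invariance of $\mu$, $\mathbb{E}_\mu[F_N] = \alpha$, and the $\mathbb{Z}^2$-ergodic theorem for the ergodic pure state $\mu$ gives $F_N \to \alpha$ in $L^1(\mu)$, hence in probability. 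Meanwhile \Cref{modellocal} applied point-by-point gives $\mathbb{E}_{\mathfrak{S}_\textbf{u}}[F_N] = \beta + o(1)$ on $\mathcal{D}_N$. If one can further establish the concentration estimate
\begin{flalign*}
\mathbb{P}_{\mathfrak{S}_\textbf{u}}\bigl[|F_N - \beta| > \varepsilon\bigr] \le e^{-c(\varepsilon) N^2}
\end{flalign*}
for some $c(\varepsilon) > 0$ whenever $\textbf{u}$ is sufficiently regular with slope $s$, the proof closes: assuming $\alpha > \beta$, set $\varepsilon = (\alpha - \beta)/3$ and $\delta < c(\varepsilon)/2$; the comparison yields $\mathbb{P}_\mu[|F_N - \beta| > \varepsilon] \le e^{\delta N^2} e^{-c N^2} + O(\delta) \to 0$, contradicting $F_N \to \alpha$ in $\mu$-probability. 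The symmetric case $\alpha < \beta$ is identical, forcing $\alpha = \beta$.

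The main obstacle is exactly this $N^2$-rate concentration for $F_N$ under the free-exit stochastic model, which substantially strengthens the first-moment content of \Cref{modellocal}. My plan here is to exploit the exponentially fast decorrelation between separated regions supplied by \Cref{modelcouple}: tile $R$ with mesoscopic sub-boxes of side $L$, and use \Cref{modelcouple} to couple the dynamics inside each tile with an independent replica driven by a fresh copy of the south entrance data restricted to that tile. The tile-wise block averages then become essentially independent bounded random variables, to which a Hoeffding- or Bernstein-type inequality delivers concentration at the required scale. The delicate technical point is to balance $L$ so that the total coupling error (of size $e^{-cL}$ per pair) sums to something negligible at the $e^{-c N^2}$ scale while retaining enough independent blocks to drive concentration, a balance which likely requires a multi-scale iteration of \Cref{modelcouple} rather than a single application.
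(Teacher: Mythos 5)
Your overall architecture matches the paper's: compare $\mu$ with the free-exit stochastic model via the partition function lower bound (your first display is exactly \Cref{modelmodelstochastic}), form an empirical average of the local indicator, identify its $\mu$-limit via the ergodic theorem, and identify its limit under the free model via \Cref{modellocal}. The crux, as you correctly flag, is the super-exponential concentration of the empirical average under the free-exit model, at a rate strong enough to beat the $e^{\delta N^2}$ cost of the comparison. The problem is that your proposed mechanism for this step cannot work. If you tile with boxes of side $L$ and use \Cref{modelcouple} to replace each tile by an independent replica, the coupling fails on each tile with probability of order $e^{-cL}$, so the total coupling error is at least $e^{-cL}$; to make this $\le e^{-\gamma N^2}$ you would need $L \gtrsim N^2$, which leaves you with no blocks, and a multi-scale iteration does not escape this because the error at the finest scale always dominates. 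No amount of balancing fixes an approximate-independence error that is only exponential in the block side when the target is exponential in $N^2$.

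The paper's resolution (\Cref{thetaistochastic}) is to use \emph{exact} conditional independence rather than approximate decorrelation: by the Markov property of the stochastic sampling procedure (\Cref{ensemblen}), each block $\Omega_i$ is, conditionally on its own entrance data, exactly independent of all earlier blocks, so the block indicators $\psi_i$ form a Doob martingale difference sequence with conditional means pinned near $\mathbb{P}_{\mu(s)}[\cdot]$ by \Cref{modellocal}, and Azuma--Hoeffding gives rate $e^{-cK^2}=e^{-cN^2/M^2}$ with zero coupling error; one then takes $\delta$ small relative to $c/M^2$ after fixing $M$. Two further points your sketch glosses over and the paper must handle: (i) the average must be taken over a sparse grid of $K^2$ well-separated points (one per block), not densely over all of $R$ --- a dense sum revealed diagonal-by-diagonal has martingale increments of order $1/N$ and Azuma then yields only $e^{-cN}$, which is insufficient; and (ii) \Cref{modellocal} requires the \emph{internal} entrance data of each block to be regular, a random event $\Theta_i(\eta)$, so the concentration only holds on $\bigcap_i\Theta_i(\eta)$, and the paper needs a separate entropy/union-bound argument over large index sets $\mathcal{I}$ plus \Cref{rhosmu} under $\mu$ to discard the exceptional blocks (see \Cref{musumpsiig}). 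Your condition ``whenever $\textbf{u}$ is sufficiently regular'' only constrains the boundary of $\Lambda_N$ and does not address this.
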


	\begin{prop}
		
		\label{mulowerprobability} 
		
		Fix real numbers $0 < B_1 < B_2 < 1$ and a pair $(s, t) \in (0, 1]^2$. Let $\mu$ denote a pure state of slope $(s, t)$ for the $(B_1, B_2)$-stochastic six-vertex model. If $s \le t \le \varphi (s)$, then $\mu$ satisfies a partition function stochastic lower bound.
		
	\end{prop}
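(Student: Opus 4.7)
The plan is to first establish a much stronger partition function bound for the specific stochastic Gibbs state $\mu(s)$ via an entropy argument, and then transfer the bound to the general pure state $\mu$ through a sparsification procedure.

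I would first treat the base case $\mu = \mu(s)$, which by \Cref{murho1} is a pure state of slope $\bigl(s, \varphi(s)\bigr)$ for the $(B_1, B_2)$-stochastic six-vertex model. From the Markovian sampling of \Cref{ModelStochastic}, the partition function $Z(\textbf{u}, \textbf{v})$ on $\Lambda_N = [1, N]^2$ coincides with the conditional probability $\mathbb{P}[\textbf{v} \mid \textbf{u}]$ of exit data given entrance data in the stochastic six-vertex model. The Gibbs property of $\mu(s)$, together with an analogue of the Markov property from \Cref{ensemblen} (obtained by a limiting argument from the construction of $\mu(s)$ in \Cref{Translation}), implies that $\mathbb{P}_{\mu(s)}[\textbf{v} \mid \textbf{u}] = Z(\textbf{u}, \textbf{v})$ for the boundary data induced by $\mu(s)$ on $\Lambda_N$. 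Since $|\partial \Lambda_N| = O(N)$, there are at most $2^{O(N)}$ possible exit configurations, so the conditional entropy satisfies $H(\textbf{v} \mid \textbf{u}) = \mathbb{E}_{\mu(s)}[-\log Z(\textbf{u}, \textbf{v})] \le CN$ for some $C > 0$. Markov's inequality applied to the nonnegative random variable $-\log Z(\textbf{u}, \textbf{v})$ then gives $Z(\textbf{u}, \textbf{v}) \ge e^{-CN/\delta} \ge e^{-\delta N^2}$ with probability at least $1 - \delta$ for $N$ large, which is in fact much stronger than what \Cref{estimateprobabilitylower} demands. Hence $\mu(s)$ satisfies the desired lower bound.

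For a general pure state $\mu$ of slope $(s, t)$ with $s \le t \le \varphi(s)$, the target slope lies below the stationary slope $\bigl(s, \varphi(s)\bigr)$ of $\mu(s)$, so I would introduce a sparsification procedure $\Sigma$ that acts on a sample from $\mu(s)$ by selectively deleting horizontal arrows to lower its horizontal slope from $\varphi(s)$ to $t$ while preserving the vertical slope $s$. The central quantitative claim to establish is that $\Sigma$ decreases the partition function of the induced boundary data on $\Lambda_N$ by at most a factor of $e^{o(N^2)}$. Combined with the base case, this would yield boundary data of slope $(s, t)$, which by construction (and \Cref{rhosmu} applied to $\mu(s)$) is $(R; \eta)$-regular in the sense of \Cref{boundary1}, with partition function at least $e^{-o(N^2)}$ with high probability. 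A final step would use \Cref{rhosmu} to note that $\mu$-sampled boundary data is also $(R; \eta)$-regular with the same slope $(s, t)$, and a slope-only dependence argument --- morally, that partition functions for the stochastic six-vertex model depend on regular boundary data only through its slope up to subexponential corrections --- would transfer the bound from sparsified $\mu(s)$-boundary data to $\mu$-boundary data.

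The main obstacle is the sparsification estimate. Constructing $\Sigma$ so that its output has the correct slope and $(R; \eta)$-regularity while the partition function decreases by at most $e^{o(N^2)}$ requires a delicate combinatorial or coupling argument: each deleted arrow modifies only $O(1)$ vertex weights, so the total multiplicative change to $Z$ across $o(N^2)$ deletions should remain subexponential, but making this precise, controlling compatibility of the deletions with arrow conservation, and showing that a sufficiently large family of consistent ensembles survives are nontrivial. A secondary issue is the final transfer step; if a clean slope-only dependence statement is unavailable, one may need to directly compare $\mu$-boundary data to sparsified $\mu(s)$-boundary data via a matching argument that leverages their shared regularity along each of the four sides of $\partial \Lambda_N$.
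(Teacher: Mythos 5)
Your high-level architecture --- an $e^{-O(N)}$ bound for a stochastic Gibbs state obtained by counting exit data, then a sparsification lowering the slope at cost $e^{o(N^2)}$, then a transfer to $\mu$ via regularity of boundary data --- is the strategy the paper follows, and your entropy/counting argument for the base case is sound (the paper implements the same count on an inner square, extracting deterministic boundary data of weight at least $2^{-4N}$ from the free-exit model with Bernoulli entrance data). The first genuine gap is the sparsification itself. The operation you describe --- deleting horizontal arrows from a sample of $\mu(s)$ so as to lower the horizontal slope from $\varphi(s)$ to $t$ while keeping the vertical slope equal to $s$ --- is not realizable: arrow conservation organizes the occupied edges into up-right paths, so one cannot remove occupied horizontal edges without removing the paths carrying them, and removing whole paths necessarily reduces \emph{both} slope components (essentially proportionally). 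The paper's resolution is geometric, and it is exactly where the hypothesis $s \le t \le \varphi(s)$ enters: one starts not from $\mu(s)$ but from $\mu(s_0)$, where $(s_0, t_0)$ is the intersection of the ray from the origin through $(s,t)$ with the curve $t_0 = \varphi(s_0)$ (see \eqref{ws0t0}); the hypothesis guarantees $(s_0,t_0) \in (0,1]^2$ with $\vartheta = s/s_0 = t/t_0 \in (0,1]$, and retaining $L$ out of every $K$ consecutive paths with $L/K \approx \vartheta$ (the $(L;K)$-restriction of \Cref{nmklrestrction}, with regularity controlled by \Cref{restrictionregular}) lands exactly on slope $(s,t)$. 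Your quantitative heuristic is also off: changing the slope by a constant factor requires deleting $\Theta(N^2)$ edges, not $o(N^2)$, so a ``each deletion perturbs $O(1)$ weights'' count would give $e^{-cN^2}$. The actual estimate (\Cref{rholrho}) comes from a coupling in which a cost is paid only at the $O(N^2/K)$ vertices where a retained path crosses a deleted one, and is made smaller than $e^{\delta N^2}$ by taking $K$ to be a large constant.

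The second gap is the transfer step. A bare ``partition functions of regular boundary data depend only on the slope up to $e^{o(N^2)}$'' principle is, on the full square, essentially equivalent to the statement you are trying to prove, so it cannot be invoked as a black box. The paper's concrete mechanism is the extension lemma \Cref{ensemblee0e}: since the sparsified boundary data on the inner square $\Lambda'$ and the $\mu$-induced boundary data on $\Lambda$ (regular with high probability by \Cref{rhosmu}) are both $(R;\eta)$-regular with the \emph{same} slope $(s,t)$, every ensemble on $\Lambda'$ extends to an ensemble on $\Lambda$ with the prescribed outer boundary data, and the annulus $\Lambda \setminus \Lambda'$ contains only $O(\eta N^2)$ vertices, so filling it in costs a harmless multiplicative factor. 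You would need to prove such an extension statement (a combinatorial matching of entrance and exit points across the annulus, which is where the regularity with a common slope is genuinely used) to close the argument.
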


	Given \Cref{musmu} and \Cref{mulowerprobability}, we can quickly establish \Cref{hstate}. 
	
	\begin{proof}[Proof of \Cref{hstate} Assuming \Cref{musmu} and \Cref{mulowerprobability}]
		
		By \Cref{murhoaibici}, there exists a pure state of any slope $(s, t) \in \partial \mathfrak{H}$ for the six-vertex model with weights $(a_1, a_2, b_1, b_2, c_1, c_2)$. Thus, it remains to show that the pure state of this slope is unique, and that no such pure state can exist if $(s, t) \in \mathfrak{H}$. We may assume in what follows that $s, t \in (0, 1]$, since otherwise $(s, t) = (0, 0)$, and there is a unique pure state of this slope (which deterministically assigns arrow configuration $(0, 0; 0, 0)$ to each vertex of $\mathbb{Z}^2$). 
		
		So, fix $(s, t) \in \overline{\mathfrak{H}}$ with $s, t > 0$, and let $\mu$ denote a pure state of slope $(s, t)$ for the six-vertex model with weights $(a_1, a_2, b_1, b_2, c_1, c_2)$. By replacing $\mu$ with its reflection into the line $x = y$ if necessary (as in the proof of \Cref{murhoaibici}), we may assume that $t \ge s$. Then, define $0 < B_1 < B_2 < 1$ as in \eqref{b1b2} and $\varphi = \varphi_{B_1, B_2}$ as in \eqref{kappafunction}. By \Cref{aibicib1b2}, $\mu$ is a pure state for the $(B_1, B_2)$-stochastic six-vertex model with slope $(s, t)$. 
		
		Since $(s, t) \in \overline{\mathfrak{H}}$, we have that $\mathfrak{h} (s, t) \le 0$. By \Cref{hb}, this yields $t \le \varphi (s)$, and so $s \le t \le \varphi (s)$. Hence, \Cref{mulowerprobability} implies $\mu$ satisfies a partition function stochastic lower bound. Thus, by \Cref{musmu}, $\mu$ is equal to the pure state $\mu (s)$ from \Cref{Translation}, which has slope $\big( s, \varphi (s) \big) \in \mathfrak{h}_1 \subset \partial \mathfrak{H}$. This uniquely determines $\mu$ if $(s, t) \in \partial \mathfrak{H}$ and shows that $\mu$ cannot exist if $(s, t) \in \mathfrak{H}$, thereby establishing the theorem. 
	\end{proof}

	The benefit to translation-invariant Gibbs measures satisfying a partition function stochastic lower bound is that they can be compared to a stochastic six-vertex model with free exit data, which is sometimes more amenable to direct analysis since it is a Markov process. This comparison is made more precise through the following lemma essentially stating that, if $\mu$ satisfies a partition function stochastic lower bound, then an event exponentially unlikely under the stochastic six-vertex model with free exit data is also unlikely under $\mu$.

	\begin{lem} 
	
	\label{modelmodelstochastic}
	
	For any real numbers $\varepsilon, \gamma \in (0, 1]$ and $0 < B_1 < B_2 < 1$, and translation-invariant Gibbs measure $\mu \in \mathscr{P} (\mathbb{Z}^2)$ for the $(B_1, B_2)$-stochastic six-vertex model satisfying a partition function stochastic lower bound, there exists a constant $C = C(\mu, \varepsilon, \gamma, B_1, B_2) > 1$ such that the following holds for any integer $N > C$. Recall $\Lambda_N = [1, N] \times [1, N]$ and $\nu_N \in \mathscr{P} \big( \mathfrak{E}(\mathbb{Z}^2 \setminus \Lambda_N) \big)$ from \Cref{estimateprobabilitylower}; sample $\mathcal{H} \in \mathfrak{E}(\mathbb{Z}^2 \setminus \Lambda)$ under $\nu_N$; and denote its boundary data by $\textbf{\emph{u}} (\mathcal{H}) \cup \textbf{\emph{v}} (\mathcal{H})$. 
	
	Further sample random six-vertex ensembles $\mathcal{E}\in \mathfrak{E} (\mathbb{Z}^2)$ under $\mu$, and $\mathcal{F} \in \mathfrak{E}_{\textbf{\emph{u}} (\mathcal{H})} (\Lambda_N)$ under the $(B_1, B_2)$-stochastic six-vertex model $\mathfrak{S}$ on $\Lambda$ with entrance data $\textbf{\emph{u}} (\mathcal{H})$ and free exit data. Then, for any subset $\mathfrak{D} \subset \mathfrak{E} (\Lambda_N)$ such that $\mathbb{E}_{\nu_N} \big[ \mathbb{P}_{\mathfrak{S}} [\mathcal{F} \in \mathfrak{D}] \big] \le e^{-\gamma N^2}$, we have $\mathbb{P}_{\mu} [\mathcal{E} \in \mathfrak{D}] < \varepsilon$. 
	
	\end{lem}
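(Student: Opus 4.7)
The plan is to exploit the Gibbs property of $\mu$ to rewrite the conditional probability that $\mathcal{E}_{\Lambda_N} \in \mathfrak{D}$ (given the exterior $\mathcal{H}$) as a ratio of a restricted sum of weights over the partition function $Z(\mathcal{H})$, and then to compare the numerator against the probability under the stochastic model $\mathfrak{S}$ and the denominator against the lower bound $e^{-\delta N^2}$ from \Cref{estimateprobabilitylower}. The key observation is that the stochasticity of the weights \eqref{bstochasticb} forces $\sum_{\mathcal{G} \in \mathfrak{E}_{\textbf{u}(\mathcal{H})}(\Lambda_N)} w(\mathcal{G}) = 1$, so $\mathbb{P}_{\mathfrak{S}}[\mathcal{F} = \mathcal{G}] = w(\mathcal{G})$ on the nose. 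This is what makes the stochastic six-vertex model with free exit data such a convenient yardstick against which to measure $\mu$.

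Concretely, first fix $\mathcal{H} \in \mathfrak{E}(\mathbb{Z}^2 \setminus \Lambda_N)$ in the support of $\nu_N$ and let $\textbf{u} = \textbf{u}(\mathcal{H})$, $\textbf{v} = \textbf{v}(\mathcal{H})$ denote its induced boundary data. By the Gibbs property (\Cref{ensemblemeasure}) applied to $\Lambda' = \Lambda_N$, the conditional law of $\mathcal{E}_{\Lambda_N}$ given $\mathcal{E}_{\mathbb{Z}^2 \setminus \Lambda_N} = \mathcal{H}$ assigns mass $Z(\mathcal{H})^{-1} w(\mathcal{G})$ to each $\mathcal{G} \in \mathfrak{E}_{\textbf{u};\textbf{v}}(\Lambda_N)$. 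Since $\mathfrak{E}_{\textbf{u};\textbf{v}}(\Lambda_N) \subseteq \mathfrak{E}_{\textbf{u}}(\Lambda_N)$, I obtain
\begin{equation*}
\mathbb{P}_\mu \big[\mathcal{E}_{\Lambda_N} \in \mathfrak{D} \,\big|\, \mathcal{E}_{\mathbb{Z}^2 \setminus \Lambda_N} = \mathcal{H}\big] \;=\; \frac{1}{Z(\mathcal{H})} \sum_{\mathcal{G} \in \mathfrak{D} \cap \mathfrak{E}_{\textbf{u};\textbf{v}}(\Lambda_N)} w(\mathcal{G}) \;\le\; \frac{\mathbb{P}_{\mathfrak{S}}[\mathcal{F} \in \mathfrak{D}]}{Z(\mathcal{H})},
\end{equation*}
where the second step uses that $\mathbb{P}_{\mathfrak{S}}[\mathcal{F} = \mathcal{G}] = w(\mathcal{G})$ for $\mathcal{G} \in \mathfrak{E}_{\textbf{u}}(\Lambda_N)$, a consequence of the stochasticity of the weights.

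Second, set $\delta = \tfrac{1}{2} \min(\varepsilon, \gamma)$ and invoke \Cref{estimateprobabilitylower} for $\mu$ with this parameter to obtain a constant $C_0 = C_0(\delta, \mu)$ such that $\mathbb{P}_{\nu_N}[Z(\mathcal{H}) \ge e^{-\delta N^2}] \ge 1 - \delta$ for $N > C_0$. Splitting the expectation over $\mathcal{H} \sim \nu_N$ according to whether $Z(\mathcal{H}) \ge e^{-\delta N^2}$ or not, and using the trivial bound $1$ on the former event's complement, yields
\begin{equation*}
\mathbb{P}_\mu[\mathcal{E}_{\Lambda_N} \in \mathfrak{D}] \;\le\; \delta \;+\; e^{\delta N^2} \cdot \mathbb{E}_{\nu_N}\big[\mathbb{P}_{\mathfrak{S}}[\mathcal{F} \in \mathfrak{D}]\big] \;\le\; \delta \;+\; e^{-(\gamma - \delta) N^2}.
\end{equation*}
Since $\delta \le \varepsilon/2$ and $\gamma - \delta \ge \gamma/2 > 0$, the second term is smaller than $\varepsilon/2$ once $N$ is large enough (depending on $\varepsilon, \gamma$), giving the required bound $\mathbb{P}_\mu[\mathcal{E} \in \mathfrak{D}] < \varepsilon$.

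There is essentially no obstacle here beyond bookkeeping: the entire argument is a single application of the Gibbs property combined with the two identities $\sum_{\mathcal{G}} w(\mathcal{G}) = Z(\mathcal{H})$ (restricted boundary) and $\sum_{\mathcal{G}} w(\mathcal{G}) = 1$ (free exit data, by stochasticity), followed by integration against the high-probability event that $Z(\mathcal{H})$ is not exponentially small. The one point that deserves care is making sure the exit data $\textbf{v}(\mathcal{H})$ is genuinely arbitrary in the sum defining $\mathbb{P}_{\mathfrak{S}}[\mathcal{F} \in \mathfrak{D}]$ (so that the inequality $\mathfrak{E}_{\textbf{u};\textbf{v}}(\Lambda_N) \subseteq \mathfrak{E}_{\textbf{u}}(\Lambda_N)$ is the right direction), which it is.
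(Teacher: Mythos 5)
Your proof is correct and follows essentially the same route as the paper's: condition on $\mathcal{H}$, use the Gibbs property to write the conditional probability as a weight sum over $Z(\mathcal{H})$, bound the numerator by $\mathbb{P}_{\mathfrak{S}}[\mathcal{F}\in\mathfrak{D}]$ via the stochasticity identity $Z_{\textbf{u}}(\Lambda)=1$, and split according to whether $Z(\mathcal{H})\ge e^{-\delta N^2}$. The only differences from the paper are cosmetic choices of the threshold exponent and of $\delta$.
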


	\begin{proof} 
		
		Throughout this proof, we abbreviate $\Lambda = \Lambda_N$, $\nu = \nu_N$ and $\textbf{u} \cup \textbf{v} = \textbf{u} (\mathcal{H}) \cup \textbf{v} (\mathcal{H})$ (the last of which is random). We further recall the partition function $Z (\mathcal{H})$ from \Cref{estimateprobabilitylower}, and define the event 
		\begin{flalign*} 
		\mathcal{A} = \mathcal{A}_N (\gamma) = \big\{ Z (\mathcal{H}) \ge e^{-\gamma N^2 / 2} \big\}.
		\end{flalign*} 
		
		\noindent Then, the fact that $\mu$ satisfies a partition function stochastic lower bound yields a constant $C_0 = C_0 (\mu, \gamma, \varepsilon) > 0$ such that $\mathbb{P}_{\nu} [\mathcal{A}] \ge 1 - \frac{\varepsilon}{2}$ holds whenever $N > C_0$. Thus, a union bound gives 
		\begin{flalign}
		\label{dmue}
		\mathbb{P}_{\mu} [\mathcal{E} \in \mathfrak{D}] \le \mathbb{P}_{\mu} \big[ \{ \mathcal{E} \in \mathfrak{D} \} \cap \mathcal{A} \big] + \mathbb{P}_{\nu} [\mathcal{A}^c] \le \mathbb{P}_{\mu} \big[ \{ \mathcal{E} \in \mathfrak{D} \} \cap \mathcal{A} \big] + \displaystyle\frac{\varepsilon}{2}. 
		\end{flalign}
		
		\noindent Next, since $\mathcal{A}$ is measurable with respect to $\mathcal{E}_{\mathbb{Z}^2 \setminus \Lambda}$, we have that 
		\begin{flalign}
		\label{probabilitymu} 
		\begin{aligned} 
		\mathbb{P}_{\mu} \big[ \{ \mathcal{E} \in \mathfrak{D} \} \cap \mathcal{A} \big] = \mathbb{E}_{\nu} \big[ \textbf{1}_{\mathcal{A}} \mathbb{P}_{\mu} [ \mathcal{E} \in \mathfrak{D} | \mathcal{E}_{\mathbb{Z}^2 \setminus \Lambda} = \mathcal{H}] \big] & = \mathbb{E}_{\nu} \Bigg[ \displaystyle\frac{\textbf{1}_{\mathcal{A}}}{Z (\mathcal{H})} \displaystyle\sum_{\mathcal{E}' \in \mathfrak{D} \cap \mathfrak{E}_{\textbf{u}; \textbf{v}} (\Lambda)} w (\mathcal{E}') \Bigg] \\
		& \le e^{\gamma N^2 / 2} \mathbb{E}_{\nu} \Bigg[ \displaystyle\sum_{\mathcal{E}' \in \mathfrak{D} \cap \mathfrak{E}_{\textbf{u}} (\Lambda)} w(\mathcal{E}') \Bigg],
		\end{aligned} 
		\end{flalign}
		
		\noindent since $Z (\mathcal{H}) \ge e^{-\gamma N^2 / 2}$ on $\mathcal{A}$. Next, due to the stochasticity of the six-vertex weights of $\mathfrak{S}$, we have the deterministic identity $Z_{\textbf{u}} (\Lambda) = \sum_{\mathcal{E}' \in \mathfrak{E}_{\textbf{u}} (\Lambda)} w (\mathcal{E}') = 1$. Thus, 
		\begin{flalign*}
		\displaystyle\sum_{\mathcal{E}' \in \mathfrak{D} \cap \mathfrak{E}_{\textbf{u}} (\Lambda)} w(\mathcal{E}') = \displaystyle\frac{1}{Z_{\textbf{u}} (\Lambda)} \displaystyle\sum_{\mathcal{E}' \in \mathfrak{D} \cap \mathfrak{E}_{\textbf{u}} (\Lambda)} w(\mathcal{E}') = \mathbb{P}_{\mathfrak{S}} [\mathcal{F} \in \mathfrak{D}].
		\end{flalign*}
		
		\noindent Taking the expectation of this bound with respect to $\nu$, and also applying \eqref{probabilitymu} and the bound $\mathbb{E}_{\nu} \big[ \mathbb{P}_{\mathfrak{S}} [\mathcal{F} \in \mathfrak{D}] \big] \le e^{-\gamma N^2}$, it follows for $N$ sufficiently large that 
		\begin{flalign*}
		\mathbb{P}_{\mu} \big[ \{ \mathcal{E} \in \mathfrak{D} \} \cap \mathcal{A} \big] \le e^{\gamma N^2 / 2} \mathbb{E}_{\nu} \big[ \mathbb{P}_{\mathfrak{S}} [\mathcal{F} \in \mathfrak{D}] \big] \le e^{-\gamma N^2 / 2} < \displaystyle\frac{\varepsilon}{2}.
		\end{flalign*}
		
		\noindent This, together with \eqref{dmue}, implies the lemma. 
	\end{proof}

	Now, to establish \Cref{musmu}, we must show that $\mathbb{E}_{\mu} [\psi] = \mathbb{E}_{\mu (s)} [\psi]$ holds, for any local function $\psi: \mathfrak{E} (\mathbb{Z}^2) \rightarrow \mathbb{R}$. To that end, we will consider shift-averages of $\psi$ over a large square grid. First, as \Cref{thetaistochastic} below, we will show the probability that this average differs non-negligibly from $\mathbb{E}_{\mu (s)} [\psi]$ decays exponentially in the grid size under the stochastic six-vertex model with free exit data. Next, by \Cref{modelmodelstochastic}, it will follow that these shift averages likely converge to $\mathbb{E}_{\mu (s)} [\psi]$ under $\mu$; see \Cref{musumpsiig} below. Then \Cref{musmu} will follow from the fact that these shift averages under $\mu$ converge to $\mathbb{E}_{\mu} [\psi]$ (by the ergodic theorem).

	\subsection{Shift-Averages Under the Stochastic Six-Vertex Model} 
	
	\label{StochasticProbability}
	
	Throughout the remainder of this paper, we fix real numbers $0 < B_1 <  B_2 < 1$ and will allow constants to depend on them, even when not explicitly mentioned. 
	
	Let us set some additional notation that will be used in this section. Suppose that we are given
	\begin{flalign*}
	s \in (0, 1]; \quad K, M \in \mathbb{Z}_{> 0}; \quad Y \in [0, M] \cap \mathbb{Z}; \quad k \in \mathbb{Z}_{> 0}; \quad \mathcal{G} \in \mathfrak{E} \big( [-k, k] \times [-k, k] \big).
	\end{flalign*}	
	
	\noindent From these parameters, define 
	\begin{flalign} 
	\label{xnlambda} 
	X = \bigg\lceil \displaystyle\frac{M}{2} \bigg\rceil; \qquad N = KM; \qquad \Lambda = \Lambda_N = [1, N] \times [1, N] \subseteq \mathbb{Z}^2.
	\end{flalign}
	
	Now let us define a partition $\Lambda = \bigcup_{i = 1}^{K^2} \Omega_i$ into $K^2$ subdomains as follows. For each index $i \in [1, K^2]$, let $j = j(i) \in [0, K - 1]$ denote the integer such that $i = jK + r + 1$ for some $r = r (i) \in [0, K - 1]$. Then define the subdomain $\Omega_i = [rM + 1, rM + M] \times [jM + 1, jM + M] \subseteq \Lambda$ and define the vertex $z_i = (rM + X, jM + Y) \in \Omega_i$. We refer to \Cref{omegai} for a depiction.

	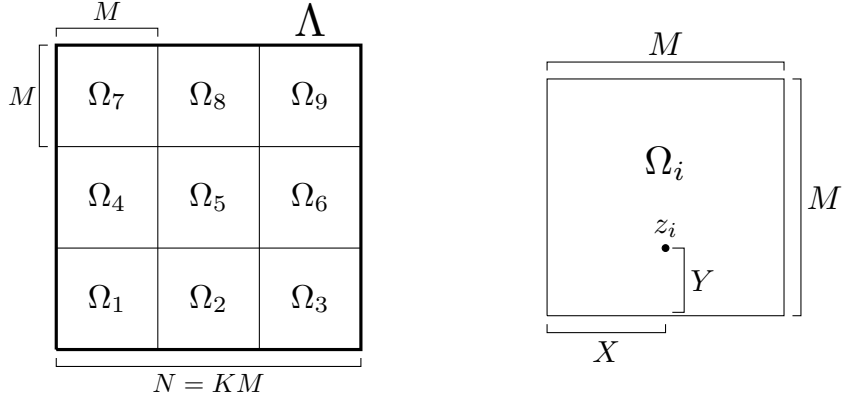
\begin{figure}[t]
		
		\begin{center}
			
			\begin{tikzpicture}[
			>=stealth,
			scale = .45
			]
			
			\draw[very thick] (0, 0) -- (9, 0) -- (9, 9) -- (0, 9) -- (0, 0);
			\draw[] (0, 3) -- (9, 3); 
			\draw[] (0, 6) -- (9, 6); 
			\draw[] (3, 0) -- (3, 9); 
			\draw[] (6, 0) -- (6, 9);
			 
			 \draw[] (1.5, 1.5) circle [radius = 0] node[scale = 1.25]{$\Omega_1$};
			 \draw[] (4.5, 1.5) circle [radius = 0] node[scale = 1.25]{$\Omega_2$};
			 \draw[] (7.5, 1.5) circle [radius = 0] node[scale = 1.25]{$\Omega_3$};
			 \draw[] (1.5, 4.5) circle [radius = 0] node[scale = 1.25]{$\Omega_4$};
			 \draw[] (4.5, 4.5) circle [radius = 0] node[scale = 1.25]{$\Omega_5$};
			 \draw[] (7.5, 4.5) circle [radius = 0] node[scale = 1.25]{$\Omega_6$};
			 \draw[] (1.5, 7.5) circle [radius = 0] node[scale = 1.25]{$\Omega_7$};
			 \draw[] (4.5, 7.5) circle [radius = 0] node[scale = 1.25]{$\Omega_8$};
			 \draw[] (7.5, 7.5) circle [radius = 0] node[scale = 1.25]{$\Omega_9$};
			 \draw[] (7.5, 9.75) circle [radius = 0] node[scale = 1.75]{$\Lambda$};
			 
			 \draw[] (0, -.25) -- (0, -.5) -- (9, -.5) -- (9, -.25);
			 \draw[] (-.25, 6) -- (-.5, 6) -- (-.5, 9) -- (-.25, 9);
			 \draw[] (0, 9.25) -- (0, 9.5) -- (3, 9.5) -- (3, 9.25);
			
			\draw[] (4.5, -1) circle [radius = 0] node[]{$N = KM$};
			\draw[] (-1, 7.5) circle [radius = 0] node[]{$M$};
			\draw[] (1.5, 10) circle [radius = 0] node[]{$M$};

			\draw[] (14.5, 1) -- (21.5, 1) -- (21.5, 8) -- (14.5, 8) -- (14.5, 1);

			\draw[] (14.5, 8.25) -- (14.5, 8.5) -- (21.5, 8.5) -- (21.5, 8.25);
			\draw[] (21.75, 1) -- (22, 1) -- (22, 8) -- (21.75, 8);
			\draw[] (14.5, .75) -- (14.5, .5) -- (18, .5) -- (18, .75);
			\draw[] (18.2, 1.1) -- (18.55, 1.1) -- (18.55, 3) -- (18.2, 3);

			\filldraw[] (18, 3) circle [radius = .1] node[above, scale = 1.15]{$z_i$};

			\draw[] (16.25, -.05) circle [radius = 0] node[scale = 1.15]{$X$};
			\draw[] (19.1, 2.05) circle [radius = 0] node[scale = 1.15]{$Y$};
			\draw[] (18, 9.05) circle [radius = 0] node[scale = 1.25]{$M$};
			\draw[] (22.7, 4.5) circle [radius = 0] node[scale = 1.25]{$M$};
			\draw[] (18, 5.5) circle [radius = 0] node[scale = 1.5]{$\Omega_i$};
			
			\end{tikzpicture}
			
		\end{center}	
		
		\caption{\label{omegai} The domains $\Omega_i$ are depicted to the left (where there $K = 3$), and the vertex $z_i \in \Omega_i$ is depicted to the right.} 
		
	\end{figure}

	Next, for any six-vertex ensemble $\mathcal{E} \in \mathfrak{E} (\Lambda)$, we will define indicator functions $\psi_i (\mathcal{E}) \in \{ 0, 1 \}$ for the event that $\mathcal{G}$ ``locally appears around'' $z_i$ in $\mathcal{E}$. More specifically, for each index $i \in [1, K^2]$, define (recalling the translation operator $\mathfrak{T}_u$ from \Cref{Measures})
	\begin{flalign}
	\label{epsii} 
	\psi_i (\mathcal{E}) = \psi_i^{(\mathcal{G})} (\mathcal{E}) = \textbf{1} \big( (\mathfrak{T}_{z_i} \mathcal{E})_{[-k, k] \times [-k, k]}= \mathcal{G} \big). 
	\end{flalign}

	\noindent Moreover, for any real number $\eta \in (0, 1)$, let $\Theta_i (\eta)$ denote the event on which the entrance data for the ensemble $\mathcal{E}_{\Omega_i}$ is $(\eta M; \eta)$-regular with slope $s$ along the south boundary of $\Omega_i$. 
	
	The following lemma provides an exponential probability concentration estimate for the sum of the $\psi_i (\mathcal{E})$ over some index set $\mathcal{I} \subseteq [1, K^2]$ for the stochastic six-vertex model with free exit data (after restricting to the event $\bigcap_{i \in \mathcal{I}} \Theta_i (\eta)$, for sufficiently small $\eta$).

	\begin{lem}
		
	\label{thetaistochastic}	
	
	For any real number $\varepsilon \in (0, 1]$, there exist constants $\delta = \delta (\varepsilon, s, k) > 0$, $c_1 (s, k) > 0$, $c_2 = c_2 (\varepsilon) > 0$, and $C = C (\varepsilon, s, k) > 1$ such that the following holds. Adopt the notation above, and assume that $M > C$ and $\frac{c_1 M}{2} < Y < c_1 M$. Let $\eta \in (0, \delta)$ denote a real number and $\mathcal{I} \subseteq [1, K^2]$ denote a nonempty subset. Further fix some entrance data $\textbf{\emph{u}}$ on $\Lambda$; let $\mathcal{E} \in \mathfrak{E}_{\textbf{\emph{u}}} (\Lambda)$ denote a random six-vertex ensemble sampled under the $(B_1, B_2)$-stochastic six-vertex model $\mathfrak{S}$ on $\Lambda$ with entrance data $\textbf{\emph{u}}$ and free exit data; and let $\mathcal{F} \in \mathfrak{E} (\mathbb{Z}^2)$ denote a random six-vertex ensemble sampled under $\mu (s)$. Then,  
	\begin{flalign}
	\label{sumpsii}
	\mathbb{P}_{\mathfrak{S}} \Bigg[ \bigg\{ \Big| \displaystyle\frac{1}{|\mathcal{I}|} \displaystyle\sum_{i \in \mathcal{I}} \psi_i (\mathcal{E}) - \mathbb{P}_{\mu (s)}\big[ \mathcal{F}_{[-k, k] \times [-k, k]} = \mathcal{G} \big] \Big| > \varepsilon \bigg\} \cap \bigcap_{i \in \mathcal{I}} \Theta_i (\eta) \Bigg] < e^{-c_2 |\mathcal{I}|}.
	\end{flalign}

	\end{lem}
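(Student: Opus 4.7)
The plan is to reveal the blocks $\mathcal{E}_{\Omega_i}$ one at a time in a bottom-to-top, left-to-right order so that both the south and west entrance data of $\Omega_i$ become $\mathcal{F}_{i-1}$-measurable, where $\mathcal{F}_i := \sigma(\mathcal{E}_{\Omega_1}, \ldots , \mathcal{E}_{\Omega_i})$; use the Markov structure of $\mathfrak{S}$ to identify the conditional law of $\mathcal{E}_{\Omega_i}$ given $\mathcal{F}_{i-1}$; apply \Cref{modellocal} on $\Theta_i(\eta)$ to show this conditional expectation is within $\varepsilon/2$ of $p := \mathbb{P}_{\mu(s)}\bigl[\mathcal{F}_{[-k,k]\times[-k,k]} = \mathcal{G}\bigr]$; and conclude by Azuma--Hoeffding.

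More precisely, since paths in $\mathfrak{S}$ travel up-right, in this revealing order the south entrance data $\textbf{u}_i^{(S)}$ of $\Omega_i$ is determined either by $\textbf{u}$ (when $j(i) = 0$) or by the top row of $\mathcal{E}_{\Omega_{i-K}}$, while the west entrance data $\textbf{u}_i^{(W)}$ is determined either by $\textbf{u}$ or by the right column of $\mathcal{E}_{\Omega_{i-1}}$. Consequently $\textbf{u}_i := \textbf{u}_i^{(S)} \cup \textbf{u}_i^{(W)}$ and $\Theta_i(\eta)$ are both $\mathcal{F}_{i-1}$-measurable, and the diagonal-by-diagonal sampling from the south-west corner (cf.\ \Cref{ensemblen}) ensures that, given $\mathcal{F}_{i-1}$, the ensemble $\mathcal{E}_{\Omega_i}$ is distributed as the $(B_1, B_2)$-stochastic six-vertex model on $\Omega_i$ with entrance data $\textbf{u}_i$ and free exit.

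To obtain the local estimate, I identify $\Omega_i$ with the box $[1, M] \times [1, M]$ inside a copy of the quadrant $\mathbb{Z}_{>0}^2$, set the quadrant's south boundary entrance data to equal $\textbf{u}_i^{(S)}$ on $[1, M] \times \{0\}$ extended by deterministic slope-$s$ data beyond, and set its west boundary entrance data to equal $\textbf{u}_i^{(W)}$ on $\{0\} \times [1, M]$ extended arbitrarily. Since $\Omega_i$'s south and west boundaries coincide with portions of the quadrant's boundaries, the same diagonal sampling argument shows that the $\Omega_i$-marginal of the quadrant ensemble coincides in law with $\mathcal{E}_{\Omega_i}$ conditional on $\textbf{u}_i$. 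Setting $c_1(s, k) := c_0(B_1, B_2, s, k)/2$ (where $c_0$ is the constant from \Cref{modellocal}) converts the hypothesis $Y \in (c_1 M/2, c_1 M)$ into $Y \in \bigl(c_0 \lceil M/2 \rceil/2, c_0 \lceil M/2 \rceil\bigr)$; I then apply \Cref{modellocal} with $N_{\mathrm{lem}} := \lceil M/2 \rceil$ and $M_{\mathrm{lem}} := Y$, so that the statistics point is $z_i$ and the required regular interval $[N_{\mathrm{lem}}/2, 3N_{\mathrm{lem}}/2)$ sits inside the south boundary of $\Omega_i$. Taking the $\delta$ of our lemma smaller than half of the $\delta$-constant produced by \Cref{modellocal} (at parameter $\varepsilon/2$) and converting between scales using \Cref{uetareta}, the $(\eta M; \eta)$-regularity guaranteed by $\Theta_i(\eta)$ implies the $(\delta N_{\mathrm{lem}}; \delta)$-regularity demanded there, yielding $\bigl|\mathbb{E}[\psi_i \mid \mathcal{F}_{i-1}] - p\bigr| \leq \varepsilon/2$ on $\Theta_i(\eta)$.

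Finally, the increments $\xi_i := \psi_i - \mathbb{E}[\psi_i \mid \mathcal{F}_{i-1}]$ (set to zero outside $\mathcal{I}$) form a bounded martingale difference sequence with $|\xi_i| \leq 1$, so Azuma--Hoeffding gives $\mathbb{P}_{\mathfrak{S}}\bigl[\bigl|\sum_{i \in \mathcal{I}} \xi_i\bigr| > \varepsilon |\mathcal{I}|/2\bigr] \leq 2 \exp(-\varepsilon^2 |\mathcal{I}|/8)$. On $\bigcap_{i \in \mathcal{I}} \Theta_i(\eta)$ the previous paragraph supplies the deterministic bound $\bigl|\sum_{i \in \mathcal{I}} \mathbb{E}[\psi_i \mid \mathcal{F}_{i-1}] - |\mathcal{I}| p\bigr| \leq \varepsilon|\mathcal{I}|/2$, so the event in \eqref{sumpsii} is contained in $\bigl\{\bigl|\sum_{i \in \mathcal{I}} \xi_i\bigr| > \varepsilon|\mathcal{I}|/2\bigr\}$, giving \eqref{sumpsii} with some $c_2 = c_2(\varepsilon) > 0$. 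I expect the delicate points to be verifying that matching $\textbf{u}_i$ along the south-west corner of the quadrant truly realizes the conditional law of $\mathcal{E}_{\Omega_i}$ (which rests on the diagonal Markov sampling consulting only data to the south-west), together with the scale conversion between the $(\eta M; \eta)$-regularity provided by $\Theta_i(\eta)$ and the $(\delta N_{\mathrm{lem}}; \delta)$-regularity demanded by \Cref{modellocal}.
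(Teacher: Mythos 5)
Your proposal is correct and follows essentially the same route as the paper: condition on the (random) entrance data of each block $\Omega_i$, use the sequential Markov structure of the stochastic model to identify the conditional law of $\mathcal{E}_{\Omega_i}$, apply \Cref{modellocal} on $\Theta_i(\eta)$ to pin the conditional mean of $\psi_i$ within $\varepsilon/2$ of $\mathbb{P}_{\mu(s)}[\mathcal{F}_{[-k,k]\times[-k,k]}=\mathcal{G}]$, and finish with Azuma--Hoeffding. If anything, you are more explicit than the paper about the quadrant embedding and the conversion between $(\eta M;\eta)$- and $(\delta\lceil M/2\rceil;\delta)$-regularity, both of which the paper's proof leaves implicit.
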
 	
	
	\begin{proof}
		
	Throughout this proof, we set $\zeta = \mathbb{P}_{\mu (s)} \big[ \mathcal{F}_{[-k, k] \times [-k, k]} = \mathcal{G} \big]$ and, for each index $i \in [1, K^2]$, we let $\textbf{u}^{(i)}$ denote the (random) entrance data on $\Omega_i$ for $\mathcal{E}_{\Omega_i}$. In this way, $\textbf{u}^{(i)}$ is $(\eta M; \eta)$-regular along the south boundary of $\Omega_i$ on the event $\Theta_i (\eta)$. Thus, \Cref{modellocal} yields constants $\delta = \delta (\varepsilon, s, k) > 0$, $c_1 = c_1 (s, k) > 0$, and $C_1 = C_1 (\varepsilon, s, k) > 1$ such that
	\begin{flalign}
	\label{thetaiestimatepsiizeta} 
	\begin{aligned} 
	\textbf{1}_{\Theta_i (\eta)} \Big| \mathbb{P}_{\mathfrak{S}} \big[ \psi_i (\mathcal{E}) = 1 \big|  \textbf{u}^{(i)}  \big] - \zeta  \Big| & = \textbf{1}_{\Theta_i (\eta)} \Big| \mathbb{P} \big[ (\mathfrak{T}_{z_i} \mathcal{E})_{[-k, k] \times [-k, k]} = \mathcal{G} \big|  \textbf{u}^{(i)}  \big] - \mathbb{P} \big[ \mathcal{F}_{[-k, k] \times [-k, k]} = \mathcal{G} \big]  \Big| \\
	& < \displaystyle\frac{\varepsilon}{2},
	\end{aligned} 
	\end{flalign}	
	
	\noindent holds whenever $M > C_1$, $\frac{c_1 M}{2} < Y < c_1 M$, and $\eta < \delta$.
	
	Next, \Cref{ensemblen} implies for any $i \in [1, K^2]$ that $\mathcal{E}_i$ is independent of $\bigcup_{j = 1}^{i - 1} \mathcal{E}_j$, after conditioning on $\textbf{u}^{(i)}$. Hence, we obtain from \eqref{thetaiestimatepsiizeta} that 
	\begin{flalign*}
	\textbf{1}_{\Theta_i (\eta)} \Bigg| \mathbb{P}_{\mathfrak{S}} \bigg[ \psi_i (\mathcal{E}) = 1 \bigg| \bigcup_{j = 1}^{i - 1} \mathcal{E}_j  \bigg] - \zeta  \Bigg| < \displaystyle\frac{\varepsilon}{2}.
	\end{flalign*}
	
	\noindent This, together with the Chernoff estimate (or, alternatively, the Azuma--Hoeffding inequality) for sums of $0$-$1$ Bernoulli random variables, yields a constant $c_2 = c_2 (\varepsilon) > 0$ such that 
	\begin{flalign*}
	\mathbb{P}_{\mathfrak{S}} \Bigg[ \bigg\{  \Big| \displaystyle\sum_{i \in \mathcal{I}} \psi_i (\mathcal{E}) - \zeta |\mathcal{I}| \Big| > \varepsilon |\mathcal{I}| \bigg\} \cap \bigcap_{i \in \mathcal{I}} \Theta_i (\eta) \Bigg| < e^{-c_2 |\mathcal{I}|},
	\end{flalign*}
	
	\noindent from which we deduce the lemma. 
	\end{proof}

	\subsection{Proof of \Cref{musmu}}
	
	\label{StateEstimate}
	
	The following lemma essentially states that shift-averages of the local functions $\psi_i$ from \eqref{epsii} under a pure state of slope $(s, t)$ satisfying a partition function stochastic lower bound converge to their expectations under $\mu (s)$.

	\begin{lem} 
		
	\label{musumpsiig} 
	
	Fix	 an integer $k > 0$; a real number $\varepsilon > 0$; a pair $(s, t) \in (0, 1]^2$, and a pure state $\mu \in \mathscr{P} \big( \mathfrak{E} (\mathbb{Z}^2) \big)$ of slope $(s, t)$ for the $(B_1, B_2)$-stochastic six-vertex model that satisfies a partition function stochastic lower bound in the sense of \Cref{estimateprobabilitylower}. Then, there exist $c = c (s, k) > 0$ and $C_1 = C_1 (\varepsilon, \mu, k) > 1$ so that, for any integers $M > C_1$ and $Y \in \big( \frac{cM}{2}, cM \big)$, there is a constant $C_2 = C_2 (M, \varepsilon, \mu, k) > 1$, such that the following holds for any integer $K > C_2$. 
	
	Fix a six-vertex ensemble $\mathcal{G}\in \mathfrak{E} \big( [-k, k] \times [-k, k] \big)$; set $X$, $N$, and $\Lambda$ as in \eqref{xnlambda}; and set $\psi_i$ as in \eqref{epsii}. Further let $\mathcal{E} \in \mathfrak{E}(\Lambda)$ denote a random six-vertex ensemble sampled under (the marginal on $\mathfrak{E} (\Lambda)$ of) $\mu$, and let $\mathcal{F} \in \mathfrak{E}(\mathbb{Z}^2)$ denote a random six-vertex ensemble sampled under $\mu (s)$. Then, 
	\begin{flalign}
	\label{musumpsimus}
	\mathbb{P}_{\mu} \Bigg[ \bigg| \displaystyle\frac{1}{K^2} \displaystyle\sum_{i = 1}^{K^2} \psi_i (\mathcal{E}) - \mathbb{P}_{\mu(s)} \big[ \mathcal{F}_{[-k, k] \times [-k, k]} = \mathcal{G} \big] \bigg| > \varepsilon \Bigg] < \varepsilon. 
	\end{flalign}
		
	\end{lem}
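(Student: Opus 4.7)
The plan is to reduce the bound under $\mu$ to one under the stochastic six-vertex model $\mathfrak{S}$ on $\Lambda$ with free exit data via \Cref{modelmodelstochastic}, then invoke \Cref{thetaistochastic}. Write $\zeta = \mathbb{P}_{\mu(s)}\big[\mathcal{F}_{[-k, k] \times [-k, k]} = \mathcal{G}\big]$ and $\mathfrak{D}_0 = \big\{\mathcal{E} \in \mathfrak{E}(\Lambda) : \big|K^{-2} \sum_{i=1}^{K^2} \psi_i(\mathcal{E}) - \zeta\big| > \varepsilon\big\}$; the goal becomes $\mathbb{P}_\mu[\mathcal{E} \in \mathfrak{D}_0] < \varepsilon$. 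The obstacle is that \Cref{thetaistochastic} only yields concentration conditional on a regularity event $\bigcap_{i \in \mathcal{I}} \Theta_i(\eta)$, while no such regularity is guaranteed a priori for interior $\Omega_i$ under $\mathfrak{S}$. I would therefore split according to the random set $\mathcal{I}(\mathcal{E}) = \{i : \Theta_i(\eta) \text{ holds}\}$ using auxiliary parameters $0 < \eta \ll \eta' \ll \varepsilon$ tuned below.

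For the ``mostly regular'' part, set $\mathfrak{D}' = \mathfrak{D}_0 \cap \{|\mathcal{I}(\mathcal{E})| \ge (1 - \eta') K^2\}$. Since the indices outside $\mathcal{I}$ contribute at most $\eta'$ to the shift-average, $\mathfrak{D}'$ implies the existence of some deterministic $\mathcal{I}' \subseteq [1, K^2]$ with $|\mathcal{I}'| \ge (1 - \eta') K^2$ satisfying both $\bigcap_{i \in \mathcal{I}'} \Theta_i(\eta)$ and $\big||\mathcal{I}'|^{-1} \sum_{i \in \mathcal{I}'} \psi_i - \zeta\big| > \varepsilon - 2 \eta'$. \Cref{thetaistochastic} (with concentration parameter $\varepsilon - 2\eta'$) combined with a union bound over such $\mathcal{I}'$ then gives, for any entrance data on $\Lambda$,
\begin{equation*}
\mathbb{P}_{\mathfrak{S}}[\mathcal{F} \in \mathfrak{D}'] \le \binom{K^2}{\lfloor \eta' K^2 \rfloor} \cdot K^2 \cdot e^{-c_2 (1 - \eta') K^2},
\end{equation*}
which reduces to $e^{-c' K^2}$ for some $c' = c'(\varepsilon, s, k) > 0$ once $\eta'$ is small enough that the entropic factor is dominated by a fraction of the exponential. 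Integrating over $\nu_N$ and applying \Cref{modelmodelstochastic} with $\gamma = c'/(2M^2)$ (so that $\gamma N^2 = c' K^2 / 2$) yields $\mathbb{P}_\mu[\mathcal{E} \in \mathfrak{D}'] < \varepsilon/2$ once $K$ exceeds a threshold $C_2$ depending on $M$ through $\gamma$.

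For the ``many irregular'' part, \Cref{rhosmu} applied to each translate $\Omega_i \subset \mathbb{Z}^2$ (at scale $R = \eta M$) gives $\mathbb{P}_\mu[\Theta_i^c] \le \eta$ once $M > C_0(\eta, \mu)/\eta$. Markov's inequality and the translation-invariance of $\mu$ then produce $\mathbb{P}_\mu\big[|\mathcal{I}(\mathcal{E})| < (1 - \eta') K^2\big] \le \eta/\eta' < \varepsilon/2$ provided $\eta < \varepsilon \eta'/2$, and summing these two bounds yields \eqref{musumpsimus}.

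The principal delicate point is the parameter hierarchy. One must first choose $\eta' \in (0, \varepsilon)$ so that the entropy $\binom{K^2}{\lfloor \eta' K^2 \rfloor} \le e^{H(\eta') K^2}$ is beaten by $e^{c_2(\varepsilon - 2 \eta') (1-\eta') K^2 / 2}$; this is possible because $c_2$ is strictly positive at fixed $\varepsilon$ while $H(\eta') \to 0$ as $\eta' \to 0$. Then $\eta$ is chosen smaller than both $\varepsilon \eta' / 2$ (for the Markov step) and the regularity threshold $\delta(\varepsilon - 2\eta', s, k)$ in \Cref{thetaistochastic}, which determines the constant $C_1(\varepsilon, \mu, k)$ via the size thresholds in \Cref{rhosmu} and \Cref{thetaistochastic}; the dependence of $C_2$ on $M$ then comes solely through $\gamma = c'/(2 M^2)$ in \Cref{modelmodelstochastic}.
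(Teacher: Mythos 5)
Your proposal is correct and follows essentially the same route as the paper's proof: the same split according to whether at least $(1-\eta')K^2$ of the blocks satisfy $\Theta_i(\eta)$, the same union bound over large index sets with the binomial entropy beaten by the exponential concentration from \Cref{thetaistochastic}, the same transfer to $\mu$ via \Cref{modelmodelstochastic} with $\gamma \asymp M^{-2}$, and the same Markov argument via \Cref{rhosmu} for the irregular part. The only cosmetic difference is that you carry the concentration parameter $\varepsilon - 2\eta'$ through \Cref{thetaistochastic}, whereas the paper fixes it at $\varepsilon/2$ from the outset and then requires $\omega < \varepsilon/4$, which sidesteps the (harmless) circularity you resolve at the end.
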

	
	\begin{proof} 
		
		For any nonempty index set $\mathcal{I}\subseteq [1, K^2]$, real number $\eta > 0$, and six-vertex ensemble $\mathcal{E}_0 \in \mathfrak{E} (\Lambda)$, define the event
		\begin{flalign}
		\label{deta}
		\mathfrak{D}_{\mathcal{I}} (\varepsilon; \eta) = \Bigg\{ \bigg| \displaystyle\frac{1}{|\mathcal{I}|} \displaystyle\sum_{i \in \mathcal{I}} \psi_i (\mathcal{E}_0) - \mathbb{P}_{\mu (s)}\big[ \mathcal{F}_{[-k, k] \times [-k, k]} = \mathcal{G} \big] \bigg| > \displaystyle\frac{\varepsilon}{2} \Bigg\} \cap \bigcap_{i \in \mathcal{I}} \Theta_i (\eta),
		\end{flalign}
		
		\noindent similar to the one appearing in \eqref{sumpsii} (where we have recalled $\Theta_i (\eta)$ from below \eqref{epsii}).
			
		Next, recalling the measure $\nu = \nu_N \in \mathscr{P} \big( \mathfrak{E}(\mathbb{Z}^2 \setminus \Lambda_N) \big)$ from \Cref{estimateprobabilitylower}, sample $\mathcal{H} \in \mathfrak{E}(\mathbb{Z}^2 \setminus \Lambda)$ under $\nu$ and denote its boundary data by $\textbf{u} (\mathcal{H}) \cup \textbf{v} (\mathcal{H})$. Let $\mathcal{E}' \in \mathfrak{E}_{\textbf{u} (\mathcal{H})} (\Lambda)$ denote a random six-vertex ensemble sampled under the $(B_1, B_2)$-stochastic six-vertex model $\mathfrak{S}$ on $\Lambda$ with entrance data $\textbf{u} (\mathcal{H})$ and free exit data. Then, \Cref{thetaistochastic} yields (after taking the expectation there over $\textbf{u}$ with respect to $\nu$) constants $\delta = \delta (\varepsilon, s, k) > 0$, $c = c (s, k) > 0$, $c_0 = c_0 (\varepsilon) > 0$, and $C = C (\varepsilon, s, k) > 1$ such that 
		\begin{flalign}
		\label{cidieta}
		\mathbb{E}_{\nu} \Big[ \mathbb{P}_{\mathfrak{S}} \big[ \mathfrak{D}_{\mathcal{I}} (\varepsilon; \eta) \big] \Big] < e^{-c_0 |\mathcal{I}|},
		\end{flalign} 
		
		\noindent for any real number $\eta \in (0, \delta)$; integers $M > C$ and $\frac{c M}{2} < Y < c M$; and nonempty set $\mathcal{I} \subseteq [1, K^2]$. 
		
		Fix parameters $\eta, M, Y$ satisfying these properties. We would eventually like to bound the probability of the events $\mathfrak{D}_{\mathcal{I}}$ but without the $\Theta_i (\eta)$ appearing in their definitions \eqref{deta}. To do this, it will be useful to show an exponential bound for the probability of the union of $\mathfrak{D}_{\mathcal{I}} (\varepsilon; \eta)$ over all $\mathcal{I}$ of sufficiently large size. To that end, further fix a real number $\omega \in \big( 0, \frac{1}{2} \big)$, later to be chosen sufficiently small. Defining the set 
		\begin{flalign*}
		\mathfrak{I} (\omega) = \big\{ \mathcal{I} \subseteq [1, K^2]: |\mathcal{I}| \ge (1 - \omega) K^2 \big\},
		\end{flalign*} 
		
		\noindent taking a union bound in \eqref{cidieta} over $\mathcal{I}\in \mathfrak{I} (\omega)$ yields 
		\begin{flalign}
		\label{omegaidestimate}
		\mathbb{E}_{\nu} \Bigg[ \mathbb{P}_{\mathfrak{S}} \bigg[ \bigcup_{\mathcal{I} \in \mathfrak{I}(\omega)} \mathfrak{D}_{\mathcal{I}} (\varepsilon; \eta) \bigg] \Bigg] < e^{c_0 (\omega - 1) K^2} \big| \mathfrak{I} (\omega) \big| < e^{c_0 (\omega - 1) K^2} \left( \displaystyle\frac{4}{\omega} \right)^{2 \omega K^2}.
		\end{flalign}
		
		\noindent Here, we have used the fact that 
		\begin{flalign*} 
		\big| \mathfrak{I} (\omega) )\big| = \displaystyle\sum_{r \ge (1 - \omega) K^2} \binom{K^2}{r} = \displaystyle\sum_{r \le \omega K^2} \binom{K^2}{r} \le \displaystyle\sum_{r \le \omega K^2} \bigg( \displaystyle\frac{4K^2}{r} \bigg)^r \le \left( \displaystyle\frac{4}{\omega} \right)^{\omega K^2} \displaystyle\sum_{r = 0}^{\infty} 2^{-r} = 2 \left( \displaystyle\frac{4}{\omega} \right)^{\omega K^2},
		\end{flalign*} 
		
		\noindent where the third statement holds since $r! \ge \big( \frac{r}{4} \big)^r$ and the fourth holds since $2 \big( \frac{4K^2}{r} \big)^r < \big( \frac{4K^2}{r + 1} \big)^{r + 1}$ for $r \in \big[0, \frac{K^2}{2} \big]$. Selecting $\omega = \omega (\varepsilon, s, k) > 0$ sufficiently small so that 
		\begin{flalign*}
		c_0 \omega + 2 \omega \log \bigg( \displaystyle\frac{4}{\omega} \bigg) < \displaystyle\frac{c_0}{2},
		\end{flalign*}
	
		\noindent we deduce from \eqref{omegaidestimate} and the fact that $N = KM$ that 
		\begin{flalign*}
		\mathbb{E}_{\nu} \Bigg[ \mathbb{P}_{\mathfrak{S}} \bigg[ \bigcup_{\mathcal{I} \in \mathfrak{I}(\omega)} \mathfrak{D}_{\mathcal{I}} (\varepsilon; \eta) \bigg] \Bigg] < e^{c_0 (\omega - 1) K^2} \big| \mathfrak{I} (\omega) \big| < e^{- c_0 N^2 / 2 M^2}. 
		\end{flalign*}
		
		\noindent Hence, by the $\gamma = \frac{c_0}{2M^2}$ case of \Cref{modelmodelstochastic}, there exists a constant $C_0 = C_0 (M, \varepsilon, \mu, k) > 1$ such that for $K > C_0$ we have 
		\begin{flalign}
		\label{omegaidmu} 
		\mathbb{P}_{\mu} \Bigg[ \bigcup_{\mathcal{I} \in \mathfrak{I}(\omega)} \mathfrak{D}_{\mathcal{I}} (\varepsilon; \eta) \Bigg] < \displaystyle\frac{\varepsilon}{2}.
		\end{flalign}

		Now, observe if 
		\begin{flalign*}
		\Bigg| \displaystyle\frac{1}{K^2} \displaystyle\sum_{i = 1}^{K^2} \psi_i (\mathcal{E}) - \mathbb{P}_{\mathfrak{S}} \big[ \mathcal{F}_{[-k, k] \times [-k, k]} = \mathcal{G} \big] \Bigg| > \varepsilon,
		\end{flalign*}
		
		\noindent then for any $\mathcal{I} \in \mathfrak{I} (\omega)$ we have
		\begin{flalign*}
		 \Bigg| \displaystyle\sum_{i \in \mathcal{I}} \psi_i (\mathcal{E}) - |\mathcal{I}| \mathbb{P}_{\mu (s)} \big[ \mathcal{F}_{[-k, k] \times [-k, k]} = \mathcal{G} \big] \Bigg| & \ge \Bigg| \displaystyle\sum_{i = 1}^{K^2} \psi_i (\mathcal{E}) - K^2 \mathbb{P}_{\mu (s)} \big[ \mathcal{F}_{[-k, k] \times [-k, k]} = \mathcal{G} \big] \Bigg| - 2 \omega K^2 \\
		 & \ge (\varepsilon - 2 \omega) K^2 \ge (\varepsilon - 2 \omega) |\mathcal{I}|. 
		\end{flalign*}
		
		\noindent Hence, if we further select $\omega = \omega (\varepsilon, s, k) > 0$ sufficiently small so that $\omega < \frac{\varepsilon}{4}$, then
		\begin{flalign*}
		\Bigg\{ \bigg| \displaystyle\frac{1}{K^2} \displaystyle\sum_{i = 1}^{K^2} \psi_i (\mathcal{E}) - \mathbb{P}_{\mathfrak{S}} \big[ \mathcal{F}_{[-k, k] \times [-k, k]} = \mathcal{G} \big] \bigg| > \varepsilon \Bigg\} \cap \bigcup_{\mathcal{I} \in \mathfrak{I} (\omega)} \bigcap_{i \in \mathcal{I}} \Theta_i (\eta) \subseteq \bigcup_{\mathcal{I} \in \mathfrak{I}(\omega)} \mathfrak{D}_{\mathcal{I}} (\varepsilon; \eta), 
		\end{flalign*}
		
		\noindent and so \eqref{omegaidmu} yields 
		\begin{flalign*}
		\mathbb{P}_{\mu} \Bigg[ \bigg\{ \Big| \displaystyle\frac{1}{K^2} \displaystyle\sum_{i = 1}^{K^2} \psi_i (\mathcal{E}) - \mathbb{P}_{\mathfrak{S}} \big[ \mathcal{F}_{[-k, k] \times [-k, k]} = \mathcal{G} \big] \Big| > \varepsilon \bigg\}	 \cap \bigcup_{\mathcal{I} \in \mathfrak{I} (\omega)} \bigcap_{i \in \mathcal{I}} \Theta_i (\eta) \Bigg] < \displaystyle\frac{\varepsilon}{2}.
		\end{flalign*}

		\noindent Therefore, by a union bound, it suffices to show for sufficiently large $M$ that 
		\begin{flalign*}
		\mathbb{P}_{\mu} \Bigg[ \bigcup_{\mathcal{I} \in \mathfrak{I} (\omega)} \bigcap_{i \in \mathcal{I}} \Theta_i (\eta) \Bigg] \ge 1 - \displaystyle\frac{\varepsilon}{2}, 
		\end{flalign*}
		
		\noindent or equivalently that 
		\begin{flalign}
		\label{dmuomegathetaieta}
		\mathbb{P}_{\mu} \Bigg[ \displaystyle\sum_{i = 1}^{K^2} \textbf{1}_{\Theta_i (\eta)} \ge (1 - \omega) K^2 \Bigg] \ge 1 - \displaystyle\frac{\varepsilon}{2}. 
		\end{flalign}
		
		\noindent To that end, first observe that \Cref{rhosmu} yields a constant $C_1 = C_1 (\varepsilon, \mu, s, k) > 1$ such that $\mathbb{P} \big[\Theta_i (\eta)^c \big] < \frac{\varepsilon \omega}{2}$ holds for any index $i \in [1, K^2]$ whenever $M > C_1$ (since $\eta$ and $\omega$ only depend on $\varepsilon$, $s$, and $k$). Thus, a Markov estimate yields 
		\begin{flalign*} 
		\mathbb{P}_{\mu} \Bigg[ \sum_{i = 1}^{K^2} \textbf{1}_{\Theta_i (\eta)} \le (1 - \omega) K^2 \Bigg] & = \mathbb{P}_{\mu} \Bigg[ \sum_{i = 1}^{K^2} \textbf{1}_{\Theta_i (\eta)^c} \ge \omega K^2 \Bigg] \\
		& \le \displaystyle\frac{1}{\omega K^2}  \mathbb{E}_{\mu} \Bigg[ \sum_{i = 1}^{K^2} \textbf{1}_{\Theta_i (\eta)^c} \Bigg] = \displaystyle\frac{1}{\omega K^2} \displaystyle\sum_{i = 1}^{K^2} \mathbb{P}_{\mu} \big[ \Theta_i (\eta)^c \big] \le \displaystyle\frac{\varepsilon}{2}, 
		\end{flalign*} 
		
		\noindent which implies \eqref{dmuomegathetaieta} and therefore the lemma. 		
	\end{proof}

	Now we can quickly establish \Cref{musmu}.

	\begin{proof}[Proof of \Cref{musmu}]
		
		Let $\mathcal{E}, \mathcal{F} \in \mathfrak{E} (\mathbb{Z}^2)$ denote random six-vertex ensembles sampled under $\mu$ and $\mu (s)$, respectively. It suffices to show that, for any integer $k > 0$ and six-vertex ensemble $\mathcal{G} \in \mathfrak{E} \big( [-k, k] \times [-k, k] \big)$, we have 
		\begin{flalign}
		\label{muemusf} 
		\mathbb{P}_{\mu} \big[ \mathcal{E}_{[-k, k] \times [-k, k]} = \mathcal{G} \big] = \mathbb{P}_{\mu (s)} \big[ \mathcal{F}_{[-k, k] \times [-k, k]} = \mathcal{G} \big].
		\end{flalign}
		
		To that end, fix $k \in \mathbb{Z}_{> 0}$ and $\mathcal{G} \in \mathfrak{E} \big( [-k, k] \times [-k, k] \big)$, and recall the function $\psi_i (\mathcal{E})$ from \eqref{epsii}. Since $(M \mathbb{Z})^2$ is an amenable group and $\mu$ is invariant under its action, it follows from the pointwise ergodic theorem for amenable group actions (see, for instance, part (ii) of Theorem 3.3 of \cite{ETSSAG}) that, for any integer $M > 0$, the limit
		\begin{flalign*}
		H(M) = \displaystyle\lim_{K \rightarrow \infty} \displaystyle\frac{1}{K^2} \displaystyle\sum_{i = 1}^{K^2} \psi_i (\mathcal{E}),
		\end{flalign*} 
		
		\noindent exists almost surely under $\mu$, and its expectation is given by 
		\begin{flalign}
		\label{hmfg}
		\mathbb{E}_{\mu} \big[ H(M) \big] = \mathbb{P}_{\mu} \big[ \mathcal{E}_{[-k, k] \times [-k, k]} = \mathcal{G} \big].
		\end{flalign}
		
		Now, \Cref{musumpsiig} implies for any $\varepsilon > 0$ that 
		\begin{flalign*}
		\displaystyle\lim_{M \rightarrow \infty} \mathbb{P}_{\mu} \bigg[ \Big| H(M) - \mathbb{P}_{\mu (s)} \big[ \mathcal{F}_{[-k, k] \times [-k, k]} = \mathcal{G} \big] \Big| > \varepsilon \bigg] = 0,
		\end{flalign*}
		
		\noindent from which \eqref{muemusf} follows by taking expectation and applying \eqref{hmfg}. 
	\end{proof}

	\section{Partition Function Estimates}
	
	\label{ProbabilityLower} 
	
	In this section we establish \Cref{mulowerprobability}, which we do in \Cref{Lower1}, after introducing a sparsification procedure in \Cref{RestrictionLK} and an ensemble extension property in \Cref{LowerProbability} that will be used in its proof. Throughout this section, we fix real numbers $0 < B_1 <  B_2 < 1$ and will allow constants to depend on them, even when not explicitly mentioned. 
	
	\subsection{\texorpdfstring{$(L; K)$}{}-Restrictions}
	
	\label{RestrictionLK}
	
	In this section we introduce and describe properties of a certain way of ``sparsifying'' six-vertex ensembles, which we refer to as $(L; K)$-restriction. This procedure has the benefit of simultaneously altering the slope of a six-vertex ensemble (see \Cref{restrictionregular} below), while not reducing the associated partition function on an $N \times N$ square by more than $e^{- o(N^2)}$, assuming $K \gg 1$ (see \Cref{rholrho} below). This will eventually enable us in the proof of \Cref{mulowerprobability} to compare partition functions of a pure state of slope $(s, t) \in \overline{\mathfrak{H}}$ to one from \Cref{Translation} of slope $(s_0, t_0) \in \partial \mathfrak{H}$, whose partition function equals to $1$ (as it is induced by a stochastic model with free exit data). 
	
	This $(L; K)$-restriction procedure essentially removes $K - L$ out of every $K$ consecutive paths in a six-vertex ensemble (and retains the remaining $L$ ones). This is made more precise through the following definition. 

		\begin{figure}[t]
		
		\begin{center}
			
			\begin{tikzpicture}[
			>=stealth,
			scale = .45
			]

			\draw[->, thick] (0, 10) -- (.9, 10);
			\draw[->, thick] (1, 10) -- (1.9, 10);
			\draw[->, thick] (2, 10) -- (2.9, 10);
			\draw[->, thick] (3, 10) -- (3.9, 10);
			\draw[->, thick] (4, 10) -- (4.9, 10);
			\draw[->, thick] (5, 10) -- (5, 10.75);
			
			\draw[->, thick] (0, 7) -- (.9, 7);
			\draw[->, thick] (1, 7) -- (1.9, 7);
			\draw[->, thick] (2, 7) -- (2.9, 7);
			\draw[->, thick] (3, 7) -- (3, 7.9);
			\draw[->, thick] (3, 8) -- (3.9, 8);
			\draw[->, thick] (4, 8) -- (4.9, 8);
			\draw[->, thick] (5, 8) -- (5, 8.9); 
			\draw[->, thick] (5, 9) -- (5.9, 9);
			\draw[->, thick] (6, 9) -- (6, 9.9); 
			\draw[->, thick] (6, 10) -- (6.9, 10);
			\draw[->, thick] (7, 10) -- (7, 10.75);
			
			\draw[->, thick] (0, 4) -- (.9, 4);
			\draw[->, thick] (1, 4) -- (1.9, 4);
			\draw[->, thick] (2, 4) -- (2, 4.9);
			\draw[->, thick] (2, 5) -- (2.9, 5);
			\draw[->, thick] (3, 5) -- (3, 5.9);
			\draw[->, thick] (3, 6) -- (3.9, 6);
			\draw[->, thick] (4, 6) -- (4.9, 6);
			\draw[->, thick] (5, 6) -- (5.9, 6);
			\draw[->, thick] (6, 6) -- (6, 6.9);
			\draw[->, thick] (6, 7) -- (6, 7.9);
			\draw[->, thick] (6, 8) -- (6.9, 8); 
			\draw[->, thick] (7, 8) -- (7, 8.9);
			\draw[->, thick] (7, 9) -- (7.9, 9); 
			\draw[->, thick] (8, 9) -- (8, 9.9);
			\draw[->, thick] (8, 10) -- (8.9, 10); 
			\draw[->, thick] (9, 10) -- (9, 10.75);

			\draw[->, thick] (0, 1) -- (.9, 1);
			\draw[->, thick] (1, 1) -- (1.9, 1);
			\draw[->, thick] (2, 1) -- (2.9, 1);
			\draw[->, thick] (3, 1) -- (3, 1.9);
			\draw[->, thick] (3, 2) -- (3.9, 2);
			\draw[->, thick] (4, 2) -- (4, 2.9);
			\draw[->, thick] (4, 3) -- (4, 3.9);
			\draw[->, thick] (4, 4) -- (4, 4.9);
			\draw[->, thick] (4, 5) -- (4.9, 5);
			\draw[->, thick] (5, 5) -- (5.9, 5); 
			\draw[->, thick] (6, 5) -- (6, 5.9); 
			\draw[->, thick] (6, 6) -- (6.9, 6);
			\draw[->, thick] (7, 6) -- (7.9, 6); 
			\draw[->, thick] (8, 6) -- (8, 6.9);
			\draw[->, thick] (8, 7) -- (8, 7.9);
			\draw[->, thick] (8, 8) -- (8, 8.9);
			\draw[->, thick] (8, 9) -- (8.9, 9);
			\draw[->, thick] (9, 9) -- (9.9, 9);
			\draw[->, thick] (10, 9) -- (10, 9.9);
			\draw[->, thick] (10, 10) -- (10.75, 10);

			\draw[->, thick] (4, 0) -- (4, .9); 
			\draw[->, thick] (4, 1) -- (4, 1.9);
			\draw[->, thick] (4, 2) -- (4.9, 2); 
			\draw[->, thick] (5, 2) -- (5.9, 2);
			\draw[->, thick] (6, 2) -- (6, 2.9); 
			\draw[->, thick] (6, 3) -- (6.9, 3); 
			\draw[->, thick] (7, 3) -- (7, 3.9);
			\draw[->, thick] (7, 4) -- (7, 4.9);
			\draw[->, thick] (7, 5) -- (7.9, 5); 
			\draw[->, thick] (8, 5) -- (8, 5.9);
			\draw[->, thick] (8, 6) -- (8.9, 6); 
			\draw[->, thick] (9, 6) -- (9, 6.9);
			\draw[->, thick] (9, 7) -- (9, 7.9);
			\draw[->, thick] (9, 8) -- (9.9, 8);
			\draw[->, thick] (10, 8) -- (10, 8.9);
			\draw[->, thick] (10, 9) -- (10.75, 9);

			\draw[->, thick] (5, 0) -- (5, .9);
			\draw[->, thick] (5, 1) -- (5.9, 1);
			\draw[->, thick] (6, 1) -- (6.9, 1);
			\draw[->, thick] (7, 1) -- (7, 1.9);
			\draw[->, thick] (7, 2) -- (7, 2.9);
			\draw[->, thick] (7, 3) -- (7.9, 3);
			\draw[->, thick] (8, 3) -- (8, 3.9);
			\draw[->, thick] (8, 4) -- (8.9, 4);
			\draw[->, thick] (9, 4) -- (9, 4.9);
			\draw[->, thick] (9, 5) -- (9, 5.9);
			\draw[->, thick] (9, 6) -- (9.9, 6);
			\draw[->, thick] (10, 6) -- (10, 6.9);
			\draw[->, thick] (10, 7) -- (10, 7.9);
			\draw[->, thick] (10, 8) -- (10.75, 8);
			
			\draw[->, thick] (7, 0) -- (7, .9); 
			\draw[->, thick] (7, 1) -- (7.9, 1); 
			\draw[->, thick] (8, 1) -- (8, 1.9);
			\draw[->, thick] (8, 2) -- (8.9, 2);
			\draw[->, thick] (9, 2) -- (9, 2.9); 
			\draw[->, thick] (9, 3) -- (9, 3.9); 
			\draw[->, thick] (9, 4) -- (9.9, 4);
			\draw[->, thick] (10, 4) -- (10, 4.9);
			\draw[->, thick] (10, 5) -- (10, 5.9);
			\draw[->, thick] (10, 6) -- (10.75, 6);

			\draw[->, thick] (9, 0) -- (9, .9);
			\draw[->, thick] (9, 1) -- (9.9, 1);
			\draw[->, thick] (10, 1) -- (10, 1.9); 
			\draw[->, thick] (10, 2) -- (10, 2.9); 
			\draw[->, thick] (10, 3) -- (10.75, 3);

			\filldraw[fill=gray!50!black, draw=black] (0, 1) circle [radius=.25];
			\filldraw[fill=gray!50!white, draw=black] (0, 2) circle [radius=.1];
			\filldraw[fill=gray!50!white, draw=black] (0, 3) circle [radius=.1];
			\filldraw[fill=gray!50!black, draw=black] (0, 4) circle [radius=.25];
			\filldraw[fill=gray!50!white, draw=black] (0, 5) circle [radius=.1];
			\filldraw[fill=gray!50!white, draw=black] (0, 6) circle [radius=.1];
			\filldraw[fill=gray!50!black, draw=black] (0, 7) circle [radius=.25];
			\filldraw[fill=gray!50!white, draw=black] (0, 8) circle [radius=.1];
			\filldraw[fill=gray!50!white, draw=black] (0, 9) circle [radius=.1];
			\filldraw[fill=gray!50!black, draw=black] (0, 10) circle [radius=.25];

			\filldraw[fill=gray!50!white, draw=black] (1, 0) circle [radius=.1];
			\filldraw[fill=gray!50!white, draw=black] (1, 1) circle [radius=.1];
			\filldraw[fill=gray!50!white, draw=black] (1, 2) circle [radius=.1];
			\filldraw[fill=gray!50!white, draw=black] (1, 3) circle [radius=.1];
			\filldraw[fill=gray!50!white, draw=black] (1, 4) circle [radius=.1];
			\filldraw[fill=gray!50!white, draw=black] (1, 5) circle [radius=.1];
			\filldraw[fill=gray!50!white, draw=black] (1, 6) circle [radius=.1];
			\filldraw[fill=gray!50!white, draw=black] (1, 7) circle [radius=.1];
			\filldraw[fill=gray!50!white, draw=black] (1, 8) circle [radius=.1];
			\filldraw[fill=gray!50!white, draw=black] (1, 9) circle [radius=.1];
			\filldraw[fill=gray!50!white, draw=black] (1, 10) circle [radius=.1];
			\filldraw[fill=gray!50!white, draw=black] (1, 11) circle [radius=.1];
			
			\filldraw[fill=gray!50!white, draw=black] (2, 0) circle [radius=.1];
			\filldraw[fill=gray!50!white, draw=black] (2, 1) circle [radius=.1];
			\filldraw[fill=gray!50!white, draw=black] (2, 2) circle [radius=.1];
			\filldraw[fill=gray!50!white, draw=black] (2, 3) circle [radius=.1];
			\filldraw[fill=gray!50!white, draw=black] (2, 4) circle [radius=.1];
			\filldraw[fill=gray!50!white, draw=black] (2, 5) circle [radius=.1];
			\filldraw[fill=gray!50!white, draw=black] (2, 6) circle [radius=.1];
			\filldraw[fill=gray!50!white, draw=black] (2, 7) circle [radius=.1];
			\filldraw[fill=gray!50!white, draw=black] (2, 8) circle [radius=.1];
			\filldraw[fill=gray!50!white, draw=black] (2, 9) circle [radius=.1];
			\filldraw[fill=gray!50!white, draw=black] (2, 10) circle [radius=.1];
			\filldraw[fill=gray!50!white, draw=black] (2, 11) circle [radius=.1];
			
			\filldraw[fill=gray!50!white, draw=black] (3, 0) circle [radius=.1];
			\filldraw[fill=gray!50!white, draw=black] (3, 1) circle [radius=.1];
			\filldraw[fill=gray!50!white, draw=black] (3, 2) circle [radius=.1];
			\filldraw[fill=gray!50!white, draw=black] (3, 3) circle [radius=.1];
			\filldraw[fill=gray!50!white, draw=black] (3, 4) circle [radius=.1];
			\filldraw[fill=gray!50!white, draw=black] (3, 5) circle [radius=.1];
			\filldraw[fill=gray!50!white, draw=black] (3, 6) circle [radius=.1];
			\filldraw[fill=gray!50!white, draw=black] (3, 7) circle [radius=.1];
			\filldraw[fill=gray!50!white, draw=black] (3, 8) circle [radius=.1];
			\filldraw[fill=gray!50!white, draw=black] (3, 9) circle [radius=.1];
			\filldraw[fill=gray!50!white, draw=black] (3, 10) circle [radius=.1];
			\filldraw[fill=gray!50!white, draw=black] (3, 11) circle [radius=.1];
			
			\filldraw[fill=gray!50!black, draw=black] (4, 0) circle [radius=.25];
			\filldraw[fill=gray!50!white, draw=black] (4, 1) circle [radius=.1];
			\filldraw[fill=gray!50!white, draw=black] (4, 2) circle [radius=.1];
			\filldraw[fill=gray!50!white, draw=black] (4, 3) circle [radius=.1];
			\filldraw[fill=gray!50!white, draw=black] (4, 4) circle [radius=.1];
			\filldraw[fill=gray!50!white, draw=black] (4, 5) circle [radius=.1];
			\filldraw[fill=gray!50!white, draw=black] (4, 6) circle [radius=.1];
			\filldraw[fill=gray!50!white, draw=black] (4, 7) circle [radius=.1];
			\filldraw[fill=gray!50!white, draw=black] (4, 8) circle [radius=.1];
			\filldraw[fill=gray!50!white, draw=black] (4, 9) circle [radius=.1];
			\filldraw[fill=gray!50!white, draw=black] (4, 10) circle [radius=.1];
			\filldraw[fill=gray!50!white, draw=black] (4, 11) circle [radius=.1];
			
			\filldraw[fill=gray!50!black, draw=black] (5, 0) circle [radius=.25];
			\filldraw[fill=gray!50!white, draw=black] (5, 1) circle [radius=.1];
			\filldraw[fill=gray!50!white, draw=black] (5, 2) circle [radius=.1];
			\filldraw[fill=gray!50!white, draw=black] (5, 3) circle [radius=.1];
			\filldraw[fill=gray!50!white, draw=black] (5, 4) circle [radius=.1];
			\filldraw[fill=gray!50!white, draw=black] (5, 5) circle [radius=.1];
			\filldraw[fill=gray!50!white, draw=black] (5, 6) circle [radius=.1];
			\filldraw[fill=gray!50!white, draw=black] (5, 7) circle [radius=.1];
			\filldraw[fill=gray!50!white, draw=black] (5, 8) circle [radius=.1];
			\filldraw[fill=gray!50!white, draw=black] (5, 9) circle [radius=.1];
			\filldraw[fill=gray!50!white, draw=black] (5, 10) circle [radius=.1];
			\filldraw[fill=gray!50!black, draw=black] (5, 11) circle [radius=.25];
			
			\filldraw[fill=gray!50!white, draw=black] (6, 0) circle [radius=.1];
			\filldraw[fill=gray!50!white, draw=black] (6, 1) circle [radius=.1];
			\filldraw[fill=gray!50!white, draw=black] (6, 2) circle [radius=.1];
			\filldraw[fill=gray!50!white, draw=black] (6, 3) circle [radius=.1];
			\filldraw[fill=gray!50!white, draw=black] (6, 4) circle [radius=.1];
			\filldraw[fill=gray!50!white, draw=black] (6, 5) circle [radius=.1];
			\filldraw[fill=gray!50!white, draw=black] (6, 6) circle [radius=.1];
			\filldraw[fill=gray!50!white, draw=black] (6, 7) circle [radius=.1];
			\filldraw[fill=gray!50!white, draw=black] (6, 8) circle [radius=.1];
			\filldraw[fill=gray!50!white, draw=black] (6, 9) circle [radius=.1];
			\filldraw[fill=gray!50!white, draw=black] (6, 10) circle [radius=.1];
			\filldraw[fill=gray!50!white, draw=black] (6, 11) circle [radius=.1];

			\filldraw[fill=gray!50!black, draw=black] (7, 0) circle [radius=.25];
			\filldraw[fill=gray!50!white, draw=black] (7, 1) circle [radius=.1];
			\filldraw[fill=gray!50!white, draw=black] (7, 2) circle [radius=.1];
			\filldraw[fill=gray!50!white, draw=black] (7, 3) circle [radius=.1];
			\filldraw[fill=gray!50!white, draw=black] (7, 4) circle [radius=.1];
			\filldraw[fill=gray!50!white, draw=black] (7, 5) circle [radius=.1];
			\filldraw[fill=gray!50!white, draw=black] (7, 6) circle [radius=.1];
			\filldraw[fill=gray!50!white, draw=black] (7, 7) circle [radius=.1];
			\filldraw[fill=gray!50!white, draw=black] (7, 8) circle [radius=.1];
			\filldraw[fill=gray!50!white, draw=black] (7, 9) circle [radius=.1];
			\filldraw[fill=gray!50!white, draw=black] (7, 10) circle [radius=.1];
			\filldraw[fill=gray!50!black, draw=black] (7, 11) circle [radius=.25];
			
			\filldraw[fill=gray!50!white, draw=black] (8, 0) circle [radius=.1];
			\filldraw[fill=gray!50!white, draw=black] (8, 1) circle [radius=.1];
			\filldraw[fill=gray!50!white, draw=black] (8, 2) circle [radius=.1];
			\filldraw[fill=gray!50!white, draw=black] (8, 3) circle [radius=.1];
			\filldraw[fill=gray!50!white, draw=black] (8, 4) circle [radius=.1];
			\filldraw[fill=gray!50!white, draw=black] (8, 5) circle [radius=.1];
			\filldraw[fill=gray!50!white, draw=black] (8, 6) circle [radius=.1];
			\filldraw[fill=gray!50!white, draw=black] (8, 7) circle [radius=.1];
			\filldraw[fill=gray!50!white, draw=black] (8, 8) circle [radius=.1];
			\filldraw[fill=gray!50!white, draw=black] (8, 9) circle [radius=.1];
			\filldraw[fill=gray!50!white, draw=black] (8, 10) circle [radius=.1];
			\filldraw[fill=gray!50!white, draw=black] (8, 11) circle [radius=.1];
			
			\filldraw[fill=gray!50!black, draw=black] (9, 0) circle [radius=.25];
			\filldraw[fill=gray!50!white, draw=black] (9, 1) circle [radius=.1];
			\filldraw[fill=gray!50!white, draw=black] (9, 2) circle [radius=.1];
			\filldraw[fill=gray!50!white, draw=black] (9, 3) circle [radius=.1];
			\filldraw[fill=gray!50!white, draw=black] (9, 4) circle [radius=.1];
			\filldraw[fill=gray!50!white, draw=black] (9, 5) circle [radius=.1];
			\filldraw[fill=gray!50!white, draw=black] (9, 6) circle [radius=.1];
			\filldraw[fill=gray!50!white, draw=black] (9, 7) circle [radius=.1];
			\filldraw[fill=gray!50!white, draw=black] (9, 8) circle [radius=.1];
			\filldraw[fill=gray!50!white, draw=black] (9, 9) circle [radius=.1];
			\filldraw[fill=gray!50!white, draw=black] (9, 10) circle [radius=.1];
			\filldraw[fill=gray!50!black, draw=black] (9, 11) circle [radius=.25];
			
			\filldraw[fill=gray!50!white, draw=black] (10, 0) circle [radius=.1];
			\filldraw[fill=gray!50!white, draw=black] (10, 1) circle [radius=.1];
			\filldraw[fill=gray!50!white, draw=black] (10, 2) circle [radius=.1];
			\filldraw[fill=gray!50!white, draw=black] (10, 3) circle [radius=.1];
			\filldraw[fill=gray!50!white, draw=black] (10, 4) circle [radius=.1];
			\filldraw[fill=gray!50!white, draw=black] (10, 5) circle [radius=.1];
			\filldraw[fill=gray!50!white, draw=black] (10, 6) circle [radius=.1];
			\filldraw[fill=gray!50!white, draw=black] (10, 7) circle [radius=.1];
			\filldraw[fill=gray!50!white, draw=black] (10, 8) circle [radius=.1];
			\filldraw[fill=gray!50!white, draw=black] (10, 9) circle [radius=.1];
			\filldraw[fill=gray!50!white, draw=black] (10, 10) circle [radius=.1];
			\filldraw[fill=gray!50!white, draw=black] (10, 11) circle [radius=.1];
			
			\filldraw[fill=gray!50!white, draw=black] (11, 1) circle [radius=.1];
			\filldraw[fill=gray!50!white, draw=black] (11, 2) circle [radius=.1];
			\filldraw[fill=gray!50!black, draw=black] (11, 3) circle [radius=.25];
			\filldraw[fill=gray!50!white, draw=black] (11, 4) circle [radius=.1];
			\filldraw[fill=gray!50!white, draw=black] (11, 5) circle [radius=.1];
			\filldraw[fill=gray!50!black, draw=black] (11, 6) circle [radius=.25];
			\filldraw[fill=gray!50!white, draw=black] (11, 7) circle [radius=.1];
			\filldraw[fill=gray!50!black, draw=black] (11, 8) circle [radius=.25];
			\filldraw[fill=gray!50!black, draw=black] (11, 9) circle [radius=.25];
			\filldraw[fill=gray!50!black, draw=black] (11, 10) circle [radius=.25];

			\filldraw[fill=gray!50!black, draw=black] (11, 12) circle [radius=0] node[scale = 1.5]{$\mathcal{E}$};

			\draw[] (-.5, 1) -- (-.85, 1) -- (-.85, 10) -- (-.5, 10);
			\draw[] (-1.35, 5.5) circle [radius = 0] node[]{$N$};
			
			\draw[] (1, -.5) -- (1, -.85) -- (10, -.85) -- (10, -.5);
			\draw[] (5.5, -1.35) circle [radius = 0] node[]{$N$};

			\draw[->, thick] (15, 10) -- (15.9, 10);
			\draw[->, thick] (16, 10) -- (16.9, 10);
			\draw[->, thick] (17, 10) -- (17.9, 10);
			\draw[->, thick] (18, 10) -- (18.9, 10);
			\draw[->, thick] (19, 10) -- (19.9, 10);
			\draw[->, thick] (20, 10) -- (20, 10.75);
			
			\draw[->, thick] (15, 4) -- (15.9, 4);
			\draw[->, thick] (16, 4) -- (16.9, 4);
			\draw[->, thick] (17, 4) -- (17, 4.9);
			\draw[->, thick] (17, 5) -- (17.9, 5);
			\draw[->, thick] (18, 5) -- (18, 5.9);
			\draw[->, thick] (18, 6) -- (18.9, 6);
			\draw[->, thick] (19, 6) -- (19.9, 6);
			\draw[->, thick] (20, 6) -- (20.9, 6);
			\draw[->, thick] (21, 6) -- (21, 6.9);
			\draw[->, thick] (21, 7) -- (21, 7.9);
			\draw[->, thick] (21, 8) -- (21.9, 8); 
			\draw[->, thick] (22, 8) -- (22, 8.9);
			\draw[->, thick] (22, 9) -- (22.9, 9); 
			\draw[->, thick] (23, 9) -- (23, 9.9);
			\draw[->, thick] (23, 10) -- (23.9, 10); 
			\draw[->, thick] (24, 10) -- (24, 10.75);

			\draw[->, thick] (15, 1) -- (15.9, 1);
			\draw[->, thick] (16, 1) -- (16.9, 1);
			\draw[->, thick] (17, 1) -- (17.9, 1);
			\draw[->, thick] (18, 1) -- (18, 1.9);
			\draw[->, thick] (18, 2) -- (18.9, 2);
			\draw[->, thick] (19, 2) -- (19, 2.9);
			\draw[->, thick] (19, 3) -- (19, 3.9);
			\draw[->, thick] (19, 4) -- (19, 4.9);
			\draw[->, thick] (19, 5) -- (19.9, 5);
			\draw[->, thick] (20, 5) -- (20.9, 5); 
			\draw[->, thick] (21, 5) -- (21, 5.9); 
			\draw[->, thick] (21, 6) -- (21.9, 6);
			\draw[->, thick] (22, 6) -- (22.9, 6); 
			\draw[->, thick] (23, 6) -- (23, 6.9);
			\draw[->, thick] (23, 7) -- (23, 7.9);
			\draw[->, thick] (23, 8) -- (23, 8.9);
			\draw[->, thick] (23, 9) -- (23.9, 9);
			\draw[->, thick] (24, 9) -- (24.9, 9);
			\draw[->, thick] (25, 9) -- (25, 9.9);
			\draw[->, thick] (25, 10) -- (25.75, 10);

			\draw[->, thick] (20, 0) -- (20, .9);
			\draw[->, thick] (20, 1) -- (20.9, 1);
			\draw[->, thick] (21, 1) -- (21.9, 1);
			\draw[->, thick] (22, 1) -- (22, 1.9);
			\draw[->, thick] (22, 2) -- (22, 2.9);
			\draw[->, thick] (22, 3) -- (22.9, 3);
			\draw[->, thick] (23, 3) -- (23, 3.9);
			\draw[->, thick] (23, 4) -- (23.9, 4);
			\draw[->, thick] (24, 4) -- (24, 4.9);
			\draw[->, thick] (24, 5) -- (24, 5.9);
			\draw[->, thick] (24, 6) -- (24.9, 6);
			\draw[->, thick] (25, 6) -- (25, 6.9);
			\draw[->, thick] (25, 7) -- (25, 7.9);
			\draw[->, thick] (25, 8) -- (25.75, 8);
			
			\draw[->, thick] (22, 0) -- (22, .9); 
			\draw[->, thick] (22, 1) -- (22.9, 1); 
			\draw[->, thick] (23, 1) -- (23, 1.9);
			\draw[->, thick] (23, 2) -- (23.9, 2);
			\draw[->, thick] (24, 2) -- (24, 2.9); 
			\draw[->, thick] (24, 3) -- (24, 3.9); 
			\draw[->, thick] (24, 4) -- (24.9, 4);
			\draw[->, thick] (25, 4) -- (25, 4.9);
			\draw[->, thick] (25, 5) -- (25, 5.9);
			\draw[->, thick] (25, 6) -- (25.75, 6);

			\filldraw[fill=gray!50!black, draw=black] (15, 1) circle [radius=.25];
			\filldraw[fill=gray!50!white, draw=black] (15, 2) circle [radius=.1];
			\filldraw[fill=gray!50!white, draw=black] (15, 3) circle [radius=.1];
			\filldraw[fill=gray!50!black, draw=black] (15, 4) circle [radius=.25];
			\filldraw[fill=gray!50!white, draw=black] (15, 5) circle [radius=.1];
			\filldraw[fill=gray!50!white, draw=black] (15, 6) circle [radius=.1];
			\filldraw[fill=gray!50!white, draw=black] (15, 7) circle [radius=.1];
			\filldraw[fill=gray!50!white, draw=black] (15, 8) circle [radius=.1];
			\filldraw[fill=gray!50!white, draw=black] (15, 9) circle [radius=.1];
			\filldraw[fill=gray!50!black, draw=black] (15, 10) circle [radius=.25];

			\filldraw[fill=gray!50!white, draw=black] (16, 0) circle [radius=.1];
			\filldraw[fill=gray!50!white, draw=black] (16, 1) circle [radius=.1];
			\filldraw[fill=gray!50!white, draw=black] (16, 2) circle [radius=.1];
			\filldraw[fill=gray!50!white, draw=black] (16, 3) circle [radius=.1];
			\filldraw[fill=gray!50!white, draw=black] (16, 4) circle [radius=.1];
			\filldraw[fill=gray!50!white, draw=black] (16, 5) circle [radius=.1];
			\filldraw[fill=gray!50!white, draw=black] (16, 6) circle [radius=.1];
			\filldraw[fill=gray!50!white, draw=black] (16, 7) circle [radius=.1];
			\filldraw[fill=gray!50!white, draw=black] (16, 8) circle [radius=.1];
			\filldraw[fill=gray!50!white, draw=black] (16, 9) circle [radius=.1];
			\filldraw[fill=gray!50!white, draw=black] (16, 10) circle [radius=.1];
			\filldraw[fill=gray!50!white, draw=black] (16, 11) circle [radius=.1];
			
			\filldraw[fill=gray!50!white, draw=black] (17, 0) circle [radius=.1];
			\filldraw[fill=gray!50!white, draw=black] (17, 1) circle [radius=.1];
			\filldraw[fill=gray!50!white, draw=black] (17, 2) circle [radius=.1];
			\filldraw[fill=gray!50!white, draw=black] (17, 3) circle [radius=.1];
			\filldraw[fill=gray!50!white, draw=black] (17, 4) circle [radius=.1];
			\filldraw[fill=gray!50!white, draw=black] (17, 5) circle [radius=.1];
			\filldraw[fill=gray!50!white, draw=black] (17, 6) circle [radius=.1];
			\filldraw[fill=gray!50!white, draw=black] (17, 7) circle [radius=.1];
			\filldraw[fill=gray!50!white, draw=black] (17, 8) circle [radius=.1];
			\filldraw[fill=gray!50!white, draw=black] (17, 9) circle [radius=.1];
			\filldraw[fill=gray!50!white, draw=black] (17, 10) circle [radius=.1];
			\filldraw[fill=gray!50!white, draw=black] (17, 11) circle [radius=.1];
			
			\filldraw[fill=gray!50!white, draw=black] (18, 0) circle [radius=.1];
			\filldraw[fill=gray!50!white, draw=black] (18, 1) circle [radius=.1];
			\filldraw[fill=gray!50!white, draw=black] (18, 2) circle [radius=.1];
			\filldraw[fill=gray!50!white, draw=black] (18, 3) circle [radius=.1];
			\filldraw[fill=gray!50!white, draw=black] (18, 4) circle [radius=.1];
			\filldraw[fill=gray!50!white, draw=black] (18, 5) circle [radius=.1];
			\filldraw[fill=gray!50!white, draw=black] (18, 6) circle [radius=.1];
			\filldraw[fill=gray!50!white, draw=black] (18, 7) circle [radius=.1];
			\filldraw[fill=gray!50!white, draw=black] (18, 8) circle [radius=.1];
			\filldraw[fill=gray!50!white, draw=black] (18, 9) circle [radius=.1];
			\filldraw[fill=gray!50!white, draw=black] (18, 10) circle [radius=.1];
			\filldraw[fill=gray!50!white, draw=black] (18, 11) circle [radius=.1];
			
			\filldraw[fill=gray!50!white, draw=black] (19, 0) circle [radius=.1];
			\filldraw[fill=gray!50!white, draw=black] (19, 1) circle [radius=.1];
			\filldraw[fill=gray!50!white, draw=black] (19, 2) circle [radius=.1];
			\filldraw[fill=gray!50!white, draw=black] (19, 3) circle [radius=.1];
			\filldraw[fill=gray!50!white, draw=black] (19, 4) circle [radius=.1];
			\filldraw[fill=gray!50!white, draw=black] (19, 5) circle [radius=.1];
			\filldraw[fill=gray!50!white, draw=black] (19, 6) circle [radius=.1];
			\filldraw[fill=gray!50!white, draw=black] (19, 7) circle [radius=.1];
			\filldraw[fill=gray!50!white, draw=black] (19, 8) circle [radius=.1];
			\filldraw[fill=gray!50!white, draw=black] (19, 9) circle [radius=.1];
			\filldraw[fill=gray!50!white, draw=black] (19, 10) circle [radius=.1];
			\filldraw[fill=gray!50!white, draw=black] (19, 11) circle [radius=.1];
			
			\filldraw[fill=gray!50!black, draw=black] (20, 0) circle [radius=.25];
			\filldraw[fill=gray!50!white, draw=black] (20, 1) circle [radius=.1];
			\filldraw[fill=gray!50!white, draw=black] (20, 2) circle [radius=.1];
			\filldraw[fill=gray!50!white, draw=black] (20, 3) circle [radius=.1];
			\filldraw[fill=gray!50!white, draw=black] (20, 4) circle [radius=.1];
			\filldraw[fill=gray!50!white, draw=black] (20, 5) circle [radius=.1];
			\filldraw[fill=gray!50!white, draw=black] (20, 6) circle [radius=.1];
			\filldraw[fill=gray!50!white, draw=black] (20, 7) circle [radius=.1];
			\filldraw[fill=gray!50!white, draw=black] (20, 8) circle [radius=.1];
			\filldraw[fill=gray!50!white, draw=black] (20, 9) circle [radius=.1];
			\filldraw[fill=gray!50!white, draw=black] (20, 10) circle [radius=.1];
			\filldraw[fill=gray!50!black, draw=black] (20, 11) circle [radius=.25];
			
			\filldraw[fill=gray!50!white, draw=black] (21, 0) circle [radius=.1];
			\filldraw[fill=gray!50!white, draw=black] (21, 1) circle [radius=.1];
			\filldraw[fill=gray!50!white, draw=black] (21, 2) circle [radius=.1];
			\filldraw[fill=gray!50!white, draw=black] (21, 3) circle [radius=.1];
			\filldraw[fill=gray!50!white, draw=black] (21, 4) circle [radius=.1];
			\filldraw[fill=gray!50!white, draw=black] (21, 5) circle [radius=.1];
			\filldraw[fill=gray!50!white, draw=black] (21, 6) circle [radius=.1];
			\filldraw[fill=gray!50!white, draw=black] (21, 7) circle [radius=.1];
			\filldraw[fill=gray!50!white, draw=black] (21, 8) circle [radius=.1];
			\filldraw[fill=gray!50!white, draw=black] (21, 9) circle [radius=.1];
			\filldraw[fill=gray!50!white, draw=black] (21, 10) circle [radius=.1];
			\filldraw[fill=gray!50!white, draw=black] (21, 11) circle [radius=.1];
			
			\filldraw[fill=gray!50!black, draw=black] (22, 0) circle [radius=.25];
			\filldraw[fill=gray!50!white, draw=black] (22, 1) circle [radius=.1];
			\filldraw[fill=gray!50!white, draw=black] (22, 2) circle [radius=.1];
			\filldraw[fill=gray!50!white, draw=black] (22, 3) circle [radius=.1];
			\filldraw[fill=gray!50!white, draw=black] (22, 4) circle [radius=.1];
			\filldraw[fill=gray!50!white, draw=black] (22, 5) circle [radius=.1];
			\filldraw[fill=gray!50!white, draw=black] (22, 6) circle [radius=.1];
			\filldraw[fill=gray!50!white, draw=black] (22, 7) circle [radius=.1];
			\filldraw[fill=gray!50!white, draw=black] (22, 8) circle [radius=.1];
			\filldraw[fill=gray!50!white, draw=black] (22, 9) circle [radius=.1];
			\filldraw[fill=gray!50!white, draw=black] (22, 10) circle [radius=.1];
			\filldraw[fill=gray!50!white, draw=black] (22, 11) circle [radius=.1];
			
			\filldraw[fill=gray!50!white, draw=black] (23, 0) circle [radius=.1];
			\filldraw[fill=gray!50!white, draw=black] (23, 1) circle [radius=.1];
			\filldraw[fill=gray!50!white, draw=black] (23, 2) circle [radius=.1];
			\filldraw[fill=gray!50!white, draw=black] (23, 3) circle [radius=.1];
			\filldraw[fill=gray!50!white, draw=black] (23, 4) circle [radius=.1];
			\filldraw[fill=gray!50!white, draw=black] (23, 5) circle [radius=.1];
			\filldraw[fill=gray!50!white, draw=black] (23, 6) circle [radius=.1];
			\filldraw[fill=gray!50!white, draw=black] (23, 7) circle [radius=.1];
			\filldraw[fill=gray!50!white, draw=black] (23, 8) circle [radius=.1];
			\filldraw[fill=gray!50!white, draw=black] (23, 9) circle [radius=.1];
			\filldraw[fill=gray!50!white, draw=black] (23, 10) circle [radius=.1];
			\filldraw[fill=gray!50!white, draw=black] (23, 11) circle [radius=.1];
			
			\filldraw[fill=gray!50!white, draw=black] (24, 0) circle [radius=.1];
			\filldraw[fill=gray!50!white, draw=black] (24, 1) circle [radius=.1];
			\filldraw[fill=gray!50!white, draw=black] (24, 2) circle [radius=.1];
			\filldraw[fill=gray!50!white, draw=black] (24, 3) circle [radius=.1];
			\filldraw[fill=gray!50!white, draw=black] (24, 4) circle [radius=.1];
			\filldraw[fill=gray!50!white, draw=black] (24, 5) circle [radius=.1];
			\filldraw[fill=gray!50!white, draw=black] (24, 6) circle [radius=.1];
			\filldraw[fill=gray!50!white, draw=black] (24, 7) circle [radius=.1];
			\filldraw[fill=gray!50!white, draw=black] (24, 8) circle [radius=.1];
			\filldraw[fill=gray!50!white, draw=black] (24, 9) circle [radius=.1];
			\filldraw[fill=gray!50!white, draw=black] (24, 10) circle [radius=.1];
			\filldraw[fill=gray!50!white, draw=black] (24, 11) circle [radius=.1];
			
			\filldraw[fill=gray!50!white, draw=black] (25, 0) circle [radius=.1];
			\filldraw[fill=gray!50!white, draw=black] (25, 1) circle [radius=.1];
			\filldraw[fill=gray!50!white, draw=black] (25, 2) circle [radius=.1];
			\filldraw[fill=gray!50!white, draw=black] (25, 3) circle [radius=.1];
			\filldraw[fill=gray!50!white, draw=black] (25, 4) circle [radius=.1];
			\filldraw[fill=gray!50!white, draw=black] (25, 5) circle [radius=.1];
			\filldraw[fill=gray!50!white, draw=black] (25, 6) circle [radius=.1];
			\filldraw[fill=gray!50!white, draw=black] (25, 7) circle [radius=.1];
			\filldraw[fill=gray!50!white, draw=black] (25, 8) circle [radius=.1];
			\filldraw[fill=gray!50!white, draw=black] (25, 9) circle [radius=.1];
			\filldraw[fill=gray!50!white, draw=black] (25, 10) circle [radius=.1];
			\filldraw[fill=gray!50!white, draw=black] (25, 11) circle [radius=.1];
			
			\filldraw[fill=gray!50!white, draw=black] (26, 1) circle [radius=.1];
			\filldraw[fill=gray!50!white, draw=black] (26, 2) circle [radius=.1];
			\filldraw[fill=gray!50!white, draw=black] (26, 3) circle [radius=.1];
			\filldraw[fill=gray!50!white, draw=black] (26, 4) circle [radius=.1];
			\filldraw[fill=gray!50!white, draw=black] (26, 5) circle [radius=.1];
			\filldraw[fill=gray!50!black, draw=black] (26, 6) circle [radius=.25];
			\filldraw[fill=gray!50!white, draw=black] (26, 7) circle [radius=.1];
			\filldraw[fill=gray!50!black, draw=black] (26, 8) circle [radius=.25];
			\filldraw[fill=gray!50!white, draw=black] (26, 9) circle [radius=.1];
			\filldraw[fill=gray!50!black, draw=black] (26, 10) circle [radius=.25];

			\filldraw[fill=gray!50!black, draw=black] (26, 12) circle [radius=0] node[scale = 1.5]{$\mathcal{E}'$};

			\draw[] (26.5, 1) -- (26.85, 1) -- (26.85, 10) -- (26.5, 10);
			\draw[] (27.35, 5.5) circle [radius = 0] node[]{$N$};
			
			\draw[] (16, -.5) -- (16, -.85) -- (25, -.85) -- (25, -.5);
			\draw[] (20.5, -1.35) circle [radius = 0] node[]{$N$};

			\end{tikzpicture}
			
		\end{center}	
		
		\caption{\label{restrictionboundary} Depicted above, $\mathcal{E}' \in \mathfrak{E}(\Lambda_{10})$ is the $(2; 3)$-restriction of $\mathcal{E} \in \mathfrak{E} (\Lambda_{10})$. Vertices in the boundary data for these ensembles are drawn darker and larger than are the other vertices in $\overline{\Lambda}_{10}$.}
	\end{figure}
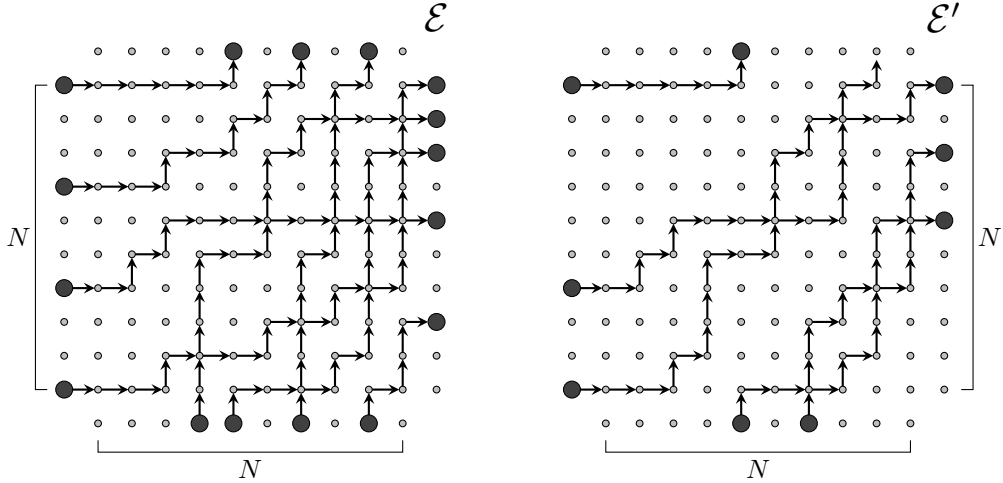

	\begin{definition} 
		
		\label{nmklrestrction} 
		
		Fix an integer $N > 0$; define the domain $\Lambda_N = [1, N] \times [1, N] \subset \mathbb{Z}^2$; and let $\textbf{u} \cup \textbf{v} = (u_{-B}, u_{1 - B}, \ldots , u_A) \cup (v_{-B}, v_{1 - B}, \ldots , v_A)$ denote boundary data  on $\Lambda_N$ for a six-vertex ensemble $\mathcal{E} \in \mathfrak{E} (\Lambda_N)$. For any integers $K > 0$ and $L \in [0, K]$, we define the \emph{$(L; K)$-restriction of} $\textbf{u} \cup \textbf{v}$ to be the boundary data $\textbf{u}' \cup \textbf{v}' = (u_{-B'}', u_{1 - B'}', \ldots , u_{A'}') \cup (v_{-B'}', v_{1 - B'}', \ldots , v_{A'}')$ obtained by setting $u_i' \in \textbf{u}'$ and $v_i' \in \textbf{v}'$ if and only if there exist $m \in \mathbb{Z}$ and $r \in [1, L]$ such that $u_i' = u_{mK + r}$ and $v_i' = v_{mK + r}$, respectively. 
		
		Similarly, the $(L; K)$-restriction $\mathcal{E}' \in \mathfrak{E} (\Lambda_N)$ of $\mathcal{E}$ is the six-vertex ensemble defined as follows. Denoting the non-crossing path ensembles associated with $\mathcal{E}$ and $\mathcal{E}'$ by $\mathcal{P} = (\textbf{p}_{-B}, \textbf{p}_{1 - B}, \ldots , \textbf{p}_A)$ and $\mathcal{P}' = (\textbf{p}_{-B'}', \textbf{p}_{1 - B'}', \ldots , \textbf{p}_{A'}')$, respectively, we have $\textbf{p}_i' \in \mathcal{P}'$ if and only if there exist $m \in \mathbb{Z}$ and $r \in [1, L]$ such that $\textbf{p}_i' = \textbf{p}_{mK + r}$. We refer to \Cref{restrictionboundary} for a depiction. 
		
	\end{definition}

	The following lemma essentially states that $(L; K)$-restricting regular boundary data of slope $(s_0, t_0)$ largely preserves its regularity but ``reduces'' its slope to $(s, t) \approx \big( \vartheta s_0, \vartheta t_0 \big)$, where $\vartheta = \frac{L}{K}$. 
		
	\begin{lem} 
		
		\label{restrictionregular}
		
		Fix real numbers $\eta \in (0, 1)$ and $R \ge 1$; two pairs $(s_0, t_0), (s, t) \in (0, 1]^2$; and integers $N \ge K \ge L > 0$. Assume that  
		\begin{flalign*}
		\bigg|  \displaystyle\frac{s_0 L}{K} - s \bigg| < \eta; \qquad \bigg| \displaystyle\frac{t_0 L}{K} - t \bigg| < \eta; \qquad R \le L; \qquad \eta < \displaystyle\frac{s_0 t_0}{4}.
		\end{flalign*}
		
		\noindent Define the domain $\Lambda = \Lambda_N = [1, N] \times [1, N] \subseteq \mathbb{Z}^2$, and fix boundary data $\textbf{\emph{u}} \cup \textbf{\emph{v}}$ on $\Lambda$; let $\textbf{\emph{u}}' \cup \textbf{\emph{v}}'$ denote the $(L; K)$-restriction of $\textbf{\emph{u}} \cup \textbf{\emph{v}}$. If $\textbf{\emph{u}} \cup \textbf{\emph{v}}$ is $(R; \eta)$-regular with slope $(s_0, t_0)$, then $\textbf{\emph{u}}' \cup \textbf{\emph{v}}'$ is $\big( \frac{2K}{s_0 t_0 \omega}; \frac{4 (\eta + \omega)}{s_0 t_0} \big)$-regular with slope $(s, t)$, for any real number $\omega > 0$. 
		
	\end{lem}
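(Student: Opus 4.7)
The plan is to combine the $(R;\eta)$-regularity of $\textbf{u}\cup\textbf{v}$ with the following structural observation: on each of the four sides of $\partial\Lambda$, the indices of paths meeting that side form a consecutive range of integers. For the south and west sides this is immediate from the indexing convention $(\textbf{p}_{-B},\ldots,\textbf{p}_0)$ and $(\textbf{p}_1,\ldots,\textbf{p}_A)$. For the north and east sides one argues that if $i<j$, $\textbf{p}_i$ exits through the north, and $\textbf{p}_j$ exits through the east, then the condition $\textbf{p}_i\le\textbf{p}_j$ applied to the endpoint $(N+1,y_j)$ of $\textbf{p}_j$ would require some point of $\textbf{p}_i$ to have $x$-coordinate at least $N+1$, contradicting the fact that $\textbf{p}_i$ terminates on the north boundary. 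Hence the east-exiting paths form an initial index segment $\{-B,\ldots,k\}$ and the north-exiting paths a terminal one, so on every boundary side the relevant indices are consecutive.

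Fix one boundary side $J$ and any interval $I\subseteq J$ of length at most $R':=2K/(s_0t_0\omega)$. Since the indices of paths meeting $J$ form a consecutive block, the points of $(\textbf{u}\cup\textbf{v})\cap I$ correspond to a further consecutive sub-block of $n_I:=|(\textbf{u}\cup\textbf{v})\cap I|$ indices. Among any $n$ consecutive integers, the count with residue in $[1,L]\pmod{K}$ differs from $\tfrac{L}{K}n$ by at most $L\le K$, so the $(L;K)$-restriction satisfies $\bigl||(\textbf{u}'\cup\textbf{v}')\cap I|-\tfrac{L}{K}n_I\bigr|\le K$. Suppose for concreteness that $J$ is the south boundary, so the relevant slope is $s_0$; the argument is identical on the other three sides (with $t_0$ in place of $s_0$ on the west and east). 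For $|I|\le R$ the $(R;\eta)$-regularity gives $|n_I-s_0|I||\le\eta R$, while for $|I|>R$, \Cref{uetareta} gives $|n_I-s_0|I||<2\eta|I|$. Combining either with the hypothesis $|Ls_0/K-s|<\eta$ and the bound $R\le L\le K$, one obtains in both cases
\begin{equation*}
\bigl||(\textbf{u}'\cup\textbf{v}')\cap I|-s|I|\bigr|\le 3\eta|I|+2K\le 3\eta R'+2K.
\end{equation*}

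It then suffices to verify $3\eta R'+2K\le\eta' R'=\tfrac{4(\eta+\omega)}{s_0t_0}R'$, i.e.\ $(\eta'-3\eta)R'\ge 2K$. The hypothesis $\eta<s_0t_0/4$ gives $8\eta-6\eta s_0t_0=2\eta(4-3s_0t_0)\ge 0$, hence
\begin{equation*}
(\eta'-3\eta)R'=\frac{K}{\omega s_0^2t_0^2}\bigl(8\eta+8\omega-6\eta s_0t_0\bigr)\ge\frac{8K}{s_0^2t_0^2}\ge 8K,
\end{equation*}
which comfortably suffices. The only substantive point is the consecutive-index observation on the north and east sides; once that is in hand the remainder is just bookkeeping of error terms.
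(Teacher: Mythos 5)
Your proof is correct, but it follows a genuinely different route from the paper's. The paper works positionally: it groups the boundary points into consecutive blocks $I_m$ each containing exactly $K$ points of $\textbf{u}$, uses the $(R;\eta)$-regularity of the \emph{original} data twice --- once to bound each block length by $2s_0^{-1}K$ (so that a not-too-short interval $I$ is well approximated by a union of whole blocks) and once to relate the number of blocks to $|I|$ --- and then counts $L$ retained points per block; this forces a preliminary case split on whether $|I| \ge 8K/(s_0^2 t_0^2)$. You instead work on the index side: after observing that each of the four sides of $\partial\Lambda$ carries a consecutive range of path indices, ordered monotonically by position (your argument for the north/east split via the definition of $\textbf{p}_i \le \textbf{p}_j$ is the right one, and matches the fact the paper itself invokes in the proof of \Cref{wabcxir}), the retained points in $I$ are exactly the indices in a consecutive block with residue in $[1,L]$ modulo $K$, and the deterministic bound $\bigl| \#\{\text{such } i\} - \tfrac{L}{K} n_I \bigr| \le L \le K$ replaces the paper's block-length analysis entirely. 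The remaining bookkeeping (regularity of $\textbf{u} \cup \textbf{v}$ via \Cref{uetareta}, the hypothesis $|Ls_0/K - s| < \eta$, and the final inequality $(\eta' - 3\eta)R' \ge 8K$) is carried out correctly. Your version is arguably cleaner --- it needs only one application of the regularity hypothesis and avoids the case split on $|I|$ --- at the cost of having to justify the consecutive-and-monotone indexing explicitly, which the paper's positional argument only uses implicitly for entrance data.
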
  
	
	\begin{proof}
		
		Let $I \subset \partial \Lambda$ denote an interval with $|I| \le \frac{2K}{s_0 t_0 \omega}$. It suffices to show that if $I$ lies on the north or south boundary of $\Lambda$, then $\big| I \cap (\textbf{u}' \cup \textbf{v}') - s|I| \big| \le 8 (\eta + \omega) (s_0^2 t_0^2 \omega)^{-1} K$ and, if $I$ lies on the east or west boundary of $\Lambda$, then $\big| I \cap (\textbf{u}' \cup \textbf{v}') - t|I| \big| \le 8 (\eta + \omega) (s_0^2 t_0^2 \omega)^{-1} K$. Let us assume that $I \subset [1, N] \times \{ 0 \}$ lies on the west boundary of $\Lambda$, as the remaining cases are entirely analogous. 
		
		In this case, let $\textbf{u} = (u_{-B}, u_{1 - B}, \ldots , u_A)$ and $\textbf{v} = (v_{-B}, v_{1 - B}, \ldots , v_A)$; for each integer $i \ge 1$, set $u_i = (0, y_i)$. Further define the intervals $I_m = \{ 0 \} \times [y_{mK + 1}, y_{mK + K}] \subset \partial \Lambda$ and $J_m = \{ 0 \} \times [y_{mK + 1}, y_{mK + L}] \subseteq I_m$ along the west boundary of $\Lambda$, for each integer $m \ge 0$ for which they exist. Since $\textbf{u}$ is $(R; \eta)$-regular of slope $(s, t)$ and $|I_m| \ge K \ge L \ge R$, by \Cref{uetareta} we have that $K = |\textbf{u} \cap I_m| \ge (s_0 - 2 \eta) |I_m| \ge \frac{s_0 |I_m|}{2}$ (as $\eta < \frac{s_0 t_0}{4}$). Thus, $|I_m| \le 2s_0^{-1} K$. 
		
		Now, we may assume that $|I| \ge \frac{8K}{s_0^2 t_0^2}$, for otherwise $\big| I \cap (\textbf{u}' \cup \textbf{v}') - s|I| \big| \le |I| \le 8 (\eta + \omega) (s_0^2 t_0^2 \omega)^{-1} K$. Then $|I| > 4s_0^{-1} K$ so, since $|I_m| \le 2 s_0^{-1} K$, there exist integers $0 \le m_1 \le m_2$ such that $\bigcup_{m = m_1}^{m_2} I_m \subseteq I$ and $\big| I \setminus \bigcup_{m = m_1}^{m_2} I_m \big| \le 4 s_0^{-1} K$. Since $\textbf{u}'$ is the $(L; K)$-restriction of $\textbf{u}$, this yields 
		\begin{flalign}
		\label{iuti} 
		\begin{aligned} 
		\big| |I \cap \textbf{u}'| - t|I| \big| & \le  \Bigg| \bigg| \bigcup_{m = m_1}^{m_2} (J_m \cap \textbf{u}) \bigg| - t|I| \Bigg| + 4 s_0^{-1} K \\
		& \le \Bigg| (m_2 - m_1 + 1) L - t \bigg| \bigcup_{m = m_1}^{m_2} I_m \bigg| \Bigg| + 8 s_0^{-1} K.
		\end{aligned} 
		\end{flalign}
		
		Moreover, by \Cref{uetareta}, the $(R; \eta)$-regularity of $\textbf{u} \cap \textbf{v}$ and the fact that $|I_m \cap \textbf{u}| = K$ imply $\big| K - t_0 |I_m| \big| \le 2 \eta |I_m|$. Summing over $m \in [m_1, m_2]$, we obtain 
		\begin{flalign*}
		\Bigg| \bigg| \bigcup_{m = m_1}^{m_2} I_m \bigg| - t_0^{-1} (m_2 - m_1 + 1) K \Bigg| \le 2 t_0^{-1} \eta |I|.
		\end{flalign*}
		
		\noindent Together with \eqref{iuti} and the bounds $|L - t_0^{-1} t K| \le t_0^{-1}\eta K$ and $(m_2 - m_1 + 1) K \le |I| \le 2 (s_0 t_0 \omega)^{-1} K$, this gives
		\begin{flalign*}
		\big| |I \cap \textbf{u}'| - t|I| \big| \le (m_2 - m_1 + 1) |L - t_0^{-1} t K| + 2 t_0^{-1} t \eta |I| + 8 s_0^{-1} K & \le 3 t_0^{-1} \eta |I| + 8 s_0^{-1} K \\
		& \le 8 (\eta + \omega) (s_0^2 t_0^2 \omega)^{-1} K,
		\end{flalign*}
		
		\noindent which, as mentioned above, implies the lemma.
	\end{proof}

	We next have the following proposition that compares probabilities between two stochastic six-vertex models, the latter of whose boundary data is the $(L; K)$-restriction of that of the former. Observe in the below that the prefactor $\big( (1 - B_1) (1 - B_2) \big)^{4MN}$ appearing on the right side of \eqref{sgesge} is $e^{- o(N^2)}$ if $K \gg 1$, indicating in this case that $(L; K)$-restriction cannot reduce a partition function by more than $e^{o(N^2)}$.  
	
	\begin{prop} 
		
		\label{rholrho}
		
		Fix integers $N, K > 0$ and $L \in [0, K]$. Set $M = \big\lceil \frac{N}{K} \big\rceil$, define $\Lambda = [1, N] \times [1, N] \subseteq \mathbb{Z}^2$, and fix some entrance data $\textbf{\emph{u}}$ for a six-vertex ensemble on $\Lambda$. Let $\textbf{\emph{u}}'$ denote the $(L; K)$-restriction of $\textbf{\emph{u}}$, and fix some six-vertex ensemble $\mathcal{G} \in \mathfrak{E}_{\textbf{\emph{u}}'} (\Lambda)$ on $\Lambda$ with entrance data $\textbf{\emph{u}}'$. 
		
		Consider two $(B_1, B_2)$-stochastic six-vertex models on $\Lambda$, denoted by $\mathfrak{S}$ and $\mathfrak{S}'$, with entrance data $\textbf{\emph{u}}$ and $\textbf{\emph{u}}'$, respectively, and both with free exit data. Let $\mathcal{E} \in \mathfrak{E}(\Lambda)$ and $\mathcal{E}' \in \mathfrak{E}(\Lambda)$ denote random six-vertex ensembles sampled under $\mathfrak{S}$ and $\mathfrak{S}'$, respectively, and let $\mathcal{F} \in \mathfrak{E} (\Lambda)$ denote the $(L; K)$-restriction of $\mathcal{E}$. Then, 
		\begin{flalign}
		\label{sgesge}
		\mathbb{P}_{\mathfrak{S}'} [ \mathcal{E}' = \mathcal{G} ] \ge \big( (1 - B_1) (1 - B_2) \big)^{4 MN} \mathbb{P}_{\mathfrak{S}} [\mathcal{F} = \mathcal{G}].
		\end{flalign}
	
	\end{prop}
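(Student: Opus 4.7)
My plan is to use the stochasticity of the six-vertex weights to rewrite both probabilities as explicit weighted sums over ensembles, and then to bound the ``cost'' of inserting extra paths into $\mathcal{G}$. Since $\sum_{(i_2, j_2)} w(i_1, j_1; i_2, j_2) = 1$ under the $(B_1, B_2)$-stochastic weights \eqref{bstochasticb}, the partition function $Z_{\textbf{u}; \Lambda}$ equals $1$ for every entrance data $\textbf{u}$. Consequently, $\mathbb{P}_{\mathfrak{S}'}[\mathcal{E}' = \mathcal{G}] = w(\mathcal{G})$ and $\mathbb{P}_{\mathfrak{S}}[\mathcal{F} = \mathcal{G}] = \sum_{\mathcal{E} \,:\, \mathcal{F}(\mathcal{E}) = \mathcal{G}} w(\mathcal{E})$, so \eqref{sgesge} reduces to showing
\begin{equation*}
\sum_{\mathcal{E} \,:\, \mathcal{F}(\mathcal{E}) = \mathcal{G}} \frac{w(\mathcal{E})}{w(\mathcal{G})} \le \big((1-B_1)(1-B_2)\big)^{-4MN}.
\end{equation*}

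Any $\mathcal{E}$ with $\mathcal{F}(\mathcal{E}) = \mathcal{G}$ is obtained by augmenting $\mathcal{G}$ with ``extra paths'' entering at the sites in $\textbf{u} \setminus \textbf{u}'$ and exiting somewhere along the north or east boundaries. At each vertex $v$ traversed by an extra path, the ratio $w_{\mathcal{E}}(v)/w_{\mathcal{G}}(v)$ is one of $B_1,\, B_2,\, 1-B_1,\, 1-B_2$ when $v$ was previously empty in $\mathcal{G}$, or one of $B_1^{-1},\, B_2^{-1},\, (1-B_1)^{-1},\, (1-B_2)^{-1}$ when the extra path merges with a kept path (changing a single-arrow configuration into the $(1,1;1,1)$ configuration of weight $a_2 = 1$). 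I would then sum over trajectories of the extra paths sequentially in the order of the Markov sampling of \Cref{ModelStochastic} along the diagonals $\mathcal{D}_n$. When an extra path never meets $\mathcal{G}$ or another extra path, its contribution sums to $1$ by stochasticity. Each merger-vertex contributes a multiplicative factor bounded by $\big((1-B_1)(1-B_2)\big)^{-1}$.

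The main obstacle is therefore the combinatorial bound that the total merger count, summed over all extra paths, is at most $4MN$. The plan is to exploit the block structure of the $(L;K)$-restriction: on each of the south and west boundaries of $\Lambda$ (both of length $N$), the entrance data decomposes into at most $M = \lceil N/K \rceil$ blocks of $K$ consecutive points, each contributing $K-L$ extra paths. By the non-crossing property and the ordering of paths, each such extra path is geometrically confined to a ``strip'' bounded by kept paths from adjacent blocks, so its merger events with kept paths are limited to $O(N)$ per block-interface, yielding an overall $O(MN)$ bound. Formalizing this confinement-and-counting argument is the technical heart of the proof, as it must simultaneously handle the merger events between extra paths of different blocks and verify that the stochastic sum over trajectories correctly decomposes across the sequential Markov updates.
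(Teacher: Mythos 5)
Your reduction via $Z_{\textbf{u};\Lambda}=1$ to the inequality $\sum_{\mathcal{E}:\,\mathcal{F}(\mathcal{E})=\mathcal{G}} w(\mathcal{E})/w(\mathcal{G})\le \big((1-B_1)(1-B_2)\big)^{-4MN}$ is valid, and the telescoping of the trajectory sum over the free (non-merger) vertices does work; this is essentially the paper's diagonal-by-diagonal coupling recast as a weight-ratio resummation. But there are two genuine gaps. First, your list of possible merger ratios includes $B_1^{-1}$ and $B_2^{-1}$, and these are \emph{not} bounded by $\big((1-B_1)(1-B_2)\big)^{-1}$ when $B_1$ is small, so your asserted per-merger factor does not follow from what you wrote. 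The rescue is structural: at a vertex shared by two paths of a non-crossing ensemble, both paths must turn (the lower one enters from the south and exits east, the upper one enters from the west and exits north), so the configuration of the surviving kept path in $\mathcal{G}$ at a merger vertex is always $(1,0;0,1)$ or $(0,1;1,0)$, of weight $1-B_1$ or $1-B_2$ respectively; the ratios $B_1^{-1}$ and $B_2^{-1}$ never occur. Without this observation the claimed factor, and hence the whole bound, fails.

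Second, the ``confinement to strips'' heuristic is not the mechanism that controls the merger count, and $O(MN)$ is not enough to produce the stated constant. What is actually needed is: a vertex has only four incident edges, so at most two paths pass through it, and therefore only paths \emph{adjacent} in the ordering $\textbf{p}_{-B}\le\cdots\le\textbf{p}_A$ can share a vertex. Hence kept--removed mergers occur only along the at most $2M$ interface pairs $(\textbf{p}_{mK+L},\textbf{p}_{mK+L+1})$ (kept below removed, factor $(1-B_1)^{-1}$) and the at most $2M$ pairs $(\textbf{p}_{mK},\textbf{p}_{mK+1})$ (kept above removed, factor $(1-B_2)^{-1}$). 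Each pair meets each of the $2N$ anti-diagonals in at most one common vertex, so there are at most $4MN$ mergers \emph{of each type}, giving $(1-B_1)^{-4MN}(1-B_2)^{-4MN}=\big((1-B_1)(1-B_2)\big)^{-4MN}$. Note that this allows up to $8MN$ mergers in total, so your claim of ``at most $4MN$'' mergers combined with a uniform per-merger factor would not close the argument as stated; the type-by-type accounting is needed. With these two repairs your route goes through and coincides in substance with the paper's proof, which implements the same bookkeeping as a step-by-step coupling along the diagonals $\mathcal{D}_n$, forcing the interface vertices via the events $\Upsilon^{(1)}(v)$ and $\Upsilon^{(2)}(v)$ at cost $(1-B_1)$ and $(1-B_2)$ per vertex.
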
  
	
	\begin{proof}
	
		For each $n \ge 1$, recall from  \Cref{ModelStochastic} the subdomain $\mathcal{T}_n = \{ (x, y) \in \mathbb{Z}_{\ge 0}^2: x + y \le n \} \cap \Lambda \subseteq \Lambda$ and diagonal $\mathcal{D}_n = \{ (x, y) \in \mathbb{Z}_{> 0}^2: x + y = n \} \cap \Lambda$. We will show for each integer $n \in [2, 2N]$ that 
		\begin{flalign}
		\label{gefe}
		\mathbb{P}_{\mathfrak{S}'} \big[ \mathcal{E}_{\mathcal{T}_n}' = \mathcal{G}_{\mathcal{T}_n}	 | \mathcal{E}_{\mathcal{T}_{n - 1}}' = \mathcal{G}_{\mathcal{T}_{n - 1}} \big] \ge \big( (1 - B_1) (1 - B_2) \big)^{2M} \mathbb{P}_{\mathfrak{S}} \big[ \mathcal{F}_{\mathcal{T}_n} = \mathcal{G}_{\mathcal{T}_n} | \mathcal{F}_{\mathcal{T}_{n - 1}} = \mathcal{G}_{\mathcal{T}_{n - 1}} \big].
		\end{flalign}
		
		\noindent Given \eqref{gefe} we deduce from the facts that $\mathcal{T}_{2N} = \Lambda$ and that $\mathcal{E}_{\mathcal{T}_1}' = \mathcal{G}_{\mathcal{T}_1}$ and $\mathcal{F}_{\mathcal{T}_1} = \mathcal{G}_{\mathcal{T}_1}$ both hold deterministically that 
		\begin{flalign*}
		\mathbb{P}_{\mathfrak{S}'} [\mathcal{E}' = \mathcal{G}] = \mathbb{P}_{\mathfrak{S}'} \big[ \mathcal{E}_{\mathcal{T}_{2N}}' = \mathcal{G}_{\mathcal{T}_{2N}} \big] & = \displaystyle\prod_{n = 2}^{2N} \mathbb{P}_{\mathfrak{S}'} \big[ \mathcal{E}_{\mathcal{T}_n}' = \mathcal{G}_{\mathcal{T}_n} | \mathcal{E}_{\mathcal{T}_{n - 1}}' = \mathcal{G}_{\mathcal{T}_{n - 1}} \big] \\
		& \ge \big( (1 - B_1) (1 - B_2) \big)^{4MN} \displaystyle\prod_{n = 2}^{2N}  \mathbb{P}_{\mathfrak{S}} \big[ \mathcal{F}_{\mathcal{T}_n} = \mathcal{G}_{\mathcal{T}_n} | \mathcal{F}_{\mathcal{T}_{n - 1}} = \mathcal{G}_{\mathcal{T}_{n - 1}} \big] \\
		& = \big( (1 - B_1) (1 - B_2) \big)^{4MN} \mathbb{P}_{\mathfrak{S}} \big[ \mathcal{F}_{\mathcal{T}_{2N}} = \mathcal{F}_{\mathcal{G}_{2N}} \big] \\
		& = \big( (1 - B_1) (1 - B_2) \big)^{4MN} \mathbb{P}_{\mathfrak{S}} [\mathcal{F} = \mathcal{G}],
		\end{flalign*}
		
		\noindent which yields \eqref{sgesge}. Thus, it suffices to establish \eqref{gefe}.
		
		To that end, we begin with some notation. Denote $\textbf{u} = (u_{-B}, u_{1 - B}, \ldots , u_A)$, and let $\mathcal{I} = [-B, A] \cap \bigcup_{m \in \mathbb{Z}} [mK + 1, mK + L]$. We further define the sets $\mathcal{R} = (r_1, r_2, \ldots , r_x) = [-B, A] \cap \bigcup_{m \in \mathbb{Z}} \{ mK + L \}$ and $\mathcal{S} = (s_1, s_2, \ldots , s_y) = [-B, A] \cap \bigcup_{m \in \mathbb{Z}} \{ mK + 1 \}$. In this way, $\mathcal{R}$ constitutes potential indices $r \in \mathcal{I}$ for which $u_r \in \textbf{u}'$ and $\textbf{u}_{r + 1} \notin \textbf{u}'$, and $\mathcal{S}$ constitutes potential indices $s \in \mathcal{I}$ for which $u_s \in \textbf{u}'$ and $\textbf{u}_{s - 1} \notin \textbf{u}'$. Observe under this notation that $x, y \le 2M$. For the example depicted in \Cref{restrictionboundary}, we have that $(A, B) = (4, 3)$, $(L, K) = (2, 3)$, $\mathcal{I} = (-2, -1, 1, 2, 4)$, $\mathcal{R} = (-1, 2)$, $\mathcal{S} = (-2, 1, 4)$, $x = 2$, and $y = 3$. 
		
		Now, let us describe a coupled sampling of $(\mathcal{E}_{\mathcal{T}_n}, \mathcal{E}_{\mathcal{T}_n}')$ given $(\mathcal{E}_{\mathcal{T}_{n - 1}}, \mathcal{E}_{\mathcal{T}_{n - 1}}')$ and that $\mathcal{E}_{\mathcal{T}_{n - 1}}' = \mathcal{G}_{\mathcal{T}_{n - 1}} = \mathcal{F}_{\mathcal{T}_{n - 1}}$. To do this, denote the non-crossing path ensemble associated with $\mathcal{E}$ by $\mathcal{P} = (\textbf{p}_{-B}, \textbf{p}_{1 - B}, \ldots , \textbf{p}_A)$. Our conditioning on $(\mathcal{E}_{\mathcal{T}_{n - 1}}, \mathcal{E}_{\mathcal{T}_{n - 1}}')$ prescribes for each $v \in \mathcal{D}_n$ all indices $i \in [-B, A]$ such that $v \in \textbf{p}_i$ (that is, the indices of all paths passing through $v$). Denoting the arrow configurations at any $v \in \mathcal{T}_n$ under $\mathcal{E}_{\mathcal{T}_n}$ and $\mathcal{E}_{\mathcal{T}_n}'$ by $\big( i_1 (v), j_1 (v); i_2 (v), j_2 (v) \big)$ and $\big( i_1' (v), j_1' (v); i_2' (v), j_2' (v) \big)$, respectively, we have that $\big( i_1 (v), j_1 (v) \big) = \big( i_1' (v), j_1' (v) \big)$ unless there exists some $m \notin \mathcal{I}$ for which $v \in \textbf{p}_m$. 
		
		For any $v \in \mathcal{D}_n$, let us randomly define $\big( i_2 (v), j_2 (v) \big)$ and $\big( i_2' (v), j_2' (v) \big)$ (given $\big( i_1 (v), j_1 (v) \big)$ and $\big( i_1' (v), j_1' (v) \big)$ from $(\mathcal{E}_{\mathcal{T}_{n - 1}}, \mathcal{E}_{\mathcal{T}_{n - 1}}')$) as follows. In the below, all choices over $v \in \mathcal{D}_n$ are mutually independent. 
		
		\begin{enumerate}
			
			\item If $v \notin \bigcup_{m \notin \mathcal{I}} \textbf{p}_m$, then couple $\big( i_2 (v), j_2 (v) \big) = \big( i_2' (v), j_2' (v) \big)$ under the probabilities from \eqref{bstochasticb}. Specifically, for any $i_2, j_2 \in \{ 0,1 \}$ set $\big( i_2 (v), j_2 (v) \big) = (i_2, j_2) = \big( i_2' (v), j_2' (v) \big)$ with probability $w\big( i_1 (v), j_1 (v); i_2, j_2 \big) = w\big( i_1' (v), j_1' (v); i_2, j_2 \big)$ (where the latter equality holds since $\big( i_1 (v), j_1 (v) \big) = \big( i_1' (v), j_1' (v) \big)$ if $v \notin \bigcup_{m \notin \mathcal{I}} \textbf{p}_m$). 
			
			\item Otherwise, set $\big( i_2 (v), j_2 (v) \big)$ and $\big( i_2' (v), j_2' (v) \big)$ independently, according to the probabilities in \eqref{bstochasticb}. 
		\end{enumerate} 
		
		\noindent This provides a sampling of $(\mathcal{E}_{\mathcal{T}_n}, \mathcal{E}_{\mathcal{T}_n}')$ given $(\mathcal{E}_{\mathcal{T}_{n - 1}}, \mathcal{E}_{\mathcal{T}_{n - 1}}')$. Defining for each $v \in \mathcal{D}_n$ the events
		\begin{flalign*}
		& \Upsilon^{(1)} (v) = \{ j_2' (v) \ge i_1' (v) \}; \qquad \Upsilon^{(2)} (v) = \big\{ i_2' (v) \ge j_1' (v) \big\}  \\
		& \Upsilon_n = \big\{ \mathcal{F}_{\mathcal{T}_n} = \mathcal{G}_{\mathcal{T}_n} \big\} \cap \bigcap_{r \in \mathcal{R}}  \bigcap_{v \in \textbf{p}_r \cap \textbf{p}_{r + 1} \cap \mathcal{D}_n} \Upsilon^{(1)} (v) \cap \bigcap_{s \in \mathcal{S}} \bigcap_{v \in \textbf{p}_{s - 1} \cap \textbf{p}_s \cap \mathcal{D}_n} \Upsilon^{(2)} (v),
		\end{flalign*} 
		
		\noindent we claim that $\mathcal{E}_{\mathcal{T}_n}' = \mathcal{G}_{\mathcal{T}_n}$ holds on $\Upsilon_n$ and that 
		\begin{flalign}
		\label{omegan1} 
		\mathbb{P} [\Upsilon_n] \ge \big( (1 - B_1) (1 - B_2) \big)^{2M} \mathbb{P}_{\mathfrak{S}} \big[ \mathcal{F}_{\mathcal{T}_n} = \mathcal{G}_{\mathcal{T}_n} | \mathcal{F}_{\mathcal{T}_{n - 1}} = \mathcal{G}_{\mathcal{T}_{n - 1}} \big],
		\end{flalign} 
		
		\noindent which would together imply \eqref{gefe}.
		
		Let us first establish the latter claim \eqref{omegan1}. To that end, observe for any $v \in \mathcal{D}_n \cap \bigcup_{r \in \mathcal{R}} (\textbf{p}_r \cap \textbf{p}_{r + 1})$ or $v \in \mathcal{D}_n \cap \bigcup_{s \in \mathcal{S}} (\textbf{p}_{s - 1} \cap \textbf{p}_s)$ that 
		\begin{flalign*} 
		\mathbb{P} \big[ \Upsilon^{(1)} (v) \big| \mathcal{F}_{\mathcal{T}_n} = \mathcal{G}_{\mathcal{T}_n} \big] \ge w (1, 0; 0, 1) = 1 - B_1; \quad \mathbb{P} \big[ \Upsilon^{(2)} (v) \big| \mathcal{E}_{\mathcal{T}_{n - 1}}, \mathcal{E}_{\mathcal{T}_{n - 1}}' \big] \ge w (0, 1; 1, 0) = 1 - B_2,
		\end{flalign*} 
		
		\noindent since then $\big( i_2 (v), j_2 (v) \big)$ and $\big( i_2' (v), j_2' (v) \big)$ are independent under the coupling described above. This, together with the above mentioned bounds $|\mathcal{R}| = x \le 2M$ and $|\mathcal{S}| = y \le 2M$ and the mutual independence between the $\Upsilon^{(i)} (v)$ for $v \in \bigcup_{m \notin \mathcal{I}} (\textbf{p}_m \cap \mathcal{D}_n)$, yields \eqref{omegan1}.
		
		It therefore remains to verify $\mathcal{E}_{\mathcal{T}_n}' = \mathcal{G}_{\mathcal{T}_n}$ on $\Upsilon_n$. So, let us restrict to $\Upsilon_n$ and denote the arrow configuration at $v \in \Lambda$ under $\mathcal{G}$ by $\big( I_1 (v), J_1 (v); I_2 (v), J_2 (v) \big)$; it suffices to show that $\big( i_1' (v), j_1' (v); i_2' (v), j_2' (v) \big) = \big( I_1 (v), J_1 (v); I_2 (v), J_2 (v) \big)$, for any vertex $v \in \mathcal{D}_n$. To do this, we separately consider cases depending on $v$. 
		
		If $v \notin \bigcup_{i \in \mathcal{I}}\textbf{p}_i$, then $\big( I_1 (v), J_1 (v); I_2 (v), J_2 (v) \big) = (0, 0; 0, 0)$, which since $\mathcal{E}_{\mathcal{T}_{n - 1}}' = \mathcal{G}_{\mathcal{T}_{n - 1}}$ implies $\big( i_1' (v), j_1' (v) \big) = (0, 0)$. Thus, $\big( i_1' (v), j_1' (v); i_2' (v), j_2' (v) \big) = (0, 0; 0, 0) = \big( I_1 (v), J_1 (v); I_2 (v), J_2 (v) \big)$. 
		
		So, suppose instead that $v \in \bigcup_{i \in \mathcal{I}} \textbf{p}_i$. If $v \notin \bigcup_{m \notin \mathcal{I}} \textbf{p}_m$, then $\big( i_1(v), j_1 (v); i_2 (v), j_2 (v) \big) = \big( I_1 (v), J_1 (v); I_2 (v), J_2 (v) \big)$, since $\mathcal{F}_{\mathcal{T}_n} = \mathcal{G}_{\mathcal{T}_n}$ on $\Upsilon_n$ and no path through $v$ is removed from $\mathcal{E}$ upon passing to its $(L; K)$-restriction $\mathcal{F}$. Moreover, $\big( i_1 (v), j_1 (v); i_2 (v), j_2 (v) \big) = \big( i_1' (v), j_1' (v); i_2' (v), j_2' (v) \big)$, due to the coupling between $\big( i_2 (v), j_2 (v) \big)$ and $\big( i_2' (v), j_2' (v) \big)$ for $v \notin \bigcup_{m \notin \mathcal{I}} \textbf{p}_m$. It therefore again follows that $\big( i_1' (v), j_1' (v); i_2' (v), j_2' (v) \big) = \big( I_1 (v), J_1 (v); I_2 (v), J_2 (v) \big)$. 
		
		Next, we consider the case $v \in \bigcup_{i \in \mathcal{I}} \textbf{p}_i \cap \bigcup_{m \notin \mathcal{I}} \textbf{p}_m$. Then $\big( i_1(v), j_1 (v); i_2 (v), j_2 (v) \big) = (1, 1; 1, 1)$, since $v$ is the in the intersection $\textbf{p}_i \cap \textbf{p}_m$, for some $i \in \mathcal{I}$ and $m \notin \mathcal{I}$. In particular, there either exists some index $r \in \mathcal{R}$ or $s \in \mathcal{S}$ such that $v \in \textbf{p}_r \cap \textbf{p}_{r + 1}$ or $v \in \textbf{p}_{s - 1} \cap \textbf{p}_s$, respectively. In the former case, $\textbf{p}_{r + 1}$ is removed from $\mathcal{E}$ when passing to $\mathcal{F}$, and so $\big( I_1 (v), J_1 (v); I_2 (v), J_2 (v) \big) = (1, 0; 0, 1)$; in the latter case, $\textbf{p}_{s - 1}$ is removed from $\mathcal{E}$ when passing to $\mathcal{F}$, and so $\big( I_1 (v), J_1 (v); I_2 (v), J_2 (v) \big) = (0, 1; 1, 0)$. Since $\big( i_1' (v), j_1' (v) \big) = \big( I_1 (v), J_1 (v) \big)$, we have in the former case that $\big( i_1' (v), j_1' (v); i_2' (v), j_2' (v) \big) = (1, 0; 0, 1)$ on $\Upsilon^{(1)} (v)$, and in the latter case that $\big( i_1' (v), j_1' (v); i_2' (v), j_2' (v) \big) = (0, 1; 1, 0)$ on $\Upsilon^{(2)} (v)$. This again implies on $\Upsilon_n$ that $\big( i_1' (v), j_1' (v); i_2' (v), j_2' (v) \big) = \big( I_1 (v), J_1 (v); I_2 (v), J_2 (v) \big)$, from which we deduce $\mathcal{E}_{\mathcal{T}_n}' = \mathcal{G}_{\mathcal{T}_n}$ and therefore the proposition. 
	\end{proof}

	\subsection{Extension of Six-Vertex Ensembles}
	
	\label{LowerProbability} 

	In this section we establish the following lemma that provides a condition for when it is possible to ``extend'' a six-vertex ensemble on a square to one on a larger square with given boundary data; see the left side of \Cref{lambdalambdae} for a depiction. This condition essentially states that the boundary data for these ensembles along the smaller and larger squares are regular with the same slope. 
	
	\begin{lem} 
		
	\label{ensemblee0e} 
		
	Fix real numbers $\eta, s, t \in (0, 1)$ and integers $N, W, R > 0$; assume that 
	\begin{flalign}
	\label{nwr}
	\min \{ sW, tW \} \ge 50 \eta N; \qquad R \le \eta N; \qquad 50 (s^{-1} + t^{-1}) R \le W \le N.
	\end{flalign} 
	
	 \noindent Define the domains $\Lambda = \Lambda_{N + 2W} = [1, N + 2W] \times [1, N + 2W] \subset \mathbb{Z}^2$ and $\Lambda' = [W + 1, N + W] \times [W + 1, N + W]$. Let $\textbf{\emph{u}} \cup \textbf{\emph{v}}$ and $\textbf{\emph{u}}' \cup \textbf{\emph{v}}'$ denote boundary data on $\Lambda$ and $\Lambda'$ for six-vertex ensembles $\mathcal{F} \in \mathfrak{E} (\Lambda)$ and $\mathcal{F}' \in \mathfrak{E} (\Lambda')$, respectively. If $\textbf{\emph{u}} \cup \textbf{\emph{v}}$ and $\textbf{\emph{u}}' \cup \textbf{\emph{v}}'$ are both $(R; \eta)$-regular, then there exists a six-vertex ensemble $\mathcal{E} \in \mathfrak{E}_{\textbf{\emph{u}}; \textbf{\emph{v}}} (\Lambda)$ such that $\mathcal{E}_{\Lambda'} = \mathcal{F}'$. 
	
	\end{lem}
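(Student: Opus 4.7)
The plan is to construct the desired ensemble $\mathcal{E}$ by filling in the annular region $\Lambda \setminus \Lambda'$ with a non-crossing up-right path ensemble that reconciles the prescribed boundary data $\textbf{u} \cup \textbf{v}$ on $\partial \Lambda$ with the boundary data $\textbf{u}' \cup \textbf{v}'$ of $\mathcal{F}'$ on $\partial \Lambda'$. I would decompose the annulus into four buffer strips of width $W$, namely the south strip $[1, N+2W] \times [1, W]$, the north strip $[1, N+2W] \times [N+W+1, N+2W]$, the west strip $[1, W] \times [W+1, N+W]$, and the east strip $[N+W+1, N+2W] \times [W+1, N+W]$, and handle each by a canonical deterministic routing.

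First I would use the $(R;\eta)$-regularity of both boundary datasets, together with \Cref{uetareta}, to derive approximate counts: for any subinterval $J$ of length at least $R$ on $\partial \Lambda$ or $\partial \Lambda'$, the number of entrance/exit points in $J$ agrees with $s|J|$ or $t|J|$ up to an error of $2\eta|J|$. From this I would compute the number of paths that need to be routed between each pair of boundary pieces, namely (i) from the south of $\Lambda$ into the south of $\Lambda'$, (ii) from the south of $\Lambda$ into the west of $\Lambda'$ or directly out through the north of $\Lambda$ at $x \le W$, (iii) from the north/east of $\Lambda'$ out through the north/east of $\Lambda$, and the two symmetric analogs. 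Using $W \le N$, all of these counts would be seen to balance up to an $O(\eta N)$ error, which is dwarfed by the slack built into the hypotheses $sW, tW \ge 50\eta N$ and $50(s^{-1} + t^{-1})R \le W$.

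Given the counts, I would realize the routing in each strip by a monotone matching. In the south strip, I would list the relevant source points on $\partial \Lambda$ (south boundary) and target points on the top of the south strip (which are the south entries of $\textbf{u}'$ together with a packet of ``through'' paths continuing into the east strip) in increasing order of $x$-coordinate, and route the $j$-th path as a simple up-right staircase from the $j$-th source to the $j$-th target. The condition $W \ge 50(s^{-1}+t^{-1})R$ ensures that on any height-$W$ window in the strip, each path has the $O(\eta R/s)$ horizontal leeway needed to realize its shift without colliding with its neighbors. Analogous constructions are used in the west, north, and east strips, and the corner squares of the annulus (each of side $W$) are filled with the unique configurations consistent with what enters and exits their sides.

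The main obstacle is showing that this strip-by-strip construction glues together consistently at the four corner squares where pairs of adjacent strips meet; that is, the number of paths crossing each interface between a strip and a corner must simultaneously match the specifications coming from both sides. The necessary identity amounts to a discrete Gauss-type conservation law stating that the net vertical arrow-flux across any horizontal cut of $\overline{\Lambda}$ is determined by $\textbf{u} \cup \textbf{v}$ alone, and symmetrically for horizontal flux. Combining this with the count matches coming from $(R;\eta)$-regularity will show that the interface counts are compatible; the slack factors in the hypotheses (in particular $sW, tW \ge 50\eta N$) absorb the regularity errors and guarantee that all four strip matchings admit a simultaneously non-crossing realization, yielding the desired $\mathcal{E} \in \mathfrak{E}_{\textbf{u}; \textbf{v}}(\Lambda)$ with $\mathcal{E}_{\Lambda'} = \mathcal{F}'$.
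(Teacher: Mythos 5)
Your overall strategy---fill the annulus $\Lambda \setminus \Lambda'$ piece by piece, using arrow conservation and $(R;\eta)$-regularity to match path counts across interfaces---is the same as the paper's, but two essential steps are missing, and you have correctly identified one of them as ``the main obstacle'' without resolving it. First, the numbers of paths crossing the interfaces between your pieces are \emph{not} determined by any conservation law: conservation supplies one fewer independent equation than there are interfaces, so one crossing number is a free parameter that must be \emph{chosen}, and chosen well. The paper fixes this by setting $K_1 = \lfloor sW \rfloor$ (roughly the typical number of paths crossing a width-$W$ cut) and deriving the other three counts from the relations \eqref{k123}; your proposal instead speaks of ``computing'' the counts and checking that they ``balance,'' which presupposes they are forced. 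Second, and more seriously, you never establish that each piece actually admits a non-crossing filling with the prescribed boundary data. For a rectangular piece the precise criterion is that the $i$-th entrance vertex is coordinatewise dominated by the $i$-th exit vertex for every $i$; verifying this inequality is exactly where the quantitative hypotheses \eqref{nwr} enter, and your ``horizontal leeway'' heuristic does not engage with it. Your assertion that the four corner squares are filled by ``the unique configurations consistent with what enters and exits their sides'' is wrong on both counts: for arbitrary corner data no consistent configuration need exist, and when one exists it is generally far from unique.

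A structural remark that would have eased both difficulties: the paper does not use four strips plus four corners (eight pieces, eight interfaces), but a pinwheel decomposition of the annulus into four rectangles $\Gamma_1, \ldots, \Gamma_4$, each absorbing one corner. This leaves only four interfaces (hence four crossing numbers $K_i$, of which one is free and three are determined), and each piece is a genuine rectangle to which the domination criterion applies directly. With your decomposition the corner squares receive boundary data on all four sides from three different sources, and making those simultaneously realizable is precisely the difficulty you defer to a ``Gauss-type conservation law''---which, as noted, cannot do the job on its own.
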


	\begin{figure}[t]
		
		\begin{center}
			
			\begin{tikzpicture}[
			>=stealth,
			scale = .45
			]

			\draw[->, very thick] (0, 10) -- (.9, 10);
			\draw[->, thick, dotted] (1, 10) -- (1.9, 10);
			\draw[->, thick, dotted] (2, 10) -- (2.9, 10);
			\draw[->, thick, dotted] (3, 10) -- (3.9, 10);
			\draw[->, thick, dotted] (4, 10) -- (4.9, 10);
			\draw[->, very thick] (5, 10) -- (5, 11);
			
			\draw[->, very thick] (0, 7) -- (.9, 7);
			\draw[->, thick, dotted] (1, 7) -- (1.9, 7);
			\draw[->, very thick] (2, 7) -- (2.9, 7);
			\draw[->] (3, 7) -- (3, 7.9);
			\draw[->] (3, 8) -- (3.9, 8);
			\draw[->] (4, 8) -- (4.9, 8);
			\draw[->, very thick] (5, 8) -- (5, 8.9); 
			\draw[->, thick, dotted] (5, 9) -- (5.9, 9);
			\draw[->, thick, dotted] (6, 9) -- (6, 9.9); 
			\draw[->, thick, dotted] (6, 10) -- (6.9, 10);
			\draw[->, very thick] (7, 10) -- (7, 11);
			
			\draw[->, very thick] (0, 4) -- (.9, 4);
			\draw[->, thick, dotted] (1, 4) -- (1.9, 4);
			\draw[->, thick, dotted] (2, 4) -- (2, 4.9);
			\draw[->, very thick] (2, 5) -- (2.9, 5);
			\draw[->] (3, 5) -- (3, 5.9);
			\draw[->] (3, 6) -- (3.9, 6);
			\draw[->] (4, 6) -- (4.9, 6);
			\draw[->] (5, 6) -- (5.9, 6);
			\draw[->] (6, 6) -- (6, 6.9);
			\draw[->] (6, 7) -- (6, 7.9);
			\draw[->] (6, 8) -- (6.9, 8); 
			\draw[->, very thick] (7, 8) -- (7, 8.9);
			\draw[->, thick, dotted] (7, 9) -- (7.9, 9); 
			\draw[->, thick, dotted] (8, 9) -- (8, 9.9);
			\draw[->, thick, dotted] (8, 10) -- (8.9, 10); 
			\draw[->, very thick] (9, 10) -- (9, 11);

			\draw[->, very thick] (0, 1) -- (.9, 1);
			\draw[->, thick, dotted] (1, 1) -- (1.9, 1);
			\draw[->, thick, dotted] (2, 1) -- (2.9, 1);
			\draw[->, thick, dotted] (3, 1) -- (3, 1.9);
			\draw[->, thick, dotted] (3, 2) -- (3.9, 2);
			\draw[->, very thick] (4, 2) -- (4, 2.9);
			\draw[->] (4, 3) -- (4, 3.9);
			\draw[->] (4, 4) -- (4, 4.9);
			\draw[->] (4, 5) -- (4.9, 5);
			\draw[->] (5, 5) -- (5.9, 5); 
			\draw[->] (6, 5) -- (6, 5.9); 
			\draw[->] (6, 6) -- (6.9, 6);
			\draw[->] (7, 6) -- (7.9, 6); 
			\draw[->] (8, 6) -- (8, 6.9);
			\draw[->] (8, 7) -- (8, 7.9);
			\draw[->, very thick] (8, 8) -- (8, 8.9);
			\draw[->, thick, dotted] (8, 9) -- (8.9, 9);
			\draw[->, thick, dotted] (9, 9) -- (9.9, 9);
			\draw[->, thick, dotted] (10, 9) -- (10, 9.9);
			\draw[->, very thick] (10, 10) -- (11, 10);

			\draw[->, very thick] (4, 0) -- (4, .9); 
			\draw[->, thick, dotted] (4, 1) -- (4, 1.9);
			\draw[->, thick, dotted] (4, 2) -- (4.9, 2); 
			\draw[->, thick, dotted] (5, 2) -- (5.9, 2);
			\draw[->, very thick] (6, 2) -- (6, 2.9); 
			\draw[->] (6, 3) -- (6.9, 3); 
			\draw[->] (7, 3) -- (7, 3.9);
			\draw[->] (7, 4) -- (7, 4.9);
			\draw[->] (7, 5) -- (7.9, 5); 
			\draw[->] (8, 5) -- (8, 5.9);
			\draw[->, very thick] (8, 6) -- (8.9, 6); 
			\draw[->, thick, dotted] (9, 6) -- (9, 6.9);
			\draw[->, thick, dotted] (9, 7) -- (9, 7.9);
			\draw[->, thick, dotted] (9, 8) -- (9.9, 8);
			\draw[->, thick, dotted] (10, 8) -- (10, 8.9);
			\draw[->, very thick] (10, 9) -- (11, 9);

			\draw[->, very thick] (5, 0) -- (5, .9);
			\draw[->, thick, dotted] (5, 1) -- (5.9, 1);
			\draw[->, thick, dotted] (6, 1) -- (6.9, 1);
			\draw[->, thick, dotted] (7, 1) -- (7, 1.9);
			\draw[->, very thick] (7, 2) -- (7, 2.9);
			\draw[->] (7, 3) -- (7.9, 3);
			\draw[->] (8, 3) -- (8, 3.9);
			\draw[->, very thick] (8, 4) -- (8.9, 4);
			\draw[->, thick, dotted] (9, 4) -- (9, 4.9);
			\draw[->, thick, dotted] (9, 5) -- (9, 5.9);
			\draw[->, thick, dotted] (9, 6) -- (9.9, 6);
			\draw[->, thick, dotted] (10, 6) -- (10, 6.9);
			\draw[->, thick, dotted] (10, 7) -- (10, 7.9);
			\draw[->, very thick] (10, 8) -- (11, 8);
			
			\draw[->, very thick] (7, 0) -- (7, .9); 
			\draw[->, thick, dotted] (7, 1) -- (7.9, 1); 
			\draw[->, thick, dotted] (8, 1) -- (8, 1.9);
			\draw[->, thick, dotted] (8, 2) -- (8.9, 2);
			\draw[->, thick, dotted] (9, 2) -- (9, 2.9); 
			\draw[->, thick, dotted] (9, 3) -- (9, 3.9); 
			\draw[->, thick, dotted] (9, 4) -- (9.9, 4);
			\draw[->, thick, dotted] (10, 4) -- (10, 4.9);
			\draw[->, thick, dotted] (10, 5) -- (10, 5.9);
			\draw[->, very thick] (10, 6) -- (11, 6);

			\draw[->, very thick] (9, 0) -- (9, .9);
			\draw[->, thick, dotted] (9, 1) -- (9.9, 1);
			\draw[->, thick, dotted] (10, 1) -- (10, 1.9); 
			\draw[->, thick, dotted] (10, 2) -- (10, 2.9); 
			\draw[->, very thick] (10, 3) -- (11, 3);

			\filldraw[fill=gray!50!white, draw=black] (1, 1) circle [radius=.1];
			\filldraw[fill=gray!50!white, draw=black] (1, 2) circle [radius=.1];
			\filldraw[fill=gray!50!white, draw=black] (1, 3) circle [radius=.1];
			\filldraw[fill=gray!50!white, draw=black] (1, 4) circle [radius=.1];
			\filldraw[fill=gray!50!white, draw=black] (1, 5) circle [radius=.1];
			\filldraw[fill=gray!50!white, draw=black] (1, 6) circle [radius=.1];
			\filldraw[fill=gray!50!white, draw=black] (1, 7) circle [radius=.1];
			\filldraw[fill=gray!50!white, draw=black] (1, 8) circle [radius=.1];
			\filldraw[fill=gray!50!white, draw=black] (1, 9) circle [radius=.1];
			\filldraw[fill=gray!50!white, draw=black] (1, 10) circle [radius=.1];
			
			\filldraw[fill=gray!50!white, draw=black] (2, 1) circle [radius=.1];
			\filldraw[fill=gray!50!white, draw=black] (2, 2) circle [radius=.1];
			\filldraw[fill=gray!50!white, draw=black] (2, 3) circle [radius=.1];
			\filldraw[fill=gray!50!white, draw=black] (2, 4) circle [radius=.1];
			\filldraw[fill=gray!50!white, draw=black] (2, 5) circle [radius=.1];
			\filldraw[fill=gray!50!white, draw=black] (2, 6) circle [radius=.1];
			\filldraw[fill=gray!50!white, draw=black] (2, 7) circle [radius=.1];
			\filldraw[fill=gray!50!white, draw=black] (2, 8) circle [radius=.1];
			\filldraw[fill=gray!50!white, draw=black] (2, 9) circle [radius=.1];
			\filldraw[fill=gray!50!white, draw=black] (2, 10) circle [radius=.1];
			
			\filldraw[fill=gray!50!white, draw=black] (3, 1) circle [radius=.1];
			\filldraw[fill=gray!50!white, draw=black] (3, 2) circle [radius=.1];
			\filldraw[fill=gray!50!white, draw=black] (3, 3) circle [radius=.1];
			\filldraw[fill=gray!50!white, draw=black] (3, 4) circle [radius=.1];
			\filldraw[fill=gray!50!white, draw=black] (3, 5) circle [radius=.1];
			\filldraw[fill=gray!50!white, draw=black] (3, 6) circle [radius=.1];
			\filldraw[fill=gray!50!white, draw=black] (3, 7) circle [radius=.1];
			\filldraw[fill=gray!50!white, draw=black] (3, 8) circle [radius=.1];
			\filldraw[fill=gray!50!white, draw=black] (3, 9) circle [radius=.1];
			\filldraw[fill=gray!50!white, draw=black] (3, 10) circle [radius=.1];
			
			\filldraw[fill=gray!50!white, draw=black] (4, 1) circle [radius=.1];
			\filldraw[fill=gray!50!white, draw=black] (4, 2) circle [radius=.1];
			\filldraw[fill=gray!50!white, draw=black] (4, 3) circle [radius=.1];
			\filldraw[fill=gray!50!white, draw=black] (4, 4) circle [radius=.1];
			\filldraw[fill=gray!50!white, draw=black] (4, 5) circle [radius=.1];
			\filldraw[fill=gray!50!white, draw=black] (4, 6) circle [radius=.1];
			\filldraw[fill=gray!50!white, draw=black] (4, 7) circle [radius=.1];
			\filldraw[fill=gray!50!white, draw=black] (4, 8) circle [radius=.1];
			\filldraw[fill=gray!50!white, draw=black] (4, 9) circle [radius=.1];
			\filldraw[fill=gray!50!white, draw=black] (4, 10) circle [radius=.1];
			
			\filldraw[fill=gray!50!white, draw=black] (5, 1) circle [radius=.1];
			\filldraw[fill=gray!50!white, draw=black] (5, 2) circle [radius=.1];
			\filldraw[fill=gray!50!white, draw=black] (5, 3) circle [radius=.1];
			\filldraw[fill=gray!50!white, draw=black] (5, 4) circle [radius=.1];
			\filldraw[fill=gray!50!white, draw=black] (5, 5) circle [radius=.1];
			\filldraw[fill=gray!50!white, draw=black] (5, 6) circle [radius=.1];
			\filldraw[fill=gray!50!white, draw=black] (5, 7) circle [radius=.1];
			\filldraw[fill=gray!50!white, draw=black] (5, 8) circle [radius=.1];
			\filldraw[fill=gray!50!white, draw=black] (5, 9) circle [radius=.1];
			\filldraw[fill=gray!50!white, draw=black] (5, 10) circle [radius=.1];
			
			\filldraw[fill=gray!50!white, draw=black] (6, 1) circle [radius=.1];
			\filldraw[fill=gray!50!white, draw=black] (6, 2) circle [radius=.1];
			\filldraw[fill=gray!50!white, draw=black] (6, 3) circle [radius=.1];
			\filldraw[fill=gray!50!white, draw=black] (6, 4) circle [radius=.1];
			\filldraw[fill=gray!50!white, draw=black] (6, 5) circle [radius=.1];
			\filldraw[fill=gray!50!white, draw=black] (6, 6) circle [radius=.1];
			\filldraw[fill=gray!50!white, draw=black] (6, 7) circle [radius=.1];
			\filldraw[fill=gray!50!white, draw=black] (6, 8) circle [radius=.1];
			\filldraw[fill=gray!50!white, draw=black] (6, 9) circle [radius=.1];
			\filldraw[fill=gray!50!white, draw=black] (6, 10) circle [radius=.1];
			
			\filldraw[fill=gray!50!white, draw=black] (7, 1) circle [radius=.1];
			\filldraw[fill=gray!50!white, draw=black] (7, 2) circle [radius=.1];
			\filldraw[fill=gray!50!white, draw=black] (7, 3) circle [radius=.1];
			\filldraw[fill=gray!50!white, draw=black] (7, 4) circle [radius=.1];
			\filldraw[fill=gray!50!white, draw=black] (7, 5) circle [radius=.1];
			\filldraw[fill=gray!50!white, draw=black] (7, 6) circle [radius=.1];
			\filldraw[fill=gray!50!white, draw=black] (7, 7) circle [radius=.1];
			\filldraw[fill=gray!50!white, draw=black] (7, 8) circle [radius=.1];
			\filldraw[fill=gray!50!white, draw=black] (7, 9) circle [radius=.1];
			\filldraw[fill=gray!50!white, draw=black] (7, 10) circle [radius=.1];
			
			\filldraw[fill=gray!50!white, draw=black] (8, 1) circle [radius=.1];
			\filldraw[fill=gray!50!white, draw=black] (8, 2) circle [radius=.1];
			\filldraw[fill=gray!50!white, draw=black] (8, 3) circle [radius=.1];
			\filldraw[fill=gray!50!white, draw=black] (8, 4) circle [radius=.1];
			\filldraw[fill=gray!50!white, draw=black] (8, 5) circle [radius=.1];
			\filldraw[fill=gray!50!white, draw=black] (8, 6) circle [radius=.1];
			\filldraw[fill=gray!50!white, draw=black] (8, 7) circle [radius=.1];
			\filldraw[fill=gray!50!white, draw=black] (8, 8) circle [radius=.1];
			\filldraw[fill=gray!50!white, draw=black] (8, 9) circle [radius=.1];
			\filldraw[fill=gray!50!white, draw=black] (8, 10) circle [radius=.1];
			
			\filldraw[fill=gray!50!white, draw=black] (9, 1) circle [radius=.1];
			\filldraw[fill=gray!50!white, draw=black] (9, 2) circle [radius=.1];
			\filldraw[fill=gray!50!white, draw=black] (9, 3) circle [radius=.1];
			\filldraw[fill=gray!50!white, draw=black] (9, 4) circle [radius=.1];
			\filldraw[fill=gray!50!white, draw=black] (9, 5) circle [radius=.1];
			\filldraw[fill=gray!50!white, draw=black] (9, 6) circle [radius=.1];
			\filldraw[fill=gray!50!white, draw=black] (9, 7) circle [radius=.1];
			\filldraw[fill=gray!50!white, draw=black] (9, 8) circle [radius=.1];
			\filldraw[fill=gray!50!white, draw=black] (9, 9) circle [radius=.1];
			\filldraw[fill=gray!50!white, draw=black] (9, 10) circle [radius=.1];
			
			\filldraw[fill=gray!50!white, draw=black] (10, 1) circle [radius=.1];
			\filldraw[fill=gray!50!white, draw=black] (10, 2) circle [radius=.1];
			\filldraw[fill=gray!50!white, draw=black] (10, 3) circle [radius=.1];
			\filldraw[fill=gray!50!white, draw=black] (10, 4) circle [radius=.1];
			\filldraw[fill=gray!50!white, draw=black] (10, 5) circle [radius=.1];
			\filldraw[fill=gray!50!white, draw=black] (10, 6) circle [radius=.1];
			\filldraw[fill=gray!50!white, draw=black] (10, 7) circle [radius=.1];
			\filldraw[fill=gray!50!white, draw=black] (10, 8) circle [radius=.1];
			\filldraw[fill=gray!50!white, draw=black] (10, 9) circle [radius=.1];
			\filldraw[fill=gray!50!white, draw=black] (10, 10) circle [radius=.1];

			\filldraw[fill=gray!50!black, draw=black] (1, 11) circle [radius=0] node[scale = 1.4]{$\Lambda$};
			\filldraw[fill=gray!50!black, draw=black] (3, 3.5) circle [radius=0] node[scale = .9]{$\Lambda'$};
			
			\draw[ultra thick, dashed] (2.5, 2.5) -- (8.5, 2.5) -- (8.5, 8.5) -- (2.5, 8.5) -- (2.5, 2.5);

			\draw[] (-.25, 1) -- (-.6, 1) -- (-.6, 2) -- (-.25, 2); 
			\draw[] (-1.15, 1.5) circle [radius = 0] node[]{$W$};
			
			\draw[] (-.25, 9) -- (-.6, 9) -- (-.6, 10) -- (-.25, 10); 
			\draw[] (-1.1, 9.5) circle [radius = 0] node[]{$W$};
			
			\draw[] (-.25, 3) -- (-.6, 3) -- (-.6, 8) -- (-.25, 8); 
			\draw[] (-1.1, 5.5) circle [radius = 0] node[]{$N$};
			
			\draw[] (1, -.25) -- (1, -.6) -- (2, -.6) -- (2, -.25);
			\draw[] (1.5, -1.1) circle [radius = 0] node[]{$W$};
			
			\draw[] (9, -.25) -- (9, -.6) -- (10, -.6) -- (10, -.25);
			\draw[] (9.5, -1.1) circle [radius = 0] node[]{$W$};
			
			\draw[] (3, -.25) -- (3, -.6) -- (8, -.6) -- (8, -.25);
			\draw[] (5.5, -1.1) circle [radius = 0] node[]{$N$};

			\draw[] (17, .5) -- (27, .5) -- (27, 10.5) -- (17, 10.5) -- (17, .5);
			\draw[] (19, 2.5) -- (25, 2.5) -- (25, 8.5) -- (19, 8.5) -- (19, 2.5);
			\draw[dashed] (19, .5) -- (19, 2.5);
			\draw[dashed] (17, 8.5) -- (19, 8.5);
			\draw[dashed] (25, 2.5) -- (27, 2.5);
			\draw[dashed] (25, 8.5) -- (25, 10.5);
			
			\draw[very thick, dashed] (19, .5) -- (19, 1.25);
			\draw[very thick, dashed] (17, 8.5) -- (17.75, 8.5);
			\draw[very thick, dashed] (25, 2.5) -- (25.75, 2.5);
			\draw[very thick, dashed] (25, 8.5) -- (25, 9.25);

			\draw[] (22, 5.5) circle[radius = 0] node[scale = 1.6]{$\Lambda'$};
			\draw[] (18, 4.5) circle[radius = 0] node[scale = 1.3]{$\Gamma_1$};
			\draw[] (23, 1.5) circle[radius = 0] node[scale = 1.3]{$\Gamma_2$};
			\draw[] (26, 6.5) circle[radius = 0] node[scale = 1.3]{$\Gamma_3$};
			\draw[] (21, 9.5) circle[radius = 0] node[scale = 1.3]{$\Gamma_4$};
			\draw[] (27, -.25) circle[radius = 0] node[scale = 1.9]{$\Lambda$};

			\draw[] (17, 4.5) circle[radius = 0] node[scale = .8, left]{$A_1$};
			\draw[] (18, .5) circle[radius = 0] node[scale = .8, below]{$B_1$};
			\draw[] (17, 9.5) circle[radius = 0] node[scale = .8, left]{$A_4$};
			\draw[] (21, 10.5) circle[radius = 0] node[scale = .8, above]{$D_4$};
			\draw[] (26, 10.5) circle[radius = 0] node[scale = .8, above]{$D_3$};
			\draw[] (27, 6.5) circle[radius = 0] node[scale = .8, right]{$C_3$};
			\draw[] (27, 1.5) circle[radius = 0] node[scale = .8, right]{$C_2$};
			\draw[] (23, .5) circle[radius = 0] node[scale = .8, below]{$B_2$};
			
			\draw[] (19, 5.5) circle[radius = 0] node[scale = .8, right]{$A'$};
			\draw[] (22, 2.5) circle[radius = 0] node[scale = .8, above]{$B' + 1$};
			\draw[] (25, 5.5) circle[radius = 0] node[scale = .8, left]{$C'$};
			\draw[] (22, 8.5) circle[radius = 0] node[scale = .8, below]{$D' + 1$};
			
			\draw[] (18, 8.5) circle[radius = 0] node[scale = .8, above]{$K_1$};
			\draw[] (19, 1.5) circle[radius = 0] node[scale = .8, right]{$K_2$};
			\draw[] (26, 2.5) circle[radius = 0] node[scale = .8, above]{$K_3$};
			\draw[] (25, 9.5) circle[radius = 0] node[scale = .8, right]{$K_4$};

			\end{tikzpicture}
			
		\end{center}	
		
		\caption{\label{lambdalambdae} To the left is a depiction of \Cref{ensemblee0e}. To the right are the domains $\Gamma_1$, $\Gamma_2$, $\Gamma_3$, and $\Gamma_4$ used in its proof, where the numbers along the boundaries there indicate how many paths enter or exit through those boundaries.  }
	\end{figure}
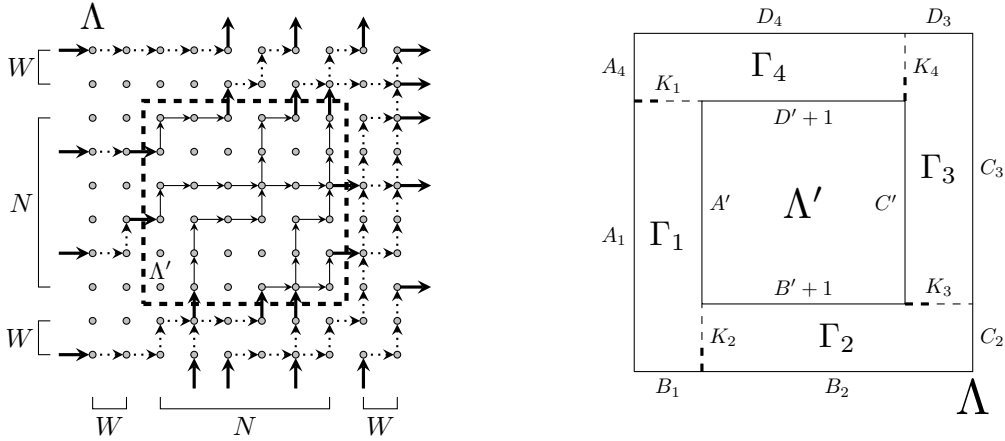

		\begin{proof}
		
		Let us partition $\Lambda \setminus \Lambda'$ into four subdomains $\Lambda \setminus \Lambda' = \Gamma_1 \cup \Gamma_2 \cup \Gamma_3 \cup \Gamma_4$ by setting 
		\begin{flalign*}
		& \Gamma_1 = [1, W] \times [1, N + W]; \qquad \qquad \qquad \qquad \qquad \qquad \Gamma_2 = [W + 1, N + 2W] \times [1, W]; \\
		& \Gamma_3 = [N + W + 1, N + 2W] \times [W + 1, N + 2W]; \qquad \Gamma_4 = [1, N + W] \times [N + W + 1, N + 2W].
		\end{flalign*} 
		
		We refer to the right side of \Cref{lambdalambdae} for a depiction. Next, we will define boundary data on the $\Gamma_i$ in such a way that they each admit a six-vertex ensemble $\mathcal{E}_i$; are consistent with each other; and are consistent with $\textbf{u} \cup \textbf{v}$ and $\textbf{u}' \cup \textbf{v}'$. Then, $\mathcal{E} \in \mathfrak{E}_{\textbf{u}; \textbf{v}} (\Lambda)$ will be formed by the union $\mathcal{F}' \cup \bigcup_{i = 1}^4 \mathcal{E}_i$. To implement this, we first require some notation. 
		
		Denote $\textbf{u} = (u_{-B}, u_{1 - B}, \ldots , u_A)$, $\textbf{v} = (v_{-B}, v_{1 - B}, \ldots , v_A)$, $\textbf{u}' = (u_{-B'}', u_{1 - B'}', \ldots , u_{A'}')$, and $\textbf{v}' = (v_{-B'}', v_{1 - B'}', \ldots , v_{A'}')$. Also let $C$ and $D + 1$ denote the numbers of vertices in $\textbf{v}$ on the east and north boundaries of $\Lambda$, respectively. In this way, $A + B + 1 = C + D + 1$ both denote the number of vertices in $\textbf{v}$, and $v_{C - B - 1}$ lies on the east boundary of $\Lambda$, but $v_{C - B}$ lies on its north boundary. Similarly, let $C'$ and $D' + 1$ denote the numbers of vertices in $\textbf{v}'$ on the east and north boundaries of $\Lambda'$, respectively. 
		
		Moreover, for each index $i \in \{ 1, 4 \}$, let $A_i$ denote the number of vertices in $\textbf{u}$ on the west boundary of $\Gamma_i$. Similarly, for each $i \in \{ 1, 2 \}$, let $B_i$ denote the number of vertices in $\textbf{u}$ on the south boundary of $\Gamma_i$; for each $i \in \{ 2, 3 \}$, let $C_i$ denote the number of vertices in $\textbf{v}$ on the east boundary of $\Gamma_i$; and for each $i \in \{ 3, 4 \}$, let $D_i$ denote the number of vertices in $\textbf{u}$ on the north boundary of $\Gamma_i$. For example, under this notation, we have that $\textbf{u} \cap \partial \Gamma_1 = (u_{1 - B_1}, u_{2 - B_1}, \ldots , u_{A_1})$, $A = A_1 + A_4$, and $B + 1 = B_1 + B_2$. We refer to the right side of \Cref{lambdalambdae} for a depiction. 
		
		Next, for each $i \in \{ 1, 2, 3, 4 \}$, we define an integer $K_i$ that will denote the number of paths passing between $\Gamma_{i - 1}$ and $\Gamma_i$, where we let $\Gamma_0 = \Gamma_4$. To that end, we first set $K_1 = \lfloor s W \rfloor$ and then define $K_2, K_3, K_4$ by the relations 
		\begin{flalign}
		\label{k123}
		\begin{aligned} 
		& 	A_1 + B_1 = K_1 + A' + K_2; \qquad K_2 + B_2 = C_2 + K_3 + B' + 1; \\
		& K_3 + C' + K_4 = C_3 + D_3; \qquad A_4 + K_1 + D' + 1 = K_4 + D_4.
		\end{aligned}
		\end{flalign}
		
		\noindent Observe that the fourth equality in \eqref{k123} is a consequence of the first three, together with the facts that $A' + B' + 1 = C' + D' + 1$ (both equal the number of paths passing through $\Lambda'$) and $A_1 + A_4 + B_1 + B_2 = A + B + 1 = C + D + 1 = C_2 + C_3 + D_3 + D_4$ (all equal the number of paths in $\Lambda$). Under the interpretation for $K_i$ as the number of paths passing between $\Gamma_{i - 1}$ and $\Gamma_i$, the four equations \eqref{k123} indicate that the same number of paths enter $\Gamma_i$ as exit $\Gamma_i$, for each $i \in \{ 1, 2, 3, 4 \}$; we again refer to the right side of \Cref{lambdalambdae} for a depiction. 
		
		Now, we claim that 
		\begin{flalign}
		\label{k1234estimate} 
		sW - 40 \eta N \le K_1, K_3 \le sW + 40 \eta N; \qquad tW - 40 \eta N \le K_2, K_4 \le tW + 40 \eta N,
		\end{flalign}
		
		\noindent which in particular implies that each of the $K_i$ are positive by \eqref{nwr}. To establish \eqref{k1234estimate}, first observe from the $(R; \eta)$-regularity of the boundary data $\textbf{u} \cup \textbf{v}$ and $\textbf{u}' \cup \textbf{v}'$; the bound $R \le W \le N$ from \eqref{nwr}; and \Cref{uetareta} that 
		\begin{flalign}
		\label{abcd1234estimate} 
		\begin{aligned} 
		& (t - 2 \eta) (N + W)  \le A_1, C_3 \le (t + 2 \eta) (N + W); \qquad (t - 2 \eta) W \le A_4, C_2 \le (t + 2 \eta) W;  \\
		& (s - 2 \eta) (N + W) \le B_2, D_4 \le (s + 2 \eta)(N + W); \qquad (s - 2 \eta) W \le B_1, D_3 \le (s + 2 \eta) W,
		\end{aligned}
		\end{flalign}
		
		\noindent and
		\begin{flalign}
		\label{abcdestimate} 
		(t - 2 \eta) N \le A', C' \le (t + 2 \eta) N; \qquad (s - 2 \eta) N \le B' + 1, D' + 1 \le (s + 2 \eta) N.
		\end{flalign}
		
		\noindent Since $W \le N$ and $K_1 = \lfloor sN \rfloor$, we deduce \eqref{k1234estimate} from inserting \eqref{abcd1234estimate} and \eqref{abcdestimate} into \eqref{k123}.
		
		Now, let us define boundary data $\textbf{u}^{(i)} \cup \textbf{v}^{(i)}$ on each $\Gamma_i$, which will be obtained as the union of the boundary data induced on $\Gamma_i$ by $\textbf{u} \cup \textbf{v}$ and $\textbf{u}' \cup \textbf{v}'$ with another set of $K_i + K_{i + 1}$ vertices $\mathcal{K}^{(i)} \subseteq (\Gamma_{i - 1} \cup \Gamma_{i + 1}) \cap \partial \Gamma_i$ (where $\Gamma_5 = \Gamma_1$). In particular, let $\mathcal{B} = \textbf{u} \cup \textbf{u}' \cup \textbf{v} \cup \textbf{v}'$, and set $\textbf{u}^{(i)} \cup \textbf{v}^{(i)} = (\mathcal{B} \cap \partial \Gamma_i) \cup \mathcal{K}^{(i)}$, where $\mathcal{K}^{(i)} = \mathcal{K}^{(i; 1)} \cup \mathcal{K}^{(i; 2)}$, and $\mathcal{K}^{(i; 1)}$ and $\mathcal{K}^{(i; 2)}$ denote the $K_i$ and $K_{i + 1}$ ``most south'' or ``most west'' vertices in $\Gamma_{i - 1} \cap \partial \Gamma_i$ and $\Gamma_{i + 1} \cap \partial \Gamma_i$, respectively (see the thick parts of the boundaries between the $\Gamma_i$ on the right side of \Cref{lambdalambdae}). More specifically, we set 
		\begin{flalign*}
		&\mathcal{K}^{(1; 1)} = \bigcup_{x = 1}^{K_1} \big\{ (x, N + W + 1) \big\}; \qquad \qquad \qquad \mathcal{K}^{(1; 2)} = \bigcup_{y = 1}^{K_2} \big\{ (W + 1, y) \big\}; \\
		& \mathcal{K}^{(2; 1)} = \bigcup_{y = 1}^{K_2} \big\{ (W, y) \big\}; \qquad	\qquad \qquad \qquad \qquad \mathcal{K}^{(2; 2)} = \bigcup_{x = 1}^{K_3} \big\{ (N + W + x, W + 1) \big\}; \\
		& \mathcal{K}^{(3; 1)} = \bigcup_{x = 1}^{K_3} \big\{ (N + W + x, W) \big\}; \qquad \qquad \qquad \mathcal{K}^{(3; 2)} = \bigcup_{y = 1}^{K_4} \big\{ (N + W, N + W + y) \big\}; \\
		& \mathcal{K}^{(4; 1)} = \bigcup_{y = 1}^{K_4} \big\{ (N + W + 1, N + W + y) \big\}; \qquad \mathcal{K}^{(4; 2)} = \bigcup_{x = 1}^{K_1} \big\{ (x, N + W) \big\}.
		\end{flalign*} 
		
	 	Setting $\textbf{u}^{(i)} \cup \textbf{v}^{(i)} = (\mathcal{B} \cap \partial \Gamma_i) \cup \mathcal{K}^{(i; 1)}  \cup \mathcal{K}^{(i; 2)}$, let us show $\mathfrak{E}_{\textbf{u}^{(i)}; \textbf{v}^{(i)}} (\Gamma_i)$ is nonempty for every $i \in \{ 1, 2, 3, 4 \}$. As the proof for each $i$ is entirely analogous, we only address the case $i = 1$. To that end, we use the following fact, which is directly verified by induction on $m + n + 1$. Let $\Gamma \subset \mathbb{Z}^2$ denote a rectangular domain, and let $\textbf{j} \cup \textbf{k}$ be some boundary data on $\Gamma$. Denoting $\textbf{j} = (j_{-n}, j_{1 - n}, \ldots , j_m)$ and $\textbf{k} = (k_{-n}, k_{1 - n}, \ldots , k_m)$, the set $\mathfrak{E}_{\textbf{j}; \textbf{k}} (\Gamma)$ is nonempty if and only if $k_i \ge j_i$ for each $i \in [-n, m]$ (where we recall from \Cref{Paths} that $(x_1, y_1) \ge (x_2, y_2)$ if $x_1 \ge y_1$ and $x_2 \ge y_2$). 
		
		So, setting $\textbf{u}^{(1)} = \big( u_{1 - B_1}^{(1)}, u_{2 - B_1}^{(1)}, \ldots , u_{A_1}^{(1)}\big)$ and $\textbf{v}^{(1)} = \big( v_{1 - B_1}^{(1)}, v_{2 - B_1}^{(1)}, \ldots , v_{A_1}^{(1)} \big)$, it suffices to show that $u_i^{(1)} \le v_i^{(1)}$ for each $i \in [1 - B_1, A_1]$. This holds if $i \in [1 - B_1, 0]$, since then $u_i^{(i)}$ lies on the south boundary of $\Gamma_1$, and if $i \in [A_1 - K_1 + 1, A_1]$, since then $v_i^{(1)}$ lies on the north boundary of $\Gamma_1$. So, we may suppose that $i \in [1, A_1 - K_1]$, in which case we denote $u_i^{(1)} = (0, w_i)$ and $v_i = (W + 1, y_i)$; it suffices to show that $w_i \le y_i$. 
		
		Letting $j = A_1 - i$, the $(R; \eta)$-regularity of the boundary data $\textbf{u} \cup \textbf{v}$ and $\textbf{u}' \cup \textbf{v}'$ (and \Cref{uetareta}) implies that 
		\begin{flalign*} 
		w_i = w_{A_1 - j} \le N - s^{-1} (j - 2 \eta j - R); \qquad y_i = y_{A_1 - j} \ge N - s^{-1} (j - K_1 + 2 \eta j + R).
		\end{flalign*}
		
		\noindent Thus, the bound $w_i \le y_i$ follows from \eqref{k1234estimate} and the fact that $sW - 40 \eta N \ge 8 \eta N + 2R \ge 4 \eta (N + W) + 2R \ge 4 \eta j + 2R$ (which holds by \eqref{nwr}). Hence, each $u_i^{(1)} \le v_i^{(1)}$, so $\mathfrak{E}_{\textbf{u}^{(1)}; \textbf{v}^{(1)}} (\Gamma_1)$ is nonempty and therefore contains a six-vertex ensemble $\mathcal{E}_1$. 
		
		Similarly, for each $i \in \{ 2, 3, 4 \}$, the set $\mathfrak{E}_{\textbf{u}^{(i)}; \textbf{v}^{(i)}} (\Gamma_i)$ is nonempty and contains some six-vertex ensemble $\mathcal{E}_i$. Letting $\mathcal{E} = \mathcal{F}' \cup \bigcup_{ = 1}^4 \mathcal{E}_i$, we have $\mathcal{E} \in \mathfrak{E}_{\textbf{u}; \textbf{v}} (\Lambda)$ and $\mathcal{E}_{\Lambda'} = \mathcal{F}'$, since the boundary data for the $\mathcal{E}_i$ are consistent with each other and with $\textbf{u} \cup \textbf{v} \cup \textbf{u}' \cup \textbf{v}'$; this yields the lemma.
	\end{proof}

	\subsection{Proof of \Cref{mulowerprobability}} 
	
	\label{Lower1} 
	
	In this section we establish \Cref{mulowerprobability}.

	\begin{proof}[Proof of \Cref{mulowerprobability}]
		
		Fix a real number $\delta \in \big( 0, 1 \big)$, and let $N > 0$ denote an integer, which we will chose to be sufficiently large below. Define the real number $\eta \in (0, 1)$ and integer $R > 0$ by
		\begin{flalign}
		\label{etar}
		\eta = st B_1 B_2 (1 - B_1) (1 - B_2) \frac{ \delta}{650}; \qquad R = \bigg\lfloor \displaystyle\frac{\eta N}{3} \bigg\rfloor.
		\end{flalign}
		
		\noindent Moreover, recall from \Cref{estimateprobabilitylower} the domain $\Lambda = \Lambda_N = [1, N] \times [1, N] \subseteq \mathbb{Z}^2$; the marginal distribution $\nu = \nu_N \in \mathscr{P} \big( \mathfrak{E} (\mathbb{Z}^2 \setminus \Lambda_N )\big)$ of $\mu$; the boundary data $\textbf{u} (\mathcal{H}) \cup \textbf{v} (\mathcal{H})$ on $\Lambda$ induced by any six-vertex ensemble $\mathcal{H} \in \mathfrak{E} (\mathbb{Z}^2 \setminus \Lambda)$; and the partition function $Z (\mathcal{H})$ for the $(B_1, B_2)$-stochastic six-vertex model with boundary data $\textbf{u} (\mathcal{H}) \cup \textbf{v} (\mathcal{H})$. 
		
		Define the event $\mathcal{A}$ on which $\textbf{u} (\mathcal{H}) \cup \textbf{v} (\mathcal{H})$ is $(R; \eta)$-regular with slope $(s, t)$. Since \Cref{rhosmu} implies for sufficiently large $N$ that $\mathbb{P}_{\nu} [\mathcal{A}] = \mathbb{P}_{\mu} [\mathcal{A}] \ge 1 - \frac{\eta}{3} \ge 1 - \delta$, it suffices to show that $Z (\mathcal{H}) \ge e^{- \delta N^2}$ holds on $\mathcal{A}$, if $N$ is sufficiently large. 
		
		Let us briefly outline how we will do this. We will first define a square subdomain of the form $\Lambda' = [W + 1, N - W] \times [W + 1, N - 1] \subset \Lambda$ occupying ``most'' of $\Lambda$. It will then suffice to lower bound the sum of $w(\mathcal{E}')$ over all $\mathcal{E}' \in \mathfrak{E}(\Lambda')$ whose boundary data has approximate slope $(s, t)$. Indeed, since \Cref{ensemblee0e} implies that each such $\mathcal{E}'$ admits an extension $\mathcal{E}$ to $\Lambda$ with boundary data $\textbf{u} (\mathcal{H}) \cup \textbf{v} (\mathcal{H})$, this would yield an estimate on the sum over all such $w (\mathcal{E})$ and therefore on $Z (\mathcal{H})$. To establish the former lower bound, we will define a pair $(s_0, t_0) \in (0, 1]^2$ such that $t_0 = \varphi (s_0)$ and consider the $(B_1, B_2)$-stochastic six-vertex model on $\Lambda'$ with double-sided $(t_0, s_0)$-Bernoulli entrance data. Conditioning on this entry data, the weight sum of all six-vertex ensembles with this entry data is equal to $1$. However, the dominant contribution to this sum arises from ensembles whose boundary data has approximate slope $(s_0, t_0)$ and not $(s, t)$. So, we will consider the weight sum of the $(L; K)$-restrictions these ensembles, whose boundary data will have approximate slope $(s, t)$ if $K$ and $L$ are appropriately chosen. Then the required lower bound will follow from \Cref{rholrho} estimating the weight sum of the latter, restricted ensembles in terms of that of the original, unrestricted ones.
		
		To implement this procedure, we begin by introducing the quantities $(s_0, t_0)$, $K$, $L$, and $W$ used there. So, recalling $\kappa$ from \eqref{kappafunction}, define the pair $(s_0, t_0)$ by setting 
		\begin{flalign} 
		\label{ws0t0}
		 s_0 = \displaystyle\frac{\kappa s - t}{(\kappa - 1)t}; \qquad t_0 = \displaystyle\frac{\kappa s - t}{(\kappa - 1) s}.
		\end{flalign}
		
		\noindent In this way, $(s_0, t_0)$ denotes the point where the line $\big\{ y = \frac{t x}{s} \big\}$ (passing through $(0, 0)$ and $(s, t)$) intersects the curve $\big\{ (x, y): y = \varphi (x) \big\} \cap \mathbb{R}_{> 0}^2$; we refer to \Cref{s0t0} for a depiction. Indeed, the facts that $\frac{t_0}{s_0} = \frac{t}{s}$ and $t_0 = \varphi (s_0)$ follow from the definitions \eqref{kappafunction} and \eqref{ws0t0} of $\varphi$ and $(s_0, t_0)$, respectively. Denoting $\vartheta = \frac{s_0}{s}$, we moreover have that 
		\begin{flalign}
		\label{s0t01ss0tt0}
		0 < s_0 \le t_0 \le 1; \qquad 0 < \frac{s}{s_0} = \vartheta = \frac{t}{t_0} = \frac{(\kappa - 1) st}{\kappa s - t} \le 1.
		\end{flalign}

		\begin{figure}
			
			\begin{center}
			
			\begin{tikzpicture}
			\begin{axis}[
			scale = .8,
			samples= 300, 
			xlabel = {$x$},
			ylabel = {$y$},
			ylabel style={rotate=270},
			xmin=0, 
			xmax=1, 
			ymin=0, 
			ymax=1, 
			xtick distance=1,
			ytick distance=1
			]
			\addplot[no marks, very thick, color=black] {4*x/(3*x+1)};
			\addplot[no marks, thick, color=black] {x};
			\addplot[no marks, thick, color=black] coordinates {(.5, .5)} node[right = 4, scale = .8]{$y = x$};
			\addplot[no marks, thick, color=black] coordinates {(.28, .65)} node[left = -1, scale = .8]{$y = \varphi (x)$};
			\addplot[mark=*,only marks] coordinates {(.5, .8)} node[below = 5, right, scale = .8]{$(s_0, t_0)$};
			\addplot[mark=*,only marks] coordinates {(.25, .4)} node[right, scale = .8]{$(s, t)$};
			\addplot[domain=0:.5, dashed] {1.6*x};
			\end{axis}
				
			\end{tikzpicture}

			\caption{\label{s0t0} Depicted above are $(s, t)$ and $(s_0, t_0)$.}
			
			\end{center}
		
		\end{figure}
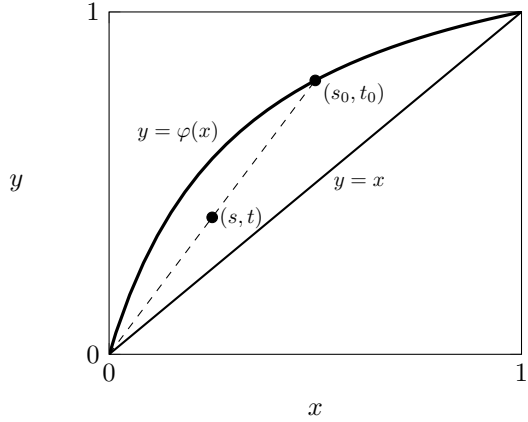

		\noindent To verify these, observe since $t \le \varphi (s)$ that 
		\begin{flalign}
		\label{kappa1st} 
		0 < (\kappa - 1 ) st \le \kappa s - t.
		\end{flalign}
		
		\noindent This implies the first bound $s_0 > 0$ in the first statement of \eqref{s0t01ss0tt0}. The bounds $s_0, t_0 \le 1$ in that statement follow from \eqref{ws0t0} and the fact that $t \ge s$, and then $s_0 \le t_0$  holds since $\varphi (z) \ge z$ for each $z \in (0, 1]$. The second statement of \eqref{s0t01ss0tt0} again follows from \eqref{kappa1st}. 
		
		Next, define the integers $K, L, W, M > 0$ by setting 
		\begin{flalign}
		\label{omegakl} 
		\qquad K = \bigg\lfloor \displaystyle\frac{s_0^2 t_0^2 \eta R}{16} \bigg\rfloor; \qquad L = \lceil \vartheta K \rceil; \qquad W = \Bigg\lceil 60 \bigg( \displaystyle\frac{1}{s} + \displaystyle\frac{1}{t} \bigg) \eta N \Bigg\rceil; \qquad M = \bigg\lceil \displaystyle\frac{N - 2W}{K} \bigg\rceil.
		\end{flalign}
		
		\noindent Further define the domain $\Lambda' = [W + 1, N - W] \times [W + 1, N - W] \subseteq \Lambda$. We will show that there exists boundary data $\textbf{x}' \cup \textbf{y}'$ on $\Lambda'$ that is $(R; \eta)$-regular with slope $(s, t)$ such that
		\begin{flalign}
		\label{weightesumxye1} 
		\displaystyle\sum_{\mathcal{E}' \in \mathfrak{E}_{\textbf{x}', \textbf{y}'} (\Lambda')} w (\mathcal{E}') \ge \big( B_1 B_2 (1 - B_1) (1 - B_2) \big)^{4MN + 4N}.
		\end{flalign} 
		
		Let us establish the proposition assuming \eqref{weightesumxye1}. To that end, first observe for sufficiently large $N$ that $\min \{ sW, tW \} \ge 50 \eta (N - 2W)$, $R \le \eta (N - 2W)$, and $50 (s^{-1} + t^{-1}) R \le W \le N - 2W$, due to the choices \eqref{omegakl} of $W$ and \eqref{etar} of $\eta < \frac{st}{650}$ and $R$. Thus, since $\textbf{x}' \cup \textbf{y}'$ and $\textbf{u} (\mathcal{H}) \cup \textbf{v} (\mathcal{H})$ are both $(R; \eta)$-regular with slope $(s, t)$ on the event $\mathcal{A}$, \Cref{ensemblee0e} implies for each $\mathcal{E}' \in \mathfrak{E}_{\textbf{x}'; \textbf{y}'} (\Lambda')$ the existence of some $\mathcal{E} \in \mathfrak{E}_{\textbf{u} (\mathcal{H}) \cup \textbf{v} (\mathcal{H})} (\Lambda)$ such that $\mathcal{E}_{\Lambda'} = \mathcal{E}'$. Then, since $|\Lambda \setminus \Lambda'| \le 4WN$ and the weight of any vertex under the $(B_1, B_2)$-stochastic six-vertex model is at most $B_1 B_2 (1 - B_1) (1 - B_2)$, we have
		\begin{flalign*}
		w (\mathcal{E}) \ge \big( B_1 B_2 (1 - B_1) (1 - B_2) \big)^{4WN} w (\mathcal{E}').
		\end{flalign*}		
		
		This, together with \eqref{weightesumxye1}, implies on $\mathcal{A}$ that
		\begin{flalign}
		\label{zf1estimate} 
		\begin{aligned}
		Z (\mathcal{H}) = \displaystyle\sum_{\mathcal{E} \in \mathfrak{E}_{\textbf{u} (\mathcal{H}); \textbf{v} (\mathcal{H})}} w (\mathcal{E}) & \ge \big( B_1 B_2 (1 - B_1) (1 - B_2) \big)^{4WN} 
		\displaystyle\sum_{\mathcal{E}' \in \mathfrak{E}_{\textbf{x}'; \textbf{y}'} (\Lambda')} w (\mathcal{E}') \\
		& \ge \big( B_1 B_2 (1 - B_1) (1 - B_2) \big)^{8 (M + W) N}.
		\end{aligned}
		\end{flalign}
		
		\noindent Then, since  \eqref{etar} and \eqref{omegakl} together yield
		\begin{flalign*}
		W \le \displaystyle\frac{65 \eta N}{st} < B_1 B_2 (1 - B_1) (1 - B_2) \displaystyle\frac{\delta N}{10}; \qquad M \le \displaystyle\frac{N}{K} \le \displaystyle\frac{96}{(s_0 t_0 \eta)^2},
		\end{flalign*}
		
		\noindent it follows from \eqref{zf1estimate} that
		\begin{flalign*}
		Z (\mathcal{H}) \ge \big( B_1 B_2 (1 - B_1) (1 - B_2) \big)^{8 (M + W) N} \ge e^{-\delta N^2},
		\end{flalign*}
		
		\noindent holds for sufficiently large $N$ on $\mathcal{A}$, which implies the proposition.
		
		Hence, it suffices to verify the existence of $(R; \eta)$-regular boundary data $\textbf{x}' \cup \textbf{y}'$ on $\Lambda'$ such that \eqref{weightesumxye1} holds. To that end, for any real numbers $R_0 > 1$ and $\eta_0 \in (0, 1)$; pair $(S, T) \in (0, 1]^2$; and rectangular domain $\Gamma \subseteq \mathbb{Z}^2$, let $\mathfrak{R} (R_0, \eta_0; S, T; \Gamma) \subseteq \mathfrak{E}(\Gamma)$ denote the set of six-vertex ensembles on $\Gamma$ whose boundary data is $(R_0; \eta_0)$-regular with slope $(S, T)$. Additionally, for any entrance data $\textbf{w}$ on $\Gamma$, let $\mathfrak{R}_{\textbf{w}} (R_0, \eta_0; S, T; \Gamma) = \mathfrak{R}(R_0, \eta_0; S, T; \Gamma) \cap \mathfrak{E}_{\textbf{w}} (\Gamma)$ denote the set of six-vertex ensembles in $\mathfrak{R} (R_0, \eta_0, S, T; \Gamma)$ with entrance data given by $\textbf{w}$. 
		
		Now, sample a random six-vertex ensemble $\mathcal{G} \in \mathfrak{E} (\Lambda')$ on $\Lambda'$ under a $(B_1, B_2)$-stochastic six-vertex model $\mathfrak{S}$ with $(s_0, t_0)$-Bernoulli entrance data; denote the (random) entrance data for $\mathcal{G}$ by $\textbf{z}$. Then the fact that $t_0 = \varphi (s_0)$, \Cref{murho1}, and \Cref{rhosmu} together imply for sufficiently large $N$ that 
		\begin{flalign*} 
		\mathbb{P}_{\mathfrak{S}} \Bigg[ \mathcal{G}\in \mathfrak{P} \bigg( L, \displaystyle\frac{s_0 t_0 \eta}{8}; s_0, t_0; \Lambda' \bigg) \Bigg] \ge \displaystyle\frac{1}{2}. 
		\end{flalign*} 
		
		\noindent Hence, 
		\begin{flalign*}
		\mathbb{E} \Bigg[ \displaystyle\sum_{\mathcal{E} \in \mathfrak{P}_{\textbf{z}} (L, s_0 t_0 \eta / 8; s_0, t_0; \Lambda')} w (\mathcal{E}) \Bigg] = \mathbb{P}_{\mathfrak{S}} \Bigg[ \mathcal{G}\in \mathfrak{P} \bigg( L, \displaystyle\frac{s_0 t_0 \eta}{8}; s_0, t_0; \Lambda' \bigg) \Bigg] \ge \displaystyle\frac{1}{2}.
		\end{flalign*} 
		
		\noindent where the expectation on the left side is with respect to the entrance data $\textbf{z}$ for $\mathcal{G}$. In particular, there exists some (deterministic) choice $\textbf{x}$ of entrance data on $\Lambda'$ such that 
		\begin{flalign}
		\label{weightesumx}
		\displaystyle\sum_{\mathcal{E} \in \mathfrak{P}_{\textbf{x}} (L, s_0 t_0 \eta / 8; s_0, t_0; \Lambda')} w (\mathcal{E}) \ge \displaystyle\frac{1}{2}.
		\end{flalign} 
		
		\noindent Moreover, since $|\partial \Lambda'| \le 4 (N - W) \ge 4N - 4$, there are at most $2^{4N - 4}$ possible choices of exit data for any $\mathcal{E} \in \mathfrak{P}_{\textbf{x}} \big( L, \frac{s_0 t_0 \eta}{8}; s_0, t_0; \Lambda' \big)$. Hence, it follows from \eqref{weightesumx} that there exists some (deterministic) exit data $\textbf{y}$ on $\Lambda'$ such that the boundary data $\textbf{x} \cup \textbf{y}$ is $\big( L; \frac{s_0 t_0 \eta}{8} \big)$-regular with slope $(s_0, t_0)$ and 
		\begin{flalign}
		\label{weightesumxy} 
		\displaystyle\sum_{\mathcal{E} \in \mathfrak{E}_{\textbf{x}, \textbf{y}} (\Lambda')} w (\mathcal{E}) \ge 2^{-4N}.
		\end{flalign} 
		
		Now let $\textbf{x}' \cup \textbf{y}'$ denote the $(L; K)$-restriction of $\textbf{x} \cup \textbf{y}$; we claim that $\textbf{x}' \cup \textbf{y}'$ is $(R; \eta)$-regular with slope $(s, t)$. To see this, observe from \eqref{omegakl} and the identity $\frac{s}{s_0} = \vartheta = \frac{t}{t_0}$ that $\big| \frac{s_0 L}{K} - s \big| < \frac{s_0 t_0 \eta}{8}$ and $\big| \frac{t_0 L}{K} - t \big| < \frac{s_0 t_0 \eta}{8}$ hold for sufficiently large $N$. Thus, since $\textbf{x} \cup \textbf{y}$ is $\big( L; \frac{s_0 t_0 \eta}{8} \big)$-regular, \Cref{restrictionregular} applies (whose $\omega$ and $\eta$ equal to $\frac{s_0 t_0 \eta}{8}$ here) and implies $\textbf{x}' \cup \textbf{y}'$ is $(R; \eta)$-regular with slope $(s, t)$. 
		
		Thus, it suffices to verify \eqref{weightesumxye1}. To that end, observe by summing \Cref{rholrho} that 
		\begin{flalign*}
		\displaystyle\sum_{\mathcal{E}' \in \mathfrak{E}_{\textbf{x}', \textbf{y}'} (\Lambda')} w (\mathcal{E}') \ge \big( (1 - B_1) (1 - B_2) \big)^{4MN} \displaystyle\sum_{\mathcal{E} \in \mathfrak{E}_{\textbf{x}, \textbf{y}} (\Lambda')} w (\mathcal{E}),
		\end{flalign*}
		
		\noindent and so \eqref{weightesumxye1} follows from \eqref{weightesumxy} and the fact that $B_1 B_2 (1 - B_1) (1 - B_2) \le \frac{1}{2}$. 
	\end{proof}

\end{document}